
\documentclass[preprint]{article}

\usepackage{geometry}

\geometry{a4paper, top=3cm, bottom=3cm, left=2.5cm, right=2.5cm}
\usepackage{amsthm}
\usepackage{amsfonts}
\usepackage{graphicx}
\usepackage{epstopdf}
\usepackage{dsfont}
\usepackage{bm}
\usepackage{bbm}
\usepackage{tikz}
\usepackage{pgfplots}
\usetikzlibrary{patterns}
\usepackage{ulem}
\usetikzlibrary{shapes}
\usetikzlibrary{plotmarks}

\usepackage{tikz}
\usepackage{pgfplots}
\usetikzlibrary{shapes}
\usepackage{tikz-3dplot}
\usetikzlibrary{decorations.pathreplacing,angles,quotes,patterns}
\usetikzlibrary{decorations.pathmorphing}
\tikzset{snake it/.style={decorate, decoration=snake}}
\usetikzlibrary{positioning}
\usetikzlibrary{patterns}
\usepackage{subfigure}
\usetikzlibrary{patterns,positioning}
\usetikzlibrary{intersections}
\usetikzlibrary{backgrounds,patterns}
\usetikzlibrary{tikzmark,calc,fadings,arrows,shapes,decorations.pathreplacing}
\usetikzlibrary{shapes,positioning,shadings}
\usepgfplotslibrary{fillbetween}

\usepackage{caption}
\usepackage{chngcntr}
\counterwithout{figure}{section}

\usepackage{color}
\usepackage{todonotes}

\usepackage{amsopn}
\RequirePackage{amsmath}
\RequirePackage{fix-cm}

\usepackage{graphicx}
\usepackage{subfigure}
\usepackage{epstopdf}
\usepackage{indentfirst}
\usepackage{color}
\usepackage{bm}
\usepackage{tikz}
\usetikzlibrary{calc}
\usepackage{ifthen}

\usepackage{ulem}

\numberwithin{equation}{section} 

\newcommand\ds{\displaystyle}

\def\O{\Omega}
\def\o{\omega}
\def\G1{{\bf 1}}

\def\S{{\GS}}

\def\e{\varepsilon}

\def\fv{\frak{v}}
\def\fu{\frak{u}}

\def\C{\mathcal{C}}

\def\Te{{\cal T}_\varepsilon}

\def\Ga{{\bf a}}

\def\Ge{{\bf e}}

\def\ds{\displaystyle}

\def\N{{\mathbb{N}}}

\def\R{{\mathbb{R}}}
\def\Z{{\mathbb{Z}}}

\def\D{{\mathbb{D}}}
\def\R{{\mathbb{R}}}

\def\N{{\mathbb{N}}}
\def\Z{{\mathbb{Z}}}

\def\Ec{{\mathcal{E}}}

\def\Uc{{\mathcal{U}}}

\def\Zc{{\mathcal{Z}}}
\def\Vc{{\mathcal{V}}}

\def\Yc{{\mathcal{Y}}}

\def\Wc{{\mathcal{W}}}

\def\Mc{{\mathcal{M}}}

\def\wh{\widehat }

\def\wt{\widetilde }
\def\X{\times }

\def\Ga{{\bf a}}

\def\Ge{{\bf e}}

\def\Gu{{\bf u}}
\def\Gv{{\bf v}}
\def\Gw{{\bf w}}

\def\GB{{\bf B}}
\def\GC{{\bf C}}

\def\GF{{\bf F}}

\def\GH{{\bf H}}
\def\GI{{\bf I}}
\def\GJ{{\bf J}}

\def\GM{{\bf M}}

\def\GQ{{\bf Q}}
\def\GR{{\bf R}}
\def\GS{{\bf S}}

\def\GF{{\bf F}}
\def\GV{{\bf V}}

\definecolor{skyblue}{RGB}{135,206,235}
\definecolor{deepskyblue}{RGB}{0, 191, 255}


\usepackage{amsfonts}
\usepackage{graphicx}
\usepackage{epstopdf}
\usepackage{dsfont}
\usepackage{bm}
\usepackage{bbm}
\usepackage{algorithmic}
\usepackage{tikz}
\usepackage{pgfplots}
\usetikzlibrary{patterns}

\usepackage{nicefrac}
\usepackage{mathtools}

\usepackage{ulem}


\usepackage{url, breakurl}
\usepackage{charter} 
\usepackage{longtable}
\usepackage{float}
\usepackage{supertabular}
\usepackage{graphicx}
\usepackage{color}
\usepackage{subfigure}
\usepackage{overpic}
\usepackage{textcomp}
\usepackage[section]{placeins}

\usepackage{enumitem}

\usepackage{siunitx}
\newtheorem{theorem}{Theorem}
\newtheorem{definition}{Definition}
\newtheorem{corollary}{Corollary}
\newtheorem{proposition}{Proposition}
\newtheorem{assumption}{Assumption}

\newtheorem{remark}{Remark}

\newtheorem{lemma}{Lemma}



\begin{document}


\title{Dimension reduction and homogenization of composite plate with matrix pre-strain}


%
\author{Amartya Chakrabortty, Georges Griso, Julia Orlik}
\date{October 5, 2023}

\maketitle
{\bf Keywords:}
Homogenization, dimension reduction, unfolding operators, $\Gamma$-convergence, non-linear elasticity, von-K\'arm\'an plate, pre-strain.\\

{\bf Mathematics Subject Classification (2010):} 35B27, 35J86, 35C20, 74K10, 74F10, 76M30, 76M45.
{\let\thefootnote\relax\footnotetext{
	Amartya Chakrabortty, Julia Orlik: SMS, Fraunhofer ITWM, Kaiserslautern 67663, Germany
	
	Emails: amartya.chakrabortty@itwm.fraunhofer.de, julia.orlik@itwm.fraunhofer.de\\
	
	Georges Griso: Laboratoire Jacques-Louis Lions (LJLL), Sorbonne Universit\'e, CNRS, Universit\'e de Paris, F-75005 Paris, France
	
	Email: griso@ljll.math.upmc.fr\\
	
	Correponding author Email: amartya.chakrabortty@itwm.fraunhofer.de}}

\begin{abstract}
		This paper focuses on the simultaneous homogenization and dimension reduction of periodic composite plates within the framework of non-linear elasticity. The composite plate in its reference (undeformed) configuration consists of a periodic perforated plate made of stiff material with holes filled by soft matrix material. The structure is clamped on a cylindrical part. Two cases of asymptotic analysis are considered: one without pre-strain and the other with matrix pre-strain. In both cases, the total elastic energy is in the von-K\'arm\'an (vK) regime ($\varepsilon^5$).
		A new splitting of the displacements is introduced to analyze the asymptotic behavior. The displacements are decomposed using the Kirchhoff-Love (KL) plate displacement decomposition. The use of a re-scaling unfolding operator allows for deriving the asymptotic behavior of the Green St. Venant's strain tensor in terms of displacements. The limit homogenized energy is shown to be of vK type with linear elastic cell problems, established using the $\Gamma$-convergence.
		Additionally, it is shown that for isotropic homogenized material, our limit vK plate is orthotropic. The derived results have practical applications in the design and analysis of composite structures.
\end{abstract}





	\section{Introduction}
Nowadays, the use of 3D printing technology allows for the deployment or folding of pre-stretched membranes into desired shapes (e.g., see \cite{gries}). In this manufacturing process, the membrane is initially extended in specific directions, and then a pattern is printed on it. An interesting technological challenge is to determine the optimal pre-strain of the membrane to achieve the desired shape of the homogenized composite shell. The key aspect of this challenge is the transfer of pre-strain from the soft membrane to the stiff part of the homogenized composite shell. This motivation serves as the basis for studying this type of problem.

In practical applications, the material of the membrane is typically softer than the printed plastics. However, the findings of this paper indicate that a soft membrane alone does not transfer sufficient stress to the stiff part of the homogenized composite shell. This transfer of stress is necessary for folding the shell, particularly when the printed pattern consists of thick beams. In the second case presented in the paper, it is assumed that there is pre-strain in the soft membrane part of the periodic plate, which provide the desired effect.\\
	
	This paper presents a theoretical study of homogenization and dimension reduction of composite structures in the framework of non-linear elasticity. The structure under consideration consists of a periodic perforated plate made of thick straight beams that intersect each other in the two in-plane directions creating a periodic structural frame ($\O_\e^B$), with soft matrix ($\O_\e^M$) filling the holes; see Figure \ref{Fig01}. The resulting heterogeneous composite plate is periodically distributed in cells isometric to the domain $(0,\e)^2\X(-r,r)$, where  both $\e$ and $r$  go to zero. It should be noted that in this paper, the authors have considered the thickness $r$ and the periodicity $\e$ to be related by the relation $r=\kappa\e$, where $\kappa$ is strictly positive constant\footnote{When the thickness $r$ and periodicity $\varepsilon$ are of the same order, it is referred to as thick beams, however, when $r$ is of a smaller order than $\varepsilon$, it is called thin beams.}. Although the analysis begin with a composite plate of specific geometry, the derived results do not depend on this particular geometry and it is extended to more general class of plate composed of a connected periodic perforated structure with the holes filled by soft matrix in Section \ref{Sec09}.\\

	For the purpose of analysis, it is assumed that the composite structure in the undeformed configuration is made of an elastic material. To simplify the analysis, it is assumed that the composite plate is made of a heterogeneous elastic material, with the matrix part $\Omega_\e^M$ relatively softer by an order of $\e^2$ compared to the structural frame part $\Omega_\e^B$. This assumption allows us to consider a composite plate made of the same material but with different levels of stiffness. A classical example of this type of elastic body is the St. Venant's Kirchhoff material (see Remark \ref{Rem2} for details) with Lam\'e's constants $(\lambda, \mu)$ and $(\e^2\lambda, \e^2\mu)$ in the periodic perforated part $\Omega_\e^B$ and matrix part $\Omega_\e^M$ respectively.
	 
 The limit behavior of the composite structure depend on the order of the infimum of the elastic energy with respect to the parameter $\e$, which in our case is in von-K\'arm\'an (vK) regime. In this paper, two cases are handled :
	\begin{enumerate}
		\item  The applied forces are scaled in such a way that infimum of the total elastic energy is in vK regime, i.e.
		$$ \begin{aligned}
			\GJ_\e(v)=&\int_{\O_\e}\wh{W}_\e(x,\nabla v)dx-\int_{\O_\e} f_\e\cdot(v-\text{\bf id})dx, \quad \hbox{with $f_\e$ is the re-scaled force from $f\in L^2(\o)^3$},\\
			&\hskip 30mm \text{and}\quad\int_{\O_\e}\wh{W}_\e(x,\nabla v)dx\sim O(\e^5).
		\end{aligned}$$
		\item  It is assumed that, there is pre-strain in the soft matrix part ($\O_\e^M$) and the applied forces are scaled in such a way that infimum of the total elastic energy is in vK regime, i.e.
		$$ \begin{aligned}
			\GJ^*_\e(v)=&\int_{\O_\e}\wh{W}_\e(x,(\nabla v)A^{-1}_\e)dx-\int_{\O_\e} f_\e\cdot(v-\text{\bf id})dx,\quad\hbox{with}\;\;
			A_\e=\left\{\	\begin{aligned}
				&\GI_3-\e B_\e\;\;&&\text{in}\;\;\O_\e^M,\\
				&\GI_3\;\;&&\text{in}\;\;\O_\e^B,
			\end{aligned}\right.\\
			&\hskip 27mm \text{and}\quad \int_{\O_\e}\wh{W}_\e(x,(\nabla v)A^{-1}_\e)dx \sim O(\e^{5}).
		\end{aligned}$$
	\end{enumerate}
	
	As far as a minimizing sequence of deformations $\{v_\e\}_\e$ of the energy is concerned, this leads to the following estimates of the Green St. Venant's strain tensors for the both cases
	$$ \begin{aligned}
		\|\nabla v_\e (\nabla v_\e)^T-\GI_3\|^2_{L^2(\O^B_\e)}+\e^2\|(\nabla v_\e)^T\nabla v_\e -\GI_3\|^2_{L^2(\O^M_\e)}&\sim O(\e^{5}),\\
		\|\nabla v_\e (\nabla v_\e)^T-\GI_3\|^2_{L^2(\O^B_\e)}+\e^2\|(\nabla v_\e A_\e^{-1})^T\nabla v_\e A_\e^{-1} -\GI_3\|^2_{L^2(\O^M_\e)}&\sim O(\e^{5}).
	\end{aligned}	$$
	
	The limit homogenized model for the composite structure is described by the vK plate model. This model emerges as the $\Gamma$-convergence limit in terms of displacements (see Remark \ref{Re7} for more details) from non-linear problems stated in terms of deformations, highlighting the importance of considering initially deformations (see Remark \ref{Re1} for more details) in studying the limit behavior. Despite the non-linear nature of the initial and homogenized problems, the cell problems associated with the vK plate are linear. In the Case 1, the homogenized limit energy has energy only from the stiff material part i.e. the limit energy terms from the matrix part vanishes due to the linear elastic cell problems. For Case 2, the homogenized energy has additional term due to the presence of pre-strain in the matrix part. With these cell problems and using the symmetry of the structure it is shown that the homogenized vK plate is orthotropic for isotropic homogeneous materials, which is consistent with the vK energy and linear elasticity formulations. This orthotropic plate model provides a valuable simplification for analyzing many mechanical behavior of the vK plate under matrix pre-strain.\\
	
	The tool used for simultaneous homogenization and dimension reduction is the re-scaling unfolding operator, which is a variation of the periodic unfolding operator introduced in \cite{CDG2} and further developed in \cite{CDG1}. The unfolding operator is very suited for periodic homogenization problems set on a domain which depends on the parameter $\e$. For more information on homogenization techniques see \cite{tartar1}, \cite{asymptotic1}, \cite{Homogenization1}, \cite{twoscale1}, and \cite{twoscale2}. Additionally, for more literature on dimension reduction see \cite{multi1}, \cite{friesecke02}.
	
	To ensure the existence of at least one minimizer for the limit energy problem, $\Gamma$-convergence techniques are used, for more details on homogenization using $\Gamma$-convergence see \cite{masoG}, \cite{ABGamma}, \cite{Friesecke06} and \cite{FJMKarman}. For general references on the theory of elasticity, see \cite{elasticity}, \cite{CiarletShell1} and for non-linear plate theory, the authors recommend the seminal work presented in \cite{friesecke02}, \cite{Friesecke06}, \cite{FJMKarman} and \cite{de2021hierarchy}. Moreover, for more literature on junction between plates and rods see \cite{BGG1} and \cite{BGG2}.	The approach presented in this paper of homogenization and dimension reduction by re-scaling unfolding operator is similar to the one given in \cite[Chapeter 11]{CDG} and \cite{DGJunction1}.\\

	Previous works have explored the topic of $\Gamma$-convergence (homogenization and dimension reduction) for plates under different non-linear (von-K\'arm\'an and bending) regimes with or without pre-strain, the authors mention a few here \cite{NV}, \cite{Neu1}, and \cite{Velcic1}. In these works the pre-strain is modeled by a multiplicative decomposition $\nabla v A^{-1}$ of the deformation gradient $\nabla v$ with a factor $A$ that is close to identity, i.e. $A=\GI_3-\e B$. In context of non-Euclidean elasticity, references such as \cite{lewicka01} and \cite{lewicka05}, assume a Riemannian manifold as the reference configuration and the factor $A$ in the decomposition $F_{el}A^{-1}$ is viewed as the square root of the metric.
	
	In this context, it is worth mentioning that when the pre-strain is assumed to be in the form of $A_\varepsilon=\GI_3-\varepsilon B_\varepsilon$ for analytical purposes, it is typically assumed that $B_\varepsilon$ converges to $B$ through strong two-scale convergence, which is equivalent to strong convergence when using the re-scaling unfolding operator. However, in the analysis conducted, it has been assumed that $B_\varepsilon$ converges weakly via the unfolding operator. This assumption constitutes a weaker condition on the pre-strain, and to the best of our knowledge, this is the first result that considers a weak convergence assumption on the pre-strain (see Assumption \ref{Ass01} for more details).
	
	An extensive series of studies on homogenization and dimension reduction problems in structures composed of beams and plates has been conducted. Specifically, the behavior of structures involving curved rods with various contact conditions has been investigated, as documented in the works \cite{GOW} and \cite{GFOW}. Furthermore, structures made of perforated plates have been the focus of attention, and for a comprehensive understanding of this topic, reference to the studies presented in \cite{hauck2} and \cite{larysa1} is recommended.

Moreover, in \cite{GOW2}, consideration was given to a woven canvas with a fixed junction between the beams. This setup is a special consequence of Case 1 with the periodic structural frame as the domain in the current study. However, due to the presence of the soft matrix in the holes, additional terms are introduced in the limit energy. Nevertheless, the limit homogenized energy remains unchanged. The cell problems in the region $Y_\kappa^B$ correspond to those defined in \cite{GOW2}, and an additional linear elastic cell problem for the matrix part is obtained due to the presence of the soft matrix. This becomes relevant when the soft matrix part is subjected to thermal, electric, or chemical expansion, resulting in in-plane stress in the homogenized structural frame (see \eqref{Eq1042}$_2$ and \eqref{Eq1046}--\eqref{EQ947}).\\
	
The paper is structured as follows:
In the beginning, general notations are introduced in Section \ref{N-1}. Following that, the composite structure and boundary conditions are defined in Sections \ref{sec03}.
The subsequent sections, namely Sections \ref{Sec04}--\ref{Sec05}, focus on deriving Korn-type inequalities for deformations. These sections provide estimates in terms of $\e$ for the linearized strain tensor corresponding to the displacement in the regions $\O_\e^B$ and $\O_\e^M$.
Moving on to Section \ref{Sec06}, a new splitting result for the displacements $u$ is established. This section decomposes every displacement acting on the whole plate $\O_\e$ into a Kirchhoff-Love (KL) displacement and a residual part. Furthermore, this decomposition is utilized to derive a Korn-type inequality for the displacements.

Section \ref{Sec07} delves into Case 1, discussing the asymptotic behavior of the composite plate without pre-strain. This case examines the re-scaling of forces and its impact.
Similarly, Section \ref{Sec08} focuses on Case 2, which explores the asymptotic behavior of the composite plate under matrix pre-strain.
Finally, the paper concludes by extending the results to a more general periodic composite plate in Section \ref{Sec09}.

	\section{Notations} \label{N-1}
	
	Throughout the paper, the following notation will be used:
	\begin{itemize}
		\item $\o\doteq(-L, L)^2$ and $\O_\e=\o\X (-r,r)$ with $L>0$;
		\item  $\e, r\in \R^+$ are small parameter such that $r=\kappa\e$ with $\kappa$ being a strictly positive fixed real constant;
		\item $\Yc:=Y\times (-\kappa, \kappa )$ be the reference cell of the structure, where $Y:=(0,1)^2$;
		\item $\o_\e=(-\e,\e)^2$;
		\item $x=(x',x_3)=(x_1,x_2,x_3)\in \R^3$, $\ds \partial_i\doteq\frac{\partial}{\partial x_i}$ denote the partial derivative w.r. to $x_i$, $i\in \{1,2,3\}$;
		\item {\bf id} denote the identity map from $\R^3$ to $\R^3$, {\bf id}$_{2}$ denotes the identity map from $\R^2$ to $\R^3$ and $\GI_3$ denote the identity matrix of order $3$;
		\item $\GM_3$ is the space of $3\times 3$  real matrices and $\S_3$ is the space space of real $3\X3$ symmetric matrices;
		\item the mapping $v:\O_\e\to\R^3$ denotes the deformation arising in response to forces and pre-strain, it is related to the displacement $u$ by $u=v-\text{\bf id}$. The gradient of deformation and displacement is denoted by $\nabla v$ and $\nabla u$ respectively, with $\nabla v(x), \nabla u(x)\in \GM_3$ for every $x\in \O_\e$;
		\item  for a.e. $x'\in\R^2$, 
					$$ x'=\left(2\e\left[{x'\over 2\e}\right]+\e\left\{{x'\over2\e}\right\}\right),\quad \text{where}\;\; \left[{x'\over 2\e}\right]\in\Z^2,\;\;\left\{{x'\over2\e}\right\}\in Y;$$
		\item  $|\cdot|_F$ denote the Frobenius norm on $\R^{3\X3}$;
		\item  for every displacement $u\in H^1(\O_\e)^3$, the linearized strain tensor is given by
		$$e(u)={1\over 2}\big(\nabla u+(\nabla u)^T\big)\quad \text{and}\quad e_{ij}(u)={1\over 2}\left({\partial u_i\over \partial x_j}+{\partial u_j\over \partial x_i}\right)$$
		where $i,j\in \{1,2,3\}$;
		\item for every deformation $v\in H^1(\O_\e)^3$ the Green St. Venant's strain tensor is given by
		$${1\over 2}\left((\nabla v)^T(\nabla v)-\GI_3\right)={1\over 2}\left((\nabla u)^T+\nabla u+(\nabla u)^T(\nabla u)\right),$$
		where $u= v-\text{\bf id}$ is corresponding displacement;
		\item for every $F\in \GM_3$, denote by 
		$$ Sym(F)={1\over 2}\left(F+F^T\right)\quad\text{and}\quad {\bf E}(F)={1\over 2}\left(F^TF-\GI_3\right);$$
		\item $(\alpha,\beta)\in \{1,2\}^2$ and $(i,j)\in \{1,2,3\}^3$ (if not specified).
	\end{itemize}
	In this paper, the Einstein convention of summation over repeated indices is employed, and a generic constant denoted as $C$ is used, which is independent of $\e$. These represent some of the general notations utilized in this paper. Notations that are not explicitly defined here can be found in the main content of the paper.

	\section{Description of the structure and natural assumption} \label{sec03}
In this section, the structure is defined as an elastic body that occupies a slender three-dimensional domain in its undeformed configuration. This domain is denoted by $\Omega_\varepsilon$, which is given by $\Omega_\varepsilon = \omega \times (-r,r)$, with $\omega = (-L,L)$ and $L>0$.

It is assumed that the elastic body consists of a composite material distributed as a periodic perforated plate with holes filled with a relatively softer heterogeneous material. The composite plate is fixed in a cylindrical part.

The section concludes by presenting the sets of admissible deformations and displacements.
	
	\begin{figure}[ht]
		\centering	
		\subfigure[$\kappa={1\over 8}$]{\begin{tikzpicture}[scale=0.5]
				
				\begin{scope}
					\clip (0,-0.50)--(0,10)--(10.25,10)--(10.25,-0.50)--cycle;
					
					\foreach \x in 
					{0.25, 1.25, 2.25, 3.25, 4.25, 5.25, 6.25, 7.25, 8.25, 9.25, 10.25, 11.25, 12.25, 13.25}
					{\fill[red!70] (\x,0) rectangle (\x+0.75, 0.75);}
					
					\foreach \x in
					{0, 1, 2, 3, 4, 5, 6, 7, 8, 9, 10, 11, 12, 13}
					{\fill[deepskyblue] (\x, -0.25) rectangle (\x+0.25, 12);}
					
					\foreach \y in
					{ -0.25, 0.75, 1.75, 2.75, 3.75,4.75,5.75,6.75,7.75,8.75,9.75,10.75,11.75}
					{ \fill[deepskyblue] (0,\y) rectangle (12.25,\y+0.25);}
					
					\foreach \x in {0.25,1.25,2.25,3.25,4.25,5.25,6.25,7.25,8.25,9.25,10.25,11.25,12.25,13.25,14.25,15.25,16.25,17.25,18.25,19.25,20.25}
					{
						\fill[red!70] (\x,1.00) rectangle (\x+0.75,1.75);
						\fill[red!70] (\x,2.00) rectangle (\x+0.75,2.75);
						\fill[red!70] (\x,3.00) rectangle (\x+0.75,3.75);
						\fill[red!70] (\x,4.00) rectangle (\x+0.75,4.75);
						\fill[red!70] (\x,5.00) rectangle (\x+0.75,5.75);
						\fill[red!70] (\x,6.00) rectangle (\x+0.75,6.75);
						\fill[red!70] (\x,7.00) rectangle (\x+0.75,7.75);
						\fill[red!70] (\x,8.00) rectangle (\x+0.75,8.75);
						\fill[red!70] (\x,9.00) rectangle (\x+0.75,9.75);
						\fill[red!70] (\x,10.00) rectangle (\x+0.75,10.75);
						\fill[red!70] (\x,11.00) rectangle (\x+0.75,11.75);
						\fill[red!70] (\x,12.00) rectangle (\x+0.75,12.75);
						\fill[red!70] (\x,13.00) rectangle (\x+0.75,13.75);
						\fill[red!70] (\x,14.00) rectangle (\x+0.75,14.75);
						\fill[black!50, semitransparent ] (\x,15.00) rectangle (\x+0.75,15.75);
						\fill[black!50, semitransparent ] (\x,16.00) rectangle (\x+0.75,16.75);
						\fill[black!50, semitransparent ] (\x,17.00) rectangle (\x+0.75,17.75);
						\fill[black!50, semitransparent ] (\x,18.00) rectangle (\x+0.75,18.75);
						\fill[black!50, semitransparent ] (\x,19.00) rectangle (\x+0.75,19.75);
						\fill[black!50, semitransparent] (\x,20.00) rectangle (\x+0.75,20.75);
					}
					
					\draw[black] (0,0.75)--(21, 0.75);

					\draw[black] (0,0)--(11,0);
					\draw[black] (0,-0.25)--(0,10);
					\draw[black] (0,-0.25)--(10.25,-0.25);
					\foreach \y in 
					{1,2,3,4,5,6,7,8,9,10,11,12,13,14,15,16,17,18,19,20,21}
					{\draw[black] (0, \y)--(21,\y);}
					
					\foreach \y in
					{ 1.75,2.75,3.75,4.75,5.75,6.75,7.75,8.75,9.75,10.75,11.75,12.75,13.75,14.75,15.75,16.75,17.75,18.75,19.75,20.75,21.75}
					{ \draw[black] (0, \y)--(21, \y);}
					
					\draw[black] (0.25, -0.25)--(0.25, 21);
					\draw[black] (1.25,0)--(1.25, 21);
					\draw[black] (10.50, 0)--(10.50, 21);
					
					\foreach \x in 
					{1,2,3,4,5,6,7,8,9,10,11,12,13,14,15,16,17,18,19,20,21}
					{\draw[black] (\x, -0.25)--(\x,21);}

					\foreach \x in
					{ 1.25,2.25,3.25,4.25,5.25,6.25,7.25,8.25,9.25,10.25,11.25,12.25,13.25,14.25,15.25,16.25,17.25,18.25,19.25,20.25,21.25}
					{\draw[black] (\x, -0.25)--(\x, 21);}
					
					
				\end{scope}
				\draw[black, ultra thick] (5,5) circle (2.5);
		\end{tikzpicture} }\hspace{0.2cm}
	\subfigure[XY-axis]{\begin{tikzpicture}
			\draw[->] (5,0) -- (5,1) node[midway, left] {$\Ge_2$};
			
			\draw[->] (5,0) -- (6,0) node[midway, below] {$\Ge_1$};
		\end{tikzpicture}
	
	} \hspace{0.2cm}
		\subfigure[$\kappa={3\over 8}$]{\begin{tikzpicture}[scale=0.445]
				\foreach \x in 
				{0,0.75,1,1.75,2,2.75,3,3.75,4,4.75,5,5.75,6,6.75,7,7.75,8,8.75,9,9.75,10,10.75,11,11.75}
				{\draw[black] (\x, -0.75)--(\x, 11);}
				\foreach \y in 
				{-0.75, 0, 0.25,1,1.25,2, 2.25, 3, 3.25, 4, 4.25, 5, 5.25,6,6.25, 7, 7.25, 8,8.25,9,9.25,10,10.25,11}
				{\draw[black] (0,\y)--(11.75,\y);}
				\foreach \x in {0.75,1.75,2.75,3.75,4.75,5.75,6.75,7.75,8.75,9.75,10.75}
				{
					\fill[red!70] (\x, 0) rectangle (\x+0.25,0.25);
					\fill[red!70] (\x, 1) rectangle (\x+0.25,1.25);
					\fill[red!70] (\x,2.00) rectangle (\x+0.25,2.25);
					\fill[red!70] (\x,3.00) rectangle (\x+0.25,3.25);
					\fill[red!70] (\x,4.00) rectangle (\x+0.25,4.25);
					\fill[red!70] (\x,5.00) rectangle (\x+0.25,5.25);
					\fill[red!70] (\x,6.00) rectangle (\x+0.25,6.25);
					\fill[red!70] (\x,7.00) rectangle (\x+0.25,7.25);
					\fill[red!70] (\x,8.00) rectangle (\x+0.25,8.25);
					\fill[red!70] (\x,9.00) rectangle (\x+0.25,9.25);
					\fill[red!70] (\x,10.00) rectangle (\x+0.25,10.25);
					
				}
				\foreach \x in {0,1,2,3,4,5,6,7,8,9,10,11}
				{
					\fill[deepskyblue, semitransparent] (\x, -0.75) rectangle (\x+0.75, 11);
				}
				\foreach \y in {-0.75, 0.25, 1.25, 2.25,3.25,4.25,5.25,6.25,7.25,8.25,9.25,10.25}
				{
					\fill[deepskyblue, semitransparent] (0,\y) rectangle (11.75,\y+0.75);
				} 
				\draw[black, ultra thick] (5.875,5.125) circle (3);
			\end{tikzpicture}
		}
		\caption{
			Domain $\O_\e$ with circular clamped condition, the blue part and red part describes $\O_\e^B$ and $\O_\e^M$ respectively.}\label{Fig01}
	\end{figure}
	

	\subsection{Structure of the stiff connected part}
	The structure is made of yarns orthogonal to each other. Then, the straight reference beams in the two directions are defined by 
$$	\begin{aligned}
		&P^{(1)}_\e\doteq \bigl\{x\in \R^3\;|\; x_1\in (-L,L),\enskip (x_2,x_3)\in  (-r,r)^2\big\},\\
		&P^{(2)}_\e\doteq  \bigl\{x\in \R^3\;|\; x_2\in (-L,L),\enskip (x_1,x_3)\in  (-r,r)^2\big\}.
	\end{aligned}$$
	
	Let us denote 
	$$ P^{(1,q)}_\e = q\e {\bf e_2} + P^{(1)}_\e \;\; \text{and} \;\;  P^{(2,p)}_\e = p\e {\bf e_1} + P^{(2)}_\e. $$
	Here, the middle line of the straight beams $P^{(1,q)}_\e$ and $P^{(2,p)}_\e$ passes through the points $(0,q)$ and $(p,0)$, respectively.\\
	The periodic structural frame $\O^{B}_\e$ is given by 
	$$\O^{B}_\e\doteq  \O_\e\cap\Big( \bigcup_{q=-N_\e}^{N_\e} P^{(1,q)}_\e \cup  \bigcup_{p=-N_\e}^{N_\e} P^{(2,p)}_\e\Big).$$
	 The part  of $\O_\e$ filled by soft matrix material  is denoted by $\O_\e^M$  (See Figure \ref{Fig01}) and is given by
	
	 	$$\O^{M}_\e = \O_\e \backslash \overline{\O^{B}_\e }. $$
	The composite plate $\O_\e$ is the periodic structural frame $\O^B_\e$ with the holes filled with soft matrix.

Here, it is important to note that $\kappa$ belongs to the interval $(0, \frac{1}{2})$. This is because there exist soft matrix between the beams, meaning that 
$$\e - 2\kappa \e > 0\quad\text{with}\quad \kappa > 0.$$
	
	\subsection{The boundary condition}
	It is assumed that the cylindrical part $\Gamma_r$ of the composite plate is fixed, its intersection with the mid-surface is a Lipschitz continuous boundary $\gamma$, i.e.
$$
		\Gamma_r=\gamma\X(-r,r)\quad\text{such that}\;\;\gamma\subset\overline{\o}.
$$
	Then, the set of admissible deformations are denoted by
$$
		\GV_{\e}\doteq\Big\{v\in H^1(\O_{\e})^3\;|\; v=\text{\bf id}\quad \hbox{a.e. in } \Gamma_r\Big\}.
$$
	\begin{remark}\label{Re1}
	The non-linear elasticity problems stated in Subsections \ref{Ssec71} and \ref{Ssec81} are formulated in terms of deformations. Therefore, it is crucial to begin with a decomposition of deformation and derive Korn-type inequalities for $v$.
	\end{remark}
	
	\section{Estimates in the periodic perforated plate $\O_\e^B$}\label{Sec04}
   In this section, an extension result for deformation is first introduced. Subsequently, the results on plate deformations are reviewed, and our assumptions on the boundary conditions are simplified. Following this, the plate deformation is applied to the extended periodic structural frame, and a Korn-type inequality for it is derived. Finally, an estimate for the strain tensor $e(u)$ corresponding to the displacement $u=v-\text{\bf id}$ in the region $\O_\e^B$ is provided.
	
	\subsection{Extension result}
	An extension result from \cite{GOW2} is recalled. This extension depends on the fact that beams intersect each other.
		\begin{proposition}\label{ExDef1}
			For every deformation $v$ in $H^1(\O^B_\e)^3$, there exists a deformation $\Gv\in H^1(\O_\e)^3$ satisfying
			\begin{equation}\label{EQ51-0}
				\begin{aligned}
					&\Gv=v,\quad \text{a.e. on}\quad \O_\e^B,\\
					&\|dist(\nabla\Gv,SO(3))\|_{L^2(\O_\e)}\leq C\|dist(\nabla v,SO(3))\|_{L^2(\O_\e^B)}.
				\end{aligned}
			\end{equation}
			The constant does not depend on $\e$.\\
			Moreover, if $v\in H^1(\O_\e^B)^3$ such that $v=0$ a.e on $\Gamma_r$ then
			$$\Gv=\text{\bf id}\qquad \hbox{a.e. in } \; \gamma\X (-r,r),$$ where $\gamma\subset \overline{\o}$.
		\end{proposition}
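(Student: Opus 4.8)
The plan is to build the extension $\Gv$ cell by cell along the two families of beams, exploiting the key geometric feature that the beams $P^{(1,q)}_\e$ and $P^{(2,p)}_\e$ cross each other, so that the stiff frame $\O_\e^B$ contains, in every periodicity cell, a connected cross-shaped piece. In a reference cell $\Yc$ this cross is a fixed Lipschitz domain independent of $\e$. First I would rescale the cell $q\e\Ge_2 + p\e\Ge_1 + (0,\e)^2\X(-r,r)$ to the fixed reference cell by the dilation $x\mapsto x/\e$ (recall $r=\kappa\e$), so that the problem becomes: extend an $H^1$ map from the fixed cross-domain $\Yc^B\subset\Yc$ to all of $\Yc$, with control of $\mathrm{dist}(\nabla\cdot,SO(3))$. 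On the fixed domain this is a classical Sobolev extension statement, but the subtlety is that we need the estimate on the \emph{distance to $SO(3)$}, not just on $\|\nabla\cdot\|_{L^2}$, and this quantity is not additive; here one uses the geometric rigidity estimate of Friesecke--James--M\"uller to replace $\nabla v$ on the cross by a single rotation $R$ plus an $L^2$-controlled remainder, extends $v - Rx$ (an honest $H^1$ map with small gradient) by the linear Sobolev extension operator on the fixed domain, and then adds back $Rx$. This yields on each cell $\|\mathrm{dist}(\nabla\Gv,SO(3))\|_{L^2(\text{cell})}\le C\|\mathrm{dist}(\nabla v,SO(3))\|_{L^2(\text{cell}\cap\O_\e^B)}$ with $C$ independent of the cell and of $\e$ (scale invariance of the distance-to-$SO(3)$ functional under dilations).

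Next I would patch the cell-wise extensions together. Because the beams are shared between neighbouring cells, $v$ is already defined on the overlap regions (the beam segments), so the local extensions agree there with $v$ and hence with each other on those overlaps; the construction can be organized (e.g. extending first into the interior square of each cell, the complement of the cross, whose boundary lies inside $\O_\e^B$) so that no incompatibility arises and $\Gv\in H^1(\O_\e)^3$ globally. Summing the squared local estimates over the $O(N_\e^2)$ cells and using that each point of $\O_\e^B$ lies in boundedly many cells gives the global bound \eqref{EQ51-0}${}_2$. The property $\Gv=v$ on $\O_\e^B$ holds by construction.

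For the last assertion, suppose $v=0$ a.e. on $\Gamma_r=\gamma\X(-r,r)$ (here $v$ should be read as $v=\text{\bf id}$, i.e. the displacement vanishes; I take the stated convention). The clamping region $\gamma$ meets the frame in whole beam segments, and the extension into each cell touched by $\gamma$ can be performed by an extension operator that preserves the boundary values on the part of the cell boundary lying in $\O_\e^B\cap\Gamma_r$ — concretely, since on those cells $v=\text{\bf id}$ on a flat piece of the boundary, one extends $v-\text{\bf id}$ by zero-preserving reflection/extension, so $\Gv=\text{\bf id}$ on $\gamma\X(-r,r)$. I expect the main obstacle to be precisely this compatibility bookkeeping: making the cell-wise Sobolev extension operators on the fixed reference geometry trace-compatible across the shared beams and with the clamped boundary, while keeping the single scale-invariant constant from geometric rigidity; once the reference-cell extension operator with these two properties is fixed, the rescaling and summation are routine. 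Since this is exactly the statement proved in \cite{GOW2}, I would in fact just invoke that construction and check that the soft matrix $\O_\e^M$ plays no role (the extension only uses connectivity of $\O_\e^B$ through the beam crossings).
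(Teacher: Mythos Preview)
Your proposal is correct and, as you yourself note at the end, coincides with what the paper does: the paper gives no proof of this proposition at all but simply recalls it from \cite{GOW2}. The sketch you provide (rescale to a fixed reference cell, apply Friesecke--James--M\"uller geometric rigidity on the connected cross $Y^B_\kappa$ to replace $\nabla v$ by a single rotation $R$, extend $v-Rx$ by a linear Sobolev extension operator, add back $Rx$, and sum over cells) is exactly the construction carried out in that reference, so there is nothing to add on the mathematical side.

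Two minor remarks. First, your parenthetical observation that ``$v=0$ on $\Gamma_r$'' should really read ``$v=\text{\bf id}$ on $\Gamma_r$'' is well taken; the admissible set $\GV_\e$ in the paper is defined by $v=\text{\bf id}$ on $\Gamma_r$, so the hypothesis in the last sentence of the proposition is indeed a slip of notation. Second, the patching you worry about is in fact simpler than you suggest: since each hole $\O_\e^{pq}$ has its lateral boundary $\partial\o_\e^{pq}\times(-r,r)$ entirely contained in $\O_\e^B$, the extension into $\O_\e^{pq}$ can be done independently cell by cell with no compatibility constraint between neighbouring holes; the only trace matching needed is with $v$ itself on that lateral boundary, which is automatic for a Sobolev extension operator.
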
	
		
		Henceforth, we use the extended deformation $\Gv\in H^1(\O_\e)^3$ (resp $\GV_\e$), which is a deformation of the whole  plate $\O_\e$.
		\subsection{Decomposition of the plate deformations} \label{Elementary}
		Let $v \in \GV^B_\e$ be a deformation, which is extended to the whole plate $\O_\e$ using Proposition \ref{ExDef1}. The extension of $v$ is denoted $\Gv$. From \cite[Theorem 3.1]{Shell1}, we know that if  $\|dist(\nabla\Gv,SO(3))\|_{L^2(\O_\e)}\leq C\e^{3/2}$ (where $C$ is a constant which only depends on $L$ and $\kappa$) then
		\begin{equation}\label{EQ51}
			\Gv = V_{e} + \overline{v} \;\; \text{ a.e. in } \;\; \O_\e.
		\end{equation}
		The quantity   $V_{e} \in H^1{(\O_\e)}^3$ is called elementary plate deformation and is defined by 
		\begin{equation}\label{Eqelem}
			V_e(x)= \mathcal{V}(x_1,x_2) +x_3{\GR}(x_1,x_2) \Ge _{3}  \quad \hbox{for a.e. } x\in \O_\e
		\end{equation}  with $\Vc\in H^1(\o)^3$ and $\GR\in H^1(\o, SO(3))^3$. $\overline{v} \in H^1{(\O_\e)}^3$ is called warping (or residual term).
		The estimates of the terms of this decomposition are given by  \cite[Theorem 3.1]{Shell1} \footnotemark
		\footnotetext{By convention, in estimates, it is commonly denoted as $L^2(\Omega_\varepsilon)$ instead of $L^2(\Omega_\varepsilon)^3$ or $L^2(\Omega_\varepsilon)^{3\times 3}$. The complete space notation is used when referring to weak or strong convergences.}
		\begin{equation}\label{E0}
			\begin{aligned}
				&\|\overline{v}\|_{L^2(\O_\e)}+\e\|\nabla \overline{v}\|_{L^2(\O_\e)}+ \e\|\nabla \Gv-{\GR}\|_{L^2(\O_\e)}\le C\e\|dist(\nabla \Gv,SO(3))\|_{L^2(\O_\e)},\\
				&\big\| \partial_\alpha\GR\big\|_{L^2(\o)}\le {C\over \e^{3/2}}\|dist(\nabla \Gv,SO(3))\|_{L^2(\O_\e)},\\
				&\big\|\partial_\alpha\mathcal{V}-{\GR}\Ge_\alpha\big\|_{L^2(\o)}\leq {C \over \e^{1/2}} \|dist(\nabla \Gv,SO(3))\|_{L^2(\O_\e)}.
			\end{aligned}
		\end{equation}  The constant does not depend on $\e$.\\
	Now, from the above decomposition we have for $\Gv\in\GV_\e$ 
	$$\Vc=\text{\bf id}_2,\quad \GR=\GI_3\quad \text{a.e on}\;\;\gamma\quad\text{and}\quad\overline{v}=0,\quad\text{a.e. on}\quad\Gamma_r.$$
		\begin{lemma}\label{FinEst1}
			The terms $\GR$ and $\Vc$ of the decomposition \eqref{Eqelem} satisfy
			\begin{equation}\label{FinEst} 
				\begin{aligned}
					&\|\GR-\GI_3\|_{H^1(\o)}+\|\Vc-\text{\bf id}_2\|_{H^1(\o)} \leq {C\over \e^{3/2}} \|dist(\nabla \Gv, SO(3))\|_{L^2(\O_\e)}.
				\end{aligned}
			\end{equation} The constants do not depend on $\e$.
		\end{lemma}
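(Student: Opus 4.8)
The plan is to derive the two bounds from the three estimates in \eqref{E0} together with a Poincar\'e-type (Korn for curves) argument that exploits the boundary conditions $\GR=\GI_3$ and $\Vc=\text{\bf id}_2$ on $\gamma$. First I would treat $\GR$. The second line of \eqref{E0} already controls $\|\partial_\alpha\GR\|_{L^2(\o)}$ by $C\e^{-3/2}\|dist(\nabla\Gv,SO(3))\|_{L^2(\O_\e)}$, so it remains to bound $\|\GR-\GI_3\|_{L^2(\o)}$ by the same quantity. Since $\GR-\GI_3$ vanishes on the Lipschitz curve $\gamma\subset\overline{\o}$ (which has positive capacity), the Poincar\'e inequality on $\o$ with trace-zero data on $\gamma$ gives $\|\GR-\GI_3\|_{L^2(\o)}\le C(\gamma)\|\nabla'(\GR-\GI_3)\|_{L^2(\o)} = C(\gamma)\|\partial_\alpha\GR\|_{L^2(\o)}$, and combining yields the $\GR$ part of \eqref{FinEst}. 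The constant depends only on $\o$ and $\gamma$, hence on $L$ (and implicitly $\kappa$ through $\gamma$), not on $\e$.

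Next I would treat $\Vc$. Write $\nabla'\Vc = \big(\partial_1\Vc\,|\,\partial_2\Vc\big)$ and note $\partial_\alpha\Vc = \GR\Ge_\alpha + (\partial_\alpha\Vc - \GR\Ge_\alpha)$. The first term satisfies $\|\GR\Ge_\alpha - \Ge_\alpha\|_{L^2(\o)} = \|(\GR-\GI_3)\Ge_\alpha\|_{L^2(\o)}\le \|\GR-\GI_3\|_{L^2(\o)}$, already bounded by the previous step; the second term is controlled by the third line of \eqref{E0}, which is $O(\e^{-1/2})$ and hence a fortiori $O(\e^{-3/2})$. Therefore $\|\nabla'(\Vc-\text{\bf id}_2)\|_{L^2(\o)}\le C\e^{-3/2}\|dist(\nabla\Gv,SO(3))\|_{L^2(\O_\e)}$. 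Since $\Vc-\text{\bf id}_2$ vanishes on $\gamma$, another application of Poincar\'e on $\o$ with zero trace on $\gamma$ upgrades this to a full $H^1(\o)$ bound on $\Vc-\text{\bf id}_2$, giving the second part of \eqref{FinEst}. Adding the two estimates finishes the proof.

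The only genuinely non-routine point is the Poincar\'e inequality with trace-zero condition on the curve $\gamma$ rather than on a portion of $\partial\o$: one needs $\gamma$ to be such that a function in $H^1(\o)$ vanishing on $\gamma$ can be controlled by its gradient. Since $\Gamma_r=\gamma\times(-r,r)$ is assumed to be a genuine clamped region with $\gamma$ a Lipschitz curve of positive length, $\o\setminus\gamma$ either disconnects $\o$ or not, but in either case $\{w\in H^1(\o): w|_\gamma=0\}$ admits a Poincar\'e inequality (standard, e.g. via a contradiction/compactness argument using Rellich and unique continuation of the trace), with a constant depending only on $\o$ and $\gamma$. I would state this as a standard fact, or cite the analogous statement used when establishing the Korn inequalities in Section \ref{Sec04}. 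Everything else is bookkeeping: collecting powers of $\e$, using $|\GR\Ge_\alpha-\Ge_\alpha|\le|\GR-\GI_3|_F$ pointwise, and absorbing the weaker $\e^{-1/2}$ term into the dominant $\e^{-3/2}$ term.
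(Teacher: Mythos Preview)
Your proposal is correct and follows essentially the same route as the paper's proof: bound $\|\GR-\GI_3\|_{H^1(\o)}$ via \eqref{E0}$_2$ plus Poincar\'e with the boundary condition $\GR=\GI_3$ on $\gamma$, then bound $\partial_\alpha\Vc-\Ge_\alpha$ by splitting it as $(\GR-\GI_3)\Ge_\alpha+(\partial_\alpha\Vc-\GR\Ge_\alpha)$ and using the previous step together with \eqref{E0}$_3$, and finally apply Poincar\'e again using $\Vc=\text{\bf id}_2$ on $\gamma$. The paper is simply terser, absorbing your decomposition of $\partial_\alpha\Vc-\Ge_\alpha$ into a single line, and it does not pause to justify the Poincar\'e inequality with trace-zero data on $\gamma$ (treating it as standard), whereas you flag it explicitly.
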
	
		\begin{proof}
			Estimate  \eqref{E0}$_2$, the boundary conditions and Poincar\'e inequality give
			$$\begin{aligned}
			\|\GR-\GI_3\|_{H^1(\o)} \leq {C\over \e^{3/2}} \|dist(\nabla \Gv, SO(3))\|_{L^2(\O_\e)},\;\;
               \|\partial_\alpha\Vc-\Ge_\alpha\|_{L^2(\o)}\leq {C\over \e^{3/2}}\|dist(\nabla \Gv,SO(3))\|_{L^2(\O_\e)}
			\end{aligned}.$$
			From the above estimate and  the boundary conditions on $\Vc$ we obtain
			$$ \|\Vc-\text{\bf id}_2\|_{L^2(\o)}\leq  {C\over \e^{3/2}}\|dist(\nabla \Gv,SO(3))\|_{L^2(\O_\e)}.$$
			Hence from the above estimates we get \eqref{FinEst}$_2$.
		\end{proof}	
		From the above inequalities, we attain the non-linear Korn-type inequality for the plate $\O_\e$.
		\begin{lemma}\label{KornB1}
			One has
			\begin{equation}\label{EQ448} 
				\|\Gv-\text{\bf id}\|_{H^1(\O_\e)}\leq{C\over\e}\|dist(\nabla \Gv,SO(3))\|_{L^2(\O_\e)}.
			\end{equation} The constant does not depend on $\e$.
		\end{lemma}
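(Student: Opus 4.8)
The plan is to combine the decomposition $\Gv = V_e + \overline{v}$ from \eqref{EQ51} with the estimates \eqref{E0} and the bound on the macroscopic fields $\GR, \Vc$ from Lemma \ref{FinEst1}, and to control $\Gv - \text{\bf id}$ piece by piece. Since $\Gv - \text{\bf id} = (V_e - \text{\bf id}) + \overline{v}$, and by \eqref{E0}$_1$ the warping term already satisfies $\|\overline{v}\|_{L^2(\O_\e)} + \e\|\nabla\overline{v}\|_{L^2(\O_\e)} \le C\e\|dist(\nabla\Gv, SO(3))\|_{L^2(\O_\e)}$, hence $\|\overline{v}\|_{H^1(\O_\e)} \le (C/\e)\|dist(\nabla\Gv,SO(3))\|_{L^2(\O_\e)}$, the crux is to estimate $\|V_e - \text{\bf id}\|_{H^1(\O_\e)}$.

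For the elementary part, recall $V_e(x) = \Vc(x_1,x_2) + x_3\GR(x_1,x_2)\Ge_3$ and $\text{\bf id}(x) = \text{\bf id}_2(x_1,x_2) + x_3\Ge_3$, so that $V_e - \text{\bf id} = (\Vc - \text{\bf id}_2) + x_3(\GR - \GI_3)\Ge_3$. I would estimate the $L^2(\O_\e)$ norm of this by integrating over the thin direction: since $|x_3| \le r = \kappa\e$, the $x_3$-slab has measure controlled, and $\|V_e - \text{\bf id}\|_{L^2(\O_\e)} \le C\sqrt{\e}\big(\|\Vc - \text{\bf id}_2\|_{L^2(\o)} + \kappa\e\|\GR - \GI_3\|_{L^2(\o)}\big)$; wait — more carefully, $\|f\|_{L^2(\O_\e)}^2 = \int_\o\int_{-r}^r |f|^2\,dx_3\,dx'$, so a function independent of $x_3$ picks up a factor $2r = 2\kappa\e$, giving $\|\Vc - \text{\bf id}_2\|_{L^2(\O_\e)} = \sqrt{2\kappa\e}\,\|\Vc - \text{\bf id}_2\|_{L^2(\o)}$ and similarly $\|x_3(\GR-\GI_3)\Ge_3\|_{L^2(\O_\e)} \le C\e^{3/2}\|\GR - \GI_3\|_{L^2(\o)}$. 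For the gradient, $\nabla(V_e - \text{\bf id})$ involves $\partial_\alpha\Vc - \Ge_\alpha$, $\partial_\alpha(x_3(\GR-\GI_3)\Ge_3)$ (which is $x_3\partial_\alpha\GR\,\Ge_3$, bounded in $L^2(\O_\e)$ by $C\e^{3/2}\|\partial_\alpha\GR\|_{L^2(\o)}$), and the $x_3$-derivative which produces $(\GR - \GI_3)\Ge_3$, bounded by $C\sqrt{\e}\|\GR-\GI_3\|_{L^2(\o)}$. Plugging in Lemma \ref{FinEst1}'s bound $\|\GR - \GI_3\|_{H^1(\o)} + \|\Vc - \text{\bf id}_2\|_{H^1(\o)} \le (C/\e^{3/2})\|dist(\nabla\Gv, SO(3))\|_{L^2(\O_\e)}$, every term is seen to be $\le (C/\e)\|dist(\nabla\Gv,SO(3))\|_{L^2(\O_\e)}$, the worst factor coming from the $\sqrt{\e}\cdot\e^{-3/2} = \e^{-1}$ combination in the $L^2$ bound on $\Vc - \text{\bf id}_2$ and in the $x_3$-derivative term.

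Adding the two contributions and using the triangle inequality then yields \eqref{EQ448}. I do not anticipate a genuine obstacle here — it is a bookkeeping argument — but the point requiring care is tracking the powers of $\e$ arising from integration in the thin variable versus the $\e^{-3/2}$ in Lemma \ref{FinEst1}, and checking that no term is worse than $\e^{-1}$; the term $\|x_3\partial_\alpha\GR\Ge_3\|_{L^2(\O_\e)} \le C\e^{3/2}\cdot\e^{-3/2}\|dist(\cdot)\|_{L^2(\O_\e)}$ is actually of order $1$, well within the claimed bound, and the dominant $\e^{-1}$ behaviour is genuinely attained only by the $\Vc$-in-plane and warping contributions.
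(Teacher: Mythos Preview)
Your proposal is correct and is exactly the argument the paper has in mind: the lemma is stated as an immediate consequence of the decomposition \eqref{EQ51}, the warping bounds \eqref{E0}$_1$, and Lemma~\ref{FinEst1}, and your term-by-term bookkeeping with the $\sqrt{\e}$ factors from the thin direction is precisely how one makes this explicit. One small sharpening: the warping actually satisfies $\|\overline v\|_{H^1(\O_\e)}\le C\|dist(\nabla\Gv,SO(3))\|_{L^2(\O_\e)}$ (not merely $C/\e$), so the genuine $\e^{-1}$ contributions come only from $\Vc-\text{\bf id}_2$ and the $x_3$-derivative term $(\GR-\GI_3)\Ge_3$, but this does not affect the validity of your bound.
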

		We also have
		\begin{lemma}
			The linearized strain tensor of an admissible deformation  satisfies
			\begin{equation} \label{Eq II.3.10}
				\|e(\Gu)\|_{L^2(\O_\e)} \leq  C  \|dist(\nabla \Gv, SO(3))\|_{L^2{(\O_\e)}} + {C \over \e^{5/2}} \|dist(\nabla \Gv, SO(3))\|^2_{L^2{(\O_\e)}}.
			\end{equation} 
			where $\Gu=\Gv-\text{\bf id}$. The constant does not depend on $\e$.
		\end{lemma}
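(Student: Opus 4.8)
The plan is to bypass any direct estimate of the Green--St.\,Venant tensor ${\bf E}(\nabla\Gv)$ and instead exploit the plate decomposition \eqref{EQ51}--\eqref{Eqelem} of the extended deformation. Write $\Gu=\Gv-\text{\bf id}$, so that $e(\Gu)=Sym(\nabla\Gv-\GI_3)$, and split
$$ e(\Gu)=Sym\big(\nabla\Gv-\GR\big)+Sym\big(\GR-\GI_3\big). $$
The first term is harmless: by estimate \eqref{E0}$_1$,
$$ \|Sym(\nabla\Gv-\GR)\|_{L^2(\O_\e)}\le\|\nabla\Gv-\GR\|_{L^2(\O_\e)}\le C\|dist(\nabla\Gv,SO(3))\|_{L^2(\O_\e)}, $$
which already produces the first term of \eqref{Eq II.3.10}. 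Thus everything reduces to controlling $Sym(\GR-\GI_3)$.

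The key point — and the only delicate one — is that $Sym(\GR-\GI_3)$ is \emph{quadratically} small because $\GR$ takes values in $SO(3)$. Indeed, $\GR^T\GR=\GI_3$ yields the pointwise identity
$$ (\GR-\GI_3)^T(\GR-\GI_3)=2\GI_3-\GR-\GR^T=-2\,Sym(\GR-\GI_3), $$
so that $|Sym(\GR-\GI_3)|_F\le\frac{1}{2}|\GR-\GI_3|_F^2$ almost everywhere, and hence
$$ \|Sym(\GR-\GI_3)\|_{L^2(\O_\e)}\le\frac{1}{2}\,\|\GR-\GI_3\|^2_{L^4(\O_\e)}. $$
Since $\GR$ does not depend on $x_3$ and $\O_\e=\o\times(-r,r)$, one has $\|\GR-\GI_3\|_{L^4(\O_\e)}=(2r)^{1/4}\|\GR-\GI_3\|_{L^4(\o)}$; the planar Sobolev embedding $H^1(\o)\hookrightarrow L^4(\o)$ together with Lemma \ref{FinEst1} gives $\|\GR-\GI_3\|_{L^4(\o)}\le C\|\GR-\GI_3\|_{H^1(\o)}\le C\e^{-3/2}\|dist(\nabla\Gv,SO(3))\|_{L^2(\O_\e)}$. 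Squaring and using $r=\kappa\e$,
$$ \|Sym(\GR-\GI_3)\|_{L^2(\O_\e)}\le C\,r^{1/2}\e^{-3}\|dist(\nabla\Gv,SO(3))\|^2_{L^2(\O_\e)}\le\frac{C}{\e^{5/2}}\|dist(\nabla\Gv,SO(3))\|^2_{L^2(\O_\e)}, $$
and adding the two contributions yields \eqref{Eq II.3.10}.

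The main obstacle is exactly the observation just used: one must not bound $Sym(\GR-\GI_3)$ linearly in $\GR-\GI_3$ (that would cost $\e^{-3/2}$ and give only the wrong, too-weak exponent), but must use the rotation constraint to see that it has size $|\GR-\GI_3|^2$; the $H^1(\o)$ bound of Lemma \ref{FinEst1} then gets squared, and the factor $r^{1/2}=(\kappa\e)^{1/2}$ from integrating across the thickness brings the exponent down to exactly $-5/2$. Everything else — the splitting of $e(\Gu)$, the use of \eqref{E0}$_1$, and the two-dimensional Sobolev embedding — is routine, and, as in the rest of this section, the argument is understood to be carried out under the smallness condition $\|dist(\nabla\Gv,SO(3))\|_{L^2(\O_\e)}\le C\e^{3/2}$ that makes the decomposition \eqref{EQ51} available.
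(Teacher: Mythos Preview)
Your proof is correct and follows essentially the same approach as the paper: split $e(\Gu)$ via the rotation field $\GR$, bound $\nabla\Gv-\GR$ by \eqref{E0}, exploit $\GR\in SO(3)$ to see that $Sym(\GR-\GI_3)$ is quadratic in $\GR-\GI_3$, and close with the planar embedding $H^1(\o)\hookrightarrow L^4(\o)$ together with Lemma \ref{FinEst1}. The only cosmetic difference is the rotation identity used---you write $(\GR-\GI_3)^T(\GR-\GI_3)=-2\,Sym(\GR-\GI_3)$, whereas the paper writes the equivalent $\GR+\GR^T-2\GI_3=\GR^T(\GR-\GI_3)^2$---and you track the thickness factor as $(2r)^{1/2}$ while the paper writes it directly as $\e^{1/2}$.
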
	
		\begin{proof}
			First, observe that $$2e(\Gu)=\nabla \Gv +(\nabla \Gv)^T- 2\GI_3=(\nabla \Gv-\GR)+(\nabla \Gv-\GR)^T+(\GR+\GR^T-2\GI_3).$$
			Hence, using estimate \eqref{E0}$_3$, we get
			\begin{equation}\label{EQ414}
				\begin{aligned}
					\|\nabla \Gv +(\nabla \Gv)^T- 2\GI_3\|_{L^2(\O_\e)}  & \leq 2\|\nabla \Gv-\GR\|_{L^2(\O_\e)}+ C\e^{1/2}\|\GR+\GR^T-2\GI_3\|_{L^2(\o)}\\
					&\leq C\|dist(\nabla \Gv, SO(3))\|_{L^2(\O_\e)} +C\e^{1/2}\|(\GR-\GI_3)^2\|_{L^2(\o)},\\
					&\leq C\|dist(\nabla \Gv, SO(3))\|_{L^2(\O_\e)} +C\e^{1/2}\|\GR-\GI_3\|^2_{L^4(\o)},
				\end{aligned}
			\end{equation}
			since we have the equality $\GR+\GR^T-2\GI_3 = \GR^T(\GR-\GI_3)^2$ and $\|\GR\|_{L^\infty(\o)}=\sqrt{3}$.
			Now,  $H^1(\o)$ is continuously included in $L^4(\o)$, so 
			$$
			\|\GR-\GI_3\|_{L^4(\o)}\leq C\|\GR-\GI_3\|_{H^1(\o)} \leq{C\over \e^{3/2}} \|dist(\nabla \Gv, SO(3))\|_{L^2(\O_\e)}.
			$$
			Hence, from the above estimates we get the required estimate for the linearized strain tensor given by \eqref{Eq II.3.10}.
		\end{proof}			
		As a consequence of the estimates \eqref{EQ448} and \eqref{Eq II.3.10}, together with the extension result \ref{ExDef1}, we obtain the following result:
		\begin{lemma}\label{EFSTB}
			For the corresponding displacement $u=v-\text{\bf id}$, we have
$$
				\begin{aligned}
					&\|u\|_{H^1(\O^B_\e)}\leq{C\over\e}\|dist(\nabla v,SO(3))\|_{L^2(\O^B_\e)},\\
					&\|e(u)\|_{L^2(\O^B_\e)} \leq  C  \|dist(\nabla v, SO(3))\|_{L^2{(\O^B_\e)}} + {C \over \e^{5/2}} \|dist(\nabla v, SO(3))\|^2_{L^2{(\O^B_\e)}}.
				\end{aligned}
$$
			The constants do not depend on $\e$.	
		\end{lemma}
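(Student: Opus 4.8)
The plan is to transfer the two inequalities already established for the extended deformation $\Gv$ on the whole plate $\O_\e$ (namely the non-linear Korn inequality \eqref{EQ448} and the strain-tensor estimate \eqref{Eq II.3.10}) back to the original deformation $v$ on the perforated subdomain $\O^B_\e$. The key observation is that Proposition \ref{ExDef1} produces an extension $\Gv\in H^1(\O_\e)^3$ with $\Gv=v$ a.e.\ on $\O^B_\e$, and — crucially — controls the extended distance to $SO(3)$ by the distance on $\O^B_\e$ alone:
$$
\|dist(\nabla\Gv,SO(3))\|_{L^2(\O_\e)}\leq C\|dist(\nabla v,SO(3))\|_{L^2(\O^B_\e)}.
$$
So the right-hand sides of \eqref{EQ448} and \eqref{Eq II.3.10}, which are written in terms of $\|dist(\nabla\Gv,SO(3))\|_{L^2(\O_\e)}$, are themselves bounded (after absorbing constants) by the corresponding quantities over $\O^B_\e$.

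First I would invoke the hypothesis implicit in this part of the paper, that the minimizing/admissible deformations satisfy $\|dist(\nabla v,SO(3))\|_{L^2(\O^B_\e)}\leq C\e^{3/2}$ (equivalently $\|dist(\nabla\Gv,SO(3))\|_{L^2(\O_\e)}\leq C\e^{3/2}$), which is exactly the threshold needed to apply the decomposition \eqref{EQ51} and hence Lemmas \ref{FinEst1}, \ref{KornB1} and the strain estimate to $\Gv$. Next, since $\Gv=v$ on $\O^B_\e$, restriction gives $\|u\|_{H^1(\O^B_\e)}=\|\Gv-\text{\bf id}\|_{H^1(\O^B_\e)}\leq\|\Gv-\text{\bf id}\|_{H^1(\O_\e)}$ and likewise $\|e(u)\|_{L^2(\O^B_\e)}\leq\|e(\Gv-\text{\bf id})\|_{L^2(\O_\e)}$. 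Combining these with \eqref{EQ448} and \eqref{Eq II.3.10} and then substituting the extension bound \eqref{EQ51-0}$_2$ on the right-hand side yields both claimed inequalities, with a new constant still independent of $\e$ (note the square in the second term of \eqref{Eq II.3.10} is preserved since $\|dist(\nabla\Gv,SO(3))\|^2_{L^2(\O_\e)}\leq C^2\|dist(\nabla v,SO(3))\|^2_{L^2(\O^B_\e)}$).

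There is no real obstacle here — this is essentially a bookkeeping argument chaining together results already proved in the section. The only point requiring a word of care is making sure the smallness hypothesis $\|dist(\nabla v,SO(3))\|_{L^2(\O^B_\e)}\leq C\e^{3/2}$ is in force so that the plate decomposition \eqref{EQ51} applies to $\Gv$; under the von-K\'arm\'an energy scaling this holds for the relevant deformations, and it is what licenses Lemmas \ref{KornB1} and the preceding strain estimate. A short proof would therefore read: apply Proposition \ref{ExDef1} to get $\Gv$; note $\|dist(\nabla\Gv,SO(3))\|_{L^2(\O_\e)}\leq C\|dist(\nabla v,SO(3))\|_{L^2(\O^B_\e)}\leq C\e^{3/2}$; apply \eqref{EQ448} and \eqref{Eq II.3.10} to $\Gv$; restrict to $\O^B_\e$ using $\Gv=v$ there; and substitute \eqref{EQ51-0}$_2$ on the right-hand sides.
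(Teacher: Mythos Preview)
Your proposal is correct and matches the paper's approach exactly: the paper states Lemma~\ref{EFSTB} as a direct consequence of \eqref{EQ448}, \eqref{Eq II.3.10}, and the extension result Proposition~\ref{ExDef1}, which is precisely the chain of restrictions and substitutions you describe. Your remark about the implicit smallness hypothesis $\|dist(\nabla v,SO(3))\|_{L^2(\O^B_\e)}\leq C\e^{3/2}$ is a useful clarification that the paper leaves tacit.
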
	
		
		\section{Estimates in the region with soft matrix $\O_\e^M$}\label{Sec05}
     In this section, a Korn-type inequality and an estimate for the strain tensor $e(u)$ corresponding to the displacement $u$ for the part filled with soft material $\O_\e^M$ are derived. This is achieved by utilizing the fact that $\O_\e^M$ is a union of small plates $\O_\e^{pq}$.
	We start with the following set up
		\begin{equation}\label{Def2}
			\begin{aligned}
				&Y_\kappa=\big(\kappa,1-\kappa\big)^2,\qquad &&\wt{Y}_\kappa=Y\setminus Y_\kappa,\\
				& \o_\e^{pq}=\e\big(\xi_{pq}+ Y_\kappa\big),\quad &&\O_\e^{pq}=\o_\e^{pq}\X(-r,r),\\
				& \wt{\o}_\e^{pq}=\e\big(\xi_{pq}+ \wt{Y}_\kappa\big) ,\quad &&\wt{\O}_\e^{pq}=\wt{\o}_\e^{pq}\X(-r,r ),\\
				&\xi_{pq}=p\Ge_1+q\Ge_2,\quad  &&(p,q)\in\{-N_\e,\ldots,N_\e-1\}^2.
			\end{aligned}
		\end{equation}
		Observe that
		\begin{equation}\label{Def3}
			\O_\e^M= \bigcup_{p=-N_\e}^{N_\e-1}\bigcup_{q=-N_\e}^{N_\e-1} \O_\e^{pq} \quad\text{and}\quad \O_\e^B=\bigcup_{p=-N_\e}^{N_\e-1}\bigcup_{q=-N_\e}^{N_\e-1} \wt{\O}_\e^{pq}.
		\end{equation}
		Now, observe that $\O_\e^{pq}$ is a star-shaped domain with respect to a ball of radius $\ds \e\inf\Big\{\kappa,{1}-\kappa \Big\}$ and that its diameter is less than $\e(2+2\kappa)$.
		So, we know from Theorem II.1.1 in \cite{BGRod} that there exist a vector $\Ga_{pq}\in \R^3$ and  a matrix $\GR_{pq}\in SO(3)$ such that 
		\begin{equation}\label{BGShell3.3}
			\begin{aligned}
				&  \|\nabla v-\GR_{pq}\|_{L^2(\O_\e^{pq})}\leq C\|dist(\nabla v, SO(3))\|_{L^2(\O_\e^{pq})},\\
				& \big\|v-\Ga_{pq}-\GR_{pq}(x-\e\xi_{pq})\big\|_{L^2(\O_\e^{pq})}\leq C\e\|dist(\nabla v, SO(3))\|_{L^2(\O_\e^{pq})}.
			\end{aligned}
		\end{equation}
		The constant does not depend on $\e$, $p$ and $q$, it only depends on the ratio $\ds {2+2\kappa\over \inf\big\{\kappa,{1}-\kappa\big\}}$ .
		\begin{figure}[ht]\label{Fig03}
			\centering
			\subfigure[$\kappa={1\over 8}$]{\begin{tikzpicture}[scale=1.1]
					
					\draw (1.5,1.5)--(4,1.5);
					\draw (1.5,1.5)--(1.5,4);
					\draw (2,1.5)--(2,4);
					\draw (1.5,2)--(4,2);
					\draw (1.5,3.5)--(4,3.5);
					\draw (1.5,4)--(4,4);
					\draw (3.5,1.5)--(3.5,4);
					\draw (4,1.5)--(4,4);

					

					
					
					\fill[deepskyblue, semitransparent] (1.5,1.5) rectangle (2,4);	
					\fill[deepskyblue, semitransparent] (1.5,4) rectangle (4,3.5);	
					\fill[deepskyblue, semitransparent] (1.5,2) rectangle (4,1.5);	
					\fill[deepskyblue, semitransparent] (3.5,1.5) rectangle (4,4);	
					\fill[deepskyblue, semitransparent] (1.5, 1.5) rectangle (2,2);
					\fill[deepskyblue, semitransparent] (3.5, 1.5) rectangle (4,2);
					\fill[deepskyblue, semitransparent] (1.5, 3.5) rectangle (2,4);
					\fill[deepskyblue, semitransparent] (3.5, 3.5) rectangle (4,4);

					\fill[red!70] (2,2) rectangle (3.5,3.5);


					
			\end{tikzpicture}} \hspace{2cm}
			\subfigure[$\kappa={3\over 8}$]{\begin{tikzpicture}[scale=0.81]
					
					\draw[black] (1.5,1.5)--(5,1.5);
					\draw[black] (1.5,1.5)--(1.5,5);
					\draw[black] (3,1.5)--(3,5);
					\draw[black] (1.5,3)--(5,3);
					\draw[black] (1.5,3.5)--(5,3.5);
					\draw[black] (1.5,5)--(5,5);
					\draw[black] (3.5,1.5)--(3.5,5);
					\draw[black] (5,1.5)--(5,5);

					

					
					
					\fill[deepskyblue, semitransparent] (1.5,1.5) rectangle (3,5);	
					\fill[deepskyblue, semitransparent] (1.5,5) rectangle (5,3.5);	
					\fill[deepskyblue, semitransparent] (1.5,3) rectangle (5,1.5);	
					\fill[deepskyblue, semitransparent] (3.5,1.5) rectangle (5,5);	
					\fill[deepskyblue, semitransparent] (1.5, 1.5) rectangle (3,3);
					\fill[deepskyblue, semitransparent] (3.5, 1.5) rectangle (5,3);
					\fill[deepskyblue, semitransparent] (1.5, 3.5) rectangle (3,5);
					\fill[deepskyblue, semitransparent] (3.5, 3.5) rectangle (5,5);

					\fill[red!70] (3,3) rectangle (3.5,3.5);



					
			\end{tikzpicture}}
			\caption{The red and blue part represent $\o_\e^{pq}$ and $\wt{\o}_\e^{pq}$ respectively for some $p,q$.}
		\end{figure}
		
		Recalling the following classical estimates of the traces 
		\begin{lemma}\label{IEQ3} For every $ \phi\in H^1(Y_\kappa\X (-\kappa,\kappa))$ and  $ \psi\in H^1(\wt{Y}_\kappa\X (-\kappa,\kappa))$, one has
			\begin{equation}\label{IEQ2-2}
				\begin{aligned}
						\|\phi\|_{L^2(\partial Y_\kappa\X (-\kappa,\kappa))}\leq C\big(\|\phi\|_{L^2(Y_\kappa\X (-\kappa,\kappa))}+\|\nabla\phi\|_{L^2(Y_\kappa\X (-\kappa,\kappa))}\big),\\
								\|\psi\|_{L^2(\partial Y_\kappa\X (-\kappa,\kappa))}\leq C\big(\|\psi\|_{L^2(\wt{Y}_\kappa\X (-\kappa,\kappa))}+\|\nabla\psi\|_{L^2(\wt{Y}_\kappa\X (-\kappa,\kappa))}\big).
				\end{aligned}
			\end{equation}
		\end{lemma}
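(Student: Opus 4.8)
The plan is to reduce both inequalities to the one-dimensional fundamental theorem of calculus applied along segments transverse to the relevant faces of the boundary, after a routine density reduction. First I would observe that $Y_\kappa\X(-\kappa,\kappa)=(\kappa,1-\kappa)^2\X(-\kappa,\kappa)$ and $\wt Y_\kappa\X(-\kappa,\kappa)$ are bounded Lipschitz domains (the second is a square frame times an interval), so $C^\infty(\overline{\mathcal O})$ is dense in $H^1(\mathcal O)$ and it suffices to establish both estimates for smooth $\phi$, $\psi$ and then pass to the limit.

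For \eqref{IEQ2-2}$_1$ I would note that $\partial Y_\kappa$ is the union of the four sides of the square $(\kappa,1-\kappa)^2$, so $\partial Y_\kappa\X(-\kappa,\kappa)$ splits into four flat rectangles and it is enough to bound the $L^2$-norm on each. On the face $\{\kappa\}\X(\kappa,1-\kappa)\X(-\kappa,\kappa)$, for any $t\in(\kappa,1-\kappa)$ one has $\phi(\kappa,x_2,x_3)^2=\phi(t,x_2,x_3)^2+2\int_\kappa^t\phi\,\partial_1\phi\,(s,x_2,x_3)\,ds$; averaging over $t\in(\kappa,1-\kappa)$, integrating over $(x_2,x_3)$, and using $2|\phi\,\partial_1\phi|\le\phi^2+(\partial_1\phi)^2$ bounds this face by $C\big(\|\phi\|_{L^2(Y_\kappa\X(-\kappa,\kappa))}^2+\|\partial_1\phi\|_{L^2(Y_\kappa\X(-\kappa,\kappa))}^2\big)$. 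The same computation on the other three sides (interchanging the roles of $x_1$ and $x_2$ where appropriate) and summing the four contributions yields the first inequality.

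For \eqref{IEQ2-2}$_2$ the only change is the choice of the transverse segment: the face $\{\kappa\}\X(\kappa,1-\kappa)\X(-\kappa,\kappa)$ now sits on the inner lateral boundary of $\wt Y_\kappa\X(-\kappa,\kappa)$, and for $t\in(0,\kappa)$ the segment from $(\kappa,x_2,x_3)$ to $(t,x_2,x_3)$ stays inside $\wt Y_\kappa\X(-\kappa,\kappa)$ because $(0,\kappa)\X(\kappa,1-\kappa)\subset\wt Y_\kappa$. Running the identical argument, integrating $t$ over $(0,\kappa)$, bounds this face by $C\big(\|\psi\|_{L^2(\wt Y_\kappa\X(-\kappa,\kappa))}^2+\|\nabla\psi\|_{L^2(\wt Y_\kappa\X(-\kappa,\kappa))}^2\big)$; the remaining three sides of $\partial Y_\kappa$ are treated symmetrically, each transverse segment of length $\kappa$ reaching the corresponding outer slab of $\wt Y_\kappa$, and summation gives the second inequality.

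The only genuinely delicate point — the step I would be most careful about — is verifying in the second estimate that for each of the four pieces of $\partial Y_\kappa$ there really is a coordinate direction along which a full segment of length $\kappa$ remains inside $\wt Y_\kappa\X(-\kappa,\kappa)$; this is exactly where the standing hypothesis $\kappa<1/2$ enters, since it guarantees the frame $\wt Y_\kappa$ has nonempty interior on every side. The four corner edges of $\partial Y_\kappa\X(-\kappa,\kappa)$ form a null set and can be ignored. Alternatively, one can bypass the hands-on argument entirely and simply invoke continuity of the trace operator $H^1(\mathcal O)\to L^2(\partial\mathcal O)$ for the two Lipschitz domains $\mathcal O$, together with the fact that $\partial Y_\kappa\X(-\kappa,\kappa)$ is a subset of $\partial\mathcal O$ in both cases.
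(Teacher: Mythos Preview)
Your proof is correct. The paper, however, does not actually prove this lemma: it introduces the statement with the phrase ``Recalling the following classical estimates of the traces'' and provides no argument whatsoever, treating both inequalities as standard trace estimates for Lipschitz domains. Your final alternative---invoking continuity of the trace operator $H^1(\mathcal O)\to L^2(\partial\mathcal O)$ on the two Lipschitz domains and noting that $\partial Y_\kappa\times(-\kappa,\kappa)$ is contained in $\partial\mathcal O$---is precisely the implicit justification the authors rely on. Your hands-on fundamental-theorem-of-calculus argument is more explicit and has the advantage of making the dependence of the constant on $\kappa$ traceable, which the paper does not need but which could be useful if one wanted to track constants through the rescaling arguments that follow (e.g.\ in the proof of Lemma~\ref{lem53}).
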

			\begin{lemma}\label{lem52}
			Let $\Ga$ be in $\R^3$ and $\GB\in \R^{3\X 3}$, we set 
			$$\Phi(x)=\Ga+\GB\,x,\qquad x\in \R^3.$$ Then we have
			\begin{equation}\label{EQ55}
				c\big(|\Ga|^2+|\GB|_F^2\big)\leq \|\Phi\|^2_{L^2(\partial Y_\kappa\X (-\kappa,\kappa))}\leq C\big(|\Ga|^2+|\GB|_F^2\big),
			\end{equation}  where $|\cdot|$  is the euclidean norm on $\R^3$. 
			The constants $c$ and $C$  depend  only on $\kappa$.
		\end{lemma}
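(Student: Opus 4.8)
The plan is to prove the two-sided bound by a compactness/homogeneity argument, exploiting that $\Phi\mapsto\|\Phi\|_{L^2(\partial Y_\kappa\times(-\kappa,\kappa))}$ is a norm on the finite-dimensional space of affine maps $\R^3\to\R^3$, and that $(|\Ga|^2+|\GB|_F^2)^{1/2}$ is another norm on the same space (identifying $\Phi$ with the pair $(\Ga,\GB)\in\R^3\times\R^{3\times 3}$, a space of dimension $12$). Since all norms on a finite-dimensional vector space are equivalent, the inequality \eqref{EQ55} follows, with constants depending only on the domain $\partial Y_\kappa\times(-\kappa,\kappa)$, i.e. only on $\kappa$.

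The key steps, in order, are as follows. First I would check that the map $(\Ga,\GB)\mapsto\Phi=\Ga+\GB\,x$ is a linear bijection from $\R^3\times\R^{3\times 3}$ onto the space of affine maps, so it suffices to work with the quantity $N(\Phi):=\|\Phi\|_{L^2(\partial Y_\kappa\times(-\kappa,\kappa))}$. Second, I would verify that $N$ is indeed a norm: it is clearly a seminorm (triangle inequality and homogeneity are inherited from the $L^2$ norm), and it is positive definite because if $\Ga+\GB\,x=0$ for a.e.\ $x$ on $\partial Y_\kappa\times(-\kappa,\kappa)$, then, since this set has nonempty interior in a $2$-dimensional sense on each face and in particular contains $3$ affinely independent... more carefully: $\partial Y_\kappa\times(-\kappa,\kappa)$ contains (on its lateral faces) open $2$-dimensional pieces and varying $x_3\in(-\kappa,\kappa)$, hence it contains four affinely independent points of $\R^3$; an affine map vanishing at four affinely independent points is identically zero, forcing $\Ga=0$ and $\GB=0$. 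Third, I would invoke equivalence of norms on the finite-dimensional space: there exist constants $c=c(\kappa)>0$ and $C=C(\kappa)$ with $c(|\Ga|^2+|\GB|_F^2)\le N(\Phi)^2\le C(|\Ga|^2+|\GB|_F^2)$, which is exactly \eqref{EQ55}. That the constants depend only on $\kappa$ is clear since the only data entering is the fixed set $\partial Y_\kappa\times(-\kappa,\kappa)$, whose geometry is determined by $\kappa$.

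Alternatively, and perhaps preferably for an explicit paper, one can avoid the abstract compactness step and argue directly. The upper bound is immediate: $\|\Phi\|_{L^2}^2=\int_{\partial Y_\kappa\times(-\kappa,\kappa)}|\Ga+\GB\,x|^2\,d\sigma\le 2|\Ga|^2|\partial Y_\kappa\times(-\kappa,\kappa)|+2|\GB|_F^2\int |x|^2\,d\sigma\le C(\kappa)(|\Ga|^2+|\GB|_F^2)$, using $|\GB\,x|\le|\GB|_F|x|$ and the boundedness of the domain. For the lower bound, expand $|\Ga+\GB\,x|^2=|\Ga|^2+2\Ga\cdot(\GB\,x)+|\GB\,x|^2$ and integrate; one may first translate coordinates so that $\int_{\partial Y_\kappa\times(-\kappa,\kappa)}x\,d\sigma$ is handled (by the central symmetry of the set $Y_\kappa$ about its centre and of $(-\kappa,\kappa)$ about $0$, in suitably centred coordinates the cross term drops), leaving $N(\Phi)^2 = |\partial Y_\kappa\times(-\kappa,\kappa)|\,|\Ga|^2 + \int |\GB\,x|^2\,d\sigma$ up to the centring shift; then $\int|\GB\,x|^2\,d\sigma=\sum_{i}\GB_{i\cdot}^T M \GB_{i\cdot}$ where $M=\int x x^T\,d\sigma$ is a fixed symmetric positive definite $3\times3$ matrix (positive definite because the domain spans $\R^3$), so $\int|\GB\,x|^2\,d\sigma\ge\lambda_{\min}(M)|\GB|_F^2$, giving the claim with $c(\kappa)=\min\{|\partial Y_\kappa\times(-\kappa,\kappa)|,\lambda_{\min}(M)\}$ after accounting for the coordinate shift.

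The main obstacle — really the only subtlety — is the positive-definiteness step, i.e.\ ruling out degeneracy: one must make sure that the trace set $\partial Y_\kappa\times(-\kappa,\kappa)$ is genuinely $3$-dimensional in the sense of not being contained in any affine hyperplane of $\R^3$ (equivalently that $M=\int xx^T\,d\sigma$ is invertible), which is where the hypothesis $\kappa\in(0,\tfrac12)$ ensuring $Y_\kappa=(\kappa,1-\kappa)^2$ is a nondegenerate square, together with $(-\kappa,\kappa)$ being a nondegenerate interval, is used. Once that is in place the rest is routine, and the dependence of the constants on $\kappa$ alone is transparent from the construction.
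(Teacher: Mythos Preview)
Your proof is correct and follows essentially the same approach as the paper: both quantities are norms on the finite-dimensional space $\R^3\times\R^{3\times 3}$, so equivalence of norms gives \eqref{EQ55}. You actually supply more detail than the paper (which simply asserts the two maps are norms without verifying positive-definiteness), and your alternative explicit computation is a nice bonus but not needed.
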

		\begin{proof} The maps 
			$$(\Ga,\GB)\in \R^3\X\R^{3\X 3}\longmapsto \sqrt{|\Ga|^2+|\GB|_F^2}\quad \hbox{and}\quad (\Ga,\GB)\in \R^3\X\R^{3\X 3}\longmapsto \|\Phi\|_{L^2(\partial Y_\kappa\X (-\kappa,\kappa))} $$ are norms on the space $ \R^3\X\R^{3\X 3}$. Since this space is of finite dimension, all norms are equivalent.
		\end{proof}		
		As a consequence of the above lemmas, we can derive the following result 
		\begin{lemma}\label{lem53} (Korn-type inequalities)
			Let $v$ be a deformation in $H^1(\O_\e^M)^3$ and the plate is clamped in the part $\Gamma_r$. Then, we have
			\begin{equation}\label{KornP2}
				\begin{aligned}
					\|\nabla (v-\text{\bf id})\|_{L^2(\O_\e^M)} & \leq C_M\Big(\|dist(\nabla v, SO(3))\|_{L^2(\O_\e^M)}+{1\over\e}\|dist(\nabla v,SO(3))\|_{L^2(\O_\e^B)}\Big),\\
					\|v-\text{\bf id}\|_{L^2(\O_\e^M)}&\leq C\Big(\|dist(\nabla v, SO(3))\|_{L^2(\O_\e^M)}+{1\over\e}\|dist(\nabla v,SO(3))\|_{L^2(\O_\e^B)}\Big).
				\end{aligned}
			\end{equation} 
			The constants $C$ and $C_M$ does not depend on $\e$.
		\end{lemma}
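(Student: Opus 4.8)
The plan is to exploit the decomposition $\O_\e^M=\bigcup_{p,q}\O_\e^{pq}$ together with the local rigidity estimates \eqref{BGShell3.3} and the trace lemmas, patching the local data $(\Ga_{pq},\GR_{pq})$ to the elementary plate deformation of $\O_\e^B$ across the interfaces $\wt{\o}_\e^{pq}\X(-r,r)$. First I would rescale each cell $\O_\e^{pq}$ to the fixed domain $Y_\kappa\X(-\kappa,\kappa)$ by $x\mapsto \e(\xi_{pq}+y)$, $y\in Y_\kappa\X(-\kappa,\kappa)$; under this rescaling $\nabla(v-\text{\bf id})$ scales without an $\e$ factor while $\|\cdot\|_{L^2}$ picks up $\e^{3/2}$, so the local Korn inequality on the fixed reference cell (standard for a star-shaped Lipschitz domain) reads $\|\nabla(v-\text{\bf id})\|_{L^2(\O_\e^{pq})}\le C\big(\|dist(\nabla v,SO(3))\|_{L^2(\O_\e^{pq})}+\e^{-1}\|\Ga_{pq}-?+(\GR_{pq}-\GI_3)\cdot?\|\big)$, i.e. after using \eqref{BGShell3.3} the only obstruction to a cell-wise estimate is controlling the affine map $\Phi_{pq}(x)=\Ga_{pq}+(\GR_{pq}-\GI_3)(x-\e\xi_{pq})$ describing the rigid motion minus identity on that cell.

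The key point is then to bound $\sum_{p,q}\|\Phi_{pq}\|^2$ by the right-hand side. For this I would use Lemma \ref{lem52}: on each interface piece $\partial Y_\kappa\X(-\kappa,\kappa)$ (after rescaling) the norm $|\Ga_{pq}|^2+|\GR_{pq}-\GI_3|_F^2$ is equivalent to $\|\Phi_{pq}\|^2_{L^2(\partial Y_\kappa\X(-\kappa,\kappa))}$. On that same interface $v$ agrees with its trace from the beam side $\O_\e^B$, where we already control $v-\text{\bf id}$ and $\nabla(v-\text{\bf id})$ via Lemma \ref{EFSTB} and the plate decomposition \eqref{EQ51}--\eqref{E0}; more precisely $v$ restricted to $\wt{\o}_\e^{pq}\X(-r,r)$ is close (in $L^2$, with the $\e^{3/2}$-type rescaled norm) to the elementary deformation $V_e=\Vc+x_3\GR\Ge_3$, whose traces are in turn controlled by $\|\GR-\GI_3\|_{H^1(\o)}$ and $\|\Vc-\text{\bf id}_2\|_{H^1(\o)}$, hence by $\e^{-3/2}\|dist(\nabla v,SO(3))\|_{L^2(\O_\e^B)}$ through Lemma \ref{FinEst1}. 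Comparing the two expressions for the trace of $v-\text{\bf id}$ on the interface, applying \eqref{BGShell3.3}$_2$ (rescaled) to replace $v$ by $\Phi_{pq}$ up to a $\e\cdot\|dist\|$ error, and using the trace inequality \eqref{IEQ2-2} to pass from the boundary of a cell to its interior, yields $\|\Phi_{pq}\|_{L^2(\partial Y_\kappa\X(-\kappa,\kappa))}\le C\big(\e^{-3/2}\|dist(\nabla v,SO(3))\|_{L^2(\wt\O_\e^{pq})}+\e^{-3/2}\|dist(\nabla v,SO(3))\|_{L^2(\O_\e^{pq})}\big)$ after accounting for the rescaling factors; here the clamping on $\Gamma_r$ is what makes the estimate an honest bound rather than one modulo rigid motions (it anchors $\Vc,\GR$ near $\text{\bf id}_2,\GI_3$ globally, consistent with the boundary conditions recorded just before Lemma \ref{FinEst1}).

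Summing over $(p,q)$: the local $\|dist(\nabla v,SO(3))\|^2_{L^2(\O_\e^{pq})}$ sum to $\|dist(\nabla v,SO(3))\|^2_{L^2(\O_\e^M)}$, the $\|dist(\nabla v,SO(3))\|^2_{L^2(\wt\O_\e^{pq})}$ sum to $\|dist(\nabla v,SO(3))\|^2_{L^2(\O_\e^B)}$, and the $\e^{-3/2}$ becomes $\e^{-1}$ after the $\e^{3}$-volume rescaling is taken into account, producing exactly the two terms on the right of \eqref{KornP2}$_1$; one must be slightly careful that each beam cell $\wt\O_\e^{pq}$ is shared by boundedly many matrix cells, so the overlap in the sum costs only a fixed constant. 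Finally \eqref{KornP2}$_2$ follows from \eqref{KornP2}$_1$ by a Poincar\'e inequality on $\O_\e^M$ using the clamping (or, cell by cell, from \eqref{BGShell3.3}$_2$ plus the bound on $\sum\|\Phi_{pq}\|^2$ just obtained). The main obstacle I anticipate is bookkeeping the $\e$-powers through the anisotropic rescaling $x'\mapsto \e y'$, $x_3\mapsto \e y_3=r y_3/\kappa$ and making sure the interface-matching of traces between the $H^1(\o)$-based plate estimates on $\O_\e^B$ and the cellwise finite-dimensional estimates on $\O_\e^M$ is done with matching norms; the geometric facts (star-shapedness, bounded overlap, equivalence of norms on affine maps) are routine once the scaling is pinned down.
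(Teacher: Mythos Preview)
Your overall strategy is the right one and coincides with the paper's: use the cellwise rigidity \eqref{BGShell3.3}, pass to the interfaces via the trace lemmas \eqref{IEQ2-2}, match against the plate decomposition $V_e=\Vc+x_3\GR\Ge_3$ on the beam side, invoke the finite-dimensional equivalence \eqref{EQ55} for affine maps, and sum. The last step---\eqref{KornP2}$_2$ from \eqref{KornP2}$_1$ by a cellwise Poincar\'e---is also how the paper closes.

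There is, however, a genuine gap in your interface step. You claim a \emph{local} bound
\[
\|\Phi_{pq}\|_{L^2(\partial Y_\kappa\times(-\kappa,\kappa))}\le C\e^{-3/2}\big(\|dist(\nabla v,SO(3))\|_{L^2(\wt\O_\e^{pq})}+\|dist(\nabla v,SO(3))\|_{L^2(\O_\e^{pq})}\big),
\]
but this cannot hold: the trace of $v-\text{\bf id}$ from the beam side equals $(V_e-\text{\bf id})+\overline v$, and while $\overline v$ and the matrix-side error are locally controlled, the quantity $V_e-\text{\bf id}=(\Vc-\text{\bf id}_2)+x_3(\GR-\GI_3)\Ge_3$ at a given cell is a \emph{pointwise} value of the global fields $\Vc,\GR$; it is \emph{not} bounded by $\|dist(\nabla v,SO(3))\|_{L^2(\wt\O_\e^{pq})}$. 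If instead you feed in the available global bound $\|V_e-\text{\bf id}\|_{L^2(\O_\e)}\le C\e^{-1}\|dist(\nabla v,SO(3))\|_{L^2(\O_\e^B)}$ from Lemma~\ref{FinEst1}, the trace step gives $\sum_{p,q}\|\Phi_{pq}\|^2_{L^2(\text{interface})}\le C\e^{-3}\|dist(\nabla v,SO(3))\|^2_{L^2(\O_\e^B)}+\ldots$, and after extracting $|\GR_{pq}-\GI_3|^2$ via \eqref{EQ55} you end up with $\e^{-2}\|dist(\nabla v,SO(3))\|_{L^2(\O_\e^B)}$ in \eqref{KornP2}$_1$ instead of the claimed $\e^{-1}\|dist(\nabla v,SO(3))\|_{L^2(\O_\e^B)}$, i.e.\ you lose a full power of $\e$.

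The paper's remedy is to insert an intermediate \emph{affine} approximation of $V_e$ on each cell, namely $\Mc_{pq}(\Vc)+\Mc_{pq}(\GR)(x-\e\xi_{pq})$, using Poincar\'e--Wirtinger on $\wt\o_\e^{pq}$. One then compares the two affine maps on the interface: the \emph{difference} $(\Mc_{pq}(\Vc)-\Ga_{pq})+(\Mc_{pq}(\GR)-\GR_{pq})(x-\e\xi_{pq})$ \emph{is} locally controlled (the approximation error involves only the local gradients $\|\nabla\GR\|_{L^2(\wt\o_\e^{pq})}$, $\|\partial_\alpha\Vc-\GR\Ge_\alpha\|_{L^2(\wt\o_\e^{pq})}$, which sum correctly), so \eqref{EQ55} yields $\sum|\Mc_{pq}(\GR)-\GR_{pq}|^2\le C\e^{-3}(\ldots)$. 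The passage from $\Mc_{pq}(\GR)$ to $\GI_3$ is then handled separately and globally via $\sum_{p,q}\e^2|\Mc_{pq}(\GR)-\GI_3|^2\le\|\GR-\GI_3\|^2_{L^2(\o)}\le C\e^{-3}\|dist(\nabla v,SO(3))\|^2_{L^2(\O_\e^B)}$, which costs no extra $\e$ because the cell measure $\e^2$ exactly compensates the number of cells. This two-step splitting (local comparison of affine maps, then global control of the means) is the missing ingredient in your argument.
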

		\begin{proof}
			{\it Step 1.} We show that
			\begin{equation}\label{eqfvm1}
				\sum_{p=-N_\e}^{N_\e-1}\sum_{q=-N_\e}^{N_\e-1}\big\|v-\Ga_{pq}-\GR_{pq}(x-\e\xi_{pq})\big\|^2_{L^2(\partial \o_\e^{pq}
					\X(-\kappa\e,\kappa\e))}\leq  C \e\|dist(\nabla v, SO(3))\|^2_{L^2(\O_\e^M)}.
			\end{equation}	
			From the estimates \eqref{BGShell3.3} and \eqref{IEQ2-2}$_1$, after a change of variables, we obtain
			$$
			\begin{aligned}
				\big\|v-\Ga_{pq}-&\GR_{pq}(x-\e\xi_{pq})\big\|^2_{L^2(\partial\o_\e^{pq}\X(-\kappa\e,\kappa\e))}\leq C\Big({1\over \e}\big\|v-\Ga_{pq}-\GR_{pq}(x-\e\xi_{pq})\big\|^2_{L^2(\O_\e^{pq})}\\
				&+\e \big\|\nabla\big(v-\Ga_{pq}-\GR_{pq}(x-\e\xi_{pq})\big)\big\|^2_{L^2(\o_\e^{pq}\X(-\kappa\e,\kappa\e))}\Big)\leq C\e\|dist(\nabla v, SO(3))\|^2_{L^2(\O_\e^{pq})}.
			\end{aligned}
			$$
			Adding all the above estimates lead to the required result.\\[1mm]
			\noindent{\it Step 2.} We prove that
			\begin{equation}\label{eqfvm2}
				\sum_{p=-N_\e}^{N_\e-1}\sum_{q=-N_\e}^{N_\e-1}\big\|v- V_e\big\|^2_{L^2(\partial \o_\e^{pq}\X(-\kappa\e,\kappa\e))}\leq C\e\|dist(\nabla v, SO(3))\|^2_{L^2(\O^B_\e)}.
			\end{equation} 
			Using the trace inequality \eqref{IEQ2-2}$_2$, after a change of variables, we get
			$$
			\begin{aligned}
				\big\|v-V_e\big\|^2_{L^2(\partial\o_\e^{pq}\X(-\kappa\e,\kappa\e))}&\leq C\Big({1\over \e}\big\|v-V_e\big\|^2_{L^2(\wt{\O}_\e^{pq})}+\e \big\|\nabla\big(v-V_e\big)\big\|^2_{L^2(\wt{\O}_\e^{pq})}\Big)\\
				&= C\Big({1\over \e}\big\|\overline{v}\big\|^2_{L^2(\wt{\O}_\e^{pq})}+\e \big\|\nabla\overline{v}\big\|^2_{L^2(\wt{\O}_\e^{pq})}\Big).
			\end{aligned}
			$$
			Now, from the estimates \eqref{E0}$_{1,2}$, we get \eqref{eqfvm2}.\\[1mm]
			\noindent{\it Step 3.} We prove
			\begin{equation}\label{EQ610}
				\begin{aligned}
					&\sum_{p=-N_\e}^{N_\e-1}\sum_{q=-N_\e}^{N_\e-1}\big|\Mc_{pq}(\Vc)-\Ga_{pq}\big|^2 \leq  {C\over \e}\big(\|dist(\nabla v, SO(3))\|^2_{L^2(\O_\e^M)}+\|dist(\nabla v, SO(3))\|^2_{L^2(\O_\e^B)}\big),\\
					&\sum_{p=-N_\e}^{N_\e-1}\sum_{q=-N_\e}^{N_\e-1}\big| \Mc_{pq}(\GR)-\GR_{pq}\big|^2\leq {C\over \e^3}\big(\|dist(\nabla v, SO(3))\|^2_{L^2(\O_\e^M)}+\|dist(\nabla v, SO(3))\|^2_{L^2(\O_\e^B)}\big).
				\end{aligned}
			\end{equation}
			For every $\phi\in H^1(\o)^k$, $k\in \N^*$, we set 
			$$\Mc_{pq}(\phi)={1\over |\wt{\o}^{pq}_\e|}\int_{\wt{\o}^{pq}_\e}\phi(x')\, dx',\qquad (p,q)\in \{-N_\e,\ldots,N_\e-1\}^2.$$
			Now using  Poincar\'e-Wirtinger inequality, we obtain the following three estimates 
			$$\|\GR-\Mc_{pq}(\GR)\|_{L^2(\wt{\o}^{pq}_\e)}\leq C\e\|\nabla\GR\|_{L^2(\wt{\o}^{pq}_\e)}$$	and
			$$\begin{aligned}
				&\|\nabla\left(\Vc-\Mc_{pq}(\Vc)-\Mc_{pq}(\GR)(x'-\e\xi_{pq})\right)\|_{L^2(\wt{\o}^{pq}_\e)} \leq C\sum_{\alpha=1}^2\Big\|{\partial\Vc\over \partial x_\alpha}-\Mc_{pq}(\GR)\Ge_\alpha\Big\|_{L^2(\wt{\o}^{pq}_\e)},\\
				&\hskip6cm\leq C\sum_{\alpha=1}^2\Big(\Big\|{\partial\Vc\over \partial x_\alpha}-\GR\Ge_\alpha\Big\|_{L^2(\wt{\o}^{pq}_\e)}+\|(\GR-\Mc_{pq}(\GR))\Ge_\alpha\|_{L^2(\wt{\o}^{pq}_\e)}\Big)\\
				&\|\Vc-\Mc_{pq}(\Vc)-\Mc_{pq}(\GR)(x'-\e\xi_{pq})\|_{L^2(\wt{\o}^{pq}_\e)} \\
				&\hskip 6cm\leq C\e \sum_{\alpha=1}^2\Big(\Big\|{\partial\Vc\over \partial x_\alpha}-\GR\Ge_\alpha\Big\|_{L^2(\wt{\o}^{pq}_\e)}+\|(\GR-\Mc_{pq}(\GR))\Ge_\alpha\|_{L^2(\wt{\o}^{pq}_\e)}\Big).
			\end{aligned}
			$$
			Then,  the trace inequality \eqref{IEQ2-2}$_2$ (after a simple change of variables) and estimates  \eqref{E0}$_{4,5}$--\eqref{FinEst} yield	
			\begin{multline*}
				\sum_{p=-N_\e}^{N_\e-1}\sum_{q=-N_\e}^{N_\e-1}\|\Vc+x_3\GR\Ge_3-\Mc_{pq}(\Vc)-\Mc_{pq}(\GR)(x-\e\xi_{pq})\|^2_{L^2(\partial\o^{pq}_\e\X(-\kappa\e,\kappa\e))}\\
				\leq C\e\|dist(\nabla v, SO(3))\|^2_{L^2(\O^B_\e)}.
			\end{multline*}
			As a consequence of  the above estimates and \eqref{eqfvm1}--\eqref{eqfvm2}, we  obtain
			\begin{multline*}
				\sum_{p=-N_\e}^{N_\e-1}\sum_{q=-N_\e}^{N_\e-1}\big\|\Mc_{pq}(\Vc)-\Ga_{pq}+\big(\Mc_{pq}(\GR)-\GR_{pq}\big)(x-\e\xi_{pq})\big\|^2_{L^2(\partial \o_\e^{pq}\X(-\kappa\e,\kappa\e))}\\
				\leq  C \e\big(\|dist(\nabla v, SO(3))\|^2_{L^2(\O_\e^M)}+\|dist(\nabla v, SO(3))\|^2_{L^2(\O_\e^B)}\big).
			\end{multline*} From the above inequality and \eqref{EQ55} after a simple change of variables, we get \eqref{EQ610}.\\[1mm]
			\noindent{\it Step 4.} We prove \eqref{KornP2}$_1$.\\[1mm]
			From \eqref{FinEst}$_1$ and \eqref{EQ51-0} we obtain
			\begin{equation}\label{EQ611}
				\sum_{p=-N_\e}^{N_\e-1}\sum_{q=-N_\e}^{N_\e-1}\big|\Mc_{pq}(\GR)-\GI_3|^2\e^2\leq \|\GR-\GI_3\|^2_{L^2(\o)}  \leq {C\over \e^{3}} \|dist(\nabla v, SO(3))\|^2_{L^2(\O^B_\e)}.
			\end{equation}
			Then, the above together with \eqref{EQ610}$_2$-\eqref{BGShell3.3}$_2$ lead to \eqref{KornP2}$_1$.\\[1mm]
			\noindent{\it Step 5.} We prove \eqref{KornP2}$_2$.\\[1mm]
			Using Poincar\'e inequality, we obtain
			$$\|v-\text{\bf id}\|^2_{L^2(\O_\e^{pq})}\leq C\|\nabla(v-\text{\bf id})\|^2_{L^2(\O_\e^{pq})}.$$
			Then, summing for all $p,q$ gives \eqref{KornP2}$_2$.
		\end{proof} 	
		
		We end this section with a estimate for the strain tensor $e(u)$ in $\O_\e^M$.
		\begin{lemma}\label{lem55}
			The strain tensor of the displacement $u=v-\text{\bf id}$ satisfies
			\begin{equation} \label{BGShell 4.9}
				\begin{aligned}
					\|e(u)\|_{L^2(\O^M_\e)} &\leq  C  \|dist(\nabla v, SO(3))\|_{L^2{(\O^M_\e)}}+{C\over \e}\|dist(\nabla v, SO(3))\|_{L^2(\O_\e^B)}\\
					& + {C \over \e^{3/2}} \|dist(\nabla v, SO(3))\|^2_{L^2{(\O^M_\e)}}+ {C\over \e^{7/2}}\|dist(\nabla v, SO(3))\|^2_{L^2(\O_\e^B)}.
				\end{aligned}
			\end{equation}
			The constant does not depend on $\e$.
		\end{lemma}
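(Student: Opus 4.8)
The strategy parallels the proof of the corresponding estimate \eqref{Eq II.3.10} in $\O_\e^B$, but now carried out cell-by-cell on each small plate $\O_\e^{pq}$ using the rigid-motion approximations $\Ga_{pq}$, $\GR_{pq}$ from \eqref{BGShell3.3}, and then summed. First I would write, on each $\O_\e^{pq}$,
$$
2e(u) = \nabla v + (\nabla v)^T - 2\GI_3 = (\nabla v - \GR_{pq}) + (\nabla v - \GR_{pq})^T + (\GR_{pq} + \GR_{pq}^T - 2\GI_3),
$$
and use the identity $\GR_{pq} + \GR_{pq}^T - 2\GI_3 = \GR_{pq}^T(\GR_{pq} - \GI_3)^2$ together with $\|\GR_{pq}\| = \sqrt 3$, exactly as in the proof of \eqref{Eq II.3.10}. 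The first two terms are controlled directly on $\O_\e^{pq}$ by \eqref{BGShell3.3}$_1$, giving $C\|dist(\nabla v, SO(3))\|_{L^2(\O_\e^{pq})}$ after summing over $p,q$. For the quadratic term one needs $|\GR_{pq} - \GI_3|^2$ on each cell (times $|\O_\e^{pq}|\sim \e^3$), i.e. $\sum_{p,q}\e^3|\GR_{pq}-\GI_3|^4$.

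The key point is therefore to bound $\sum_{p,q}|\GR_{pq} - \GI_3|^2$. I would obtain this by triangle inequality from the decomposition data: $|\GR_{pq}-\GI_3| \le |\GR_{pq} - \Mc_{pq}(\GR)| + |\Mc_{pq}(\GR) - \GI_3|$. The first piece is controlled by \eqref{EQ610}$_2$ (after multiplying by the appropriate power of $\e$ coming from $|\O_\e^{pq}|$), and the second by \eqref{EQ611}, which already gives $\sum_{p,q}\e^2|\Mc_{pq}(\GR)-\GI_3|^2 \le C\e^{-3}\|dist(\nabla v,SO(3))\|^2_{L^2(\O_\e^B)}$. Combining, one gets a bound of the form $\sum_{p,q}\e^2|\GR_{pq}-\GI_3|^2 \le C\e^{-3}\big(\|dist\|^2_{L^2(\O_\e^M)} + \|dist\|^2_{L^2(\O_\e^B)}\big)$; equivalently the $\ell^\infty$ (or at least $\ell^4$) control $\big(\sum_{p,q}\e^2|\GR_{pq}-\GI_3|^4\big)^{1/2}$ then follows because $\ell^2 \hookrightarrow \ell^4$ with the discrete measure, paying one extra factor. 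Feeding this into $\sum_{p,q}\e^3|\GR_{pq}-\GI_3|^4 = \e\cdot\sum_{p,q}\e^2|\GR_{pq}-\GI_3|^4 \le \e\big(\sum_{p,q}\e^2|\GR_{pq}-\GI_3|^2\big)^2\big/\e^{\,?}$ — the bookkeeping of $\e$-powers must be done carefully — produces exactly the two quadratic terms $\e^{-3/2}\|dist\|^2_{L^2(\O_\e^M)}$ and $\e^{-7/2}\|dist\|^2_{L^2(\O_\e^B)}$ appearing in \eqref{BGShell 4.9}.

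The main obstacle is precisely this $\e$-power accounting in the quadratic (non-linear) terms: one must track where each factor of $\e$ enters — from the cell volume $|\O_\e^{pq}|\sim\e^3$, from the Poincaré/trace constants on cells of size $\e$ after rescaling to the unit reference cell $Y_\kappa\times(-\kappa,\kappa)$, from the passage between $\ell^2$ and $\ell^4$ discrete norms, and from estimates \eqref{E0}$_{4,5}$--\eqref{FinEst} which already carry negative powers of $\e$. A secondary subtlety is ensuring the constant $C_M$ (and hence $C$) genuinely does not depend on $p,q$: this is guaranteed because the star-shapedness ratio $\frac{2+2\kappa}{\inf\{\kappa,1-\kappa\}}$ is uniform, as noted after \eqref{BGShell3.3}. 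Once $\sum_{p,q}|\GR_{pq}-\GI_3|^2$ is controlled and the $\e$-powers checked, the linear terms are immediate and the lemma follows by summing the per-cell estimates and invoking the Korn inequality \eqref{KornP2} where the first-order term $\|\nabla(v-\text{\bf id})\|_{L^2(\O_\e^M)}$ is needed.
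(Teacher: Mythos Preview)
Your plan is correct and matches the paper's proof: the same per-cell decomposition via $\GR_{pq}$, the same identity $\GR_{pq}+\GR_{pq}^T-2\GI_3=\GR_{pq}^T(\GR_{pq}-\GI_3)^2$, the same use of \eqref{BGShell3.3}$_1$, \eqref{EQ610}$_2$, \eqref{EQ611}, and the $\ell^2\hookrightarrow\ell^4$ step $\sum_{p,q}|\GR_{pq}-\GI_3|^4\le\big(\sum_{p,q}|\GR_{pq}-\GI_3|^2\big)^2$. The $\e$-bookkeeping you flag resolves cleanly as $\sum_{p,q}\e^3|\GR_{pq}-\GI_3|^4\le\e^{-3}\big(\sum_{p,q}\e^3|\GR_{pq}-\GI_3|^2\big)^2$, and your final appeal to \eqref{KornP2} is unnecessary --- the linear term in \eqref{BGShell 4.9} comes directly from summing \eqref{BGShell3.3}$_1$ over all cells.
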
	
		\begin{proof} 
			First, observe that
			$$2e(u)=\nabla v +(\nabla v)^T- 2\GI_3=(\nabla v-\GR_{pq})+(\nabla v-\GR_{pq})^T+(\GR_{pq}+\GR_{pq}^T-2\GI_3)\quad \text{a.e. in}\;\; \O^{pq}_\e.$$
			Hence, using estimate \eqref{BGShell3.3}$_1$, we get
			\begin{equation} \label{EQ515}
				\begin{aligned}
					\|\nabla v +(\nabla v)^T- 2\GI_3\|^2_{L^2(\O^{pq}_\e)}  & \leq C\|\nabla v-\GR_{pq}\|^2_{L^2(\O^{pq}_\e)}+ C\e^3|\GR_{pq}+\GR_{pq}^T-2\GI_3|^2_F\\
					&\leq C\|dist(\nabla v, SO(3))\|^2_{L^2(\O^{pq}_\e)} + C\e^3 | \GR_{pq}-\GI_3|_F^4,
				\end{aligned}
			\end{equation}
			since we have the equality $\GR_{pq}+\GR_{pq}^T-2\GI_3 = \GR_{pq}^T(\GR_{pq}-\GI_3)^2$ and $|\GR_{pq}|_F=\sqrt{3}$.\\
			Besides, from the estimates \eqref{EQ610}$_{2}$ and \eqref{EQ611}, we get
			$$
			\begin{aligned} 
				\sum_{p,\, q=-N_\e}^{N_\e-1} \e^3\big|\GR_{pq}-\GI_3\big|^2_F &\leq \sum_{p,\, q=-N_\e}^{N_\e-1} \e^3\left(|\Mc_{pq}(\GR)-\GI_3|^2+|\Mc_{pq}(\GR)-\GR_{pq}|^2\right)\\
				&\leq
				C\|dist(\nabla v, SO(3))\|^2_{L^2{(\O^M_\e)}}+ {C \over \e^{2}} \|dist(\nabla v, SO(3))\|^2_{L^2{(\O^B_\e)}}.
			\end{aligned}
			$$
			Observe that
			$$\sum_{p,\, q=-N_\e}^{N_\e-1}\big|\GR_{pq}-\GI_3\big|^4_F \leq \Big(\sum_{p,\, q=-N_\e}^{N_\e-1}\big|\GR_{pq}-\GI_3\big|^2_F\Big)^2.$$
			Hence,  the above estimates give
			$$
			\begin{aligned}
				\sum_{p,\, q=-N_\e}^{N_\e-1} \e^3\big|\GR_{pq}-\GI_3\big|^4_F & \leq {1\over \e^3}\Big(\sum_{p,\, q=-N_\e}^{N_\e-1} \e^3\big|\GR_{pq}-\GI_3\big|^2_F\Big)^2\\
				&\leq {C\over \e^3}\|dist(\nabla v, SO(3))\|^4_{L^2{(\O^M_\e)}}+ {C \over \e^{7}} \|dist(\nabla v, SO(3))\|^4_{L^2{(\O^B_\e)}}.
			\end{aligned}
			$$
			The sum of the inequalities \eqref{EQ515} and taking into account the above estimate lead to \eqref{BGShell 4.9}.
		\end{proof}	
		\section{Decomposition of displacements}\label{Sec06}
In this section, a new splitting of the displacements corresponding to the deformations defined on $\O_\e$ is derived. The primary objective of this new splitting is to reduce the contrast in the estimates in the region $\O_\e^M$. This section concludes with the decomposition of plate displacements through Kirchhoff-Love displacement and the presentation of Korn-type inequalities for the respective terms in the decomposition.
		
\subsection{New splitting of the displacement}
	Let the displacement $u=v-\text{\bf id}$ be defined in $\O_\e$. In Lemmas \ref{EFSTB} and \ref{lem55}, upper bounds for the norms of its strain tensors have been obtained. Given the high contrast in these estimates, a new splitting of $u$ is introduced below.
	
	The displacement is denoted by $\Gu$, defined as $\Gv-\text{\bf id}$, and we define $\Gu^M$ as $u-\Gu$.
		\begin{lemma} The displacements $\Gu$ and $\Gu^M$ satisfy
			\begin{equation} \label{Eq518}
				\begin{aligned}
					& \Gu=0 \quad \text{a.e. in}\;\;\;  D(O,R)\X (-r,r),\qquad \Gu^M=0\quad \hbox{a.e. in } \O^M_\e\cap \big(D(O,R)\X (-r,r)\big),\\
					&\|e(\Gu)\|_{L^2(\O_\e)} \leq  C  \|dist(\nabla v, SO(3))\|_{L^2{(\O^B_\e)}} + {C \over \e^{5/2}} \|dist(\nabla v, SO(3))\|^2_{L^2{(\O^B_\e)}},\\
					&\|e(\Gu^M)\|_{L^2(\O_\e^M)}\leq  C\|dist(\nabla v, SO(3))\|_{L^2{(\O^M_\e)}} + {C}  \|dist(\nabla v, SO(3))\|_{L^2{(\O^B_\e)}}  \\
					&\hskip 30mm + {C \over \e^{3/2}} \|dist(\nabla v, SO(3))\|^2_{L^2{(\O^M_\e)}}+{C\over \e^{7/2}}\|dist(\nabla v, SO(3))\|^2_{L^2(\O_\e^B)},\\
					&\|\Gu^M\|_{L^2(\O^M_\e)}+\e \|\nabla \Gu^M\|_{L^2(\O^M_\e)} \leq  C \e \|e(\Gu^M)\|_{L^2(\O_\e^M)}.
				\end{aligned}
			\end{equation} 
			The constants do not depend on $\e$. 
		\end{lemma}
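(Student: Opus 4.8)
The plan is to establish the five claims in \eqref{Eq518} essentially by bookkeeping from the results already proven, since $\Gu = \Gv - \text{\bf id}$ is the displacement of the \emph{extended} deformation, and $\Gu^M = u - \Gu$ measures the difference between the true displacement and its extension, which is supported only in the soft region $\O_\e^M$.

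First I would handle the vanishing claims \eqref{Eq518}$_1$. By Proposition \ref{ExDef1}, since $v = \text{\bf id}$ on $\Gamma_r$, the extension satisfies $\Gv = \text{\bf id}$ a.e.\ on $\gamma \times (-r,r)$, hence $\Gu = 0$ there; since $\Gu$ is defined on the whole plate and the clamping disk $D(O,R)$ lies inside this clamped cylindrical region, $\Gu = 0$ on $D(O,R)\times(-r,r)$. For $\Gu^M = u - \Gu$: on $\O_\e^B$ the extension agrees with $v$ by \eqref{EQ51-0}$_1$, so $\Gu^M = 0$ on $\O_\e^B$; on the part of $\O_\e^M$ inside the clamping region both $u$ and $\Gu$ vanish, giving $\Gu^M = 0$ there. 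The second estimate \eqref{Eq518}$_2$ is exactly the strain estimate of Lemma \ref{EFSTB} (equivalently \eqref{Eq II.3.10} applied to $\Gu$, using the extension bound \eqref{EQ51-0}$_2$ to replace $\|dist(\nabla\Gv,SO(3))\|_{L^2(\O_\e)}$ by $\|dist(\nabla v,SO(3))\|_{L^2(\O_\e^B)}$).

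Next, for \eqref{Eq518}$_3$ I would write $e(\Gu^M) = e(u) - e(\Gu)$ on $\O_\e^M$ and apply the triangle inequality: $\|e(\Gu^M)\|_{L^2(\O_\e^M)} \le \|e(u)\|_{L^2(\O_\e^M)} + \|e(\Gu)\|_{L^2(\O_\e^M)}$. The first term is controlled by Lemma \ref{lem55} (estimate \eqref{BGShell 4.9}), and the second by restricting \eqref{Eq518}$_2$ to $\O_\e^M \subset \O_\e$; collecting the four resulting terms and absorbing the (smaller-order) $\O_\e^M$ contributions from \eqref{Eq II.3.10} gives precisely the stated bound. The key observation making this useful is that the \emph{contrast} has dropped: the quadratic-in-$\|dist\|$ term over $\O_\e^M$ now carries $\e^{-3/2}$ rather than $\e^{-3}$, because on $\O_\e^M$ the troublesome rotation $\GR_{pq}$ of the small plate is measured against $\GI_3$ only through the averaged quantities $\Mc_{pq}(\GR)$, whose deviation was estimated in \eqref{EQ610}$_2$ and \eqref{EQ611}.

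Finally, for the Poincar\'e-type inequality \eqref{Eq518}$_4$ I would use that $\Gu^M$ vanishes on $\O_\e^M \cap (D(O,R)\times(-r,r))$ together with the fact that $\O_\e^M$ is the disjoint union of the thin plates $\O_\e^{pq}$ of lateral size $\e$; on each such small plate one applies a scaled Korn inequality (the deformation version of Lemma \ref{lem53}, or rather the rigidity estimate \eqref{BGShell3.3} composed with the standard second-Korn inequality on the fixed reference cell $Y_\kappa\times(-\kappa,\kappa)$ and rescaled), yielding $\|\Gu^M\|_{L^2(\O_\e^{pq})} + \e\|\nabla\Gu^M\|_{L^2(\O_\e^{pq})} \le C\e\|e(\Gu^M)\|_{L^2(\O_\e^{pq})}$ with $C$ uniform in $p,q,\e$; summing over all cells gives \eqref{Eq518}$_4$. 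I expect the main subtlety to be the bookkeeping in step three --- correctly tracking which $\|dist(\nabla v,SO(3))\|$ norm (over $\O_\e^B$ versus $\O_\e^M$) and which power of $\e$ each term carries, and verifying that the cross-terms coming from $e(\Gu)$ restricted to $\O_\e^M$ are dominated by the terms already present --- rather than any genuinely new analytic difficulty, since all the hard rigidity and trace estimates have been done in Sections \ref{Sec04}--\ref{Sec05}.
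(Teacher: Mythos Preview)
Your treatment of \eqref{Eq518}$_{1,2,3}$ is correct and matches the paper's proof: the boundary conditions come from the extension construction, \eqref{Eq518}$_2$ is \eqref{Eq II.3.10} combined with \eqref{EQ51-0}$_2$, and \eqref{Eq518}$_3$ is the triangle inequality $\|e(\Gu^M)\|_{L^2(\O_\e^M)}\le \|e(u)\|_{L^2(\O_\e^M)}+\|e(\Gu)\|_{L^2(\O_\e)}$ followed by \eqref{BGShell 4.9} and \eqref{Eq518}$_2$.

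There is, however, a genuine gap in your argument for the last estimate \eqref{Eq518}$_4$. You propose to anchor the cell-wise Korn inequality on the clamping condition ``$\Gu^M=0$ on $\O_\e^M\cap(D(O,R)\times(-r,r))$''. But this condition is only available for the cells $\O_\e^{pq}$ that intersect the clamped region; for the (vast majority of) cells away from it you have no such information, and neither the rigidity estimate \eqref{BGShell3.3} nor the second Korn inequality will eliminate the rigid motion on those cells. The combination ``rigidity $+$ second Korn'' only gives $\|\Gu^M-r_{pq}\|_{L^2(\O_\e^{pq})}+\e\|\nabla\Gu^M-\nabla r_{pq}\|_{L^2(\O_\e^{pq})}\le C\e\|e(\Gu^M)\|_{L^2(\O_\e^{pq})}$ for some rigid motion $r_{pq}$, and you have no way to control $r_{pq}$ on a generic cell from the clamping alone.

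The correct anchor is the fact you already established in the first paragraph: $\Gu^M=0$ a.e.\ in $\O_\e^B$. Since $\Gu^M\in H^1(\O_\e)^3$, this means that for \emph{every} cell the rescaled displacement $\Gw_{pq}(y)=\Gu^M(\e\xi_{pq}+\e y)$ vanishes on the frame part $Y_\kappa^B=\wt Y_\kappa\times(-\kappa,\kappa)$ of the reference cell $\Yc$ (equivalently, $\Gu^M$ has zero trace on the lateral boundary $\partial\o_\e^{pq}\times(-r,r)$). One then applies the \emph{first} Korn inequality on $\Yc$ for displacements vanishing on $Y_\kappa^B$ (or on $Y_\kappa\times(-\kappa,\kappa)$ for displacements with zero lateral trace) to obtain
\[
\|\Gw_{pq}\|_{L^2(\Yc)}+\|\nabla_y\Gw_{pq}\|_{L^2(\Yc)}\le C\|e_y(\Gw_{pq})\|_{L^2(\Yc)},
\]
rescales, and sums over $(p,q)$. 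This is precisely the paper's route; the clamping plays no role in this step.
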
	
		
		\begin{proof} 
			The equalities \eqref{Eq518}$_{1,2}$ are direct consequence of the boundary condition on $\O_\e$ and the definition of $\Gu^M$. From the estimates \eqref{EQ51-0} and \eqref{Eq II.3.10}, we obtain \eqref{Eq518}$_{3}$.\\ 
			Since $\Gu^M=u-\Gu$, we first obtain
			$$\|e(\Gu^M)\|_{L^2(\O_\e)}=\|e(\Gu^M)\|_{L^2(\O_\e^M)}\leq \|e(u)\|_{L^2(\O^M_\e)}+\|e(\Gu)\|_{L^2(\O_\e)}.$$ Then, the estimates \eqref{Eq II.3.10} and \eqref{BGShell 4.9} give \eqref{Eq518}$_{4}$.\\
			We prove \eqref{Eq518}$_{5}$ to do that, we first apply the $3$D-Korn inequality in the fixed domain $\wt{Y}_\kappa$ with the displacement $\Gw_{pq}(y)=\Gu^M(\e\xi_{pq}+\e y)$, to attain the estimate
			$$\begin{aligned}
				\|\Gw_{pq}\|_{L^2(\wt{Y}_\kappa)}+ \|\nabla_y \Gw_{pq}\|_{L^2(\wt{Y}_\kappa)}\leq C\|e(\Gw_{pq})\|_{L^2(\wt{Y}_\kappa)}.
			\end{aligned}$$ The constant does not depend on $(p,q)\in \{-N_\e,\ldots,0,\ldots,N_\e-1\}^2$, it only depends on $\kappa$.\\
			Applying the change of variable, since
			$$ \nabla_y \Gw_{pq}(y)=\e  \nabla\Gu^M(\e\xi_{pq}+\e y) \qquad \hbox{for a.e. } y\in \wt{Y}_\kappa$$ 
			we obtain 
			$$ \|\Gu^M\|^2_{L^2(\e(\xi_{pq}+\wt{Y}_\kappa))}+\e^2\|\nabla \Gu^M\|^2_{\e(\xi_{pq}+\wt{Y}_\kappa)} \leq C\e^2\|e(\Gu^M)\|^2_{L^2(\e(\xi_{pq}+\wt{Y}_\kappa))}.$$
			So, adding for all $p,q$ yields
			\begin{equation}\label{Eq519}
				\|\Gu^M\|^2_{L^2(\O_\e)}+\e^2\|\nabla \Gu^M\|^2_{L^2(\O_\e)} \leq C\e^2\|e(\Gu^M)\|^2_{L^2(\O_\e)}.
			\end{equation} From the above estimate, we obtain the estimate \eqref{Eq518}$_{5}$. 	
		\end{proof}
		We are in a position to use the results of \cite{GKL} and to decompose the displacement $\Gu$ of the plate $\O_\e$.

		\subsection{Decomposition via a Kirchhoff-Love displacement}\label{Sec8}
In this subsection, the decomposition of every displacement of the plate $\O_\e$ is decomposed as the sum of a Kirchhoff-Love displacement and a residual part.
		Below we recall a result proven in \cite{GKL}.
		\begin{lemma}[Theorem 6.1 in \cite{GKL}]\label{DeComKL1} Every displacement belonging to $H^1(\O_\e)^3$ can be decomposed into the sum of a Kirchhoff-Love displacement and a residual part
			\begin{equation}\label{PD1}
				\Gu= U_{KL}(x) + \fu(x),\qquad \text{  a.e. in }\;\; x\in\O_\e,
			\end{equation} 
			where $U_{KL}$ is a Kirchhoff-Love displacement given by 
			$$
			U_{KL} (x) = \left(\begin{aligned}
				&\Uc_1 (x') - x_3 {\partial \Uc_3 \over \partial x_1}(x')\\
				&\Uc_2 (x') - x_3 {\partial \Uc_3 \over \partial x_2}(x')\\
				&\hspace{0.9cm}\Uc_3 (x')
			\end{aligned}\right)
			\;\; \text{for a.e.}\;\; x=(x', x_3) \in \O_\e.
			$$
			We have
$$
				\Uc_m=\Uc_1 \Ge_1 +\Uc_2 \Ge_2 \in H^1(\o)^2,\quad \Uc_3\in H^2(\o),\qquad \fu\in H^1(\O_\e)^3
$$  and the following estimates
			\begin{equation}\label{PD2}
				\begin{aligned}
					\|e_{\alpha \beta}(\Uc_{m})\|_{L^2(\o)} &\leq {C \over \e^{1/2}} \|e(\Gu)\|_{L^2(\O_\e)},\\
					\|D^2(\Uc_3)\|_{L^2(\o)} &\leq {C \over \e^{3/2} }\|e(\Gu)\|_{L^2(\O_\e)},\\
					\|\fu\|_{L^2(\O_\e)} +\e\|\nabla\fu\|_{L^2(\O_\e)} &\leq C\e \|e(\Gu)\|_{L^2(\O_\e)}.
				\end{aligned}
			\end{equation}
			The constants do not depend on $\e$.
		\end{lemma}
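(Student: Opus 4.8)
The statement to prove is Lemma~\ref{DeComKL1} (Theorem 6.1 in \cite{GKL}), the Kirchhoff--Love decomposition of a displacement on the thin plate $\O_\e$ with the estimates \eqref{PD2}. Since the paper explicitly attributes this to \cite{GKL}, a self-contained proof here should reconstruct the standard argument. Let me sketch the plan.

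\textbf{Plan.} The plan is to first reduce to a fixed reference domain by rescaling in the transverse variable, then build the Kirchhoff--Love part from averages of $\Gu$ and its transverse derivative over the thickness, and finally control the remainder by a transverse Poincar\'e/Korn inequality.

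First I would set $\Omega = \o \times (-1,1)$ and rescale: for $\Gu \in H^1(\O_\e)^3$ define $\widehat\Gu(x', x_3) = \Gu(x', r x_3)$ on $\Omega$, so that $\partial_3 \widehat\Gu = r\,(\partial_3 \Gu)(x', r x_3)$. Then I would define the mid-surface in-plane field and the transverse deflection by thickness averages:
\[
\Uc_\alpha(x') = \frac{1}{2r}\int_{-r}^{r} \Gu_\alpha(x', t)\, dt, \qquad
\Uc_3(x') = \frac{1}{2r}\int_{-r}^{r} \Gu_3(x', t)\, dt,
\]
and check that the Kirchhoff--Love ansatz $U_{KL}$ as written in the statement captures exactly the zeroth- and first-order (in $x_3$) behaviour of $\Gu$: the first two components have the bending correction $-x_3 \partial_\alpha \Uc_3$, and $\fu := \Gu - U_{KL}$ is the residual.

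The key steps, in order: (1) establish the regularity claims $\Uc_m \in H^1(\o)^2$, $\Uc_3 \in H^2(\o)$ — the $H^2$ regularity of $\Uc_3$ is the delicate point and comes from the fact that the symmetrized gradient $e(\Gu)$ controls $\partial_\alpha\partial_\beta \Uc_3$ through the combination $e_{\alpha 3}(\Gu)$, since $\partial_\alpha \Uc_3 \approx -$(transverse average of $\partial_3 \Gu_\alpha$) and differentiating once more in $x_\beta$ brings in $e_{\alpha\beta}$-type and $e_{\alpha 3}$-type terms after integration by parts in $x_3$; (2) prove the membrane estimate \eqref{PD2}$_1$: $e_{\alpha\beta}(\Uc_m)$ is the thickness average of $e_{\alpha\beta}(\Gu)$ minus a term involving $x_3 \partial_\alpha\partial_\beta\Uc_3$ which integrates to zero, so a Cauchy--Schwarz in $x_3$ gives the $\e^{-1/2}$ loss from the measure $|\O_\e| \sim \e^2 r = \e^3$ versus $|\o| \sim \e^2$; (3) prove \eqref{PD2}$_2$ by the argument in (1) with the same scaling bookkeeping, yielding the $\e^{-3/2}$ factor; (4) prove the residual estimate \eqref{PD2}$_3$: on the fixed domain $\Omega$, $\widehat\fu$ has zero transverse average of its first two components and zero transverse average — so a one-dimensional Poincar\'e inequality in $x_3$ controls $\|\widehat\fu\|_{L^2}$ and a Korn-type inequality on slices controls $\|\nabla_{x_3}\widehat\fu\|$ by $\|e(\widehat\Gu)\|$ up to the already-estimated KL contributions; undoing the scaling converts the fixed-domain bound into the $\e$-weighted bound $\|\fu\|_{L^2(\O_\e)} + \e\|\nabla\fu\|_{L^2(\O_\e)} \le C\e \|e(\Gu)\|_{L^2(\O_\e)}$.

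\textbf{Main obstacle.} The hard part will be step (1)--(3): showing that $\Uc_3 \in H^2(\o)$ with the quantitative bound $\|D^2\Uc_3\|_{L^2(\o)} \le C\e^{-3/2}\|e(\Gu)\|_{L^2(\O_\e)}$. The naive definition of $\partial_\alpha\Uc_3$ as $-$(average of $\partial_3\Gu_\alpha$) is not obviously in $H^1$; one must instead use that $2e_{\alpha 3}(\Gu) = \partial_\alpha \Gu_3 + \partial_3 \Gu_\alpha$, take the transverse average, integrate by parts against a weight, and express a second in-plane derivative of $\Uc_3$ purely in terms of $e(\Gu)$ and lower-order KL quantities — this is exactly the mechanism behind Korn's inequality for plates and requires care with the boundary terms at $x_3 = \pm r$. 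Everything else (the membrane estimate, the residual Poincar\'e--Korn bound, the scaling powers of $\e$) is routine once the correct definitions and the fixed-domain Korn inequality on $\Omega$ are in place. Since this is Theorem~6.1 of \cite{GKL}, I would in practice cite it directly and only reproduce the scaling normalization needed to match the $\e$-powers in \eqref{PD2} to the conventions of the present paper.
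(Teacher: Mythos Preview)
The paper does not prove this lemma at all: it is stated as ``Below we recall a result proven in \cite{GKL}'' and immediately followed by the next remark, with no argument given. So there is no proof in the paper to compare against; your instinct at the end of the proposal --- to cite \cite{GKL} directly and only check that the $\e$-powers match the conventions here (recall $r=\kappa\e$) --- is exactly what the paper does.

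Your reconstruction sketch is broadly the right strategy (thickness averages for $\Uc$, first moment for the bending, transverse Poincar\'e for $\fu$), and you correctly flag that the $H^2$ regularity of $\Uc_3$ with the $\e^{-3/2}$ bound is the nontrivial step. One small slip: you write $|\O_\e|\sim\e^2 r=\e^3$ and $|\o|\sim\e^2$, but in this paper $\o=(-L,L)^2$ has fixed area and $|\O_\e|=2r|\o|\sim\e$; the $\e^{-1/2}$ in \eqref{PD2}$_1$ comes simply from Cauchy--Schwarz over the thickness interval of length $2r\sim\e$, and the extra $\e^{-1}$ in \eqref{PD2}$_2$ from the first-moment weight $x_3/r^2$. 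This does not affect your conclusions, only the bookkeeping.
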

		\begin{remark}
			Since our plate $\O_\e$ is clamped on the part $\Gamma_\e=\gamma\X(-r,r)$ i.e. $\Gu=0$ a.e. on $\Gamma_{r}$, then we have
$$
				\Uc=0,\quad \nabla\Uc_3=0,\quad\text{a.e. on}\quad \gamma,\quad \text{and}\quad \fu=0\quad\text{a.e. on}\quad \Gamma_r.
$$
		\end{remark}
		Using the above Lemma and Remark, 
		\begin{lemma}[Korn-type inequalities] One has
			\begin{equation}\label{KornPUD1}
				\begin{aligned}
					\|\Gu_1\|_{L^2(\O_\e)}+\|\Gu_2\|_{L^2(\O_\e)}+\e\|\Gu_3\|_{L^2(\O_\e)}&\leq C\|e(\Gu)\|_{L^2(\O_\e)},\\
					\sum_{\alpha,\beta=1}^2\left\|{\partial\Gu_\beta\over \partial x_\alpha}\right\|_{L^2(\O_\e)}+\left\|{\partial\Gu_3\over \partial x_3}\right\|_{L^2(\O_\e)}&\leq C\|e(\Gu)\|_{L^2(\O_\e)},\\
					\sum_{\alpha=1}^2\left(\left\|{\partial\Gu_3\over \partial x_\alpha}\right\|_{L^2(\O_\e)}+\left\|{\partial\Gu_\alpha\over \partial x_3}\right\|_{L^2(\O_\e)}\right)&\leq {C\over\e}\|e(\Gu)\|_{L^2(\O_\e)}.
				\end{aligned}
			\end{equation}
			The constants do not depend on $\e$.
		\end{lemma}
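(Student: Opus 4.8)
The plan is to feed the Kirchhoff–Love decomposition from Lemma~\ref{DeComKL1} into the standard plate Korn inequalities and track the powers of $\e$. Write $\Gu = U_{KL} + \fu$ as in \eqref{PD1}. For the first line of \eqref{KornPUD1}, note that the in-plane components satisfy $\Gu_\alpha = \Uc_\alpha - x_3\partial_\alpha\Uc_3 + \fu_\alpha$ and $\Gu_3 = \Uc_3 + \fu_3$. Using $|x_3|\le r = \kappa\e$, the estimate $\|\fu\|_{L^2(\O_\e)}\le C\e\|e(\Gu)\|_{L^2(\O_\e)}$ from \eqref{PD2}$_3$, and the one-dimensional Poincaré inequality in $x_3$, it suffices to bound $\e^{1/2}\|\Uc_\alpha\|_{L^2(\o)}$, $\e^{1/2}\|\Uc_3\|_{L^2(\o)}$ and $\e^{1/2}\|\nabla\Uc_3\|_{L^2(\o)}$ by $C\|e(\Gu)\|_{L^2(\O_\e)}$. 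The first follows from the 2D Korn inequality on $\o$ applied to $\Uc_m\in H^1(\o)^2$, which vanishes on $\gamma$, combined with \eqref{PD2}$_1$; the bounds on $\Uc_3$ and $\nabla\Uc_3$ follow from \eqref{PD2}$_2$ and the Poincaré inequality on $\o$, using $\Uc_3=0$, $\nabla\Uc_3=0$ on $\gamma$. This produces the asserted $\|\Gu_1\|+\|\Gu_2\|+\e\|\Gu_3\|\le C\|e(\Gu)\|$ after multiplying through by the $x_3$-integration factor $\e^{1/2}$.

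For the second line of \eqref{KornPUD1}, differentiate the decomposition: $\partial_\alpha\Gu_\beta = \partial_\alpha\Uc_\beta - x_3\partial^2_{\alpha\beta}\Uc_3 + \partial_\alpha\fu_\beta$ and $\partial_3\Gu_3 = \partial_3\fu_3$ (the KL part has $\partial_3 (U_{KL})_3 = 0$). The warping contributes $\e\|\nabla\fu\|_{L^2(\O_\e)}\le C\e^2\|e(\Gu)\|$, hence $\|\nabla\fu\|_{L^2(\O_\e)}\le C\e\|e(\Gu)\|$, which is more than enough. The term $x_3\partial^2_{\alpha\beta}\Uc_3$ is controlled by $r\,\|D^2\Uc_3\|_{L^2(\o)}\cdot\e^{1/2} \le C\e\cdot\e^{-3/2}\cdot\e^{1/2}\|e(\Gu)\| = C\|e(\Gu)\|$ after the $x_3$-integration. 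Finally $\|\partial_\alpha\Uc_\beta\|_{L^2(\o)}$: the symmetric part $e_{\alpha\beta}(\Uc_m)$ is bounded by $\e^{-1/2}\|e(\Gu)\|$ via \eqref{PD2}$_1$, and the skew part is controlled by a 2D Korn inequality on $\o$ (again using the clamping on $\gamma$), so $\|\nabla\Uc_m\|_{L^2(\o)}\le C\e^{-1/2}\|e(\Gu)\|$; multiplying by $\e^{1/2}$ gives the bound on $\O_\e$.

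For the third line of \eqref{KornPUD1}, the relevant components are $\partial_\alpha\Gu_3 = \partial_\alpha\Uc_3 + \partial_\alpha\fu_3$ and $\partial_3\Gu_\alpha = -\partial_\alpha\Uc_3 + \partial_3\fu_\alpha$; both contain the factor $\partial_\alpha\Uc_3$, whose $L^2(\o)$ norm is only controlled at order $\e^{-1/2}\cdot(\text{const})$ through Poincaré from $D^2\Uc_3$, namely $\|\nabla\Uc_3\|_{L^2(\o)}\le C\|D^2\Uc_3\|_{L^2(\o)}\le C\e^{-3/2}\|e(\Gu)\|$. Integrating in $x_3$ multiplies by $\e^{1/2}$, yielding $\|\partial_\alpha\Uc_3\|_{L^2(\O_\e)}\le C\e^{-1}\|e(\Gu)\|$, which matches the claimed $\e^{-1}$ loss; the warping terms $\partial_\alpha\fu_3$, $\partial_3\fu_\alpha$ are again $O(\e)$ and absorbed.

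\textbf{Main obstacle.} The only genuinely delicate point is the application of the 2D Korn inequality on $\o$ to recover the full gradients of $\Uc_m$ (and, for line three, the Poincaré chain for $\nabla\Uc_3$) \emph{with constants independent of $\e$}: Lemma~\ref{DeComKL1} supplies only symmetric combinations and second derivatives, so one must invoke a Korn/Poincaré inequality on the \emph{fixed} domain $\o$ with the clamping condition on $\gamma$ — this is where the boundary condition $\gamma\subset\overline\o$ with positive measure is essential, and it is the step that prevents a naive bound from going through. Everything else is bookkeeping of the $x_3$-rescaling factor $\e^{1/2}$ and the powers in \eqref{PD2}.
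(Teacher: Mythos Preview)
Your approach is exactly what the paper intends: the paper gives no explicit proof and simply writes ``Using the above Lemma and Remark,'' meaning Lemma~\ref{DeComKL1} (the Kirchhoff--Love decomposition with estimates \eqref{PD2}) together with the boundary conditions $\Uc_m=0$, $\Uc_3=\nabla\Uc_3=0$ on $\gamma$, combined with the 2D Korn and Poincar\'e inequalities on the fixed domain $\o$. Your identification of the ``main obstacle'' is accurate --- the only nontrivial input beyond \eqref{PD2} is the $\e$-independent 2D Korn/Poincar\'e inequality on $\o$ using the clamping on $\gamma$.

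Two small bookkeeping slips to fix (they do not affect the argument): first, in line one your stated intermediate targets $\e^{1/2}\|\Uc_3\|_{L^2(\o)}$ and $\e^{1/2}\|\nabla\Uc_3\|_{L^2(\o)}$ are too strong --- what you actually need (and what the Poincar\'e chain gives) is $\e^{3/2}\|\Uc_3\|_{L^2(\o)}\le C\|e(\Gu)\|$ and $\e^{3/2}\|\nabla\Uc_3\|_{L^2(\o)}\le C\|e(\Gu)\|$, because of the extra $\e$ in front of $\|\Gu_3\|$ and the factor $|x_3|\le\kappa\e$ respectively. Second, \eqref{PD2}$_3$ reads $\|\fu\|+\e\|\nabla\fu\|\le C\e\|e(\Gu)\|$, so $\|\nabla\fu\|_{L^2(\O_\e)}\le C\|e(\Gu)\|$, not $C\e\|e(\Gu)\|$; the warping gradient is $O(1)$, not $O(\e)$, but this is still enough everywhere you use it.
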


\section{Case 1: Asymptotic behavior of the composite plate due to re-scaling of forces}\label{Sec07}
		In this section, the main results of Case 1 are presented. The section begins with the definition of the non-linear elasticity problem and the imposition of necessary assumptions on the right-hand side forces. The focus then shifts to the analysis of the asymptotic behavior of the Green St. Venant's strain tensor. To achieve this, two periodic unfolding method operators are introduced.
The key result of Case 1 is proved in Theorem \ref{MainConR}, which details the asymptotic behavior of the minimization sequence $\ds\left\{\frac{m_\e}{\e^5}\right\}_\e$. The proof of this theorem relies on techniques involving $\Gamma$-convergence.
In addition to describing the asymptotic behavior, this section also addresses the linear elastic cell problems and establishes the existence of a minimizer for the homogenized limit energy.
		
		\subsection{The non-linear elasticity problem in Von-K\'arm\'an regime}\label{Ssec71}
  In this section, the elasticity problem is introduced, and the initial setup is established.
Let us denote $\wh W_\e$ the local elastic energy density, then the total elastic energy $\GJ_\e(v)\footnote{For later convenience, the term $\int_{\O_\e} {f_\e}(x)\cdot I_d(x)dx$ is added to the usual standard energy, indeed this does not affect the minimizing problem for $\GJ_\e$.}$ over $\GV_\e$ is given by
		\begin{equation}\label{stpr}
			\GJ_{\e}(v)= 	\int_{\O_{\e}} \wh W_\e\big(x,\nabla v\big)\, dx- \int_{\O_{\e}} f_{\e}(x)\cdot(v(x)-\text{\bf id}(x))\, dx,\qquad
			\forall v\in \GV_\e.
		\end{equation}
		The local density energy $\wh W_\e\, :\ \,  \O_\e\times \GM_3 \longrightarrow [0,+\infty]$ is given by
		\begin{equation}\label{Eq62}
			\wh{W}_\e(F)=\widehat{W}_\e(x , F)=\left\{
			\begin{aligned}
				&\begin{aligned}
					& \e^{2} Q\Big(\frac{x}{\e} ,{\bf E}(F)\Big) &&\hbox{if}\;\;  \hbox{det}(F)> 0,\\
					& +\infty  &&\hbox{if}\;\;  \hbox{det}(F)\leq 0,
				\end{aligned}&&\quad\hbox{for a.e. }  x \in \O^{M}_\e,\\ 
				&\begin{aligned}
					&  Q\Big(\frac{x}{\e} , {\bf E}(F)\Big) &&\hbox{if}\;\;  \hbox{det}(F)> 0,\\
					& +\infty  &&\hbox{if}\;\;  \hbox{det}(F)\leq 0,
				\end{aligned}&&\quad\hbox{for a.e. }  x \in \O^{B}_\e.\\ 
			\end{aligned}\right.
		\end{equation}
	 The quadratic form $Q$ is defined by
		$$Q\left({x\over \e},S\right)= a^\e_{ijkl}\left({x\over \e}\right)S_{ij}S_{kl} \quad \hbox{for a.e. $x\in \O_\e$ and for all $S\in \GS_3$},$$ 
		where
		$$a^\e_{ijkl}\left({x\over \e}\right)=a_{ijkl}\left(\left\{{x'\over \e}\right\}, {x_3\over \e}\right),\quad \text{a.e.}\quad x\in\O_\e.$$
		Here $a_{ijkl} \in L^{\infty}(\Yc)$ and are  periodic with respect to $\Ge_1$ and $\Ge_2$ in the undeformed configuration.
		Moreover, the Hooke's tensor $a$ is symmetric, i.e., $a_{ijkl}= a_{jikl} = a_{klji}$. Also it is positive definite and satisfies that there exists a $c_0>0$ such that
		\begin{equation}\label{QPD}
			c_0\,S_{ij}S_{ij}\leq a_{ijkl}(y)S_{ij}S_{kl}\quad \hbox{for a.e. $y\in \Yc$ and for all $S\in \GS_3$}.
		\end{equation}
			So, we have
		\begin{equation}\label{eq64}
			c_0|F^{T}F-\GI_3|^2_F\leq  Q\Big(y , {1\over 2}\big (F^{T}F -\GI_3\big)\Big)=Q\left(y,{\bf E}(F)\right)\qquad \hbox{for a.e. } x\in \Yc.
		\end{equation} 
		 We recall that (see e.g. \cite{BGRod} or \cite{Friesecke06})
		\begin{equation}\label{SymCon}
		dist(F, SO(3))\leq |F^TF-\GI_3|_F \quad \text{for all}\;\;F \in \GM_3\quad\hbox{such that}\;\; \det(F)>0.
		\end{equation}
		\begin{remark}
          From the above assumptions on the local energy, it can be observed that:
			\begin{itemize}
				\item 			The Hooke's tensor $a$ satisfy the following
  					$$ a_{ijkl}^\e(y)=a_{ijkl}(\{y'\},y_3)=a_{ijkl}(y),\quad\forall\; y\in\Yc.$$
				\item The local energy $\wh{W}_\e$, also known as stored energy, is a measurable integrand that is $[0,1)^2$ periodic in the first two components of the first variable such that
				$$\begin{aligned}
					\wh{W}_\e(\GI_3)= 0,\quad \wh{W}_\e(F)\geq c_0 dist^2(F,SO(3)) \quad \forall F\in \GM_3.
				\end{aligned}$$
			\end{itemize}
		\end{remark}  
		As a consequence of \eqref{Eq62}--\eqref{SymCon} we have 
		$$
		\begin{aligned}
			&c_0 \big(\e^2\|dist(\nabla v, SO(3))\|^{2}_{L^{2}(\O^M_\e)}+\|dist(\nabla v, SO(3))\|^{2}_{L^{2}(\O^B_\e)}\big)\\
			& \hskip 10mm \leq c_0\big(\e^2\|\nabla v(\nabla v)^T-\GI_3\|^2_{L^2(\O^M_\e)}+\|\nabla v(\nabla v)^T-\GI_3\|^2_{L^2(\O^B_\e)}\big)\leq   \int_{\O_{\e}} \wh W_\e\big(\cdot,\nabla v\big)\, dx.
		\end{aligned}
		$$
		Note that $\GJ_\e(\text{\bf id})=0$, so we get
		\begin{equation}\label{GOWEQ57}
			c_0 \big(\e^2\|dist(\nabla v, SO(3))\|^{2}_{L^{2}(\O^M_\e)}+\|dist(\nabla v, SO(3))\|^{2}_{L^{2}(\O^B_\e)}\big)-\int_{\O_\e}f_\e\cdot(v-\text{\bf id})dx \leq {\GJ}_{\e}(v).
		\end{equation}
		Therefore, from now on the only deformations are chosen such that $\GJ_\e(v)\leq \GJ_\e(\text{\bf id})= 0$.
		
		Then the non-linear elasticity problem reads 
		\begin{equation}\label{MinPro}
			m_{\e}=	\inf_{v\in \GV_{\e}} \GJ_{\e}(v)\leq 0.
		\end{equation}
		\begin{remark}\label{Rem2}
	As a classical example of local elastic energy satisfying the above assumptions, we mention the  St. Venant-Kirchhoff's  material  for which
				$$
			\widehat{W}_\e(x , F)=\left\{
				\begin{aligned}
				&\begin{aligned}
					&{\e^2\lambda\over 8}\big(tr(F^TF-\GI_3) \big)^2+{\e^2\mu\over 4}\left|F^T F-\GI_3\right|^2_F&&\hbox{if}\quad \det(F)>0,\\
					&+\infty &&\hbox{if}\quad\det(F)\le 0,
				\end{aligned}&& \quad\hbox{for a.e. }  x \in \O^{M}_\e,\\ 
				&\begin{aligned}
					&{\lambda\over 8}\big(tr(F^TF-\GI_3) \big)^2+{\mu\over 4}\left|F^T F-\GI_3\right|^2_F&&\hbox{if}\quad \det(F)>0,\\
					&+\infty &&\hbox{if}\quad\det(F)\le 0,
				\end{aligned}&& \quad\hbox{for a.e. }  x \in \O^{B}_\e.
				\end{aligned}
				\right.
				$$ 
		\end{remark}
\subsection{Assumption on the right-hand side forces}\label{Ssec72}
		The forces have to admit a certain scaling with respect to $\e$. For the composite plate the required forces are of the type
		\begin{equation}\label{AFB1}
			f_{\e}=\e^2f_1\Ge_1+\e^2f_2\Ge_2+\e^3f_3\Ge_3\quad \text{a.e. in}\;\; \O_\e,
		\end{equation}
		with $f\in L^2(\o)^3$. In order to obtain at the limit a von-K\'arm\'an model, the applied forces must satisfy a condition 
		\begin{equation}\label{AFB2}
			\|f\|_{L^2(\o)}\leq C^*.
		\end{equation}
		The constant depends on the reference cell $\Yc$, the mid-surface $\o$ of the structure and the local elastic energy $\wh{W}_\e$ (see Lemma \ref{GOWL54}).\\
		The scaling of the force gives rise to the order of the energy in the elasticity problem. We prove this in the lemma below.
		\begin{lemma}\label{GOWL54}
			Let $v\in \GV_{\e}$ be a deformation such that $\GJ_{\e}(v)\leq 0$. Assume the forces fulfill \eqref{AFB1}, there exists a constant $C^*$ independent of $\e$ and the applied forces such that, if $\|f\|_{L^2(\o)}\le C^*$   (see \eqref{EQ74}) one has 
			\begin{equation}\label{EFBM}
				\begin{aligned}
					\|dist(\nabla v, SO(3))\|_{L^{2}(\O^B_\e)}+\e\|dist(\nabla v, SO(3))\|_{L^{2}(\O^M_\e)}&&\leq C\e^{5/2}\|f\|_{L^2(\o)},\\
					\|\nabla v(\nabla v)^T-\GI_3\|_{L^2(\O^B_\e)}+\e \|\nabla v(\nabla v)^T-\GI_3\|_{L^2(\O^M_\e)}&&\leq C\e^{5/2}\|f\|_{L^2(\o)}.
				\end{aligned}
			\end{equation}	
			The constants do not depend on $\e$.
		\end{lemma}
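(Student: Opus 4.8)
The plan is to show that the hypothesis $\GJ_\e(v)\le 0$, combined with the coercivity estimate \eqref{GOWEQ57} and the force scaling \eqref{AFB1}, forces the distance-to-$SO(3)$ terms to be of order $\e^{5/2}$. The starting point is the inequality
\begin{equation*}
	c_0 \big(\e^2\|dist(\nabla v, SO(3))\|^{2}_{L^{2}(\O^M_\e)}+\|dist(\nabla v, SO(3))\|^{2}_{L^{2}(\O^B_\e)}\big)\le \int_{\O_\e}f_\e\cdot(v-\text{\bf id})\,dx,
\end{equation*}
which is just \eqref{GOWEQ57} rearranged. So the whole game is to bound the force term on the right by (a small constant times) the left-hand side plus an absorbable remainder of order $\e^5$.

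First I would estimate $\int_{\O_\e}f_\e\cdot(v-\text{\bf id})\,dx$ componentwise using \eqref{AFB1}: the in-plane components carry a factor $\e^2$ and the transverse component a factor $\e^3$. Writing $u=v-\text{\bf id}$ and splitting the integral over $\O_\e^B$ and $\O_\e^M$, I would apply Cauchy–Schwarz in each region and then invoke the Korn-type inequalities already established — Lemma \ref{EFSTB} for $\|u\|_{H^1(\O_\e^B)}$ and Lemma \ref{lem53} (estimate \eqref{KornP2}$_2$) for $\|u\|_{L^2(\O_\e^M)}$ — to replace $\|u\|$ by a constant times $\frac1\e\|dist(\nabla v,SO(3))\|_{L^2(\O_\e^B)}+\|dist(\nabla v,SO(3))\|_{L^2(\O_\e^M)}$. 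Combining the $\e$-powers, the force term is bounded by something like $C\e^{3/2}\|f\|_{L^2(\o)}\big(\|dist(\nabla v,SO(3))\|_{L^2(\O_\e^B)}+\e\|dist(\nabla v,SO(3))\|_{L^2(\O_\e^M)}\big)$, i.e. $C\e^{3/2}\|f\|_{L^2(\o)}\cdot(\text{square root of the LHS up to }c_0)$. Then Young's inequality $ab\le \tfrac{c_0}{2}a^2+\tfrac{1}{2c_0}b^2$ absorbs the distance-norm factor into the left side and leaves a residual $\tfrac{C}{c_0}\e^3\|f\|^2_{L^2(\o)}$ — wait, that residual is $\e^3$, not $\e^5$; to get the sharp $\e^{5/2}$ one must also use that the integration domain $\O_\e$ has thickness $r=\kappa\e$, contributing an extra $\e$ from the $x_3$-integration, so Cauchy–Schwarz on $\O_\e$ rather than on $\omega$ yields an additional $\e^{1/2}$ and the residual becomes $\tfrac{C}{c_0}\e^5\|f\|^2_{L^2(\o)}$. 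This is where the constant $C^*$ enters: one needs $\|f\|_{L^2(\o)}\le C^*$ so that the coefficient in front of the absorbed term is strictly less than $c_0$ and the absorption is legitimate; the precise threshold is exactly condition \eqref{EQ74} referenced in the statement.

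The resulting bound is $\|dist(\nabla v,SO(3))\|^2_{L^2(\O_\e^B)}+\e^2\|dist(\nabla v,SO(3))\|^2_{L^2(\O_\e^M)}\le C\e^5\|f\|^2_{L^2(\o)}$, which upon taking square roots is precisely \eqref{EFBM}$_1$. For \eqref{EFBM}$_2$, I would use the elementary inequality — valid whenever $\det F>0$ — that $|FF^T-\GI_3|_F$ is controlled by $dist(F,SO(3))$ up to a lower-order (quadratic) correction, or more directly the chain $\|FF^T-\GI_3\|_{L^2}\le C\|dist(\nabla v,SO(3))\|_{L^2}(1+\|dist(\nabla v,SO(3))\|_{L^\infty})$-type estimate; since on $\O_\e^B$ the preliminary bound already gives $\|dist(\nabla v,SO(3))\|_{L^2(\O_\e^B)}\lesssim \e^{5/2}$, the quadratic term is genuinely lower order and \eqref{EFBM}$_2$ follows from \eqref{EFBM}$_1$ together with \eqref{eq64}.

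The main obstacle I anticipate is the bookkeeping of $\e$-powers in the force term, especially making sure the thickness factor $r=\kappa\e$ is correctly accounted for so that the residual lands at $\e^5$ and not $\e^3$ or $\e^4$; a related subtlety is that the Korn inequality in $\O_\e^M$ (Lemma \ref{lem53}) brings in a factor $\tfrac1\e$ multiplying the $\O_\e^B$-distance term, so the cross-contributions between the two regions must be tracked carefully and re-absorbed. A secondary point is circular dependence: the non-linear Korn inequality of Lemma \ref{EFSTB} is only stated under the a priori smallness $\|dist(\nabla\Gv,SO(3))\|_{L^2(\O_\e)}\le C\e^{3/2}$ required for the decomposition \eqref{EQ51}, so the argument should really be run as a bootstrap — first establish a crude bound guaranteeing the hypotheses of \eqref{EQ51}, then feed in the decomposition estimates to close — or else one quotes the $H^1$-Korn bound in a form that already incorporates this, which is what \eqref{EQ448} and Lemma \ref{EFSTB} provide. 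Once these two points are handled, the rest is routine Cauchy–Schwarz and Young.
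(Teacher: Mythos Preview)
There is a genuine gap in your argument for \eqref{EFBM}$_1$: the isotropic Korn-type bounds you invoke (Lemma~\ref{EFSTB} and \eqref{KornP2}$_2$) are one power of $\e$ too weak, and your proposed fix via ``an extra $\e^{1/2}$ from the thickness'' is a miscount --- that factor is already present once you pass from $\|f_\alpha\|_{L^2(\O_\e)}$ to $\e^{1/2}\|f_\alpha\|_{L^2(\o)}$. With your bounds $\|u\|_{L^2}\le \tfrac{C}{\e}\|dist(\nabla v,SO(3))\|_{L^2(\O_\e^B)}+C\|dist(\nabla v,SO(3))\|_{L^2(\O_\e^M)}$ the force term is genuinely only $C\e^{3/2}\|f\|\cdot(\ldots)$, which after absorption yields $\|dist(\nabla v,SO(3))\|_{L^2(\O_\e^B)}\le C\e^{3/2}$, not $\e^{5/2}$.

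The missing ingredient is the \emph{anisotropic} plate Korn inequality \eqref{KornPUD1}$_1$, obtained from the Kirchhoff--Love decomposition of the extended displacement $\Gu$ (Lemma~\ref{DeComKL1}): one has $\|\Gu_1\|_{L^2(\O_\e)}+\|\Gu_2\|_{L^2(\O_\e)}+\e\|\Gu_3\|_{L^2(\O_\e)}\le C\|e(\Gu)\|_{L^2(\O_\e)}$, with no $1/\e$ loss on the in-plane components. This matches the force scaling \eqref{AFB1} exactly (the transverse component $f_3$ carries the extra $\e$), so the force term is bounded by $C\e^{5/2}\|f\|\,\|e(\Gu)\|$. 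Then \eqref{Eq518}$_3$ gives $\|e(\Gu)\|\le C\|dist\|_{L^2(\O_\e^B)}+\tfrac{C}{\e^{5/2}}\|dist\|^2_{L^2(\O_\e^B)}$, producing a linear term $C\e^{5/2}\|f\|\,\|dist\|_B$ \emph{plus} a quadratic term $C\|f\|\,\|dist\|_B^2$. The quadratic piece is absorbed into the left-hand side by the smallness hypothesis $\|f\|\le C^*$ (this is precisely where \eqref{EQ74} enters), and what remains is the linear inequality yielding \eqref{EFBM}$_1$. Your Young-inequality mechanism, by contrast, would not even require smallness of $f$ --- a sign that you are not capturing the same structure.

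For \eqref{EFBM}$_2$ your suggestion to control $|FF^T-\GI_3|$ pointwise by $dist(F,SO(3))$ plus a quadratic correction goes the wrong way (the paper's \eqref{SymCon} is the reverse inequality). The clean route is to return to the energy: from $\GJ_\e(v)\le 0$ and the coercivity \eqref{eq64} one has $c_0\big(\|(\nabla v)^T\nabla v-\GI_3\|^2_{L^2(\O_\e^B)}+\e^2\|\ldots\|^2_{L^2(\O_\e^M)}\big)\le \int_{\O_\e}f_\e\cdot(v-\text{\bf id})\,dx$, and the right-hand side has just been shown (via \eqref{EFBM}$_1$) to be $\le C\e^5\|f\|^2$.
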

		\begin{proof}
			Using \eqref{GOWEQ57} gives rise to the estimate
			$$
			c_0\Big(\|dist(\nabla v, SO(3))\|^{2}_{L^{2}(\O^B_\e)}+\e^2\|dist(\nabla v, SO(3))\|^{2}_{L^{2}(\O^M_\e)}\Big)\leq\left|\int_{\O_\e} f_\e\cdot(v-\text{\bf id})dx\right|.
			$$
			Then, with \eqref{AFB1}, we obtain 
			$$ \left|\int_{\O_\e} f_\e\cdot(v-\text{\bf id})dx\right| \leq \left|\int_{\O^B_\e} f_\e\cdot(v-\text{\bf id})dx\right| +\left|\int_{\O^M_\e} f_\e\cdot(v-\text{\bf id})dx\right|,$$
			since $u=\Gu$ a.e. on $\O_\e^B$ and the estimates \eqref{KornPUD1}$_1$ imply 
			$$\begin{aligned}
				\left|\int_{\O^B_\e} f_\e\cdot(v-\text{\bf id})dx\right| &\leq \e^{5/2} \|f\|_{L^2(\o)}\left(\|u_1\|_{L^2(\O_\e^B)}+\|u_2\|_{L^2(\O_\e^B)}+\e\|u_3\|_{L^2(\O^B_\e)}\right)\\
				&= \e^{5/2} \|f\|_{L^2(\o)}\left(\|\Gu_1\|_{L^2(\O_\e^B)}+\|\Gu_2\|_{L^2(\O_\e^B)}+\e\|\Gu_3\|_{L^2(\O^B_\e)}\right)\\
				&\leq \e^{5/2} \|f\|_{L^2(\o)}\left(\|\Gu_1\|_{L^2(\O_\e)}+\|\Gu_2\|_{L^2(\O_\e)}+\e\|\Gu_3\|_{L^2(\O_\e)}\right)\\
				&\leq C\e^{5/2}\|f\|_{L^2(\o)}\|e(\Gu)\|_{L^2(\O_\e)}.
			\end{aligned}$$
			Now, using the fact that $u=\Gu+\Gu^M$ and  the estimates \eqref{Eq518}--\eqref{KornPUD1}$_1$, we get
			$$\begin{aligned}
				\left|\int_{\O^M_\e} f_\e\cdot(v-\text{\bf id})dx\right|
				&\leq \e^{5/2} \|f\|_{L^2(\o)}\left(\|u_1\|_{L^2(\O_\e^M)}+\|u_2\|_{L^2(\O_\e^M)}+\e\|u_3\|_{L^2(\O^M_\e)}\right)\\
				&\leq C\e^{5/2}\|f\|_{L^2(\o)}\left(\|\Gu_1\|_{L^2(\O_\e^M)}+\|\Gu_2\|_{L^2(\O_\e^M)}+\e\|\Gu_3\|_{L^2(\O^M_\e)}+\|\Gu^M\|_{L^2(\O^M_\e)}\right)\\
				&\leq C\e^{5/2}\|f\|_{L^2(\o)}\left(\|e(\Gu)\|_{L^2(\O_\e)}+\e\|e(\Gu^M)\|_{L^2(\O^M_\e)}\right).
			\end{aligned}$$
			Then, from the above inequalities and the estimates \eqref{Eq518} we obtain
			\begin{multline*}
				\left|\int_{\O_\e} f_\e\cdot(v-\text{\bf id})dx\right| \leq C\e^{5/2}\|f\|_{L^2(\o)}\Big(\|dist(\nabla v, SO(3))\|_{L^2(\O_\e^B)}+\e\|dist(\nabla v, SO(3))\|_{L^2(\O_\e^M)}\Big)\\
				+C\|f\|_{L^2(\o)}\Big(\e^2 \|dist(\nabla v, SO(3))\|^2_{L^2{(\O^M_\e)}}+\|dist(\nabla v, SO(3))\|^{2}_{L^{2}(\O^B_\e)}\Big).
			\end{multline*}
			Then, we have
			$$
			\begin{aligned}
				&\|dist(\nabla v, SO(3))\|^{2}_{L^{2}(\O^B_\e)}+\e^2\|dist(\nabla v, SO(3))\|^{2}_{L^{2}(\O^M_\e)}\\
				&\qquad \leq C_0\e^{5/2}\|f\|_{L^2(\o)}\Big(\|dist(\nabla v, SO(3))\|_{L^2(\O_\e^B)}+\e\|dist(\nabla v, SO(3))\|_{L^2(\O_\e^M)}\Big)\\
				&\qquad +C_0\|f\|_{L^2(\o)}\Big(\e^2\|dist(\nabla v, SO(3))\|^2_{L^2{(\O^M_\e)}}+\|dist(\nabla v, SO(3))\|^{2}_{L^{2}(\O^B_\e)}\Big).
			\end{aligned}
			$$
			So, we assume that
			\begin{equation}\label{EQ74}
				C_0\|f\|_{L^2(\o)}\leq {1\over 2}.
			\end{equation}
			From the above assumption, we get
			$$
			\begin{aligned}	
				\|dist(\nabla v, SO(3))\|^{2}_{L^{2}(\O^B_\e)}&+\e^2\|dist(\nabla v, SO(3))\|^{2}_{L^{2}(\O^M_\e)}\\
				&\leq 2C_0\e^{5/2}\|f\|_{L^2(\o)}\Big(\|dist(\nabla v, SO(3))\|_{L^2(\O_\e^B)}+\e\|dist(\nabla v, SO(3))\|_{L^2(\O_\e^M)}\Big)\\
				&\leq 3C_0\e^{5/2}\|f\|_{L^2(\o)}\sqrt{\|dist(\nabla v, SO(3))\|^2_{L^2(\O_\e^B)}+\e^2\|dist(\nabla v, SO(3))\|^2_{L^2(\O_\e^M)}}.
			\end{aligned}
			$$
			This inequality gives \eqref{EFBM}$_1$. 
			
			Since, we have assumed that the deformations $v\in \GV_\e$ satisfies $\GJ_\e(v)\leq 0$, then the estimate \eqref{EFBM} can be employed to arrive at 
			\begin{equation}\label{Eq75}
				\begin{aligned}
					c_0\big(\big\|(\nabla v)^T \nabla v -\GI_3\|^2_{L^2(\O^B_\e)}+\e^2\|(\nabla v)^T\nabla v -\GI_3 \big\|^2_{L^2(\O^M_\e)}\big) 
					\leq \int_{\O_\e} \wh{W}(x,\nabla v(x))dx\\
					\leq \int_{\O_\e} f_\e(x)\cdot(v-\text{\bf id})(x)dx \leq C\e^5\|f\|^2_{L^2(\o)},
				\end{aligned}
			\end{equation}
			which in turn leads to
			\begin{equation}\label{Eq76}
				\|\nabla v (\nabla v)^T-\GI_3\|_{L^2(\O^B_\e)}+\e\|(\nabla v)^T\nabla v -\GI_3\|_{L^2(\O^M_\e)}\leq C\e^{5/2}.
			\end{equation}
			This inequality gives \eqref{EFBM}$_2$.	
		\end{proof}	
		
		As a consequence, there exists a constant $c$ strictly negative, and independent of $\e$ such that 
$$
			c\e^5 \leq \GJ_\e(v) \leq 0.
$$
		Recalling that $m_\e=\inf_{v\in \GV_{\e}} \GJ_\e(v)$\footnote{It is still an open problem whether there is a minimizer of $\GJ_\e$ over $\GV_\e$. } yields 
$$
			c\leq {m_\e\over \e^5} \leq 0.
$$
	Our next goal is to give the asymptotic behavior of the re-scaled sequence $\ds\left\{{m_\e \over \e^5}\right\}_\e$ and to characterize its limit as a minimum of the functional.

		\subsection{Asymptotic behavior of the Green St. Venant's strain tensors}\label{ABGSVST}
		In this subsection, we consider a sequence $\{v_\e\}_\e$ of deformation in $\GV_\e$ with $\GJ_{\e}(v_\e)\leq 0$ satisfying
		\begin{equation}\label{AFLP1}
			\|dist(\nabla v_\e, SO(3))\|_{L^{2}(\O^B_\e)}+\e\|dist(\nabla v_\e, SO(3))\|_{L^{2}(\O^M_\e)}\leq C\e^{5/2}.
		\end{equation}
		We give asymptotic behavior of the sequence $\{\nabla v_\e(\nabla v_\e)^T-\GI_3\}_\e$.
		From the above assumption and using the estimates \eqref{E0}, \eqref{FinEst}, \eqref{EQ448}, \eqref{KornP2} and \eqref{Eq76} we deduce the following estimates 
		\begin{equation}\label{EWA1}
			\begin{aligned}
				\e\|\nabla v_\e(\nabla v_\e)^T-\GI_3\|_{L^2(\O^M_\e)}+	\|\nabla v_\e(\nabla v_\e)^T-\GI_3\|_{L^2(\O^B_\e)}&\leq C\e^{5/2}.
			\end{aligned}
		\end{equation} 
		Moreover, from the Lemmas \ref{EFSTB} and \ref{lem55}, we have
		\begin{equation}\label{EWA3}
			\|e(u_\e)\|_{L^2(\O^B_\e)}\leq C\e^{5/2}\quad\text{and}\quad\|e(u_\e)\|_{L^2(\O^M_\e)}\leq C\e^{3/2}.
		\end{equation}
		From the estimate \eqref{Eq518}$_{3,4}$, one has 
		\begin{equation}\label{EWA2}
			\|e(\Gu_\e)\|_{L^2(\O_\e)}\leq C\e^{5/2}\quad \text{and}\quad \|e(\Gu^M_\e)\|_{L^2(\O_\e)}\leq C\e^{3/2}.
		\end{equation}
		
		Now, we give the estimates of $\Gu^M_\e$, $\Gu_\e$ and the terms of its decomposition from \eqref{AFLP1}. And using the estimates \eqref{Eq518}, \eqref{PD2} and \eqref{KornPUD1} one has
		\begin{equation}\label{EWAgu}
			\begin{aligned}
				\|\Gu_\e^M\|_{L^2(\O^M_\e)}+\e \|\nabla \Gu_\e^M\|_{L^2(\O^M_\e)} &\leq C\e^{5/2},\\
				\|e_{\alpha \beta}(\Uc_{m})\|_{L^2(\o)}+\e\|D^2(\Uc_{3,\e})\|_{L^2(\o)}&\leq C\e^{2},\\
				\|\fu_\e\|_{L^2(\O_\e)} +\e\|\nabla\fu_\e\|_{L^2(\O_\e)} &\leq C\e^{7/2},\\
				\|\Gu_{1,\e}\|_{L^2(\O_\e)}+\|\Gu_{2,\e}\|_{L^2(\O_\e)}+\e\|\Gu_{3,\e}\|_{L^2(\O_\e)}&\leq C\e^{5/2},\\
				\|\nabla\Gu_\e\|_{L^2(\O_\e)} &\leq C\e^{3/2}.
			\end{aligned}
		\end{equation} 
		We also have from the estimates \eqref{EQ448} and \eqref{KornP2}
		\begin{equation}\label{Eq106}
			\begin{aligned}
				\|u_\e\|_{L^2(\O_\e)}+	\|\nabla u_\e\|_{L^2(\O_\e)}\leq C\e^{3/2}.
			\end{aligned}
		\end{equation}
		The constants do not depend on $\e$.

		\subsection{Preliminaries on the unfolding operators}
   The convergences are achieved through the utilization of the re-scaling unfolding operator $\Pi_\e$ for homogenization and dimension reduction in $\O_\e$ and the unfolding operator $\Te$ for homogenization in $\o$.
   
   Below, the definition of the periodic re-scaling unfolding operator and the unfolding operator for functions defined in $\O_\e$ and $\o$ are recalled. Further details about the properties of these operators can be found in \cite{CDG}.
		\begin{definition}
			For every measurable function $\psi$ on $\O_\e$ the re-scaling unfolding  operator $\Pi_\e$ is defined by 
			$$ \Pi_\e(\psi) (x',y) \doteq \psi \left(2\e\left[{x'\over 2\e}\right] + \e y\right) \qquad \hbox{for a.e. }\; (x',y)\in  \o\times \Yc,\qquad \Yc\doteq (0,1)^2\times(-\kappa,\kappa).$$
			For every measurable function $\phi$ on $\o$ the unfolding operator $\Te$ is defined by 
			$$ \Te(\phi) (x',y') \doteq \phi \left(2\e\left[{x'\over 2\e}\right] + \e y'\right) \qquad \hbox{for a.e. }\; (x',y')\in  \o\times Y,\qquad Y\doteq (0,1)^2.$$
		\end{definition} 
		In what follows, we recall some of the properties and inequalities related to the re-scaling unfolding  operator and the unfolding operator, for proofs and more details see \cite{CDG}. \\ The re-scaling unfolding operator $\Pi_\e$ is a  continuous linear operator  from $L^2(\O_\e)$ into $L^2(\o\X \Yc)$ which satisfies
		\begin{equation}\label{EQ722}
			\|\Pi_\e(\psi)\|_{L^2(\o\X\Yc)}\leq{C\over \sqrt{\e}}\|\psi\|_{L^2(\O_\e)}\quad \text{ for every}\quad \psi\in L^2(\O_\e),
		\end{equation}
		and the unfolding operator is also a continuous linear operator from $L^2(\o)$ into $L^2(\o\X Y)$ which satisfies
		$$\|\Te(\phi)\|_{L^2(\o\X Y)}\leq C\|\phi\|_{L^2(\o)}\quad\text{for every}\quad \phi\in L^2(\o).$$
		The constants $C$ do not depend on $\e$.
		\begin{remark}
			Here, we note that for functions defined in $\o$, we have
			$$\Pi_\e(\phi)(x',y)=\Te(\phi)(x',y')=\phi \left(2\e\left[{x'\over 2\e}\right] + \e y'\right)\quad \text{for every}\quad \phi\in L^2(\o).$$
		\end{remark}
		Moreover, for every $\psi\in H^1(\O_\e)$ we have (see \cite[Proposition 1.35]{CDG})
		$$ \nabla_y\Pi_\e(\psi)(x',y)=\e\Pi_\e(\nabla \psi)(x',y) \quad \hbox{a.e. in}\quad \o\X\Yc.$$
		Then, from the above inequality along with the estimates \eqref{EWAgu}--\eqref{Eq106}, we have
		\begin{equation}\label{EWRUO1}
			\begin{aligned}
				\|\Pi_\e(\Gu_\e^M)\|_{L^2(\o\X\Yc)}+ \|\nabla_y\Pi_\e( \Gu_\e^M)\|_{L^2(\o\X\Yc)} &\leq C\e^{2},\\
				\|\Pi_\e(\fu_\e)\|_{L^2(\o\X\Yc)} +\|\nabla_y\Pi_\e(\fu_\e)\|_{L^2(\o\X\Yc)}&\leq C\e^{3},\\
				\|\Pi_\e(\Gu_{1,\e})\|_{L^2(\o\X\Yc)}+\|\Pi_\e(\Gu_{2,\e})\|_{L^2(\o\X\Yc)}+\e\|\Pi_\e(\Gu_{3,\e})\|_{L^2(\o\X\Yc)}&\leq C\e^{2},\\
				\|\Pi_\e(\nabla\Gu_\e)\|_{L^2(\o\X\Yc)}&\leq C\e,\\
				\|\Pi_\e(u_\e)\|_{L^2(\o\X \Yc)}+\|\Pi_\e(\nabla u_\e)\|_{L^2(\o\X \Yc)}&\leq C\e,
			\end{aligned}
		\end{equation}
		where we denote
		$$
		\begin{aligned}
			&Y^M_\kappa\doteq Y_\kappa\X(-\kappa,\kappa),\qquad Y^B_\kappa\doteq\Yc\setminus \overline{Y^M_\kappa},\\
			&\GH^1(Y^M_\kappa)\doteq\Big\{\phi\in H^1(Y^M_\kappa)\;|\; \phi=0\;\;\hbox{a.e. on }\; \partial Y_\kappa\X(-\kappa,\kappa)\Big\}.
		\end{aligned}
		$$
		
		\subsection{Limit behavior for the Green St. Venant's strain tensor using the unfolding re-scaling operator} 
		In this subsection, the asymptotic behavior of the unfolded sequences of deformations and displacements defined on the region $\O_\e$ are discussed. 
		
		\begin{lemma}\label{CFuu1}
			There exist a subsequence of $\{\e\}$, still denoted by $\{\e\}$, and $\Uc_m=\Uc_1\Ge_1+\Uc_2\Ge_2 \in H^1(\o)^2$ and $\Uc_3\in H^2(\o)$ such that
			\begin{equation}\label{CFU1}
				\begin{aligned}
					{1\over \e^2}\Uc_{m,\e}&\rightharpoonup \Uc_m\quad \text{weakly in}\quad H^1(\o)^2\quad\hbox{and strongly in}\quad L^2(\o)^2,\\
					{1\over \e}\Uc_{3,\e}&\rightharpoonup \Uc_3\quad \text{weakly in}\quad H^2(\o)\quad\hbox{and strongly in}\quad H^1(\o). 
				\end{aligned}	
			\end{equation}
			The fields	$\Uc_m$, $\Uc_3$ satisfy the following boundary conditions:
			$$ \Uc_m=0\quad \text{a.e. on}\quad \gamma,\quad \Uc_3=\nabla \Uc_3=0\quad\text{a.e. on}\;\; \gamma.$$
			Moreover, there exist $\fu \in L^2(\o;H^1_{per}(\Yc))^3$ and $\Gu^M \in L^2(\o;\GH^1(Y^M_\kappa))^3$ such that
			\begin{equation}\label{CFfu1}
				\begin{aligned}
					{1\over \e^3}\Pi_\e(\fu_\e) &\rightharpoonup \fu \quad \text{weakly in}\quad L^2(\o;H^1(\Yc))^3,\\
					{1\over \e^2}\Pi_\e\left(\Gu^M_\e\right) &\rightharpoonup \Gu^M\quad\text{weakly in}\quad L^2(\o;\GH^1(Y^M_\kappa))^3,
				\end{aligned}
			\end{equation}
			and
			\begin{equation}\label{CFgu1}
				\begin{aligned}
					{1\over \e^2}\Pi_\e\left(u_{\alpha,\e}\right)&\rightharpoonup \Uc_\alpha-y_3{\partial \Uc_3\over \partial x_\alpha}+ \Gu^M_\alpha\quad \text{weakly in}\quad L^2(\o;H^1(\Yc)),\\
					{1\over \e}\Pi_\e(u_{3,\e})&\to \Uc_3\quad \text{strongly in}\quad L^2(\o;H^1(\Yc)).
				\end{aligned}
			\end{equation}
			Furthermore, one has
			\begin{equation}\label{CFgu2}
				\begin{aligned}	
					{1\over\e^2}\Pi_\e\left(\nabla u_\e(\nabla u_\e)^T\right) &\rightharpoonup
					F(\Uc_3) \quad && \text{weakly in}\quad L^2(\o\X Y_\kappa^B)^{3\X3},\\
					{1\over\e}\Pi_\e\left(\nabla u_\e(\nabla u_\e)^T\right) &\rightharpoonup 0 \quad && \text{weakly in}\quad L^2(\o\X Y_\kappa^M)^{3\X3}
				\end{aligned}
			\end{equation}
			where the symmetric matrix $F(\Uc_3)$ is given by
$$
				F(\Uc_3)=\begin{pmatrix*}
					(\partial_1\Uc_3)^2&\partial_1\Uc_3\partial_2\Uc_3& 0\\
					\partial_1\Uc_3\partial_2\Uc_3& (\partial_2\Uc_3)^2& 0\\
					0&0&|\nabla\Uc_3|^2
				\end{pmatrix*}.
$$
		\end{lemma}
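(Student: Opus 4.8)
The plan is to extract the weak/strong limits one family at a time, using the a priori bounds \eqref{EWAgu}, \eqref{EWRUO1} together with the standard properties of the re-scaling unfolding operator $\Pi_\e$ and the unfolding operator $\Te$ recalled above, and then to identify the structure of the various limits by passing to the limit in exact algebraic identities. First I would treat the Kirchhoff--Love part: the bounds \eqref{EWAgu}$_2$ give $\|e_{\alpha\beta}(\Uc_{m,\e})\|_{L^2(\o)}\le C\e^2$ and $\|D^2(\Uc_{3,\e})\|_{L^2(\o)}\le C\e$, which combined with the clamping conditions ($\Uc_{m,\e}=0$, $\nabla\Uc_{3,\e}=0$, $\Uc_{3,\e}=0$ on $\gamma$) and the 2D Korn and Poincar\'e inequalities yield uniform bounds on $\frac1{\e^2}\Uc_{m,\e}$ in $H^1(\o)^2$ and $\frac1{\e}\Uc_{3,\e}$ in $H^2(\o)$; Rellich then gives, up to a subsequence, the weak limits and the announced strong convergences \eqref{CFU1} in $L^2(\o)^2$ and in $H^1(\o)$ respectively, and the boundary conditions pass to the limit since the trace operator on $\gamma$ is continuous for these convergences.

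Next I would handle the microscopic correctors. The bounds \eqref{EWRUO1}$_2$, namely $\|\Pi_\e(\fu_\e)\|_{L^2(\o\X\Yc)}+\|\nabla_y\Pi_\e(\fu_\e)\|_{L^2(\o\X\Yc)}\le C\e^3$, show that $\frac1{\e^3}\Pi_\e(\fu_\e)$ is bounded in $L^2(\o;H^1(\Yc))^3$, so a subsequence converges weakly to some $\fu$; $Y$-periodicity in $y'$ is inherited in the limit because the unfolding of a single-valued $H^1(\O_\e)$ function has matching traces on opposite faces of $\Yc$ up to a term controlled by $\e$ (this is the standard argument giving $\fu\in L^2(\o;H^1_{per}(\Yc))^3$). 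Similarly \eqref{EWRUO1}$_1$ gives that $\frac1{\e^2}\Pi_\e(\Gu^M_\e)$ is bounded in $L^2(\o;H^1(Y^M_\kappa))^3$ and converges weakly to some $\Gu^M$; since $\Gu^M_\e=0$ a.e. on $\O_\e^B$, in particular on the lateral boundary $\partial Y_\kappa\times(-\kappa,\kappa)$ after unfolding, the limit lies in $L^2(\o;\GH^1(Y^M_\kappa))^3$. For \eqref{CFgu1} I would use the decomposition $u_\e=\Gu_\e+\Gu^M_\e$ with $\Gu_\e=U_{KL,\e}+\fu_\e$ from Lemma \ref{DeComKL1}: applying $\Pi_\e$ and using $\Pi_\e(\phi)(x',y)=\Te(\phi)(x',y')$ for $\phi$ depending only on $x'$, one writes $\frac1{\e^2}\Pi_\e(u_{\alpha,\e})=\frac1{\e^2}\Te(\Uc_{\alpha,\e})-y_3\,\frac1{\e^2}\Te(\partial_\alpha\Uc_{3,\e})+\frac1{\e^2}\Pi_\e(\fu_{\alpha,\e})+\frac1{\e^2}\Pi_\e(\Gu^M_{\alpha,\e})$; the first two terms converge (strongly, resp.\ weakly) to $\Uc_\alpha$ and $y_3\partial_\alpha\Uc_3$ by the properties of $\Te$ and \eqref{CFU1}, the third vanishes by \eqref{EWRUO1}$_2$, and the fourth converges weakly to $\Gu^M_\alpha$, giving \eqref{CFgu1}$_1$; the transverse component follows directly from $\frac1{\e}\Pi_\e(u_{3,\e})=\frac1{\e}\Te(\Uc_{3,\e})$ plus the vanishing of $\frac1{\e}\Pi_\e(\Gu^M_{3,\e})$ and $\frac1{\e}\Pi_\e(\fu_{3,\e})$, and the convergence is strong because $\frac1{\e}\Uc_{3,\e}\to\Uc_3$ strongly in $H^1(\o)$.

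Finally, for \eqref{CFgu2} I would compute $\nabla u_\e(\nabla u_\e)^T$ entrywise as $\sum_k \partial_k u_{i,\e}\,\partial_k u_{j,\e}$, unfold, and use the gradient-unfolding identity $\nabla_y\Pi_\e(\psi)=\e\Pi_\e(\nabla\psi)$ to rewrite each product in terms of $y$-derivatives of the unfolded displacements. On $Y^B_\kappa$ the dominant contribution comes from the transverse part: from Lemma \ref{DeComKL1} the in-plane gradient of $u_{3,\e}$ is $\partial_\alpha\Uc_{3,\e}+$ lower order, and $\frac1{\e}\partial_\alpha\Uc_{3,\e}\to\partial_\alpha\Uc_3$ strongly in $L^2(\o)$ (from \eqref{CFU1}$_2$), while the in-plane gradients of $u_{1,\e},u_{2,\e}$ are of order $\e^2$ in $L^2$ and $\partial_3 u_{\alpha,\e}$ of order $\e$; matching orders, the only products surviving after division by $\e^2$ are $\frac1{\e}\partial_\alpha\Uc_{3,\e}\cdot\frac1{\e}\partial_\beta\Uc_{3,\e}\to\partial_\alpha\Uc_3\,\partial_\beta\Uc_3$ and, in the $(3,3)$ slot, $\sum_\alpha(\frac1{\e}\partial_\alpha\Uc_{3,\e})^2\to|\nabla\Uc_3|^2$, the cross terms with the transverse derivative $\partial_3 u_{3,\e}=O(\e)$ and with the warping being negligible; since $\frac1{\e}\partial_\alpha\Uc_{3,\e}$ converges strongly, the products pass to the limit and one obtains exactly $F(\Uc_3)$, yielding \eqref{CFgu2}$_1$. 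On $Y^M_\kappa$ every component of $\Pi_\e(\nabla u_\e)$ is of order $\e^2$ by \eqref{EWRUO1} (the matrix part contributes $\Gu^M$ at order $\e^2$), so $\Pi_\e(\nabla u_\e(\nabla u_\e)^T)=O(\e^4)$ and even after division by $\e$ the limit is $0$, which is \eqref{CFgu2}$_2$. The symmetry of $F(\Uc_3)$ is immediate from the entrywise formula. The main obstacle I anticipate is the careful bookkeeping in the last step: one must verify that every product of gradient components other than the ones listed is genuinely of lower order after the appropriate rescaling (in particular controlling the cross terms involving $\partial_3 u_{\alpha,\e}$ and the warping $\fu_\e$, and making sure the weak convergences are upgraded to strong exactly where a product of two weakly convergent sequences would otherwise be uncontrolled — here it is the strong $H^1(\o)$ convergence of $\frac1\e\Uc_{3,\e}$ that saves the argument).
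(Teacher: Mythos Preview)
Your treatment of \eqref{CFU1}, \eqref{CFfu1} and \eqref{CFgu1} is essentially the same as the paper's and is fine.

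There is, however, a genuine gap in your argument for \eqref{CFgu2}. First, a factual error: on $Y^M_\kappa$ it is \emph{not} true that $\Pi_\e(\nabla u_\e)=O(\e^2)$ in $L^2$. Recall that $u_\e=\Gu_\e+\Gu^M_\e$ and $\Gu_\e=U_{KL,\e}+\fu_\e$ is defined on all of $\O_\e$; in particular $\partial_3\Gu_{\alpha,\e}=-\partial_\alpha\Uc_{3,\e}+\partial_3\fu_{\alpha,\e}$ and $\partial_\alpha\Gu_{3,\e}=\partial_\alpha\Uc_{3,\e}+\partial_\alpha\fu_{3,\e}$ are only $O(\e)$ after unfolding, exactly as on $Y^B_\kappa$. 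Likewise $\nabla_y\Pi_\e(\Gu^M_\e)=\e\,\Pi_\e(\nabla\Gu^M_\e)$, so \eqref{EWRUO1}$_1$ gives $\|\Pi_\e(\nabla\Gu^M_\e)\|_{L^2}\le C\e$, not $C\e^2$. Hence $\Pi_\e(\nabla u_\e(\nabla u_\e)^T)$ is only $O(\e^2)$ in $L^1$, not $O(\e^4)$; after division by $\e$ you still get strong $L^1$ convergence to $0$, but your stated reasoning is wrong.

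More seriously, for both parts of \eqref{CFgu2} you only establish convergence of products in $L^1$ (a product of two $L^2$-bounded sequences, even with one strongly convergent, is controlled a priori only in $L^1$), whereas the statement claims weak convergence in $L^2$. The missing ingredient is an $L^2$ bound on $\frac{1}{\e^2}\Pi_\e(\nabla u_\e(\nabla u_\e)^T)$ in $\o\times Y^B_\kappa$ and on $\frac{1}{\e}\Pi_\e(\nabla u_\e(\nabla u_\e)^T)$ in $\o\times Y^M_\kappa$. This does \emph{not} follow from the $L^2$ bounds on $\Pi_\e(\nabla u_\e)$ alone. The paper obtains it from the algebraic identity
\[
\nabla u_\e(\nabla u_\e)^T=\big(\nabla v_\e(\nabla v_\e)^T-\GI_3\big)-2\,e(u_\e),
\]
combined with the nonlinear estimates \eqref{EWA1} and \eqref{EWA3}, which give $\|\nabla u_\e(\nabla u_\e)^T\|_{L^2(\O^B_\e)}\le C\e^{5/2}$ and $\|\nabla u_\e(\nabla u_\e)^T\|_{L^2(\O^M_\e)}\le C\e^{3/2}$. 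With these $L^2$ bounds in hand, the (strong $L^1$) limits you identify are automatically weak $L^2$ limits. You should add this step; without it the proof of \eqref{CFgu2} is incomplete.
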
	
		\begin{proof}
			The convergences \eqref{CFU1}, \eqref{CFfu1}$_1$ are the immediate consequences of the estimates \eqref{EWAgu}, \eqref{EWRUO1} and using the properties of the unfolding operator.
			We remind that
			\begin{equation}\label{UGUFU1}
				v_\e-\text{\bf id}=u_\e=\Gu_\e+\Gu^M_\e .
			\end{equation}
			Here  by the Lemma \ref{ExDef1} we have the extension $\Gu_\e$ of the restriction of $u_\e=v_\e-I_d$ in $\O^B_\e$, i.e.
			$$ \Gu_\e^M=u_\e-\Gu_\e\quad\text{in}\quad \O_\e^M\quad \text{and}\quad  \Gu_\e^M=0\quad \text{a.e. in}\quad \O_\e^B.$$  
			Then, from the estimates \eqref{EWAgu} and using the properties of re-scaling unfolding operator, we obtain  \eqref{CFfu1}$_2$.\\
			We prove the convergences \eqref{CFgu1}. From Lemma \ref{DeComKL1} we have
			$$\begin{aligned}
				u_{\alpha,\e}(\cdot,x_3)&=\Uc_{\alpha,\e}-x_3{\partial\Uc_{3,\e}\over\partial x_\alpha}+\fu_{\alpha,\e}(\cdot,x_3)+\Gu^M_{\alpha,\e}(\cdot,x_3),\\
				u_{3,\e}(\cdot,x_3)&=\Uc_{3,\e}+\fu_{3,\e}(\cdot,x_3)+\Gu^M_{3,\e}(\cdot,x_3),
			\end{aligned}\qquad \text{a.e. in}\quad (x',x_3)\in \O_\e.$$ 
			From the convergences \eqref{CFU1}, \eqref{CFfu1} with the estimates \eqref{EWAgu}, and using the re-scaling unfolding operator, we get 
			$$\begin{aligned}
				{1\over \e^2}\Pi_\e(u_{\alpha,\e}) & \rightharpoonup \Uc_{\alpha}-y_3{\partial\Uc_{3}\over\partial x_\alpha}+\Gu^M_{\alpha}\quad \text{weakly in}\quad L^2(\o;H^1(\Yc)),\\
				{1\over \e}\Pi_\e(u_{3,\e})&\to \Uc_3\quad\text{strongly in}\quad L^2(\o;H^1(\Yc)).
			\end{aligned}$$
			Here, we have also used the fact that the terms with $\fu_\e$ vanish, hence, we get the convergences \eqref{CFgu1}.\\ 
			We give the proof of convergence \eqref{CFgu2}. For that we see
			$$\nabla u_\e=\nabla \Gu_\e+\nabla \Gu_\e^M.$$
			Again using the decomposition \eqref{PD1}, we have $\nabla \Gu_\e=\nabla \Uc_{KL,\e}+\nabla \fu_\e$, which gives
			$$\nabla u_\e=
			\begin{pmatrix*}
				\ds \partial_1\Uc_{1,\e}-x_3{\partial^2\Uc_{3,\e}\over\partial x_1^2} &			\ds \partial_2\Uc_{1,\e}-x_3{\partial^2\Uc_{3,\e}\over\partial x_1 \partial x_2} &- \partial_1\Uc_{3,\e}\\[3mm]
				\ds \partial_1\Uc_{2,\e}-x_3{\partial^2\Uc_{3,\e}\over\partial x_1\partial x_2} & 	\ds \partial_2\Uc_{2,\e}-x_3{\partial^2\Uc_{3,\e}\over\partial x_2^2} & -\partial_2\Uc_{3,\e}\\[3mm]
				\partial_1\Uc_{3,\e}&	 \partial_2\Uc_{3,\e}&0
			\end{pmatrix*}
			+ \nabla \fu_\e+\nabla \Gu^M_\e.
			$$
			Since the second convergence of \eqref{CFU1} is strong and all other fields vanish due to the convergences \eqref{CFgu1} and \eqref{CFfu1}$_1$ together with the estimates \eqref{EWAgu} and \eqref{EWRUO1}, implies 
			$$
			{1 \over \e}\Pi_\e\left(\nabla u_\e\right)={1\over \e}\Pi_\e(\nabla\Uc_{KL,\e})+{1\over\e^2}\nabla_y\Pi_\e(\fu_\e)\quad \text{a.e. in}\quad L^2(\o\X Y_\kappa^B)^{3\X3}.
			$$
			So, we have
			\begin{equation}\label{CFrgu1}
				{1 \over \e}\Pi_\e\left(\nabla u_\e\right) \to
				\begin{pmatrix*}
					0&0& -\partial_1\Uc_3\\
					0&0& -\partial_2\Uc_3\\
					\partial_1\Uc_3&\partial_2\Uc_3&0
				\end{pmatrix*} 
				\quad \text{strongly in}\quad L^2(\o\X Y^B_\kappa)^{3\X3}.
			\end{equation}
			Therefore,
			\begin{equation}\label{C814}
				\begin{aligned}
					{1\over \e^2}\Pi_\e\left(\nabla u_\e (\nabla u_\e)^T\right) &\to
					\begin{pmatrix*}
						0&0& -\partial_1\Uc_3\\
						0&0& -\partial_2\Uc_3\\
						\partial_1\Uc_3&\partial_2\Uc_3&0
					\end{pmatrix*}
					\begin{pmatrix*}
						0&0&\partial_1\Uc_3\\
						0&0&\partial_2\Uc_3\\
						-\partial_1\Uc_3& -\partial_2\Uc_3&0
					\end{pmatrix*}\\
					& = F(\Uc_3) \qquad \text{strongly in}\quad L^1(\o\X Y_\kappa^B)^{3\X3}.
				\end{aligned}
			\end{equation} 	
			Using the estimate \eqref{KornP2} and the assumption on the sequence \eqref{AFLP1}, we deduce that the sequence $\ds\Big\{{1 \over \e}\Pi_\e\left(\nabla u_\e\right)\Big\}_\e$ is uniformly bounded in $L^2(\o\X Y_\kappa^M)^{3\X3}$, like see \eqref{EWRUO1}
			$$ {1\over \e}\left\|\Pi_\e(\nabla u_\e)\right\|_{L^2(\o\X Y_\kappa^M)}\leq C,$$  which implies
			\begin{equation}\label{C815}
				{1\over\e}\Pi_\e\left(\nabla u_\e(\nabla u_\e)^T\right) \to 0\quad\text{strongly in}\quad L^1(\o\X Y_\kappa^M)^{3\X3}.
			\end{equation}			
			To prove that the convergences \eqref{C814}--\eqref{C815} are also weak in $L^2(\o\X Y_\kappa^B)^{3\X3}$ and $L^2(\o\X Y_\kappa^M)^{3\X3}$ respectively, we show that the sequences $\ds\left\{{1\over \e^2}\Pi_\e\left(\nabla u_\e(\nabla u_\e)^T\right)\right\}_\e$ and $\ds\left\{{1\over\e}\Pi_\e\left(\nabla u_\e(\nabla u_\e)^T\right)\right\}_\e$ are bounded in $L^2(\o\X Y_\kappa^B)^{3\X 3}$ and $L^2(\o\X Y_\kappa^M)^{3\X3}$. Then, using the estimates \eqref{EWA1} and \eqref{EWA3}, we obtain
			$$\begin{aligned}
				&\|\nabla u_\e(\nabla u_\e)^T\|_{L^2(\O^B_\e)}
				\leq \|\nabla u_\e(\nabla u_\e)^T+2e(u_\e)\|_{L^2(\O^B_\e)}+2\|e(u_\e)\|_{L^2(\O^B_\e)}\\
				&\hskip 33mm =\|\nabla v_\e(\nabla v_\e)^T-\GI_3\|_{L^2(\O^B_\e)}+2\|e(u_\e)\|_{L^2(\O^B_\e)}\leq C\e^{5/2},\\
				&\|\nabla u_\e(\nabla u_\e)^T\|_{L^2(\O^M_\e)}
				\leq \|\nabla u_\e(\nabla u_\e)^T+2e(u_\e)\|_{L^2(\O^M_\e)}+2\|e(u_\e)\|_{L^2(\O^M_\e)}\\
				&\hskip 33mm =\|\nabla v_\e(\nabla v_\e)^T-\GI_3\|_{L^2(\O^M_\e)}+2\|e(u_\e)\|_{L^2(\O^M_\e)}\leq C\e^{3/2}.
			\end{aligned}$$
			From the above we get that the convergences \eqref{C814}--\eqref{C815} are also weak in $L^2(\o\X Y_\kappa^B)^{3\X3}$ and 
			$L^2(\o\X Y_\kappa^M)^{3\X3}$, respectively. 
		\end{proof}  
		\begin{definition}
			For any $\psi\in H^1(Y_\kappa^B)^3$, (resp. $ H^1(Y_\kappa^M)^3$, $L^2(\o\X Y_\kappa^B)^3$ and $L^2(\o\X Y_\kappa^M)^3$) we denote $e_y(\psi)$ the $3\X 3$ symmetric matrix whose entries are
			$$e_{y,ij}(\psi)={1\over 2}\left({\partial \psi_i\over \partial y_j}+{\partial \psi_j\over \partial y_i}\right).$$
		\end{definition}
		
		\begin{lemma}\label{CFeu2}
			For the same subsequence as in the previous Lemma, one has 
			\begin{equation}\label{CFeu1}
				\begin{aligned}
					{1\over \e^2}\Pi_\e\left(e(u_\e)\right)&\rightharpoonup E^{Lin}(\Uc)+e_y(\fu)\quad &&\text{weakly in}\quad L^2(\o\X Y_\kappa^B)^{3\X3},\\
					{1\over \e}\Pi_\e\left(e(u_\e)\right)&\rightharpoonup e_y(\Gu^M) \quad &&\text{weakly in}\quad L^2(\o\X Y_\kappa^M)^{3\X3}.
				\end{aligned}
			\end{equation}
			where $E^{Lin}(\Uc)$ denotes the symmetric matrix
$$
				E^{Lin}(\Uc)=
				\begin{pmatrix}
					e_{11}(\Uc_m)-y_3D_{11}(\Uc_3)&*&*\\
					e_{12}(\Uc_m)-y_3D_{12}(\Uc_3) &e_{22}(\Uc_m)-y_3D_{22}(\Uc_3) &*\\
					0&0&0
				\end{pmatrix}.
$$
		\end{lemma}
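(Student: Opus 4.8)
The plan is to pass to the limit in the unfolded strain by exploiting the two decompositions already established: the splitting $u_\e=\Gu_\e+\Gu^M_\e$ from \eqref{UGUFU1}, with $\Gu^M_\e=0$ a.e.\ in $\O^B_\e$, and the Kirchhoff--Love splitting $\Gu_\e=\Uc_{KL,\e}+\fu_\e$ from Lemma \ref{DeComKL1}. First I would record the algebraic structure of $e(\Uc_{KL,\e})$: being the strain of a Kirchhoff--Love displacement, its $i3$-entries vanish identically and its in-plane entries are $e_{\alpha\beta}(\Uc_{m,\e})-x_3D_{\alpha\beta}(\Uc_{3,\e})$. Consequently, on $\O^B_\e$ (where $u_\e=\Gu_\e$) one has $e(u_\e)=e(\Uc_{KL,\e})+e(\fu_\e)$, and on $\O^M_\e$ one has $e(u_\e)=e(\Uc_{KL,\e})+e(\fu_\e)+e(\Gu^M_\e)$; after applying $\Pi_\e$ these identities pass to $\o\X Y^B_\kappa$ and $\o\X Y^M_\kappa$ respectively.

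The computational engine is the relation $\Pi_\e(\nabla\psi)=\tfrac1\e\nabla_y\Pi_\e(\psi)$, which turns $\Pi_\e(e(\psi))$ into $e_y\!\big(\tfrac1\e\Pi_\e(\psi)\big)$, together with the substitution $x_3=\e y_3$ under unfolding and the norm bound \eqref{EQ722} for $\Pi_\e$ (combined with \eqref{EWA3}, the latter yields $\|\Pi_\e(e(u_\e))\|_{L^2(\o\X Y^B_\kappa)}\le C\e^2$ and $\|\Pi_\e(e(u_\e))\|_{L^2(\o\X Y^M_\kappa)}\le C\e$, which explains the rescalings $\e^{-2}$ and $\e^{-1}$). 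For \eqref{CFeu1}$_1$ I would then compute: the in-plane entries of $\tfrac1{\e^2}\Pi_\e(e(\Uc_{KL,\e}))$ equal $\tfrac1{\e^2}\Te(e_{\alpha\beta}(\Uc_{m,\e}))-y_3\tfrac1\e\Te(D_{\alpha\beta}(\Uc_{3,\e}))$, which converge weakly in $L^2(\o\X Y)$ to $e_{\alpha\beta}(\Uc_m)-y_3D_{\alpha\beta}(\Uc_3)$ by the convergences \eqref{CFU1} and the weak continuity of $\Te$; since the $i3$-entries are zero this gives $\tfrac1{\e^2}\Pi_\e(e(\Uc_{KL,\e}))\rightharpoonup E^{Lin}(\Uc)$. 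For the warping part, $\tfrac1{\e^2}\Pi_\e(e(\fu_\e))=e_y\!\big(\tfrac1{\e^3}\Pi_\e(\fu_\e)\big)\rightharpoonup e_y(\fu)$ by \eqref{CFfu1}$_1$; adding the two gives \eqref{CFeu1}$_1$.

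For \eqref{CFeu1}$_2$ I would argue that on the soft region the Kirchhoff--Love and warping contributions are of strictly lower order: by \eqref{EWAgu} one has $\|e_{\alpha\beta}(\Uc_{m,\e})\|_{L^2(\o)}+\|D^2(\Uc_{3,\e})\|_{L^2(\o)}\le C\e$, so (using again $x_3=\e y_3$) $\tfrac1\e\Pi_\e(e(\Uc_{KL,\e}))\to0$ strongly in $L^2(\o\X Y^M_\kappa)$, while by \eqref{EWRUO1} one has $\|\tfrac1{\e^2}\Pi_\e(\fu_\e)\|_{L^2(\o;H^1(\Yc))}\le C\e$, so $\tfrac1\e\Pi_\e(e(\fu_\e))=e_y\!\big(\tfrac1{\e^2}\Pi_\e(\fu_\e)\big)\to0$ strongly; finally $\tfrac1\e\Pi_\e(e(\Gu^M_\e))=e_y\!\big(\tfrac1{\e^2}\Pi_\e(\Gu^M_\e)\big)\rightharpoonup e_y(\Gu^M)$ by \eqref{CFfu1}$_2$. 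Summing the three contributions gives $\tfrac1\e\Pi_\e(e(u_\e))\rightharpoonup e_y(\Gu^M)$.

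I do not expect a deep obstacle here: the statement is essentially a careful passage to the limit in a known decomposition. The delicate point is precisely the bookkeeping of orders of magnitude, so that the Kirchhoff--Love and warping parts survive in $\O^B_\e$ (producing $E^{Lin}(\Uc)+e_y(\fu)$) but are annihilated in $\O^M_\e$, leaving only the microscopic corrector $\Gu^M$ there. A secondary technical point is that only weak convergences are available for the gradients of $\Uc_{m,\e}$ and $\Uc_{3,\e}$, so the identification of the weak $L^2(\o\X Y)$-limits of $\Te(e_{\alpha\beta}(\Uc_{m,\e}))$ and $\Te(D_{\alpha\beta}(\Uc_{3,\e}))$ must invoke the weak-to-weak continuity of the unfolding operator $\Te$ on $L^2(\o)$.
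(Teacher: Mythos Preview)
Your proposal is correct and follows essentially the same approach as the paper: decompose $e(u_\e)$ via $u_\e=\Gu_\e+\Gu^M_\e$ and $\Gu_\e=\Uc_{KL,\e}+\fu_\e$, use the relation $\Pi_\e(e(\psi))=e_y\!\big(\tfrac1\e\Pi_\e(\psi)\big)$ and the substitution $x_3=\e y_3$, and then pass to the limit using \eqref{CFU1} and \eqref{CFfu1}. You are, if anything, more explicit than the paper in justifying why the Kirchhoff--Love and warping contributions vanish at order $\e^{-1}$ on $Y^M_\kappa$ and in flagging the weak-to-weak continuity of $\Te$ needed to identify the limits of $\Te(e_{\alpha\beta}(\Uc_{m,\e}))$ and $\Te(D_{\alpha\beta}(\Uc_{3,\e}))$.
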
	
		\begin{proof}
			We have from \eqref{UGUFU1} that
			$$e(u_\e)=
			\left\{
			\begin{aligned}
				&e(\Gu_\e),\quad &&\text{a.e. in}\quad L^2(\O^B_\e)^{3\X3},\\
				&e(\Gu_\e)+e(\Gu^M_\e),\quad &&\text{a.e. in}\quad L^2(\O_\e^M)^{3\X3}.
			\end{aligned}\right.
			$$
			The strain tensor of $\Gu_\e$ is given by the following $3\X3$ symmetric matrix defined a.e. in $\O_\e$ by 
$$
				\begin{aligned}
					e(\Gu_\e)&=e(U_{KL,\e})+e(\fu_\e)\\
					&=\begin{pmatrix}
						\ds e_{11}(\Uc_{m,\e})-x_3{\partial^2\Uc_{3,\e}\over\partial x_1^2}&*&*\\
						\ds e_{12}(\Uc_{m,\e})-x_3{\partial^2\Uc_{3,\e}\over\partial x_1\partial x_2}& \ds e_{22}(\Uc_{m,\e})-x_3{\partial^2\Uc_{3,\e}\over\partial x_2^2}&*\\
						0&0&0
					\end{pmatrix}+e(\fu_\e).
				\end{aligned}
$$
			From the convergences \eqref{CFU1}, \eqref{CFfu1}$_{1,2}$ with definition of $e(\Gu_\e)$ we get
			$$\begin{aligned}
				{1\over\e^2}\Pi_\e(e(u_\e))={1\over \e^2}\Pi_\e(e(\Gu_\e))&={1\over\e^2}\Pi_\e\left(e(\Uc_{KL,\e})\right)+{1\over\e^3}e_y\left(\Pi_\e(\fu_{\e})\right)\\
				&\rightharpoonup E^{Lin}(\Uc)+e_y(\fu)\quad \text{weakly in}\quad L^2(\o\X Y_\kappa^B)^{3\X3},
			\end{aligned}$$
			and 
			$$
			\begin{aligned}
				{1\over\e}\Pi_\e(e(u_\e))&={1\over\e}\Pi_\e\left(e(\Uc_{KL,\e})\right)+{1\over\e^2}e_y\left(\Pi_\e(\fu_{\e})\right)+{1\over \e^2}e_y\left(\Pi_\e(\Gu^M_\e)\right)\\ &\rightharpoonup e_y(\Gu^M)\quad \text{weakly in}\quad L^2(\o\X Y_\kappa^M)^{3\X3}.
			\end{aligned}
			$$
			Hence, we have the convergence \eqref{CFeu1}.  
		\end{proof}	
		
		We end this subsection with the limit of the Green St. Venant's strain tensor $ \left(\nabla v_\e (\nabla v_\e)^T-\GI_3\right)$, which is used in the total elastic energy.
		\begin{lemma}\label{CFGLST2}
			For the same subsequence as the previous Lemmas, one has
			\begin{equation}\label{CFGLST1}
				\begin{aligned}
					{1\over 2\e^2} \Pi_\e\left(\nabla v_\e (\nabla v_\e)^T-\GI_3\right) &\rightharpoonup E(\Uc)+e_y(\wh{\fu})\quad &&\text{weakly in}\quad L^2(\o\X Y_\kappa^B)^{3\X3},\\
					{1\over 2\e} \Pi_\e\left(\nabla v_\e (\nabla v_\e)^T-\GI_3\right) &\rightharpoonup   e_y(\Gu^M) \quad &&\text{weakly in}\quad L^2(\o\X Y_\kappa^M)^{3\X3},
				\end{aligned}
			\end{equation}
			where the symmetric matrix $E(\Uc)$ is given by
			$$ E(\Uc)=
			\begin{pmatrix*}
				\ds -y_3{\partial^2\Uc_3\over\partial x_1^2}+\Zc_{11}& \ds -y_3{\partial^2\Uc_3\over\partial x_1 \partial x_2}+\Zc_{12}&0\\
				*& \ds -y_3{\partial^2\Uc_3\over\partial x_2^2}+\Zc_{22}&0\\
				0&0&0
			\end{pmatrix*}
			$$
			
			here, we have set\footnote{Note, that the anti-symmetric part is responsible for the the non-linearity of the problem.}
			$$\begin{aligned}
				\Zc_{\alpha\beta}&=e_{\alpha\beta}(\Uc_m)+{1\over 2} {\partial \Uc_3 \over \partial x_\alpha} {\partial \Uc_3 \over \partial x_\beta},\quad\text{and}\\
				\wh{\fu}(x',y)&=\fu(x',y)+y_3|\nabla\Uc_3(x')|^2\Ge_3\quad \text{for a.e.}\quad (x',y)\in\o\X\Yc.
			\end{aligned}$$
		\end{lemma}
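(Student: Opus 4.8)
The plan is to reduce the whole statement to the two convergence lemmas already established (Lemmas~\ref{CFuu1} and \ref{CFeu2}) via the pointwise algebraic identity
$\tfrac12\big(\nabla v_\e(\nabla v_\e)^T-\GI_3\big)=e(u_\e)+\tfrac12\,\nabla u_\e(\nabla u_\e)^T$,
which holds a.e. in $\O_\e$ for $v_\e=\text{\bf id}+u_\e$. Since the re-scaling unfolding operator $\Pi_\e$ commutes with pointwise matrix products, applying it to this identity turns the limit into a sum of two already-known limits, once the correct power of $\e$ is inserted in each region.

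First, on $\o\X Y_\kappa^B$ I would multiply by $\e^{-2}$. The linear part $\e^{-2}\Pi_\e(e(u_\e))$ converges weakly in $L^2(\o\X Y_\kappa^B)^{3\X3}$ to $E^{Lin}(\Uc)+e_y(\fu)$ by Lemma~\ref{CFeu2}, and the quadratic part $\tfrac12\e^{-2}\Pi_\e(\nabla u_\e(\nabla u_\e)^T)$ converges weakly in the same space to $\tfrac12 F(\Uc_3)$ by \eqref{CFgu2}$_1$ (that this convergence is weak in $L^2$, not merely in $L^1$, is part of the statement of Lemma~\ref{CFuu1}). Adding the two limits, the remaining work is purely algebraic: check that $E^{Lin}(\Uc)+\tfrac12 F(\Uc_3)$ coincides with $E(\Uc)$ on the upper $2\X2$ block — this is exactly the definition of $\Zc_{\alpha\beta}=e_{\alpha\beta}(\Uc_m)+\tfrac12\partial_\alpha\Uc_3\,\partial_\beta\Uc_3$ — and that the only leftover, which lives in the out-of-plane $(3,3)$ entry of $F(\Uc_3)$, is absorbed by passing from $\fu$ to $\wh{\fu}=\fu+y_3|\nabla\Uc_3|^2\Ge_3$, because the symmetrized $y$-gradient of the added field $y_3|\nabla\Uc_3(x')|^2\Ge_3$ is supported precisely on that entry. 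One should also note that $\wh{\fu}$ is still an admissible warping field: $\Uc_3\in H^2(\o)$ forces $\nabla\Uc_3\in H^1(\o)^2\hookrightarrow L^4(\o)^2$, hence $|\nabla\Uc_3|^2\in L^2(\o)$ and $y_3|\nabla\Uc_3(x')|^2\Ge_3\in L^2(\o;H^1_{per}(\Yc))^3$. This gives \eqref{CFGLST1}$_1$.

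On $\o\X Y_\kappa^M$ the argument is identical but with the scaling $\e^{-1}$: $\e^{-1}\Pi_\e(e(u_\e))\rightharpoonup e_y(\Gu^M)$ weakly in $L^2(\o\X Y_\kappa^M)^{3\X3}$ by Lemma~\ref{CFeu2}, while the quadratic correction $\tfrac12\e^{-1}\Pi_\e(\nabla u_\e(\nabla u_\e)^T)\rightharpoonup 0$ by \eqref{CFgu2}$_2$ (the extra factor $\e$ coming from the contrast kills it). Hence the sum converges weakly to $e_y(\Gu^M)$, which is \eqref{CFGLST1}$_2$.

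I do not anticipate a genuine obstacle: all analytic content — the unfolded convergences, their weak-$L^2$ character, and the vanishing of the mid-plane-gradient and $\fu$-warping contributions — is already available from the preceding lemmas. The only point that needs care is the bookkeeping identifying $E^{Lin}(\Uc)+e_y(\fu)+\tfrac12 F(\Uc_3)$ with $E(\Uc)+e_y(\wh{\fu})$: keeping track of which quadratic terms $\partial_\alpha\Uc_3\,\partial_\beta\Uc_3$ land in the in-plane block $\Zc_{\alpha\beta}$ versus the out-of-plane term that must be re-routed into the redefined warping $\wh{\fu}$.
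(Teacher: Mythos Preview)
Your proposal is correct and follows essentially the same approach as the paper: the paper's proof also relies on the pointwise identity $\nabla v_\e(\nabla v_\e)^T-\GI_3=\nabla u_\e(\nabla u_\e)^T+2e(u_\e)$ and then invokes the convergences \eqref{CFgu2} and \eqref{CFeu1} from Lemmas~\ref{CFuu1} and~\ref{CFeu2}, followed by the algebraic rearrangement you describe. Your write-up is in fact more explicit than the paper's about the bookkeeping that identifies $E^{Lin}(\Uc)+e_y(\fu)+\tfrac12 F(\Uc_3)$ with $E(\Uc)+e_y(\wh{\fu})$ and about why $\wh{\fu}$ remains in $L^2(\o;H^1_{per}(\Yc))^3$.
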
	
		\begin{proof}
			Replacing the $v_\e$ in the strain tensor $\nabla v_\e(\nabla v_\e)^T-\GI_3$ by $u_\e$, yields
			$$ \nabla v_\e(\nabla v_\e)^T-\GI_3= \nabla u_\e(\nabla u_\e)^T+\nabla u_\e+(\nabla u_\e)^T= \nabla u_\e(\nabla u_\e)^T+2e(u_\e).$$
			Hence, from convergences \eqref{CFgu1}, \eqref{CFgu2} and re-arranging the terms we obtain the required result.
		\end{proof}

		\begin{remark}$ $
			We remark the following from the above Lemmas
			\begin{itemize}
				\item The convergence \eqref{CFGLST1} gives the limit of the Green St. Venant's strain tensor used in the energy.
				\item  From the convergences \eqref{CFgu1}$_2$, we get that the limit displacement is of Kirchhoff-Love type.
				\item The convergences \eqref{CFrgu1} and the estimate \eqref{EWAgu}$_1$ imply
				\begin{equation}\label{C920}
					\Pi_\e(\nabla v_\e)\to \GI_3\quad \text{strongly in}\quad L^2(\o\X\Yc)^{3\X3}.
				\end{equation}
			\end{itemize}
		\end{remark}
		
		\subsection{Asymptotic behavior of the sequence $\ds \left\{{m_\e \over \e^5}\right\}_\e$}
		The limit of the previous subsection allows us to investigate the limit of the elastic problem. Therefore, we introduce below the limit re-scaled elastic energy for the elasticity problem
$$
			\GJ(\Uc, \wh{\fu},\Gu^M)=\GJ_B(\Uc,\wh{\fu})+\GJ_M(\Gu^M)-|\Yc|\int_{\o} f\cdot\Uc dx'- \sum_{\alpha=1}^2\int_{\o}f_\alpha\Big(\int_\Yc\Gu_\alpha^M dy\Big)dx',					
$$
		where we have set 	the part of the energy without the external force as
$$
			\begin{aligned}
				\GJ_B(\Uc,\wh{\fu})&=\int_{\o}\int_{Y_\kappa^B}Q(y,E(\Uc)+e_y(\wh{\fu})) dydx',\\
				\GJ_M(\Gu^M)&=\int_{\o}\int_{Y_\kappa^M} Q(y,e_y(\Gu^M))dydx'.
			\end{aligned}
$$
		Let us define the limit displacement space as
		$$\O_R=\o\setminus\overline{\gamma},\qquad 
		\D_0=\Big\{\Uc=\left(\Uc_1,\Uc_2,\Uc_3\right)\in H^1(\O_R)^2\X H^2(\O_R) \;\;|\;\; \Uc=\partial_\alpha\Uc_3=0\quad\text{a.e. on}\;\; \gamma\Big \}.$$ 
		\begin{remark}\label{Re7}
		   Even though the total energy, as stated in equation \eqref{stpr}, is expressed in terms of deformation, the limit is derived in terms of displacement. This is a common practice when the total elastic energy is in the Von-K\'arm\'an regime, which is of order $O(\e^5)$.
		\end{remark}
		Before showing the convergence of the problem with $\Gamma$-convergence, we first prove that the limit-functional $\GJ$ attains a minimum on  $\D=\D_0\X L^2(\O_R;H^1_{per,0}(Y_\kappa^B))^3\X L^2(\O_R;\GH^1(Y_\kappa^M))^3$.

		\begin{lemma}\label{L107}
			Let us equip the space $\S=\R^3\X\R^3\X H^1_{per,0}(\Yc)^3$ with the semi-norm 
			$$	\|(\eta,\zeta,\wh{w})\|_\S = \sqrt{\sum_{i,j=1}^3  \|\wt{\cal E}_{ij}(\eta,\zeta,\wh{w})\|^2_{L^2( \Yc)}},
			$$ 
			where for every $(\eta,\zeta,\wh{w}) \in \S=\R^3\X\R^3\X H^1_{per,0}(\Yc)^3$, we denote $\wt{\cal E}$ the symmetric matrix by
			$$
			\wt{\cal E}(\eta,\zeta,\wh{w})=
			\begin{pmatrix}
				\eta_1-y_3\zeta_1+ e_{11,y}(\wh w)  & \eta_3 -y_3 \zeta_3 + e_{12,y}(\wh w) 
				&  e_{13,y}(\wh w)  \\
				* & \eta_2-y_3\zeta_2 + e_{22,y}(\wh w)  &  e_{23,y}(\wh w) \\
				* & *&    e_{33,y}(\wh w) 
			\end{pmatrix}.
			$$ 
			Then, this expression defines a norm on $\S$. Moreover, there exists constants $c,C>0$ such that for all $(\eta,\zeta,\wh{w}) \in \S$ 
			\begin{equation}\label{ENE1}
				c\left(\|\eta\|_2 + \|\zeta\|_2 + \|\wh{w}\|_{H^1_{per,0}(\Yc)}\right) \leq  \|(\eta,\zeta,\wh{w})\|_\S\leq C \left(\|\eta\|_2 + \|\zeta\|_2 + \|\wh{w}\|_{H^1_{per,0}(\Yc)}\right).
			\end{equation} 
		\end{lemma}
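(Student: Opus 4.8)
The plan is to verify the three axioms of a norm for $\|\cdot\|_\S$ and then the two-sided estimate \eqref{ENE1}; the only step that is not entirely routine is a Korn inequality on the fixed cell $\Yc$ for periodic fields with vanishing mean. Positive homogeneity and the triangle inequality are immediate, since $(\eta,\zeta,\wh w)\mapsto \wt{\cal E}(\eta,\zeta,\wh w)$ is $\R$-linear with values in $L^2(\Yc)^{3\X 3}$ and $\|(\eta,\zeta,\wh w)\|_\S$ is the $L^2(\Yc)$-norm of this matrix field; it therefore inherits subadditivity and homogeneity from $\|\cdot\|_{L^2(\Yc)}$. The remaining axiom — definiteness — will drop out of the lower bound in \eqref{ENE1}, so the real content is \eqref{ENE1} itself.

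For the upper bound in \eqref{ENE1}: on $\Yc$ one has $|y_3|\le\kappa$, so each entry of $\wt{\cal E}(\eta,\zeta,\wh w)$ is, pointwise, the sum of a term of modulus at most $|\eta|+\kappa|\zeta|$ and an entry of $e_y(\wh w)$; integrating over the bounded set $\Yc$ and using $\|e_y(\wh w)\|_{L^2(\Yc)}\le\|\nabla_y\wh w\|_{L^2(\Yc)}\le\|\wh w\|_{H^1_{per,0}(\Yc)}$ yields the right-hand inequality.

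The heart of the proof is the lower bound. First I would record the orthogonality property: for $\wh w\in H^1_{per,0}(\Yc)^3$ and $\alpha,\beta\in\{1,2\}$ the membrane strain $e_{\alpha\beta,y}(\wh w)$ is $L^2(\Yc)$-orthogonal both to $1$ and to $y_3$, i.e. $\int_{\Yc}y_3^{\,k}\,e_{\alpha\beta,y}(\wh w)\,dy=0$ for $k\in\{0,1\}$; this follows by integration by parts in the in-plane variables, using the $Y$-periodicity of $\wh w$ and $\partial_\alpha y_3=0$. Since $\eta_i$ and $y_3\zeta_i$ lie in the span of $1$ and $y_3$ as functions of $y_3$, the three ``membrane'' entries of $\wt{\cal E}$ split orthogonally in $L^2(\Yc)$, for instance
\[
\big\|\eta_1-y_3\zeta_1+e_{11,y}(\wh w)\big\|^2_{L^2(\Yc)}=\big\|\eta_1-y_3\zeta_1\big\|^2_{L^2(\Yc)}+\big\|e_{11,y}(\wh w)\big\|^2_{L^2(\Yc)},
\]
with $\|\eta_i-y_3\zeta_i\|^2_{L^2(\Yc)}=2\kappa\,\eta_i^2+\tfrac23\kappa^3\,\zeta_i^2\ge c_\kappa(\eta_i^2+\zeta_i^2)$, while the entries $e_{13,y}(\wh w)$, $e_{23,y}(\wh w)$, $e_{33,y}(\wh w)$ of $\wt{\cal E}$ are already pure strain components. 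Summing over $i,j$ gives
\[
\|(\eta,\zeta,\wh w)\|_\S^2\ \ge\ c\Big(\|\eta\|_2^2+\|\zeta\|_2^2+\|e_y(\wh w)\|^2_{L^2(\Yc)}\Big).
\]

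It then remains to absorb $\|e_y(\wh w)\|_{L^2(\Yc)}$ into $\|\wh w\|_{H^1_{per,0}(\Yc)}$ through the classical Korn inequality on $\Yc$, i.e. $\|\wh w\|_{H^1_{per,0}(\Yc)}\le C\,\|e_y(\wh w)\|_{L^2(\Yc)}$. This is the point requiring care: it is valid because the kernel of $e_y$ on $Y$-periodic $H^1$ vector fields consists exactly of the constant fields — the rotational infinitesimal rigid displacements $y\mapsto b\wedge y$ are not $Y$-periodic — and the constants are excluded by the vanishing-mean normalization built into $H^1_{per,0}(\Yc)$; if one prefers, this Korn inequality is obtained by the standard compactness (Lions lemma) argument on the fixed domain $\Yc$. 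Combining the last two displays yields the left-hand inequality of \eqref{ENE1}, and in particular shows that $\|(\eta,\zeta,\wh w)\|_\S=0$ forces $\eta=\zeta=0$ and $e_y(\wh w)=0$, hence $\wh w=0$, which supplies the missing definiteness. Apart from the Korn step, everything is bookkeeping with the orthogonal splitting in the transverse variable.
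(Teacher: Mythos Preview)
Your proof is correct and takes a genuinely different route from the paper's. The paper proceeds in two steps: first it shows definiteness directly, by writing $\wt{\cal E}(\eta,\zeta,\wh w)=e_y(\tau+\wh w)$ for an explicit polynomial field $\tau$ built from $\eta,\zeta$, so that $\wt{\cal E}=0$ forces $\tau+\wh w$ to be a rigid motion, and then uses periodicity and zero mean of $\wh w$ to kill all parameters; second, it obtains the lower bound in \eqref{ENE1} by a compactness/contradiction argument (normalize, extract weak limits, invoke Step~1 to identify the limit as zero, then upgrade to strong convergence via Korn on $H^1_{per,0}(\Yc)^3$). You bypass the compactness step entirely with the orthogonality observation $\int_\Yc y_3^{\,k}\,e_{\alpha\beta,y}(\wh w)\,dy=0$ for $k\in\{0,1\}$ and $\alpha,\beta\in\{1,2\}$, which gives a Pythagorean splitting of each membrane entry of $\wt{\cal E}$ and hence a constructive lower bound; definiteness then comes for free. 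Your argument is shorter and more explicit (it even produces the constants in terms of $\kappa$), while the paper's approach is the standard Peetre--Tartar pattern and would transfer unchanged to variants where the clean orthogonal splitting is unavailable.
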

		
		\begin{proof} {\bf Step 1.} We show that  $\|(\eta,\zeta,\wh{w})\|_\S = 0$ implies $(\eta,\zeta,\wh{w})=0$.\\[1mm]	
			Let $(\eta,\zeta,\wh{w}) \in \S$ satisfy $\|(\eta,\zeta,\wh{w})\|_\S = 0$ and define the map
$$
				\tau(y) = \begin{pmatrix}
					y_1(\eta_1 - y_3\zeta_1) + y_2(\eta_3 - y_3\zeta_3)\\
					y_1(\eta_3 - y_3\zeta_3) + y_2(\eta_2 - y_3\zeta_2)\\
					\ds \frac{y_1^2}{2}\zeta_1+\frac{y_2^2}{2}\zeta_2 + y_1y_2\zeta_3
				\end{pmatrix}.
$$
			We can rewrite
			\begin{equation}
				\begin{aligned}\label{Eq1023}
					\wt{\cal E}(\eta,\zeta,\wh{w}) &= e_{y}(\tau + \wh w),\\
					\wt{\Ec}(0,0,\wh{w})&=e_y(\wh{w}).
				\end{aligned}
			\end{equation}
			So, $\|(\eta,\zeta,\wh{w})\|_\S = 0$ implies $e_y(\tau+\wh{w})=0$.
			Hence, $\tau(y) + \wh w(y) = a + b\land y$ is a rigid motion, $a=a_1\Ge_1+a_2\Ge_2+a_3\Ge_3$ and $b=b_1\Ge_1+b_2\Ge_2+b_3\Ge_3$. Since  $\wh{w}\in H^1_{per,0}(\Yc)^3$, so we have periodicity in the directions $\Ge_\alpha$ and vanishing mean, i.e,
			\begin{equation}\label{Eq113-}
				\begin{aligned}
					&\wh{w}(y+n\Ge_\alpha)=\wh{w}(y),\qquad \text{a.e. on}\quad \Yc,\quad \forall n\in \Z,\\
					&\Mc_{\Yc}(\wh{w})={1\over |\Yc|}\int_{\Yc}\wh{w}(y)dy=0.
				\end{aligned}	
			\end{equation}
			So, we have periodicity in the directions $\Ge_1$, $\Ge_2$ of the displacement
			$$
			\wh{w}(y)=		\begin{pmatrix}
				y_1(\eta_1 - y_3\zeta_1) + y_2(\eta_3 - y_3\zeta_3)-a_1-b_2y_3+b_3y_2\\
				y_1(\eta_3 - y_3\zeta_3) + y_2(\eta_2 - y_3\zeta_2)- a_2-b_3y_1+b_1y_3\\
				\ds \frac{y_1^2}{2}\zeta_1+\frac{y_2^2}{2}\zeta_2 + y_1y_2\zeta_3-a_3-b_1y_2+b_2y_1
			\end{pmatrix}.	$$	
			This first implies
			$$\eta_1=\zeta_1=0,\qquad \eta_2=\zeta_2=0,\quad \text{and}\quad \eta_3-y_3\zeta_3\pm b_3=0\quad \forall\;y_3\in(-\kappa,\kappa).$$	
			From which we get $\eta_3=\zeta_3=b_3=0$. Now, with $\eta=0=\zeta$ along with equality in the third component gives $b_1=b_2=0$.
			Applying \eqref{Eq113-}$_2$ leads to $a=0$.
			Hence, we get $a=b=\eta=\zeta=0$, which thus implies $\wh{w}=0$.\\		
			\noindent{\bf Step 2.}	Using contradiction method we show the result \eqref{ENE1}$_1$, which is enough since the other side is trivial.\\[1mm]
			Indeed, if  there does not exist any $C>0$ such that the inequality \eqref{ENE1} holds, then for all $n\in\N^*$ there exist a $(\eta_n,\zeta_n,\wh{w}_n)\in \S$ such that
			\begin{equation}\label{Eq1024}
				\|(\eta_n,\zeta_n,\wh{w}_n)\|_{\S}< {1\over n} \qquad \hbox{and}\qquad \|\eta_n\|_2 + \|\zeta_n\|_2 + \|\wh{w}_n\|_{H^1_{per,0}(\Yc)}=1 .
			\end{equation}
			From the above inequality \eqref{Eq1024}$_2$,  there exist a subsequence (still denoted by $n$), and $(\eta,\zeta,\wh{w}) \in \S$ such that
			$$
			\begin{aligned}
				&\eta_n\to \eta,\qquad \zeta_n\to \zeta,\\
				&\wh{w}_n\rightharpoonup \wh{w}\quad\hbox{weakly in } H^1_{per,0}(\Yc)^3.
			\end{aligned}
			$$ So, from the above convergences we get
			$$\wt{\cal E}(\eta_n,\zeta_n,\wh{w}_n) \rightharpoonup \wt{\cal E}(\eta,\zeta,\wh{w})\quad\hbox{weakly in } L^2(\Yc)^{3\X 3}.$$ Besides, from 
			\eqref{Eq1024}$_1$	we have
			$$\wt{\cal E}(\eta_n,\zeta_n,\wh{w}_n) \to 0 \quad\hbox{strongly in } L^2(\Yc)^{3\X 3}.$$ From the result obtained in Step 1, we get $(\eta,\zeta,\wh{w})=(0,0,0)$. Then, we have from \eqref{Eq1023}$_2$ that
			$$e_y(\wh{w}_n)\to  0\quad\hbox{strongly in } L^2(\Yc)^{3\X 3}.$$ Since $\|e_y(\cdot)\|_{L^2(\Yc)}$ is a norm in $H^1_{per,0}(\Yc)^3$, this finally leads to
			$$\lim_{n\to 0}\big(\|\eta_n\|_2 + \|\zeta_n\|_2 + \|\wh{w}_n\|_{H^1_{per,0}(\Yc)}\big)=0$$ which is a contradiction.
		\end{proof}

		\begin{lemma}\label{ExMin1}
			The functional $\GJ$ admits a minimum on $\D$.
		\end{lemma}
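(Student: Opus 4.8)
The plan is to run the direct method of the calculus of variations on the Hilbert space $\D$: prove that $\GJ$ is bounded from below and coercive on $\D$, take a minimizing sequence, extract a weakly convergent subsequence, and verify that $\GJ$ is sequentially weakly lower semicontinuous at the limit point, which is then a minimizer. The only non-routine feature is that $\GJ_B$ is not convex in $\Uc$, because of the von-K\'arm\'an term $\frac12\partial_\alpha\Uc_3\,\partial_\beta\Uc_3$ contained in $\Zc_{\alpha\beta}$; I expect this to be the main obstacle, and I would deal with it by compactness in the mid-surface variable.

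For coercivity I would fix $(\Uc,\wh{\fu},\Gu^M)\in\D$. Applying Lemma \ref{L107} slice-wise for a.e.\ $x'\in\O_R$ --- with $\eta=(\Zc_{11},\Zc_{22},\Zc_{12})(x')$, $\zeta=(D_{11}(\Uc_3),D_{22}(\Uc_3),D_{12}(\Uc_3))(x')$, $\wh{w}=\wh{\fu}(x',\cdot)$, the cell $Y_\kappa^B$ being treated exactly as $\Yc$ there --- together with the ellipticity \eqref{QPD} of $Q$ gives
\[
\GJ_B(\Uc,\wh{\fu})\ \ge\ c\Big(\|D^2\Uc_3\|^2_{L^2(\O_R)}+\sum_{\alpha,\beta}\|\Zc_{\alpha\beta}\|^2_{L^2(\O_R)}+\|\wh{\fu}\|^2_{L^2(\O_R;H^1(Y_\kappa^B))}\Big),
\]
while Korn's inequality on the fixed Lipschitz domain $Y_\kappa^M$ (fields in $\GH^1(Y_\kappa^M)$ vanish on $\partial Y_\kappa\X(-\kappa,\kappa)$) gives $\GJ_M(\Gu^M)\ge c\|\Gu^M\|^2_{L^2(\O_R;H^1(Y_\kappa^M))}$. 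The clamping $\Uc_m=\Uc_3=\nabla\Uc_3=0$ on $\gamma$ and Poincar\'e's inequality yield $\|\Uc_3\|_{H^2(\O_R)}\le C\|D^2\Uc_3\|_{L^2(\O_R)}$, and then Korn's plus Poincar\'e's inequalities and the embedding $H^1(\O_R)\hookrightarrow L^4(\O_R)$ give $\|\Uc_m\|_{H^1(\O_R)}\le C\big(\sum_{\alpha,\beta}\|\Zc_{\alpha\beta}\|_{L^2(\O_R)}+\|D^2\Uc_3\|^2_{L^2(\O_R)}\big)$, since $e(\Uc_m)=\Zc-\frac12\nabla\Uc_3\otimes\nabla\Uc_3$. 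The vertical force term and the matrix force term are linear in $\|\Uc_3\|_{H^2}$ and in $\|\Gu^M\|_{L^2(\O_R;H^1(Y_\kappa^M))}$ respectively, hence absorbed by Young's inequality; the in-plane force term $-|\Yc|\int_\o(f_1\Uc_1+f_2\Uc_2)\,dx'$ is bounded by $C\|f\|_{L^2(\o)}\big(\sum_{\alpha,\beta}\|\Zc_{\alpha\beta}\|_{L^2}+\|D^2\Uc_3\|^2_{L^2}\big)$ and can likewise be absorbed provided $\|f\|_{L^2(\o)}$ is small --- which is the standing assumption \eqref{AFB2} (possibly after shrinking $C^*$, exactly as in Lemma \ref{GOWL54}). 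Collecting these bounds gives $\GJ\ge\alpha_0\big(\|\Uc_m\|^2_{H^1(\O_R)}+\|\Uc_3\|^2_{H^2(\O_R)}+\|\wh{\fu}\|^2_{L^2(\O_R;H^1(Y_\kappa^B))}+\|\Gu^M\|^2_{L^2(\O_R;H^1(Y_\kappa^M))}\big)-\beta_0$ with $\alpha_0>0$, $\beta_0\ge 0$, so $\GJ$ is bounded below with bounded sublevel sets.

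Then I would take a minimizing sequence $\{(\Uc^n,\wh{\fu}^n,\Gu^{M,n})\}_n\subset\D$, which is bounded by the coercivity estimate, and pass to a subsequence with $\Uc^n_m\rightharpoonup\Uc_m$ in $H^1(\O_R)^2$, $\Uc^n_3\rightharpoonup\Uc_3$ in $H^2(\O_R)$, $\wh{\fu}^n\rightharpoonup\wh{\fu}$ in $L^2(\O_R;H^1(Y_\kappa^B))^3$ and $\Gu^{M,n}\rightharpoonup\Gu^M$ in $L^2(\O_R;H^1(Y_\kappa^M))^3$; the clamping, periodicity, zero-average and partial-Dirichlet constraints are closed under weak convergence, so $(\Uc,\wh{\fu},\Gu^M)\in\D$. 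The crucial step is that in dimension two $H^2(\O_R)\hookrightarrow\hookrightarrow W^{1,4}(\O_R)$, hence $\nabla\Uc^n_3\to\nabla\Uc_3$ strongly in $L^4(\O_R)$ and $\partial_\alpha\Uc^n_3\,\partial_\beta\Uc^n_3\to\partial_\alpha\Uc_3\,\partial_\beta\Uc_3$ strongly in $L^2(\O_R)$; combined with $e_{\alpha\beta}(\Uc^n_m)\rightharpoonup e_{\alpha\beta}(\Uc_m)$ in $L^2(\O_R)$ this gives $\Zc^n_{\alpha\beta}\rightharpoonup\Zc_{\alpha\beta}$ in $L^2(\O_R)$, hence $E(\Uc^n)+e_y(\wh{\fu}^n)\rightharpoonup E(\Uc)+e_y(\wh{\fu})$ in $L^2(\O_R\X Y_\kappa^B)^{3\X3}$ and $e_y(\Gu^{M,n})\rightharpoonup e_y(\Gu^M)$ in $L^2(\O_R\X Y_\kappa^M)^{3\X3}$. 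Because $Q$ is a non-negative quadratic form, $S\mapsto\int_{\O_R}\!\int_{Y_\kappa^B}Q(y,S)$ and $S\mapsto\int_{\O_R}\!\int_{Y_\kappa^M}Q(y,S)$ are convex and strongly $L^2$-continuous, hence sequentially weakly lower semicontinuous, so $\liminf_n\GJ_B(\Uc^n,\wh{\fu}^n)\ge\GJ_B(\Uc,\wh{\fu})$ and $\liminf_n\GJ_M(\Gu^{M,n})\ge\GJ_M(\Gu^M)$; moreover the two linear force functionals are weakly continuous on $\D$ ($\Uc^n\to\Uc$ strongly in $L^2(\O_R)^3$ by Rellich, $\Gu^{M,n}\rightharpoonup\Gu^M$ in $L^2(\O_R\X Y_\kappa^M)^3$). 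Taking $\liminf_n$ in $\GJ(\Uc^n,\wh{\fu}^n,\Gu^{M,n})$ then gives $\inf_\D\GJ=\lim_n\GJ(\Uc^n,\wh{\fu}^n,\Gu^{M,n})\ge\GJ(\Uc,\wh{\fu},\Gu^M)\ge\inf_\D\GJ$, so $(\Uc,\wh{\fu},\Gu^M)$ realizes the minimum.

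The genuinely delicate point is the non-convexity of $\GJ_B$ in $\Uc$ induced by the von-K\'arm\'an nonlinearity $\frac12\nabla\Uc_3\otimes\nabla\Uc_3$; it is overcome by the compact embedding $H^2\hookrightarrow\hookrightarrow W^{1,4}$ of the two-dimensional mid-surface, which turns that product into a strongly convergent sequence, so that $\Zc^n$ still converges weakly and the convex-integrand lower semicontinuity applies. Everything else is routine, the only thing worth flagging being that coercivity of the in-plane part needs the smallness of $f$, as in the a priori estimate of Lemma \ref{GOWL54}.
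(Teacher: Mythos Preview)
Your proof is correct and follows essentially the same strategy as the paper: the direct method, with coercivity obtained from Lemma \ref{L107} together with \eqref{QPD}, the 2D Korn and Poincar\'e inequalities, the embedding $H^2(\O_R)\hookrightarrow W^{1,4}(\O_R)$, and the smallness of $f$ to absorb the in-plane force term, then weak compactness and lower semicontinuity of $\GJ$. If anything, you are more explicit than the paper about the one non-routine point---the paper simply invokes ``the lower semi-continuity of $\GJ$'' in Step 2, whereas you spell out that the compact embedding $H^2(\O_R)\hookrightarrow\hookrightarrow W^{1,4}(\O_R)$ upgrades $\nabla\Uc_3^n$ to strong $L^4$ convergence, so that $\Zc^n\rightharpoonup\Zc$ in $L^2$ despite the quadratic nonlinearity, and then convexity of $S\mapsto Q(y,S)$ gives the liminf inequality.
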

		\begin{proof}
			First, we observe from \eqref{QPD} and \eqref{ENE1}$_1$ that there exist a constant $C_1>0$ such that
			\begin{equation}\label{Eq823}
				\begin{aligned}
					&\sum_{\alpha,\beta=1}^2\left[\left\|e_{\alpha\beta}(\Wc)+{1\over 2}{\partial \Wc_3\over \partial x_\alpha}{\partial \Wc_3\over \partial x_\beta}\right\|^2_{L^2(\o)}+\left\|{\partial^2\Wc_3\over\partial x_\alpha\partial x_\beta}\right\|^2_{L^2(\o)}\right]+\|\wh{w}\|^2_{L^2(\o;H^1(Y_\kappa^B))} \leq C_1\GJ_B(\Wc,\wh{w}),\\
					&\hskip 83mm \text{for all}\quad(\Wc,\wh{w})\in\D_0\X L^2(\O_R;H^1_{per,0}(Y_\kappa^B))^3.
				\end{aligned}
			\end{equation} Since $\GJ(0,0,0)=0$, from now on, we only consider triplets $(\Uc,\wh{\fu},\Gu^M)\in \D$ such that $\GJ(\Uc,\wh{\fu},\Gu^M)\leq 0$.\\[1mm]
			Let us set
			$$ m=\inf_{(\Uc,\wh{\fu},\Gu^M)\in\D} \GJ(\Uc,\wh{\fu},\Gu^M).$$  We have $m\in[-\infty,0]$.\\
			
			\noindent{\it {\bf Step 1.}} We prove that $m\in (-\infty,0]$.\\
			
			Due to the boundary conditions on $\Uc_3$ in $\D_0$, the fact that $\wh{\fu}$ belongs to $L^2(\O_R;H^1_{per,0}(Y_\kappa^B))^3$ and with using the estimate \eqref{Eq823}, we immediately have
			\begin{equation}\label{Eq824}
				\|\Uc_3\|^2_{H^2(\o)} \leq C\sum_{\alpha,\beta=1}^2\left\|{\partial^2\Uc_3\over\partial x_\alpha\partial x_\beta}\right\|^2_{L^2(\o)}\leq C \GJ_B(\Uc,\wh{\fu}),\qquad \|\wh{\fu}\|^2_{L^2(\o;H^1(Y_\kappa^B))}\leq C_3\GJ_B(\Uc,\wh{\fu}).
			\end{equation}
			Using the estimate \eqref{Eq823}, with the fact that $\ds \Zc_{\alpha\beta}=e_{\alpha\beta}(\Uc)+{1\over 2}\partial_\alpha\Uc_3\partial_\beta\Uc_3$ and due to  the embedding $H^2(\O_R) \hookrightarrow W^{1,4}(\O_R)$, we get
			$$
			\begin{aligned}
				\sum_{\alpha,\beta=1}^2\|e_{\alpha\beta}(\Uc)\|^2_{L^2(\O_R)} & \leq 2\sum_{\alpha,\beta=1}^2\Big(\Big\|e_{\alpha,\beta}(\Uc)+{1\over 2}{\partial \Uc_3\over \partial x_\alpha}{\partial \Uc_3\over \partial x_\beta}\Big\|^2_{L^2(\o)}+{1\over 4}\Big\|{\partial \Uc_3\over \partial x_\alpha}{\partial \Uc_3\over \partial x_\beta}\Big\|^2_{L^2(\O_R)}\Big)\\
				&\leq C\GJ_B(\Uc,\wh{\fu})+C\|\nabla\Uc_3\|^4_{L^4(\O)}\leq C\GJ_B(\Uc,\wh{\fu})+C\GJ_B(\Uc,\wh{\fu})^2.
			\end{aligned}$$
			Then, using the $2$D Korn's inequality, we obtain
			\begin{equation}\label{Eq825}
				\|\Uc_1\|^2_{H^1(\o)}+\|\Uc_2\|^2_{H^1(\o)}\leq C\sum_{\alpha,\beta=1}^2\|e_{\alpha\beta}(\Uc)\|^2_{L^2(\o)}  \leq C\GJ_B(\Uc,\wh{\fu})+C_0^2\GJ_B(\Uc,\wh{\fu})^2.
			\end{equation}
			From the estimates \eqref{ENE1}$_1$, \eqref{QPD} and the fact that $\Gu^M\in L^2(\O_R;\GH^1(Y_\kappa^M))^3$, we have
			\begin{equation}\label{Eq826}
				\|\Gu^M\|^2_{L^2(\o;H^1(Y_\kappa^M))}\leq C \GJ_M(\Gu^M).
			\end{equation}
			With the estimates \eqref{Eq824}--\eqref{Eq826}, we obtain
$$
				\begin{aligned}
					&\GJ_B(\Uc,\wh{\fu})+\GJ_M(\Gu^M)\\
					&\leq \sum_{i=1}^3 \|f_i\|_{L^2(\o)}\|\Uc_i\|_{L^2(\o)}+\|f\|_{L^2(\o)}\|\Gu^M\|_{L^2(\o,H^1(Y_\kappa^M))}\\
					&\leq C\|f_3\|_{L^2(\o)}\sqrt{\GJ_B(\Uc,\wh{\fu})}+C\sqrt{\|f_1\|^2_{L^2(\o)}+\|f_2\|^2_{L^2(\o)}}\left[\sqrt{\GJ_B(\Uc,\wh{\fu})}+\GJ_B(\Uc,\wh{\fu})\right]\\
					&\hskip 100mm +C\|f\|_{L^2(\o)}\sqrt{\GJ_M(\Gu^M)}.
				\end{aligned}  
$$
			Thus, we get from the estimate 
			$$ \GJ_B(\Uc,\wh{\fu})+\GJ_M(\Gu^M)\leq C\|f\|_{L^2(\o)}\sqrt{\GJ_B(\Uc,\wh{\fu})+\GJ_M(\Gu^M)}+\GC\|f\|_{L^2(\o)}\GJ_B(\Uc,\wh{\fu}).$$
			From the above inequality with the assumption on the force $\GC\|f\|_{L^2(\o)}\leq 1/2$ (see \eqref{EQ74}), we have
			$$ \GJ_B(\Uc,\wh{\fu})+\GJ_M(\Gu^M)\leq C.$$
			Finally, for every $(\Uc,\wh{\fu},\Gu^M)\in \D$ such that $\GJ(\Uc,\wh{\fu},\Gu^M)\leq 0$ we have 
			$$ \|\Uc_1\|_{L^2(\o)}+\|\Uc_2\|_{L^2(\o)}+\|\Uc_3\|_{L^2(\o)}+\|\wh{\fu}\|_{L^2(\o;H^1(Y_\kappa^B)}+\|\Gu^M\|_{L^2(\o;H^1(Y_\kappa^M))}\leq C.$$
			As a consequence $m\in (-\infty,0]$.\\
			
			\noindent{\it {\bf Step 2.}} We prove that $m$ is a minimum.\\
			
			Consider a minimizing sequence $\{(\Uc_n,\wh{\fu}_n,\Gu^M_n)\}_n \subset \D$ satisfying ${\GJ}(\Uc^n,\wh{\fu}_n,\Gu^M_n)\leq {\GJ}(0,0,0) = 0$ and
$$
				m = \inf_{(\Uc,\wh{\fu},\Gu^M)\in \D} {\GJ}(\Uc,\wh{\fu},\Gu^M) = \lim_{n\rightarrow+\infty} {\GJ}(\Uc^n,\wh{\fu}_n,\Gu^M_n).
$$
			From Step 1, one has
			$$ 
			\|\Uc^n_1\|_{H^1(\o)}+\|\Uc^n_2\|_{H^1(\o)}+\|\Uc^n_3\|_{H^2(\o)}+\|\wh{\fu}_n\|_{L^2(\o; H^1(Y_\kappa^B))}+\|\Gu^M_n\|_{L^2(\o;H^1(Y_\kappa^M))}\leq C,
			$$ where the constant does not depend on $n$.\\
			Hence, there exists a  subsequence of $\{(\Uc^n,\wh{\fu}_n,\Gu^M_n)\}_n$, still denoted $\{(\Uc^n,\wh{\fu}_n,\Gu^M_n)\}_n$,  such that $$(\Uc^n,\wh{\fu}_n,\Gu^M_n) \rightharpoonup  (\Uc',\wh{\fu}',\Gu^M)\quad\text{weakly 
				in }\D.$$
			Furthermore, by the lower semi-continuity of ${\GJ}$ we have 
$$
				{\GJ} (\Uc',\wh{\fu}',(\Gu^M)') \leq \liminf_{n\rightarrow+\infty} {\GJ}(\Uc^n,\wh{\fu}_n,\Gu^M_n) = \lim_{n\rightarrow+\infty} {\GJ}(\Uc^n,\wh{\fu}_n,\Gu^M_n) = m.
$$
			However, since $m = \inf_{(\Uc,\wh{\fu},\Gu^M)\in \D} {\GJ}(\Uc,\wh{\fu},\Gu^M)$ we conclude that for every $(\Uc,\wh{\fu},\Gu^M)\in \D$ it holds
$$
				{\GJ} (\Uc',\wh{\fu}',(\Gu^M)')\leq m \leq {\GJ}(\Uc,\wh{\fu},\Gu^M).
$$
			Hence, choosing $(\Uc,\wh{\fu},\Gu^M)=(\Uc',\wh{\fu}',(\Gu^M)')$, we get 
			$$  m=\inf_{(\Uc,\wh{\fu},\Gu^M)\in\D} \GJ(\Uc,\wh{\fu},\Gu^M)=\GJ(\Uc',\wh{\fu}',(\Gu^M)')$$
			which proves that $m$ in-fact is a minimum.
		\end{proof}

		The following theorem is the main result of this section. It characterizes the limit of the re-scaled infimum of the total energy  $\ds {m_\e\over \e^5}={1\over \e^5}\inf_{v_\e\in\GV_\e}\GJ_\e(v_\e)$  as the minimum of the limit energy $\GJ$ over the space $\D$. 
		\begin{theorem}\label{MainConR}
			Under the assumption on the forces \eqref{AFB1}--\eqref{EQ74}, we have
$$
				m=\lim_{\e\to0}{m_\e \over \e^5} = \min_{(\Uc,\wh{\fu},\Gu^M)\in \D} \GJ(\Uc,\wh{\fu},\Gu^M).
$$
		\end{theorem}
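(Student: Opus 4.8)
The plan is to establish the two matching inequalities $\liminf_{\e\to0}m_\e/\e^5\ge m$ and $\limsup_{\e\to0}m_\e/\e^5\le m$, where $m=\min_\D\GJ$ exists by Lemma~\ref{ExMin1}; the compactness will come from Lemma~\ref{GOWL54} and the identification of limits from Lemmas~\ref{CFuu1}, \ref{CFeu2} and \ref{CFGLST2}.

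For the $\liminf$ inequality, fix a subsequence realising $\liminf_\e m_\e/\e^5$ and choose $v_\e\in\GV_\e$ with $\GJ_\e(v_\e)\le m_\e+\e^6$; the indices with $m_\e\ge-\e^6$ satisfy $m_\e/\e^5\ge-\e\to0\ge m$ and may be dropped, so we may assume $\GJ_\e(v_\e)\le0$, whence Lemma~\ref{GOWL54} provides the bound \eqref{AFLP1} and $\det\nabla v_\e>0$ a.e. Passing to a further subsequence, Lemmas~\ref{CFuu1}--\ref{CFGLST2} furnish a limit triple $(\Uc,\wh{\fu},\Gu^M)\in\D$ together with the weak convergences of the rescaled unfolded displacements and strains. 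Writing $\GJ_\e(v_\e)=\int_{\O_\e}\wh W_\e(x,\nabla v_\e)\,dx-\int_{\O_\e}f_\e\cdot(v_\e-\text{\bf id})\,dx$ and using $\Te(f_i)\to f_i$ strongly in $L^2(\o\X Y)$, the scaling \eqref{AFB1} and the convergences \eqref{CFgu1}, \eqref{CFfu1},
$$\frac{1}{\e^5}\int_{\O_\e}f_\e\cdot(v_\e-\text{\bf id})\,dx\;\longrightarrow\;|\Yc|\int_{\o}f\cdot\Uc\,dx'+\sum_{\alpha=1}^2\int_{\o}f_\alpha\Big(\int_{\Yc}\Gu^M_\alpha\,dy\Big)dx',$$
the $y_3$-odd term dropping out on integration over $\Yc$. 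For the stored energy one unfolds cell by cell; since $a$ is $Y$-periodic and $Q$ is $2$-homogeneous in its second slot,
$$\frac{1}{\e^5}\int_{\O_\e^B}Q\Big(\tfrac{x}{\e},{\bf E}(\nabla v_\e)\Big)dx=\int_{\o\X Y_\kappa^B}Q\Big(y,\tfrac{1}{\e^2}\Pi_\e({\bf E}(\nabla v_\e))\Big)dx'dy+o(1),$$
$$\frac{\e^2}{\e^5}\int_{\O_\e^M}Q\Big(\tfrac{x}{\e},{\bf E}(\nabla v_\e)\Big)dx=\int_{\o\X Y_\kappa^M}Q\Big(y,\tfrac{1}{\e}\Pi_\e({\bf E}(\nabla v_\e))\Big)dx'dy+o(1).$$
Since ${\bf E}(\nabla v_\e)=e(u_\e)+\tfrac{1}{2}(\nabla u_\e)^T\nabla u_\e$ and the gap between $(\nabla u_\e)^T\nabla u_\e$ and $\nabla u_\e(\nabla u_\e)^T$ disappears in the limit (both terms, rescaled and unfolded, converging to $F(\Uc_3)$ on $Y_\kappa^B$ and to $0$ on $Y_\kappa^M$ by Lemmas~\ref{CFuu1} and \ref{CFGLST2}), Lemma~\ref{CFGLST2} gives $\tfrac{1}{\e^2}\Pi_\e({\bf E}(\nabla v_\e))\rightharpoonup E(\Uc)+e_y(\wh{\fu})$ weakly in $L^2(\o\X Y_\kappa^B)$ and $\tfrac{1}{\e}\Pi_\e({\bf E}(\nabla v_\e))\rightharpoonup e_y(\Gu^M)$ weakly in $L^2(\o\X Y_\kappa^M)$. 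As $S\mapsto\int Q(y,S)$ is a nonnegative quadratic form, hence weakly lower semicontinuous, the two right-hand sides have $\liminf$ at least $\GJ_B(\Uc,\wh{\fu})$ and $\GJ_M(\Gu^M)$; collecting, $\liminf_\e\GJ_\e(v_\e)/\e^5\ge\GJ(\Uc,\wh{\fu},\Gu^M)\ge m$, and since $|\GJ_\e(v_\e)/\e^5-m_\e/\e^5|\le\e$ this yields $\liminf_\e m_\e/\e^5\ge m$.

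For the $\limsup$ inequality, fix by Lemma~\ref{ExMin1} a minimiser $(\Uc,\wh{\fu},\Gu^M)\in\D$ with $\GJ(\Uc,\wh{\fu},\Gu^M)=m$. As $\GJ$ is continuous on $\D$ (its only nonlinear term $\partial_\alpha\Uc_3\partial_\beta\Uc_3$ being continuous through $H^2(\O_R)\hookrightarrow W^{1,4}(\O_R)$) and smooth fields meeting the clamping conditions are dense in $\D$, one may assume $\Uc$, $\fu$ and $\Gu^M$ (extended by $0$ outside $Y_\kappa^M$) smooth and vanishing near $\gamma$. Build a recovery sequence $v_\e=\text{\bf id}+u_\e$ by the oscillating-test-function ansatz dictated by Lemmas~\ref{CFuu1}--\ref{CFGLST2}, namely
$$u_{\alpha,\e}(x)=\e^2\Uc_\alpha(x')-\e x_3\partial_\alpha\Uc_3(x')+\e^3\fu_\alpha\Big(x',\tfrac{x'}{\e},\tfrac{x_3}{\e}\Big)+\e^2\Gu^M_\alpha\Big(x',\tfrac{x'}{\e},\tfrac{x_3}{\e}\Big),$$
$$u_{3,\e}(x)=\e\Uc_3(x')+\e^3\fu_3\Big(x',\tfrac{x'}{\e},\tfrac{x_3}{\e}\Big)+\e^2\Gu^M_3\Big(x',\tfrac{x'}{\e},\tfrac{x_3}{\e}\Big),$$
the cell argument being the $2\e$-periodic mirror extension of the cell data compatible with $\Pi_\e$; since the data vanishes near $\gamma$, $v_\e\in\GV_\e$. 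Every corrector carries a positive power of $\e$ against its $y$-derivative, so $\|\nabla u_\e\|_{L^\infty(\O_\e)}\le C\e\to0$, whence $\det\nabla v_\e>0$ for small $\e$ and the energy is finite; a direct computation then gives $\tfrac{1}{\e^2}\Pi_\e({\bf E}(\nabla v_\e))\to E(\Uc)+e_y(\wh{\fu})$ strongly in $L^2(\o\X Y_\kappa^B)$ (the $y_3$-linear $\Ge_3$-correction in $\wh{\fu}$ being produced precisely by the quadratic term $\tfrac{1}{2}(\nabla u_\e)^T\nabla u_\e$) and $\tfrac{1}{\e}\Pi_\e({\bf E}(\nabla v_\e))\to e_y(\Gu^M)$ strongly in $L^2(\o\X Y_\kappa^M)$, so, via the unfolding identities above and the force computation of the previous step, $\GJ_\e(v_\e)/\e^5\to\GJ(\Uc,\wh{\fu},\Gu^M)=m$. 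Hence $\limsup_\e m_\e/\e^5\le m$, and together with the lower bound this proves $m_\e/\e^5\to m=\min_\D\GJ$.

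I expect the recovery-sequence construction to be the main obstacle: one has to reproduce \emph{exactly} the integrands of $\GJ_B$ and $\GJ_M$ — in particular the $\Ge_3$-correction inside $\wh{\fu}$ stemming from the nonlinearity and the status of $\Gu^M$ as a cell corrector in $\GH^1(Y_\kappa^M)$ — while honouring the clamped condition on $\Gamma_r$ and the $2\e$-periodicity built into $\Pi_\e$ (which forces the mirror extension of the cell data), and one must verify that the unfolded strains converge \emph{strongly}, not merely weakly, so that the energy inequality becomes an equality. By contrast the $\liminf$ half is comparatively routine once Lemmas~\ref{GOWL54} and \ref{CFuu1}--\ref{CFGLST2} are available: it reduces to weak lower semicontinuity of the nonnegative quadratic form $Q$ under the established convergences, together with the (strong $\times$ weak) convergence of the force term.
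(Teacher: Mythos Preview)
Your proposal is correct and follows essentially the same two-step $\Gamma$-convergence strategy as the paper: the $\liminf$ inequality via Lemma~\ref{GOWL54} for compactness, Lemmas~\ref{CFuu1}--\ref{CFGLST2} for identification, and weak lower semicontinuity of $Q$; the $\limsup$ inequality via density in $\D$, an explicit Kirchhoff--Love-plus-corrector ansatz, the $L^\infty$ gradient bound to secure $\det\nabla v_\e>0$, and strong convergence of the unfolded strains. Your treatment is in fact slightly more explicit than the paper's on two points the paper glosses over: the harmless discrepancy between $(\nabla u_\e)^T\nabla u_\e$ and $\nabla u_\e(\nabla u_\e)^T$ (both converge to $F(\Uc_3)$ since the strong limit of $\e^{-1}\Pi_\e(\nabla u_\e)$ is antisymmetric), and the $2\e$-period built into $\Pi_\e$; the paper simply writes the recovery fields and asserts the strong convergence without commenting on either.
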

		\begin{proof}
			The following proof uses a form of $\Gamma$-convergence.\\
			\noindent{\it {\bf Step 1.}} In this step we show that
$$
				\liminf_{\e\to0} {m_\e\over \e^5}\geq\min_{(\Uc,\wh{\fu},\Gu^M)\in \D} \GJ(\Uc,\wh{\fu},\Gu^M).
$$
			
			Let $\{v_\e\}_\e \subset \GV_\e$, be a sequence of deformations, i.e, it satisfies 
			$$ \lim_{\e\to0} {\GJ_\e(v_\e)\over \e^5} = \liminf_{\e\to 0} {m_\e \over \e^5}.$$
			Without loss of generality, we can assume that the sequence satisfies $\GJ_\e(v_\e)\leq \GJ_\e(\text{\bf id})= 0$ and as a consequence of the estimates from the previous section, in particular \eqref{AFLP1}, the sequence $\{v_\e\}_\e$ satisfies
			$$ 	\|dist(\nabla v_\e, SO(3))\|_{L^{2}(\O^B_\e)}\leq C\e^{5/2} \quad \text{and}\quad \|dist(\nabla v_\e, SO(3))\|_{L^{2}(\O^M_\e)}\leq C\e^{3/2}.$$
			The estimates in \eqref{EWA1} give
			$$ 	\|\nabla v_\e(\nabla v_\e)^T-\GI_3\|_{L^2(\O^M_\e)}\leq C\e^{3/2}\quad \text{and}\quad
			\|\nabla v_\e(\nabla v_\e)^T-\GI_3\|_{L^2(\O^B_\e)}\leq C\e^{5/2}.$$
			Therefore, for any fixed $\e$, we are allowed to use the decomposition given in the Lemma \ref{DeComKL1} for the displacement $u_\e=v_\e-\text{\bf id}$. So, using the decomposition we obtain the estimates \eqref{EWAgu} and the convergences as in the Lemmas \ref{CFuu1}, \ref{CFeu2} and \ref{CFGLST2}. Then the assumption on forces \eqref{AFB1}--\eqref{EQ74} lead to
			\begin{equation}\label{CFf1}
				\begin{aligned}
					\lim_{\e\to0}{1\over \e^5}\int_{\o\X\Yc}\Pi_\e&\big(f_\e\cdot(v_\e-\text{\bf id} )\big)dx'dy=\lim_{\e\to0}{1\over 2\e^5}\int_{\o\X\Yc}\Pi_\e(f_\e\cdot u_\e)dx'dy\\
					&=\lim_{\e\to0}{1\over \e^5}\left(\int_{\o\X\Yc}\Pi_\e(f_\e\cdot\Gu_\e)dx'dy+ \int_{\o\X\Yc}\Pi_\e(f_\e\cdot\Gu^M_\e)dx'dy\right)\\
					&=|\Yc|\int_{\o}f\cdot \Uc dx'  +  \sum_{\alpha=1}^2\int_{\o}f_\alpha\left(\int_{\Yc}\Gu_\alpha^M dy\right) dx'.
				\end{aligned}
			\end{equation} 
			Consequently, we have
			$$\begin{aligned}
				\int_{\O_\e}f_\e\cdot \Gu_\e dx& =\sum_{\alpha=1}^2 \e^3\int_{\o}f_\alpha\Uc_{\alpha,\e} dx'+\e^4\int_{\o}f_3\Uc_{3,\e} dx',\\
				\int_{\O_\e} f_\e\cdot \Gu^M_\e dx'& =\sum_{\alpha=1}^2\e^2\int_{\o}f_\alpha\left(\int_{-\kappa\e}^{\kappa\e}\Gu^M_{\alpha,\e}dx_3\right)dx'+\e^3\int_{\o}f_3\left(\int_{-\kappa\e}^{\kappa\e}\Gu^M_{3,\e}dx_3\right)dx'. 
			\end{aligned}$$
			Further \eqref{CFf1} is converging as a product of a weak and a strong convergent sequence using the convergences \eqref{CFU1}, \eqref{CFfu1}. We also have 
			$$\begin{aligned}
				&{1\over \e^5}\int_{\O_\e}\wh{W}_\e(x,\nabla v_\e)dx\\
				&\geq {1\over \e^5}\int_{\O^B_\e}Q\left({x\over\e},{1\over2}(\nabla v_\e(\nabla v_\e)^T-\GI_3)\right)dx+ {1\over \e^3}\int_{\O^M_\e}Q\left({x\over\e},{1\over2}(\nabla v_\e(\nabla v_\e)^T-\GI_3)\right)dx.
			\end{aligned}$$
			Hence, as consequence of the above inequality with the convergences \eqref{CFGLST1}, \eqref{CFf1} and using the weak semi-continuity of $\GJ$, we get
			\begin{equation*}
				\liminf_{\e\to0}{\GJ_\e(v_\e)\over \e^5}\geq \GJ_B(\Uc,\wh{\fu})+\GJ_M(\Gu^M)-|\Yc|\int_{\o}f\cdot \Uc dx' -  \sum_{\alpha=1}^2\int_{\o}f_\alpha\left(\int_{Y_\kappa^M}\Gu_\alpha^M dy\right) dx'. 
			\end{equation*}
			\noindent{\it {\bf Step 2.}}
			We recall from the Step 2 of the proof of Lemma \ref{ExMin1} that there exist $(\Uc',\wh{\fu}',(\Gu^M)') \in \D$ such that 
			$$ m=\inf_{(\Uc,\wh{\fu},\Gu^M)\in \D} {\GJ}(\Uc,\wh{\fu},\Gu^M) = \GJ(\Uc',\wh{\fu}',(\Gu^M)').$$
			In this step we show that 
$$
				\limsup_{\e\to0}{m_\e\over \e^5} \leq \GJ(\Uc,\wh{\fu},\Gu^M)\qquad \forall (\Uc,\wh{\fu},\Gu^M)\in \D.
$$
			To do that, we will build a sequence $\{V_{n,\e}\}_{n,\e}$ of admissible deformations such that 
			$$ \limsup_{\e\to0}{m_\e\over \e^5}  \leq \lim_{n\to \infty}\lim_{\e\to0} {\GJ_\e(V_{n,\e})\over\e^5}=\GJ(\Uc,\wh{\fu},\Gu^M).$$
			We consider a sequence $\{\Uc_n,\wh{\fu}_{n},\Gu^M_{n}\}_{n}$ such that
			\begin{itemize}
				\setlength{\itemindent}{5mm}
				\item $\Uc_n\in \D_0\cap \big(\C^1(\overline{\O_R})^2\X\C^2(\overline{\O_R})\big)$, satisfying
				\begin{equation}\label{Eq1113}
					\begin{aligned}
						\Uc_{n,\alpha} &\to \Uc_\alpha\qquad \text{strongly in}\quad H^1(\O_R),\\
						\Uc_{n,3} &\to \Uc_3\qquad \text{strongly in}\quad H^2(\O_R).
					\end{aligned}
				\end{equation}
				Existence of such a sequence is because of the regularity of the boundary of $\O_R$.
				\item  $\wh{\fu}_{n}\in L^2(\O_R;H^1_{per}(Y_\kappa^B))^3\cap \C^1(\overline{\O_R}\X\overline{Y_\kappa^B})^3$ and $\Gu^M_{n} \in L^2(\O_R; \GH^1(Y_\kappa^M))^3\cap \C^1(\overline{\O_R}\X\overline{Y_\kappa^M})^3$   satisfying
				\begin{equation}\label{Eq1114}
					\begin{aligned}
						\wh{\fu}_{n} &\to \wh{\fu}\qquad \text{strongly in}\quad L^2(\O_R;H^1_{per}(Y_\kappa^B))^3\\
						\Gu^M_{n} &\to \Gu^M\qquad \text{strongly in}\quad L^2(\O_R; \GH^1(Y_\kappa^M))^3.
					\end{aligned}
				\end{equation}
			\end{itemize}  
			We show that for fixed $n$, there exists a sequence $\{V_{n,\e}\}_\e$ such that
			$$\limsup_{\e\to0}{m_\e\over \e^5} \leq \lim_{\e\to 0} {\GJ_\e(V_{n,\e})\over \e^5}= \GJ(\Uc_n,\wh{\fu}_n,\Gu^M_{n}).$$
			So, we define the following sequence  $\{V_{n,\e}\}_\e$ of deformations of the whole structure $\O_\e$ as 
			\begin{itemize}
				\setlength{\itemindent}{5mm}
				\item In $\O^B_\e$ we set
				\begin{equation}\label{Eq1115}
					\begin{aligned}
						V_{n,\e,1}^{B}(x)&=x_1+\e^2\left(\Uc_{n,1}(x_1,x_2)-{x_3\over \e}\partial_1\Uc_{n,3}(x_1,x_2)+\e\wh{\fu}_{n,1}\left(x_1,x_2,{x_3\over \e}\right)\right),\\
						V_{n,\e,2}^{B}(x)&=x_2+\e^2\left(\Uc_{n,2}(x_1,x_2)-{x_3\over \e}\partial_2\Uc_{n,3}(x_1,x_2)+\e\wh{\fu}_{n,2}\left(x_1,x_2,{x_3\over \e}\right)\right),\\
						V_{n,\e,3}^{B}(x)&=x_3+\e\left(\Uc_{n,3}(x_1,x_2)+\e^2\wh{\fu}_{n,3}\left(x_1,x_2,{x_3\over\e}\right)\right).
					\end{aligned}
				\end{equation}
				\item  In $\O_\e$ we set
				\begin{equation}\label{Eq1116}
					\begin{aligned}
						V_{n,\e,1}(x)&=V^{B}_{n,\e,1}(x)+\e^2\Gu^M_{n,1}\left(x_1,x_2,{x_3\over \e}\right),\\
						V_{n,\e,2}(x)&=V^{B}_{n,\e,2}(x)+\e^2\Gu^M_{n,2}\left(x_1,x_2,{x_3\over \e}\right),\\
						V_{n,\e,3}(x)&=V^{B}_{n,\e,3}(x)+\e^2\Gu^M_{n,3}\left(x_1,x_2,{x_3\over \e}\right).
					\end{aligned}
				\end{equation}
			\end{itemize}
			By construction, the deformations $V_{n,\e}$ belong to $\GV_\e$ and satisfy
			$$\begin{aligned}
				\|\nabla V_{n,\e}-\GI_3\|_{L^\infty(\O_\e^B)}&\leq C(n)\e,\\
				\|\nabla V_{n,\e}-\GI_3\|_{L^\infty(\O_\e^M)}&\leq C(n)\e.
			\end{aligned} $$
			So, we get $$\|\nabla V_{n,\e}-\GI_3\|_{L^\infty(\O_\e)}\leq C(n)\e,$$
			which gives the estimate of the displacement gradient. 
			Here the constant $C(n)$ does not depend on $\e$, but depends on $n$ such that $C(n)\to +\infty$ for $n\to +\infty$ because  the strong convergence  given in \eqref{Eq1113}--\eqref{Eq1114} are given in $H^1(\O_R)$, $H^2(\O_R)$ and $L^2(\O_R;H^1_{per}(\Yc))^3$. This implies for $\e$ small enough, that for a.e. $x\in \O_\e$ we have $|\nabla V_{n,\e}-\GI_3|_F<1$ a.e. in $\O_R$. As a consequence we get det$\big(\nabla V_{n,\e}(x)\big)>0$ for all $n\in \N$. This leads to
			\begin{equation}\label{Eq1118}
				m_\e\leq \GJ_\e(V_{n,\e}).
			\end{equation}
			In the expressions \eqref{Eq1115}--\eqref{Eq1116} of the displacement $V_{n,\e}-\text{\bf id}$, the explicit dependence with respect to $\e$ permits to derive directly the limit of the Green St. Venant's strain tensor as $\e$ tends to $0$ ($n$ being fixed) as in Lemma \ref{CFGLST2}, so we obtain
			$$\begin{aligned}
				{1\over 2\e^2} \Pi_\e\left(\nabla V_{n,\e} (\nabla V_{n,\e})^T-\GI_3\right) &\to E(\Uc_n)+e_y(\wh{\fu}_n)\quad &&\text{strongly on}\quad L^\infty(\o\X Y_\kappa^B)^{3\X3},\\
				{1\over 2\e} \Pi_\e\left(\nabla V_{n,\e} (\nabla V_{n,\e})^T-\GI_3\right) &\to e_y(\Gu^M_n)\quad &&\text{strongly on}\quad L^\infty(\o\X Y_\kappa^M)^{3\X3}.
			\end{aligned}$$
			The above convergences give the convergence of the elastic energy
			$$\begin{aligned}
				 & \;\;\lim_{\e\to0} {1\over \e^5}\int_{\o\X\Yc}\Pi_\e(\wh{W}_\e(y, \nabla V_{n,\e}))dx'dy\\
				&=\lim_{\e\to0}\left({1\over\e^5}\int_{\o\X Y_\kappa^B}\Pi_\e(Q(y,(\nabla V_{n,\e}(\nabla V_{n,\e})^T-\GI_3)))dx'dy\right.\\
				&\hskip 50mm \left.+{1\over \e^3}\int_{\o\X Y_\kappa^M}\Pi_\e(Q(y,(\nabla V_{n,\e}(\nabla V_{n,\e})^T-\GI_3)))dx'dy\right)\\
				&=\int_{\o\X Y_\kappa^B}Q(y,E(\Uc_n)+e_y(\wh{\fu}_n))dx'dy+\int_{\o\X Y_\kappa^M}Q(y,e_y(\Gu^M_{n}))dx'dy,
			\end{aligned}$$
			and the right-hand side
			$$\lim_{\e\to0}{1\over \e^5}\int_{\o\X\Yc}\Pi_\e(f_\e\cdot(V_{n,\e}-\text{\bf id}))dx'dy=  |\Yc|\int_\o f\cdot\Uc_ndx' + \sum_{\alpha=1}^2\int_{\o}f_\alpha\left(\int_{Y_\kappa^M}\Gu_{n,\alpha}^M dy\right) dx'.$$
			Hence, with \eqref{Eq1118} and using the above convergences we obtain
			$$ \limsup_{\e\to0}{m_\e\over \e^5}\leq \lim_{\e\to0}{1\over \e^5}\GJ_\e(V_{n,\e})=\GJ(\Uc_{n},\wh{\fu}_{n},\Gu^M_{n}).$$
			Since this holds for every $n\in \N$, when $n$ tends infinity the strong convergences \eqref{Eq1113}--\eqref{Eq1114} yield
			$$\limsup_{\e\to0}{m_\e\over \e^5}\leq \lim_{n\to +\infty} \GJ(\Uc_{n},\wh{\fu}_{n},\Gu^M_{n})=\GJ(\Uc,\wh{\fu},\Gu^M),$$
			which completes the Step $2$.\\
			
			\noindent{\it {\bf Step 3.}}
			From Step $1$ and $2$ we have for every $(\Uc,\wh{\fu},(\Gu^M)) \in \D$
			$$ \GJ(\Uc',\wh{\fu}',(\Gu^M)')\leq \liminf_{\e\to0}{m_\e\over \e^5}\leq \limsup_{\e\to0}{m_\e\over \e^5}\leq \GJ(\Uc,\wh{\fu},\Gu^M).$$
			Thus, choosing $(\Uc,\wh{\fu},\Gu^M)=(\Uc',\wh{\fu}',(\Gu^M)')$ gives
			$$\GJ(\Uc',\wh{\fu}',(\Gu^M)')=\lim_{\e\to0}{m_\e\over \e^5},$$
			and, 
			$$\lim_{\e\to0}{m_\e\over \e^5}=\GJ(\Uc',\wh{\fu}',(\Gu^M)')=\min_{(\Uc,\wh{\fu},\Gu^M)\in \D} \GJ(\Uc,\wh{\fu},\Gu^M).$$
		\end{proof}

		Finally, we end this section with a convergence result for $\{u_\e\}_\e$, which is a direct consequence of the convergence \eqref{CFgu1} and the Theorem \ref{MainConR}.
		\begin{corollary}
			Let $\{v_\e\}_\e$ be a sequence of $\GV_\e$ such that
			$$ \lim_{\e\to0}{\GJ_\e(v_\e)\over \e^5}=\lim_{\e\to0}{m_\e\over\e^5}.$$
			Then there exists a subsequence (still denoted by $\e$) such that
$$
				\begin{aligned}
					{1\over \e^2}\Pi_\e(u_{\e,\alpha})&\rightharpoonup \Uc'_\alpha-y_3{\partial \Uc'_3\over\partial x_\alpha}+(\Gu^M_\alpha)'\quad \text{weakly in}\quad L^2(\O_R;H^1(\Yc)), \\
					{1\over \e}\Pi_\e(u_{\e,\alpha})&\rightharpoonup \Uc'_3\quad \text{strongly in}\quad L^2(\O_R;H^1(\Yc)),
				\end{aligned}
$$
			where $(\Uc',\wh{\fu}',(\Gu^M)')$ is a minimizer of $\GJ$ in $\D$.
		\end{corollary}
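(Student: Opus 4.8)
The plan is to read this off as the natural by‑product of the $\Gamma$‑convergence argument of Theorem \ref{MainConR}, extracted along a well‑chosen subsequence. First I would reduce to the a priori estimates: one may assume without loss of generality that $\GJ_\e(v_\e)\le\GJ_\e(\text{\bf id})=0$ for all $\e$ (the hypothesis $\GJ_\e(v_\e)/\e^5\to m\le 0$ makes this harmless, exactly as in Subsection \ref{Ssec71}). Since the forces satisfy \eqref{AFB1}--\eqref{EQ74}, Lemma \ref{GOWL54} applies and gives \eqref{EFBM}, hence the bound \eqref{AFLP1} holds for $\{v_\e\}_\e$. Consequently the whole apparatus of Subsection \ref{ABGSVST} is available, and Lemmas \ref{CFuu1}, \ref{CFeu2} and \ref{CFGLST2} provide a subsequence (not relabelled) together with limits $\Uc=\Uc_1\Ge_1+\Uc_2\Ge_2\in H^1(\o)^2$, $\Uc_3\in H^2(\o)$, $\fu\in L^2(\o;H^1_{per}(\Yc))^3$ and $\Gu^M\in L^2(\o;\GH^1(Y^M_\kappa))^3$, meeting the clamping conditions on $\gamma$ and all the convergences \eqref{CFU1}, \eqref{CFfu1}, \eqref{CFgu1} and \eqref{CFGLST1}. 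Setting $\wh{\fu}(x',y)=\fu(x',y)+y_3|\nabla\Uc_3(x')|^2\Ge_3$, the triple $(\Uc,\wh{\fu},\Gu^M)$ belongs to $\D$.

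Next I would identify this triple as a minimizer of $\GJ$. Applying the $\liminf$ inequality established in Step 1 of the proof of Theorem \ref{MainConR} along this subsequence gives $\liminf_{\e\to0}\GJ_\e(v_\e)/\e^5\ge \GJ(\Uc,\wh{\fu},\Gu^M)$. On the other hand, by the hypothesis on $\{v_\e\}_\e$ and by Theorem \ref{MainConR}, $\lim_{\e\to0}\GJ_\e(v_\e)/\e^5=\lim_{\e\to0}m_\e/\e^5=m=\min_{\D}\GJ$. Hence $\GJ(\Uc,\wh{\fu},\Gu^M)\le m$, and since $m$ is the minimum of $\GJ$ over $\D$, this forces $\GJ(\Uc,\wh{\fu},\Gu^M)=m$; that is, $(\Uc,\wh{\fu},\Gu^M)$ is a minimizer of $\GJ$ on $\D$. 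We then denote it $(\Uc',\wh{\fu}',(\Gu^M)')$, the prime being needed only because the minimizer is not claimed to be unique.

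Finally, the asserted convergences are precisely \eqref{CFgu1} for the chosen subsequence, now that the limit is known to be a minimizer: $\tfrac1{\e^2}\Pi_\e(u_{\e,\alpha})\rightharpoonup \Uc'_\alpha-y_3\partial_\alpha\Uc'_3+(\Gu^M_\alpha)'$ weakly in $L^2(\O_R;H^1(\Yc))$ and $\tfrac1\e\Pi_\e(u_{\e,3})\to\Uc'_3$ strongly in $L^2(\O_R;H^1(\Yc))$; the passage from $\o$ to $\O_R=\o\setminus\overline{\gamma}$ is immediate since the clamping on $\Gamma_r$ makes $\Gu_\e$, $\fu_\e$, $\Gu^M_\e$ and $U_{KL,\e}$ vanish near $\gamma$. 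I do not expect any genuine obstacle here: the only point requiring a little care is the bookkeeping of subsequences — the a priori compactness is obtained along a subsequence, but the full numerical sequence $\{\GJ_\e(v_\e)/\e^5\}_\e$ already converges to $m$ by hypothesis, so every subsequence converges to the same value and there is no inconsistency. The corollary is thus a direct consequence of the $\Gamma$‑$\liminf$ inequality combined with the optimality of $\{v_\e\}_\e$, closing exactly as in Step 3 of Theorem \ref{MainConR}.
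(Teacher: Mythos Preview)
Your proposal is correct and follows precisely the route the paper intends: the corollary is stated there as a direct consequence of the convergences \eqref{CFgu1} and Theorem \ref{MainConR}, and you have simply unpacked this by combining the compactness of Lemmas \ref{CFuu1}--\ref{CFGLST2} with the $\liminf$ inequality of Step~1 and the identification $m=\min_\D\GJ$ from Step~3. The paper gives no argument beyond that one sentence, so your write-up is in fact more detailed than the original.
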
				
		
		\subsection{The cell problems}\label{cellPro}
		To obtain the cell problems, we consider the variational formulation for $\wh{\fu}$ and $\Gu^M$ associated to the functional $\GJ_B$ and $\GJ_M$, respectively. We recall the limit elasticity energy as 	
		\begin{equation}\label{EQ757}
			\GJ(\Uc, \wh{\fu},\Gu^M)=\GJ_B(\Uc,\wh{\fu})+\GJ_M(\Gu^M)-|\Yc|\int_{\o} f\cdot\Uc dx'  -  \sum_{\alpha=1}^2\int_{\o}f_\alpha\left(\int_{Y_\kappa^M}\Gu_\alpha^M dy\right) dx',
		\end{equation}
		we have set 
		\begin{equation}\label{EQ1041}
			\begin{aligned}
				\GJ_B(\Uc,\wh{\fu})&=\int_{\o}\int_{Y_\kappa^B}a(y)(E(\Uc)+e_y(\wh{\fu})) : (E(\Uc)+e_y(\wh{\fu})) dydx',\\
				\GJ_M(\Gu^M)&=\int_{\o}\int_{Y_\kappa^M} a(y)(e_y( \Gu^M)) : e_y(\Gu^M)dydx',
			\end{aligned}
		\end{equation}
		and we get
		\begin{equation*}
			\begin{aligned}
				a(y)(E(\Uc)+e_y(\wh{\fu})) : (E(\Uc)+e_y(\wh{\fu}))&=a_{ijkl}(y)(E_{kl}(\Uc)+e_{kl}(\wh{\fu}))(E_{ij}(\Uc)+e_{ij}(\wh{\fu})),\\
				a(y)(e_y( \Gu^M)) : e_y(\Gu^M)&=a_{ijkl}(y)e_{kl}(\Gu^M)e_{ij}(\Gu^M),\\
				\sum_{\alpha=1}^2f_\alpha\int_{Y_\kappa^M}\Gu_\alpha^M dy&=\sum_{i=1}^3\int_{Y_\kappa^M}F_i\Gu^M_idy,\qquad F=(f_1,f_2,0).
			\end{aligned}
		\end{equation*}
		Let $(\Uc, \wh{\fu},\Gu^M)$ be a minimizer, then we have
		$$\GJ(\Uc, \wh{\fu},\Gu^M)\leq \GJ(\Uc, \wh{\fu}+t\wh{\fv},\Gu^M+t\Gv^M),\qquad \forall t\in \R,\quad \forall (\wh{\fv}, \Gv^M)\in L^2(\O_R;H^1_{per,0}(Y_\kappa^B))^3\X L^2(\O_R;\GH^1(Y_\kappa^M))^3.$$
		So, to get the variational formulation, we use the Euler-Lagrange equation (since $\GJ_B$ and $\GJ_M$ are quadratic forms in $e_y(\wh{\fu}))$ and $e_y(\Gu^M)$ respectively over a Hilbert-space) and we obtain:
		\begin{equation}\label{Eq1042}
			\begin{aligned}
				&\text{Find $\wh{\fu}\in L^2(\o;H^1_{per,0}(Y_\kappa^B))^3$ and $\Gu^M\in L^2(\o;\GH^1(Y_\kappa^M))^3$ such that}\\	
				&\begin{aligned}
					&\int_{Y_\kappa^B} a(y)\left(E(\Uc)+e_y(\wh{\fu})\right) : e_y(\wh{w}) dy=0\quad \text{for all}\quad \wh{w} \in H^1_{per,0}(Y_\kappa^B)^3,\\
					&\int_{Y_\kappa^M} a(y)\left(e_y(\Gu^M)\right) : e_y(\Gw) dy=\sum_{\alpha=1}^2 f_\alpha\left(\int_{Y_\kappa^M}\Gw_\alpha dy\right)\quad \text{for all}\quad \Gw \in \GH^1(Y_\kappa^M)^3\\
					& \hbox{a.e. in }\o.
				\end{aligned}
			\end{aligned}
		\end{equation}
		So, we have linear problems. Hence we have a unique solution for the above problems. Therefore,
		we get from the above equations 
		\begin{equation}\label{EQ761}
			\begin{aligned}
				&\text{Find $\wh{\fu}\in L^2(\O;H^1_{per,0}(Y_\kappa^B))^3$ such that}\\	
				&\begin{aligned}
					&\int_{Y_\kappa^B} a(y)\left(e_y(\wh{\fu})\right) : e_y(\wh{w}) dy=-\int_{Y_\kappa^B} a(y)\left(E(\Uc)\right) : e_y(\wh{w}) dy,\qquad \forall \wh{w} \in H^1_{per,0}(Y_\kappa^B)^3 \hbox{ a.e. in }\o.
				\end{aligned}
			\end{aligned}
		\end{equation}
		This shows that $\wh{\fu}$ can be expressed in terms of the elements of the tensor $E(\Uc)$ and some correctors.\\
		Let us denote
		\begin{equation}\label{EQMatrix}
			\begin{aligned}
				M^{11}&=\begin{pmatrix*}
					1 & 0&0\\
					0&0&0\\
					0&0&0
				\end{pmatrix*},
				\quad &&M^{22}=\begin{pmatrix*}
					0&0&0\\
					0&1&0\\
					0&0&0
				\end{pmatrix*},
				\quad M^{12}=&&&M^{21}=\begin{pmatrix*}
					0&1&0\\
					1&0&0\\
					0&0&0
				\end{pmatrix*},\\
				M^{31}=M^{13}&=\begin{pmatrix*}
					0 & 0&1\\
					0&0&0\\
					1&0&0
				\end{pmatrix*},
				\quad M^{23}=&&M^{32}=\begin{pmatrix*}
					0&0&0\\
					0&0&1\\
					0&1&0
				\end{pmatrix*},
				\quad &&&M^{33}=\begin{pmatrix*}
					0&0&0\\
					0&0&0\\
					0&0&1
				\end{pmatrix*}.
			\end{aligned}
		\end{equation}
		The cell problems in $Y_\kappa^B$ are
		\begin{equation}\label{Eq125}
			\begin{aligned}
				&\text{Find $\big(\chi^m_{11},\chi^m_{12}, \chi^m_{22},\chi^b_{11},\chi^b_{12},\chi^b_{22}\big)\in [H^1_{per,0}(Y_\kappa^B)^3]^6$ such that}\\
				&\hskip 2mm\left.\begin{aligned}
					&\int_{Y_\kappa^B}a(y)(M^{\alpha\beta}+e_y(\chi^m_{\alpha\beta})) : e_y(\wh{w}) dy =0,\\
					&\int_{Y_\kappa^B}a(y)(-y_3M^{\alpha\beta}+e_y(\chi^b_{\alpha\beta})) : e_y(\wh{w}) dy =0,
				\end{aligned}\right\}\;\;
				\\
				&\text{for all}\quad \wh{w}\in H^1_{per,0}(Y_\kappa^B)^3.
			\end{aligned}			
		\end{equation}
	The above equations imply
		\begin{equation}\label{Eq1044}
			\begin{aligned}
				&\wh{\fu}(x',y)=\Zc_{\alpha\beta}(x')\chi^m_{\alpha\beta}(y)+\partial_{\alpha\beta}\Uc_3(x')\chi^b_{\alpha\beta}(y),\quad &&\hbox{for a.e. } (x',y)\in \o\X Y^B_\kappa.
			\end{aligned}
		\end{equation}
		Similarly, we express $\Gu^M$ in terms of correctors $\chi^p=(\chi^p_1,\chi^p_2)\in [H^1(Y_\kappa^M)^3]^2$. Before that we give the problem for which the $\chi^p$ are  the solution
		\begin{equation}\label{Eq1046}
			\int_{Y_\kappa^M} a(y)\left( e_y(\chi^p_\alpha)+M^{ij}\right) : e_y(\Gw) dy = \int_{Y_\kappa^M}\Gw_\alpha dy \quad \forall \Gw\in \GH^1(Y_\kappa^M)^3.
		\end{equation}
		We use the variational formulation \eqref{Eq1042}$_2$ and above equation   which implies
		\begin{equation}\label{EQ947}
			\begin{aligned}
				\Gu^M(x',y)= \sum_{\alpha=1}^2 f_\alpha(x')\chi^p_\alpha(y)\qquad \hbox{for a.e. } (x',y)\in \o\X Y^M_\kappa. 
			\end{aligned}
		\end{equation}
		We set the homogenized coefficients in $Y_\kappa^B$ and $Y_\kappa^M$ as
		\begin{equation}\label{EQ1048}
			\begin{aligned}
				&a^{B,hom}_{\alpha\beta\alpha^\prime\beta^\prime} = \frac{1}{|Y_\kappa^B|} \int_{Y_\kappa^B} a_{ijkl}(y) \left[M^{\alpha\beta}_{ij} + 
				e_{y,ij}({\chi}_{\alpha\beta}^m)\right]M^{\alpha^\prime\beta^\prime}_{kl} dy,\\
				&b^{B,hom}_{\alpha\beta\alpha^\prime\beta^\prime} = \frac{1}{|Y_\kappa^B|} \int_{Y_\kappa^B} a_{ijkl}(y) \left[y_3M^{\alpha\beta}_{ij} + 
				e_{y,ij}({\chi}_{\alpha\beta}^b)\right]M^{\alpha^\prime\beta^\prime}_{kl} dy,\\
				&c^{B,hom}_{\alpha\beta\alpha^\prime\beta^\prime} = \frac{1}{|Y_\kappa^B|} \int_{Y_\kappa^B} a_{ijkl}(y) \left[y_3M^{\alpha\beta}_{ij} + 
				e_{y,ij}({\chi}_{\alpha\beta}^b)\right]y_3M^{\alpha^\prime\beta^\prime}_{kl} dy.
			\end{aligned}
		\end{equation}
		Hence, the homogenized energy is defined by
		\begin{equation}\label{EQ949}
			\GJ_{vK}^{hom}(\Uc)=|Y_\kappa^B|\GJ^{B, hom}(\Uc)-|\Yc|\int_{\o}f\cdot \Uc dx',
		\end{equation}
		where
		\begin{equation}\label{EQ950}
			\begin{aligned}
				\begin{aligned}
					\GJ^{B, hom}(\Uc)
					&= {1\over 2}\int_{\o}\Big(a^{B,hom}_{\alpha\beta\alpha^\prime\beta^\prime}\Zc_{\alpha\beta}\Zc_{\alpha^\prime\beta^\prime}+ b^{B,hom}_{\alpha\beta\alpha^\prime\beta^\prime}\Zc_{\alpha\beta}\partial_{\alpha^\prime\beta^\prime}\Uc_3+c^{B,hom}_{\alpha\beta\alpha^\prime\beta^\prime}\partial_{\alpha\beta}\Uc_3\partial_{\alpha^\prime\beta^\prime}\Uc_3\Big)dx'
				\end{aligned}
			\end{aligned}
		\end{equation}
		with
		$$ \Zc_{\alpha\beta}=e_{\alpha\beta}(\Uc)+{1\over 2}\partial_{\alpha}\Uc_3\partial_{\beta}\Uc_3.$$
		\begin{remark}
			The homogenized energy \eqref{EQ949} unveils an intriguing observation: the limit energy stemming from the matrix part \eqref{EQ1041}$_2$ remains absent. This intriguing phenomenon explains that the soft matrix exhibits a level of weakness that renders it incapable of exerting any discernible influence on the homogenized vK plate behavior. Consequently, the homogenized energy for this composite structure remains like that of a periodic perforated plate.
		\end{remark}
		\begin{remark}
			A common terminology in literature for the homogenized tensors coefficients $a^{B,hom}$, $b^{B,hom}$, $c^{B,hom}$ are membrane, coupling and bending stiffness tensor coefficients respectively. Moreover, the entries of $a^{B,hom}$ and $c^{B,hom}$ correspond to the membrane stiffness and flexural rigidity in classical plate and shell theory, respectively.
		\end{remark}
		The minimizer of this homogenized energy functional \eqref{EQ949} satisfies the variational problem: \begin{equation}\label{EQ951}
			\begin{aligned}
				&\text{Find } \Uc\in \D_0\text{ such that for all } \Wc\in\D_0:\\
				&\hskip 25mm \int_{\o}\Big(a^{B,hom}_{\alpha\beta\alpha^\prime\beta^\prime}\Zc_{\alpha\beta}(\Uc)\Zc_{\alpha^\prime\beta^\prime}(\Wc)+ {b^{B,hom}_{\alpha\beta\alpha^\prime\beta^\prime}\over 2}\left(\Zc_{\alpha\beta}(\Uc)\partial_{\alpha^\prime\beta^\prime}\Wc_3+\Zc_{\alpha\beta}(\Wc)\partial_{\alpha^\prime\beta^\prime}\Uc_3\right)\\
				&\hskip 75mm +c^{B,hom}_{\alpha\beta\alpha^\prime\beta^\prime}\partial_{\alpha\beta}\Uc_3\partial_{\alpha^\prime\beta^\prime}\Uc_3\Big)dx' =|\Yc|\int_{\o}f\cdot\Wc dx'.
			\end{aligned}
		\end{equation}
		We introduce the homogenized coefficients in the limit energy which  gives the convergence result for the homogenized energy, but before that we show that the associated quadratic form to this energy functional is coercive.
		
		\begin{lemma}\label{L716}
			The quadratic form associated to $\GJ_{vK}^{hom}$ is coercive.
		\end{lemma}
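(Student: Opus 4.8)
The plan is to reduce the claim to the positive--definiteness of the pointwise homogenized quadratic form
$$\mathcal{Q}^{hom}(\mathcal E,\Theta)\doteq a^{B,hom}_{\alpha\beta\alpha'\beta'}\mathcal E_{\alpha\beta}\mathcal E_{\alpha'\beta'}+b^{B,hom}_{\alpha\beta\alpha'\beta'}\mathcal E_{\alpha\beta}\Theta_{\alpha'\beta'}+c^{B,hom}_{\alpha\beta\alpha'\beta'}\Theta_{\alpha\beta}\Theta_{\alpha'\beta'},$$
defined for symmetric $2\X2$ matrices $\mathcal E,\Theta$, and then to integrate over $\o$ along $\mathcal E=(\Zc_{\alpha\beta}(\Uc))$, $\Theta=(\partial_{\alpha\beta}\Uc_3)$. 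The first point is to recognise $\mathcal{Q}^{hom}$ as a \emph{minimized} cell energy. Fix $\mathcal E,\Theta$, put $\eta\doteq(\mathcal E_{11},\mathcal E_{22},\mathcal E_{12})$, $\zeta\doteq(\Theta_{11},\Theta_{22},\Theta_{12})$, and, by linearity of the cell problems \eqref{Eq125}, $\chi_{\mathcal E,\Theta}\doteq\mathcal E_{\alpha\beta}\chi^m_{\alpha\beta}+\Theta_{\alpha\beta}\chi^b_{\alpha\beta}\in H^1_{per,0}(Y_\kappa^B)^3$. With the matrix $\wt{\cal E}$ of Lemma \ref{L107} and the matrix $E(\Uc)$ of Lemma \ref{CFGLST2} evaluated at the constant data $\mathcal E,\Theta$ (which is precisely $\wt{\cal E}(\eta,\zeta,0)$), one has $E(\Uc)+e_y(\chi_{\mathcal E,\Theta})=\wt{\cal E}(\eta,\zeta,\chi_{\mathcal E,\Theta})$; expanding the definitions \eqref{EQ1048}, using the major symmetry $a_{ijkl}=a_{klij}$ and testing the cell equation with $\chi_{\mathcal E,\Theta}$ itself, I get
$$|Y_\kappa^B|\,\mathcal{Q}^{hom}(\mathcal E,\Theta)=\int_{Y_\kappa^B}a(y)\,\wt{\cal E}(\eta,\zeta,\chi_{\mathcal E,\Theta}):\wt{\cal E}(\eta,\zeta,\chi_{\mathcal E,\Theta})\,dy=\min_{\wh w\in H^1_{per,0}(Y_\kappa^B)^3}\int_{Y_\kappa^B}a(y)\,\wt{\cal E}(\eta,\zeta,\wh w):\wt{\cal E}(\eta,\zeta,\wh w)\,dy,$$
the last equality being the variational characterisation of the cell corrector.

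Next I would bound this from below. Positive--definiteness \eqref{QPD} of $a$ gives $a(y)T:T\ge c_0|T|^2_F$, hence
$$|Y_\kappa^B|\,\mathcal{Q}^{hom}(\mathcal E,\Theta)\ \ge\ c_0\inf_{\wh w\in H^1_{per,0}(Y_\kappa^B)^3}\big\|\wt{\cal E}(\eta,\zeta,\wh w)\big\|^2_{L^2(Y_\kappa^B)}.$$
I then claim that Lemma \ref{L107} holds verbatim with $\Yc$ replaced by $Y_\kappa^B$. Indeed $Y_\kappa^B=\wt{Y}_\kappa\X(-\kappa,\kappa)$ is a connected Lipschitz set invariant under the $\Ge_1,\Ge_2$ periodic tiling (here $\kappa\in(0,1/2)$, so the frame $\wt{Y}_\kappa$ has nonempty interior), and the functions of $H^1_{per,0}(Y_\kappa^B)^3$ satisfy exactly the $\Ge_1,\Ge_2$ periodicity and zero--mean condition which are the only ingredients of that proof (Step~1 uses connectedness $+$ periodicity $+$ vanishing mean; Step~2 uses in addition the periodic Korn inequality on $Y_\kappa^B$). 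Consequently $\wh w\mapsto\|\wt{\cal E}(\eta,\zeta,\wh w)\|_{L^2(Y_\kappa^B)}$ is a norm on $\R^3\X\R^3\X H^1_{per,0}(Y_\kappa^B)^3$ equivalent to $\|\eta\|_2+\|\zeta\|_2+\|\wh w\|_{H^1_{per,0}(Y_\kappa^B)}$; dropping the $\wh w$--term on the right gives $c_1>0$ with $\inf_{\wh w}\|\wt{\cal E}(\eta,\zeta,\wh w)\|^2_{L^2(Y_\kappa^B)}\ge c_1(|\eta|^2+|\zeta|^2)$. Since $|\eta|^2+|\zeta|^2$ is comparable to $|\mathcal E|^2_F+|\Theta|^2_F$, this yields $\mathcal{Q}^{hom}(\mathcal E,\Theta)\ge c(|\mathcal E|^2_F+|\Theta|^2_F)$ with $c\doteq c_0c_1/|Y_\kappa^B|>0$; in particular (take $\Theta=0$, resp. $\mathcal E=0$) the membrane block $a^{B,hom}$ and the bending block $c^{B,hom}$ are themselves positive--definite.

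It then remains to integrate. Taking $\mathcal E=(\Zc_{\alpha\beta}(\Uc))(x')$, $\Theta=(\partial_{\alpha\beta}\Uc_3)(x')$ and integrating over $\o$,
$$|Y_\kappa^B|\,\GJ^{B,hom}(\Uc)=\frac{|Y_\kappa^B|}{2}\int_{\o}\mathcal{Q}^{hom}\big((\Zc_{\alpha\beta}(\Uc)),(\partial_{\alpha\beta}\Uc_3)\big)\,dx'\ \ge\ \frac{c\,|Y_\kappa^B|}{2}\int_{\o}\sum_{\alpha,\beta}\Big(|\Zc_{\alpha\beta}(\Uc)|^2+|\partial_{\alpha\beta}\Uc_3|^2\Big)\,dx',$$
which is exactly the coercivity of the quadratic form attached to $\GJ_{vK}^{hom}$. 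If coercivity in the $\D_0$--norm is wanted, one then argues as for \eqref{Eq824}--\eqref{Eq825}: the clamping $\Uc_3=\nabla\Uc_3=0$ on $\gamma$ and Poincar\'e control $\|\Uc_3\|_{H^2(\o)}$; the embedding $H^2(\O_R)\hookrightarrow W^{1,4}(\O_R)$ controls $\|\nabla\Uc_3\|^4_{L^4(\O_R)}$, whence $\sum_{\alpha,\beta}\|e_{\alpha\beta}(\Uc_m)\|^2_{L^2(\o)}$ is dominated by $\GJ^{B,hom}(\Uc)+\GJ^{B,hom}(\Uc)^2$; and the $2$D Korn inequality together with $\Uc_m=0$ on $\gamma$ then controls $\|\Uc_1\|_{H^1(\o)}+\|\Uc_2\|_{H^1(\o)}$.

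The only non--formal step is the Korn--type lower bound used above, i.e. the $Y_\kappa^B$--analogue of Lemma \ref{L107} (which, incidentally, is precisely what underlies estimate \eqref{Eq823}). The hard part is thus this adaptation, but it is essentially routine since the proof of Lemma \ref{L107} uses nothing about $\Yc$ beyond connectedness of the domain, $\Ge_1,\Ge_2$ periodicity and vanishing mean of the corrector, all retained on $Y_\kappa^B$; conceptually it is nonetheless the step carrying the content, expressing that the homogenized membrane/coupling/bending block cannot degenerate because the microscopic Hooke tensor is coercive and the cell minimization cannot annihilate a nonzero macroscopic strain.
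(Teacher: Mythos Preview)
Your proposal is correct and follows essentially the same route as the paper: identify the pointwise homogenized quadratic form as the cell energy evaluated at the corrector $\chi_{\mathcal E,\Theta}=\mathcal E_{\alpha\beta}\chi^m_{\alpha\beta}+\Theta_{\alpha\beta}\chi^b_{\alpha\beta}$, apply the coercivity \eqref{QPD} of $a$, and then invoke the norm equivalence of Lemma~\ref{L107} to conclude. The paper simply cites Lemma~\ref{L107} as stated (on $\Yc$), whereas you correctly observe that the correctors live in $H^1_{per,0}(Y_\kappa^B)^3$ and that the analogue on $Y_\kappa^B$ is what is actually required; your remark that the proof of Lemma~\ref{L107} transfers verbatim to $Y_\kappa^B$ (connected, periodic in $\Ge_1,\Ge_2$, with zero-mean correctors) is the right justification and in fact tightens the paper's argument slightly.
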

		\begin{proof}
			
			The associated quadratic form of the homogenized energy \eqref{EQ949} comes from the macroscopic problem \eqref{EQ951}, and is given by
			\begin{equation}\label{Eq1052}
				\begin{aligned}
					&\GQ_{vK}^{hom}(\xi,\eta)=
					a^{B,hom}_{\alpha\beta\alpha^\prime\beta^\prime}\xi_{\alpha\beta}\xi_{\alpha^\prime\beta^\prime}+ {b^{B,hom}_{\alpha\beta\alpha^\prime\beta^\prime}\over 2}\big(\xi_{\alpha\beta}\eta_{\alpha^\prime\beta^\prime}+\eta_{\alpha\beta}\xi_{\alpha^\prime\beta^\prime}\big)\\
					&\hskip 75mm 	+c^{B,hom}_{\alpha\beta\alpha^\prime\beta^\prime}\eta_{\alpha\beta}\eta_{\alpha^\prime\beta^\prime}
					,\qquad \forall \;\; (\xi,\eta)\in \S_3\X\S_3.	
				\end{aligned}
			\end{equation}
			From the definition of the homogenized coefficients given in \eqref{EQ1048}, for every $(\xi,\eta)\in\S_3\X\S_3$we have
			$$ \begin{aligned}
				&a^{B,hom}_{\alpha\beta\alpha^\prime\beta^\prime}\xi_{\alpha\beta}\xi_{\alpha^\prime\beta^\prime}+ {b^{B,hom}_{\alpha\beta\alpha^\prime\beta^\prime}\over 2}\big(\xi_{\alpha\beta}\eta_{\alpha^\prime\beta^\prime}+\eta_{\alpha\beta}\xi_{\alpha^\prime\beta^\prime})+c^{B,hom}_{\alpha\beta\alpha^\prime\beta^\prime}\eta_{\alpha\beta}\eta_{\alpha^\prime\beta^\prime}\\
				&\hskip 40mm={1\over |Y_\kappa^B|}\int_{Y_\kappa^B}a_{ijkl}\left[M_{ij}+e_{y,ij}(\Psi)\right]\left[M_{kl}+e_{y,kl}(\Psi)\right]dy
			\end{aligned}$$
			where
			$$ M=(\xi_{\alpha\beta}+y_3\eta_{\alpha\beta})M^{\alpha\beta}\qquad\text{and}\qquad\Psi=\xi_{\alpha\beta}\chi^m_{\alpha\beta}+\eta_{\alpha\beta}\chi^b_{\alpha\beta}.$$
			By the coercivity of $a_{ijkl}$ from \eqref{QPD}, we get
$$
				\int_{Y_\kappa^B}a_{ijkl}\left[M_{ij}+e_{y,ij}(\Psi)\right]\left[M_{kl}+e_{y,kl}(\Psi)\right]dy \geq c_0\int_{Y_\kappa^B}\big|M+e_{y}(\Psi)\big|^2_Fdy.
$$
			Then, from Lemma \ref{L107} together with the equivalence of norms in $[\R^{3\X3}]^2$, we obtain
			$$\begin{aligned}
				\big\|M+e_{y}(\Psi)\big\|^2_{L^2(Y_\kappa^B)} &\geq C\left(\|\xi\|^2_2+\|\eta\|^2_2+\|\Psi\|_{L^2(Y_\kappa^B)}^2\right),\\
				&\geq C\big\|(\xi,\eta)\big\|_2^2,\quad\forall\;(\xi,\eta)\in\S_3\X\S_3.
			\end{aligned} $$
			So, the above estimates together with \eqref{Eq1052}, yield
			$$ \GQ_{vK}^{hom}(\xi,\eta)\geq C\|(\xi,\eta)\|_2^2,\qquad\forall\;(\xi,\eta)\in\S_3\X\S_3.$$
			Hence, we proved that the associated quadratic form is coercive.
		\end{proof}
		We are ready to give the main result of this subsection.
		\begin{theorem}
			Under the assumption on the forces \eqref{AFB1} and \eqref{EQ74}, the problem
			\begin{equation}\label{EQ1052}
				\min_{\Uc\in\D_0} \GJ_{vK}^{hom}(\Uc)
			\end{equation}	
			admits at least a solution. Moreover, one has
			$$ m=	\lim_{\e\to0}{m_\e \over \e^5} = \min_{(\Uc,\wh{\fu},\Gu^M)\in \D} \GJ(\Uc,\wh{\fu},\Gu^M)= 	\min_{\Uc\in\D_0} \GJ_{vK}^{hom}(\Uc).$$	
		\end{theorem}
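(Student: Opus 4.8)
The plan is to obtain the theorem almost entirely from two facts already established: Theorem \ref{MainConR}, which gives $m=\lim_{\e\to0}m_\e/\e^5=\min_{(\Uc,\wh{\fu},\Gu^M)\in\D}\GJ$, and Lemma \ref{ExMin1}, which furnishes a minimizer $(\Uc',\wh{\fu}',(\Gu^M)')\in\D$ of $\GJ$. The only genuinely new content is that the minimization of $\GJ$ over $\D$ can be performed in the microscopic variables $(\wh{\fu},\Gu^M)$ first, for each fixed macroscopic $\Uc\in\D_0$, and that this partial minimization collapses $\GJ$ onto exactly $\GJ_{vK}^{hom}$. Once this is shown, the closing chain of equalities and existence for $\min_{\D_0}\GJ_{vK}^{hom}$ both drop out.

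First I would fix $\Uc\in\D_0$ and study $(\wh{\fu},\Gu^M)\mapsto\GJ(\Uc,\wh{\fu},\Gu^M)$. Up to the $\Uc$-linear force term and a constant, this functional is the sum of two \emph{decoupled} quadratic pieces: $\wh{\fu}\mapsto\GJ_B(\Uc,\wh{\fu})$ on $L^2(\o;H^1_{per,0}(Y_\kappa^B))^3$, which is strictly convex and coercive by \eqref{Eq823} (hence by Lemma \ref{L107}); and $\Gu^M\mapsto\GJ_M(\Gu^M)-\sum_\alpha\int_\o f_\alpha\big(\int_{Y_\kappa^M}\Gu^M_\alpha\,dy\big)dx'$ on $L^2(\o;\GH^1(Y_\kappa^M))^3$, which is strictly convex and coercive because $\|e_y(\cdot)\|_{L^2(Y_\kappa^M)}$ is a norm on $\GH^1(Y_\kappa^M)^3$ (Korn in the fixed cell together with the homogeneous Dirichlet condition on $\partial Y_\kappa\X(-\kappa,\kappa)$). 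Each therefore has a unique minimizer, and the Euler--Lagrange equations are precisely \eqref{Eq1042}; note the $\Gu^M$-minimizer is independent of $\Uc$.

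The substantive step is then the algebraic identification of the minimal value. I would substitute the corrector representation \eqref{Eq1044} (solutions of the cell problems \eqref{Eq125}) into $\GJ_B$ and use the cell equations repeatedly to annihilate the cross terms between $M^{\alpha\beta}+e_y(\chi^m_{\alpha\beta})$, $-y_3M^{\alpha\beta}+e_y(\chi^b_{\alpha\beta})$ and the corrector strains; with the definitions \eqref{EQ1048}--\eqref{EQ950} this yields $\min_{\wh{\fu}}\GJ_B(\Uc,\wh{\fu})=|Y_\kappa^B|\,\GJ^{B,hom}(\Uc)$. For the matrix part I would substitute \eqref{EQ947} and test the identity \eqref{Eq1042}$_2$ with $\Gw=\Gu^M$ itself, getting $\GJ_M(\Gu^M)=\sum_\alpha\int_\o f_\alpha\big(\int_{Y_\kappa^M}\Gu^M_\alpha\,dy\big)dx'$ at the minimizer, so that the matrix elastic energy cancels the matrix force term exactly. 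Hence $\min_{(\wh{\fu},\Gu^M)}\GJ(\Uc,\wh{\fu},\Gu^M)=|Y_\kappa^B|\,\GJ^{B,hom}(\Uc)-|\Yc|\int_\o f\cdot\Uc\,dx'=\GJ_{vK}^{hom}(\Uc)$ for every $\Uc\in\D_0$, and therefore $\inf_{\D}\GJ=\inf_{\D_0}\GJ_{vK}^{hom}$. The main obstacle here is pure bookkeeping: tracking the factor-$2$ conventions on off-diagonal entries in the contractions of \eqref{EQMatrix}, \eqref{EQ950} so that the two energies agree on the nose, and checking that the matrix part produces no residual $\Uc$-independent constant.

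Finally, for existence and the conclusion: with $(\Uc',\wh{\fu}',(\Gu^M)')$ the minimizer of $\GJ$ on $\D$ from Lemma \ref{ExMin1}, the pair $(\wh{\fu}',(\Gu^M)')$ must minimize $\GJ(\Uc',\cdot,\cdot)$ (because $(\Uc',\wh{\fu},\Gu^M)\in\D$ for every admissible $\wh{\fu},\Gu^M$), so by the previous step $\GJ(\Uc',\wh{\fu}',(\Gu^M)')=\GJ_{vK}^{hom}(\Uc')$; and for any $\Uc\in\D_0$, $\GJ_{vK}^{hom}(\Uc)=\min_{(\wh{\fu},\Gu^M)}\GJ(\Uc,\cdot,\cdot)\geq\min_{\D}\GJ=\GJ_{vK}^{hom}(\Uc')$. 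Thus $\Uc'$ minimizes $\GJ_{vK}^{hom}$ over $\D_0$ and $\min_{\D_0}\GJ_{vK}^{hom}=\GJ_{vK}^{hom}(\Uc')=\min_{\D}\GJ=\lim_{\e\to0}m_\e/\e^5=m$. As a cross-check, Lemma \ref{L716} gives coercivity of the quadratic form of $\GJ_{vK}^{hom}$, so existence can alternatively be obtained by the direct method, using the a priori bounds from the proof of Lemma \ref{ExMin1} and the weak lower semicontinuity of $\GJ^{B,hom}$ (the bending part is convex quadratic in $D^2\Uc_3$, while the von-K\'arm\'an term $\Zc_{\alpha\beta}$ is handled by the compact embedding $H^2(\O_R)\hookrightarrow W^{1,4}(\O_R)$).
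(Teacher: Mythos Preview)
Your proposal is correct and follows essentially the same route as the paper. The paper does not give a separate formal proof of this theorem: it is presented as an immediate consequence of the cell-problem derivation in \S\ref{cellPro} (equations \eqref{Eq1042}--\eqref{EQ950}) together with Theorem~\ref{MainConR}, while the remark following the statement points to Lemma~\ref{L716} and lower semicontinuity for existence --- exactly the two ingredients you invoke. Your write-up simply makes the ``minimize in the microscopic variables first'' reduction explicit, and your cautionary note about tracking the factor-$2$ conventions is well placed, since the paper's bookkeeping between $Q$, $\GJ_B$, $\GJ_M$, the Euler--Lagrange equations \eqref{Eq1042}, and the $\tfrac12$ in \eqref{EQ950} is not entirely uniform.
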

		\begin{remark} We remark the following  
			\begin{itemize}
				\item The existence of a minimizer of the limit homogenized energy \eqref{EQ1052} is given by the fact the associated quadratic form is coercive and lower semi-continuous.
				\item		Note that the homogenized energy $\GJ^{B,hom}(\Uc)$ is the vK limit energy.
			\end{itemize}
		\end{remark}	
		
		\section{Case 2: Asymptotic behavior under matrix pre-strain}\label{Sec08}
		In this section the effective elastic behavior of the composite plate under matrix pre-strain is derived, for that it is assumed that there exist pre-strain in the matrix part which is modeled by a multiplicative decomposition of the deformation gradient $\nabla v$ with the matrix field $A_\e: \O_\e\to\R^{3\X3}$.
		
		Above way of decomposition has first been introduced in the context of finite strain plasticity \cite{pretrain01} and \cite{prestrain02}, in which they decompose the deformation gradient into two parts one elastic and another inelastic which is written as $\nabla v=F_{el}A_\e$ for the strain, where $A_\e$ the pre-strain tensor (also called the inelastic part) and $F_{el}$ the elastic part, which is the only part that enters the elastic energy functional. Hence, we say that total elastic energy \eqref{EQJ101} describes a composite plate with heterogeneous matrix pre-strain $A_\e(\cdot)$.
		\subsection{The non-linear elasticity problem in Von-K\'arm\'an regime}\label{Ssec81}
		We consider the following energy functional
		\begin{equation}\label{EQJ101}
			\GJ^*_\e(v)=\int_{\O_\e}\wh{W}_\e(x,(\nabla v) A_\e^{-1})dx-\int_{\O_{\e}} f_{\e}(x)\cdot(v(x)-\text{\bf id}(x)),\quad \text{for all}\;\;v\in\GV_\e.
		\end{equation}
		
		The local elastic energy $\wh{W}$ and the external forces $f_\e$ are given by \eqref{Eq62} and \eqref{AFB1} with the pre-strain 				\begin{equation}\label{EQ102}
			A_\e(x)=\left\{
			\begin{aligned}
				&\GI_3,\quad &&x=(x',x_3)\in\O_\e^B,\\
				&\GI_3-\e B_\e(x'),\quad && x=(x',x_3)\in\O_\e^M,
			\end{aligned}\right.
		\end{equation}	
		with  $B_\e\in L^\infty(\o)^{3\X3}$.

		\begin{assumption} \label{Ass01}
			We suppose that for  the small pre-strain, there exists a $B\in  L^2(\o\X Y_\kappa^M)^{3\X3}$ independent of $y$ such that 
			\begin{equation}\label{EQ84}
				\e\|B_\e\|_{L^\infty(\o)}\leq C_P,\quad\text{and}\quad\Pi_\e(B_\e)\rightharpoonup B\;\;\text{weakly in}\;\;L^2(\o\X Y_\kappa^M)^{3\X3}.
			\end{equation} The constant does not depend on $\e$.\\
			Moreover, we assume that (see estimate \eqref{KornP2}$_1$)
			$$C_MC_P\leq{1\over 2}.$$
		\end{assumption}
		Since the pre-strain is small, then we can simplify the decomposition since for $\e<<1$, we have the inverse of $A_\e$ given by the Neumann series.  Thus, if $C_P<1$ then we have 
		$$\begin{aligned}
			A_\e^{-1}=\sum_{n=0}^\infty  (\e B_\e)^n=(\GI_3+\e B_\e)+O(\e^2),\\
			\text{det}(A_\e)= \text{det}(\GI_3-\e B_\e)=1+O(\e),
		\end{aligned}\quad\text{for a.e.}\;x\in \O_\e^M.$$ 
		Since we do not have $O(\e^2)$ for the $L^\infty$ norm but it is true for the $L^2$ norm. Hence, we replace $A^{-1}_\e $ by $\GI_3+\e B_\e$.
	
		Similarly, like in Subsection \ref{Ssec71}, we have as a consequence of \eqref{Eq62}--\eqref{SymCon} we have 
	$$
	\begin{aligned}
		&c_0 \big(\e^2\|dist(\nabla v A_\e^{-1}, SO(3))\|^{2}_{L^{2}(\O^M_\e)}+\|dist(\nabla v, SO(3))\|^{2}_{L^{2}(\O^B_\e)}\big)\\
		& \hskip 10mm \leq c_0\big(\e^2\|\nabla v A_\e^{-1}(\nabla v A_\e^{-1})^T-\GI_3\|^2_{L^2(\O^M_\e)}+\|\nabla v(\nabla v)^T-\GI_3\|^2_{L^2(\O^B_\e)}\big)\leq   \int_{\O_{\e}} \wh W_\e\big(\cdot,\nabla v A_\e^{-1}\big)\, dx.
	\end{aligned}
	$$
	First, observe that		
	$$\begin{aligned}
		dist(\nabla v,SO(3))=dist(\nabla v(\GI_3+\e B_\e)-\e\nabla v B_\e,SO(3))=\inf_{G\in SO(3)}|\nabla v(\GI_3+\e B_\e)-\e\nabla v B_\e-G|_F\\
		\leq \inf_{G\in SO(3)}|\nabla v(\GI_3+\e B_\e)-G|_F+|\e\nabla v B_\e|_F=dist(\nabla v(\GI_3+\e B_\e), SO(3))+|\e\nabla v B_\e|_F,\;\;
		\hbox{a.e. in } \O^M_\e.
	\end{aligned}$$
	With the above inequality with definition of $A_\e$ and estimate \eqref{KornP2} we get
	\begin{align*}
		&\|dist(\nabla v,SO(3))\|_{L^2(\O_\e^M)}\leq \|dist(\nabla v(\GI_3+\e B_\e),SO(3))\|_{L^2(\O_\e^M)}+\e\|\nabla v B_\e\|_{L^2(\O_\e^M)}\\
		&\leq \|dist(\nabla v(\GI_3+\e B_\e),SO(3))\|_{L^2(\O_\e^M)}+\e\| B_\e\|_{L^2(\O_\e^M)}+\e\|B_\e\|_{L^\infty(\o)}\|\nabla v-\GI_3\|_{L^2(\O^M_\e)}\\
		&\leq \|dist(\nabla v(\GI_3+\e B_\e),SO(3))\|_{L^2(\O_\e^M)}+C_P C_M\|dist(\nabla v,SO(3))\|_{L^2(\O_\e^M)}\\
		&+ {C_P C_M\over \e}\|dist(\nabla v,SO(3))\|_{L^2(\O_\e^B)}  + C \e^{3/2}.
	\end{align*}
	As consequence of the above estimate, we get
	\begin{equation*}
		\begin{aligned}
			&\|dist(\nabla v,SO(3))\|_{L^2(\O_\e^B)}+\e\|dist(\nabla v,SO(3))\|_{L^2(\O_\e^M)}\\
			&\hskip 25mm\leq (C_P C_M+1) \|dist(\nabla v,SO(3))\|_{L^2(\O_\e^B)}+\e C_MC_P\|dist(\nabla v,SO(3))\|_{L^2(\O_\e^M)}\\
			&\hskip 35mm + \e\|dist(\nabla v(\GI_3+\e B_\e),{SO(3)})\|_{L^2(\O_\e^M)}+C\e^{5/2}.
		\end{aligned}
	\end{equation*}
	For the Assumption \ref{Ass01}, we have $C_PC_M\leq {1\over 2}$ which yields
	\begin{equation*}
		\begin{aligned}
			&\|dist(\nabla v,SO(3))\|_{L^2(\O_\e^B)}+\e\|dist(\nabla v,SO(3))\|_{L^2(\O_\e^M)}\\
			&\hskip 25mm\leq \|dist(\nabla v,SO(3))\|_{L^2(\O_\e^B)}+ \e\|dist(\nabla v(\GI_3+\e B_\e),SO(3))\|_{L^2(\O_\e^M)}+C\e^{5/2}.
		\end{aligned}
	\end{equation*}
		Similarly, as Subsections \ref{Ssec71}, with the above estimates we have 
	\begin{equation}\label{EQ87}
		c_0 \big(\e^2\|dist(\nabla v,SO(3))\|^{2}_{L^{2}(\O^M_\e)}+\|dist(\nabla v, SO(3))\|^{2}_{L^{2}(\O^B_\e)}\big)-\int_{\O_\e}f_\e\cdot(v-\text{\bf id})dx \leq {\GJ^*}_{\e}(v)+C_1\e^5.
	\end{equation}
		Here, note that using the fact that $a(\cdot)\in L^\infty(\Yc)$ we have
		$$\begin{aligned}
			\GJ_\e^*(\text{\bf id})=\int_{\O_\e}\wh{W}(x,A_\e^{-1})dx=\int_{\O_\e^M}\wh{W}(x,\GI_3+\e B_\e)dx&\leq C\e^2\|(\GI_3+\e B_\e)^T(\GI_3+\e B_\e)-\GI_3\|^2_{L^2(\O_\e^M)}\\
			&= C\e^2\|\e B_\e^T+\e B_\e+\e^2 B_\e^T B_\e\|^2_{L^2(\O_\e^M)}.
		\end{aligned}$$
		Since, from the estimate \eqref{EQ84}, we have
		\begin{equation}\label{EQ88}
			\|\e B_\e^T+\e B_\e+\e^2 B_\e^T B_\e\|_{L^2(\O_\e^M)}\leq  2\e \|B_\e\|_{L^2(\O_\e^M)}+\e\big(\e\|B_\e\|_{L^\infty(\O_\e^M)}\big)\|B_\e\|_{L^2(\O_\e^M)}\leq C\e^{3/2}
		\end{equation} which imply
		$$\|(\GI_3+\e B_\e)^T(\GI_3+\e B_\e)-\GI_3\|_{L^2(\O^M_\e)}\leq C\e^{3/2}$$ and then
		$$\GJ_\e^*(\text{\bf id})\leq C\e^5.$$
		We choose only deformations $v\in\GV_\e$ such that $\GJ_\e^*(v)\leq \GJ_\e^*(\text{\bf id})$. Then, the non-linear elasticity problem reads as
		$$m_\e^*=\inf_{v\in\GV_\e}\GJ_\e^*(v)\leq \GJ_\e^*(\text{\bf id}).$$
		\begin{remark}
			Since, we have not assumed $A_\e \in SO(3)$, we have $0<\GJ_\e^*(\text{\bf id})\leq C\e^5$ which is in accordance with the mechanical property i.e. since the composite plate has pre-strain, it has some stored elastic energy.
		\end{remark}
		Proceeding as \eqref{Ssec72}, we have the following estimates:
		\begin{lemma}
			Let $v\in \GV_{\e}$ be a deformation such that $\GJ^*_{\e}(v)\leq\GJ^*(\text{\bf id})$. Assume the forces fulfill \eqref{AFB1} and \eqref{EQ74}, one has 
			\begin{equation}\label{EQ86}
				\begin{aligned}
					&\|dist(\nabla v, SO(3))\|_{L^{2}(\O^B_\e)}+\e\|dist(\nabla v, SO(3))\|_{L^{2}(\O^M_\e)}\leq C\e^{5/2},\\
					&\|\nabla v(\nabla v)^T-\GI_3\|_{L^2(\O^B_\e)}+\e \|\nabla v (\GI_3+\e B_\e)(\nabla v(\GI_3+\e B_\e))^T-\GI_3\|_{L^2(\O^M_\e)}\leq C\e^{5/2}.
				\end{aligned}
			\end{equation}	
			The constants do not depend on $\e$.
		\end{lemma}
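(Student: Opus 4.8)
The plan is to follow closely the proof of Lemma~\ref{GOWL54}, the only new ingredients being that the reference configuration now stores elastic energy, $0<\GJ^*_\e(\text{\bf id})\le C\e^5$, and that the coercivity bound is available only in the perturbed form \eqref{EQ87}, which carries an extra additive term $C_1\e^5$. Throughout I will abbreviate $X_B=\|dist(\nabla v,SO(3))\|_{L^2(\O^B_\e)}$, $X_M=\|dist(\nabla v,SO(3))\|_{L^2(\O^M_\e)}$ and $t=\sqrt{X_B^2+\e^2X_M^2}$.

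For \eqref{EQ86}$_1$ I would start from \eqref{EQ87}: since we restrict to deformations with $\GJ^*_\e(v)\le\GJ^*_\e(\text{\bf id})\le C\e^5$, this gives $c_0 t^2\le\big|\int_{\O_\e}f_\e\cdot(v-\text{\bf id})\,dx\big|+C\e^5$. Next I would estimate the force integral exactly as in the proof of Lemma~\ref{GOWL54}; the pre-strain does not enter here, so splitting $u=\Gu+\Gu^M$ with $\Gu^M=0$ on $\O^B_\e$, using the scaling \eqref{AFB1}, the Korn-type inequality \eqref{KornPUD1}$_1$, and the estimates \eqref{Eq518} for $\|e(\Gu)\|_{L^2(\O_\e)}$ and $\|e(\Gu^M)\|_{L^2(\O^M_\e)}$ in terms of $X_B$ and $X_M$, one gets $\big|\int_{\O_\e}f_\e\cdot(v-\text{\bf id})\,dx\big|\le C_0\e^{5/2}\|f\|_{L^2(\o)}\,t+C_0\|f\|_{L^2(\o)}\,t^2$. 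Combining, dividing by $c_0$ and using the smallness condition \eqref{EQ74} to absorb the quadratic term, I would reach $\tfrac12 t^2\le C\e^{5/2}\|f\|_{L^2(\o)}\,t+C\e^5$; solving this quadratic inequality for $t$ then yields $t\le C\e^{5/2}$, which is \eqref{EQ86}$_1$.

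For \eqref{EQ86}$_2$ I would feed the bound $t\le C\e^{5/2}$ back into the force estimate above, making its right-hand side $O(\e^5)$, hence $\big|\int_{\O_\e}f_\e\cdot(v-\text{\bf id})\,dx\big|\le C\e^5$, and therefore $\int_{\O_\e}\wh{W}_\e(x,(\nabla v)A_\e^{-1})\,dx=\GJ^*_\e(v)+\int_{\O_\e}f_\e\cdot(v-\text{\bf id})\,dx\le C\e^5$. Applying the pointwise coercivity \eqref{eq64} of $Q$ on each subdomain---with no prefactor on $\O^B_\e$, the prefactor $\e^2$ on $\O^M_\e$, and $A_\e^{-1}$ replaced by $\GI_3+\e B_\e$ as justified above in the text---would bound $\|\nabla v(\nabla v)^T-\GI_3\|^2_{L^2(\O^B_\e)}+\e^2\|\nabla v(\GI_3+\e B_\e)(\nabla v(\GI_3+\e B_\e))^T-\GI_3\|^2_{L^2(\O^M_\e)}$ by $C\e^5$, i.e.\ \eqref{EQ86}$_2$. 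Here I would use that $F^TF$ and $FF^T$ are conjugate, so $|F^TF-\GI_3|_F=|FF^T-\GI_3|_F$, in order to pass freely between the two forms of the Green--St.\ Venant tensor that occur in \eqref{eq64} and in the statement.

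I expect the only delicate point---the one absent from Case~1---to be the presence of \emph{two} terms of order $\e^5$, coming respectively from $\GJ^*_\e(\text{\bf id})$ and from the correction in \eqref{EQ87}: since this is exactly the target order the absorption arguments still close, but I must carry these $\e^5$ contributions explicitly through both steps rather than discarding them, so that the first step becomes a genuine quadratic inequality $\tfrac12 t^2\le at+b\e^5$ instead of the homogeneous one $\tfrac12 t^2\le at$ solved in Lemma~\ref{GOWL54}. All remaining ingredients---the displacement splitting, the Korn inequalities \eqref{KornPUD1}, and the perturbed coercivity \eqref{EQ87}, whose derivation via the triangle inequality, \eqref{KornP2} and the smallness $C_MC_P\le\tfrac12$ of Assumption~\ref{Ass01} has already been carried out above---are in place, so there is no substantive new obstacle.
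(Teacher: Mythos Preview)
Your proposal is correct and follows essentially the same approach as the paper: start from \eqref{EQ87} with the additional $C\e^5$ coming from $\GJ^*_\e(\text{\bf id})$, repeat verbatim the force-integral estimate of Lemma~\ref{GOWL54}, absorb the quadratic term via \eqref{EQ74}, solve the resulting quadratic inequality for $t$, and then feed \eqref{EQ86}$_1$ back into the energy to obtain \eqref{EQ86}$_2$ from the pointwise coercivity \eqref{eq64}. Your explicit remark that the inhomogeneous quadratic $\tfrac12 t^2\le at+b\e^5$ replaces the homogeneous one of Case~1, and the observation $|F^TF-\GI_3|_F=|FF^T-\GI_3|_F$, are exactly the small adjustments the paper leaves implicit.
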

		\begin{proof}
			Using \eqref{EQ87} gives rise to the estimate
			$$
			c_0\Big(\|dist(\nabla v, SO(3))\|^{2}_{L^{2}(\O^B_\e)}+\e^2\|dist(\nabla v, SO(3))\|^{2}_{L^{2}(\O^M_\e)}\Big)\leq\left|\int_{\O_\e} f_\e\cdot(v-\text{\bf id})dx\right|+C\e^5.
			$$
			We get the estimates \eqref{EQ86}$_{1}$ following the same arguments as in Lemma \ref{GOWL54} with the estimate \eqref{EQ87}.\\					
			Furthermore, the estimate \eqref{EFBM}$_1$ can be employed to arrive at 
			\begin{equation*}
				\begin{aligned}
					c_0\big(\big\|(\nabla v)^T \nabla v -\GI_3\|^2_{L^2(\O^B_\e)}+\e^2\|(\nabla v A_\e^{-1})^T\nabla v A_\e^{-1} -\GI_3 \big\|^2_{L^2(\O^M_\e)}\big) 
					\leq \int_{\O_\e} \wh{W}(x,\nabla v(x)A_\e^{-1})dx\\
					\leq \int_{\O_\e} f_\e(x)\cdot(v-\text{\bf id})(x)dx+C\e^5 \leq C\e^5,
				\end{aligned}
			\end{equation*}
			which in turn leads to
$$
		\|\nabla v(\nabla v)^T-\GI_3\|_{L^2(\O^B_\e)}+\e \|\nabla v (\GI_3+\e B_\e)(\nabla v(\GI_3+\e B_\e))^T-\GI_3\|_{L^2(\O^M_\e)}\leq C\e^{5/2}.
$$
			This completes our proof.
		\end{proof}
		
		As a consequence of the above Lemma, we have that there exists a strictly positive constants $c^*$ and $C^*$ independent of $\e$ such that
		$$-c^*\e^5\leq \GJ^*_\e(v)\leq \GJ^*(\text{\bf id})\leq C^*\e^5.$$
		Now with definition of $m_\e^*$, we have
		$$-c^*\leq {m_\e^*\over \e^5}\leq C^*.$$
		
		So, our aim is to give the limit of the re-scaled sequence $\ds \left\{{m^*_\e\over \e^5}\right\}_\e$, for that we first give the asymptotic behavior of the Green St. Venant's strain tensor.
		
		\subsection{Asymptotic behavior of the non-linear strain tensor with pre-strain}
		We proceed similarly like in Section \ref{Sec07}, so we have from \eqref{AFLP1} that $\{v_\e\}_\e$ is a sequence of deformations in $\GV_\e$ satisfying
		\begin{equation}\label{Ass02}
			\|dist(\nabla v_\e, SO(3))\|_{L^{2}(\O^B_\e)}+\e\|dist(\nabla v_\e, SO(3))\|_{L^{2}(\O^M_\e)}\leq C\e^{5/2}. 
		\end{equation}
		We give asymptotic behavior of the sequence $\{\nabla v_\e(\GI_3+\e B_\e)(\nabla v_\e (\GI_3+\e B_\e))^T-\GI_3\}_\e$.	
		
		So, we now give the limit of the non-linear strain tensor with matrix pre-strain, which is present in the total elastic energy.\\
		We recall the following classical result:
		\begin{lemma}\label{swc}
			Let $\{f_\e\}_\e$ and $\{g_\e\}_\e$ be sequence in $L^2(\o\X \Yc)$ such that
			$$ f_\e \to f\quad \text{strongly in}\;\;L^2(\o\X \Yc),\quad\text{and}\quad g_\e\rightharpoonup g\quad\text{weakly in}\;\;L^2(\o\X\Yc),$$
			then we have
			$$ f_\e g_\e \rightharpoonup fg \quad \text{weakly in}\quad L^1(\o\X\Yc).$$
		\end{lemma}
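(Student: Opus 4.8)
The plan is to deduce the weak $L^1$ convergence directly from its definition, namely by pairing $f_\e g_\e$ against an arbitrary test function $\varphi \in L^\infty(\o\X\Yc) = \big(L^1(\o\X\Yc)\big)'$ and showing that $\int_{\o\X\Yc}\varphi\, f_\e g_\e \to \int_{\o\X\Yc}\varphi\, fg$. A preliminary remark is that $f_\e g_\e$ and $fg$ do lie in $L^1(\o\X\Yc)$ by Cauchy--Schwarz, so the statement is meaningful. The first ingredient I would record is that the weakly convergent sequence $\{g_\e\}_\e$ is bounded in $L^2(\o\X\Yc)$ (Banach--Steinhaus); let $M$ denote such a bound, so $\|g_\e\|_{L^2(\o\X\Yc)}\le M$ for all $\e$.

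Then, for a fixed $\varphi\in L^\infty(\o\X\Yc)$, I would use the splitting
\[
\int_{\o\X\Yc}\varphi\, f_\e g_\e - \int_{\o\X\Yc}\varphi\, f g = \int_{\o\X\Yc}\varphi\,(f_\e-f)\,g_\e + \Big(\int_{\o\X\Yc}(\varphi f)\,g_\e - \int_{\o\X\Yc}(\varphi f)\,g\Big).
\]
For the first term on the right, Cauchy--Schwarz gives $\big|\int_{\o\X\Yc}\varphi\,(f_\e-f)\,g_\e\big|\le \|\varphi\|_{L^\infty(\o\X\Yc)}\,\|f_\e-f\|_{L^2(\o\X\Yc)}\,\|g_\e\|_{L^2(\o\X\Yc)}\le M\,\|\varphi\|_{L^\infty(\o\X\Yc)}\,\|f_\e-f\|_{L^2(\o\X\Yc)}$, which tends to $0$ by the strong convergence $f_\e\to f$. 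For the second term, since $\varphi\in L^\infty(\o\X\Yc)$ and $f\in L^2(\o\X\Yc)$ we have $\varphi f\in L^2(\o\X\Yc)$, so the weak convergence $g_\e\rightharpoonup g$ in $L^2(\o\X\Yc)$ makes it vanish in the limit. Adding the two estimates yields $\int_{\o\X\Yc}\varphi\, f_\e g_\e \to \int_{\o\X\Yc}\varphi\, fg$ for every $\varphi\in L^\infty(\o\X\Yc)$, which is precisely the claim $f_\e g_\e\rightharpoonup fg$ weakly in $L^1(\o\X\Yc)$.

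I do not expect any genuine obstacle; this is a classical lemma. The only points deserving attention are to invoke the uniform bound on $\{g_\e\}_\e$ \emph{before} applying Cauchy--Schwarz to the cross term, and to note that the correct $L^2$-test function against which the weak convergence of $g_\e$ is used is $\varphi f$ rather than $\varphi$ alone.
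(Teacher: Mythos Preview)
Your argument is correct and complete; this is the standard proof of this classical fact. Note that the paper does not actually supply a proof of this lemma---it is introduced with ``We recall the following classical result'' and stated without demonstration---so there is no proof in the paper to compare against.
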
	
		\begin{lemma}\label{fniq1}
			Suppose $A,B \in \GM_3$, we have
			$$ \left|ABA^T\right|_F\leq |B|_F\,|AA^T|_F.$$
		\end{lemma}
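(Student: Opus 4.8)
The plan is to reduce the inequality to two elementary facts about matrix norms on $\GM_3$: the compatibility of the Frobenius norm with the operator (spectral) norm, and the comparison between the operator norm and the Frobenius norm on symmetric positive semi-definite matrices. Throughout, denote by $|M|_2$ the operator norm of $M\in\GM_3$ (the largest singular value), so that $|M|_2=|M^T|_2$.

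First I would recall the sub-multiplicativity estimates $|XY|_F\le |X|_2\,|Y|_F$ and $|XY|_F\le |X|_F\,|Y|_2$ for $X,Y\in\GM_3$. Both follow by writing the product column by column (resp.\ row by row): for instance $|XY|_F^2=\sum_j\|Xy_j\|^2\le |X|_2^2\sum_j\|y_j\|^2=|X|_2^2\,|Y|_F^2$, where $y_j$ are the columns of $Y$. Applying these two bounds in turn gives
\[
|ABA^T|_F\le |A|_2\,|BA^T|_F\le |A|_2\,|B|_F\,|A^T|_2=|A|_2^2\,|B|_F .
\]

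Second, I would identify $|A|_2^2$ with the largest eigenvalue $\lambda_{\max}$ of the symmetric positive semi-definite matrix $AA^T$. Since the Frobenius norm is orthogonally invariant, $|AA^T|_F$ equals the Euclidean norm of the (nonnegative) eigenvalue vector of $AA^T$, whence $\lambda_{\max}\le |AA^T|_F$. Substituting $|A|_2^2\le |AA^T|_F$ into the display above yields $|ABA^T|_F\le |AA^T|_F\,|B|_F$, which is the claim.

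The argument is entirely routine; the only point requiring care is the choice of intermediate norm. A direct use of sub-multiplicativity of $|\cdot|_F$ alone would only give $|ABA^T|_F\le |A|_F^2\,|B|_F$, and since $|AA^T|_F\le |A|_F^2$ this is too weak — one genuinely needs to route the estimate through $|A|_2$ to make the sharper factor $|AA^T|_F$ appear. (As an alternative that avoids the operator norm, one may run the computation in the singular value decomposition $A=U\Sigma V^T$: then $|ABA^T|_F^2=\sum_{i,j}\sigma_i^2\sigma_j^2\,C_{ij}^2$ with $C=V^TBV$ and $|C|_F=|B|_F$, and $\sigma_i^2\sigma_j^2\le\frac12(\sigma_i^4+\sigma_j^4)\le\sum_k\sigma_k^4=|AA^T|_F^2$ concludes.)
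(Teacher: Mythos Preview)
Your proof is correct and is considerably cleaner than the paper's. The paper avoids the operator norm entirely and instead works directly with traces: it expands $|ABA^T|_F^2=tr\big((AB)^T(AB)\,A^TA\big)$, applies Cauchy--Schwarz in the Frobenius inner product to pull off a factor $|A^TA|_F$, then repeats a similar trace manipulation and a second Cauchy--Schwarz on $|(AB)^T(AB)|_F$, and finishes with the plain submultiplicativity $|A^TA\cdot BB^T\cdot A^TA|_F\le |A^TA|_F^2|BB^T|_F$ together with $|BB^T|_F\le |B|_F^2$. Your route through $|A|_2$ short-circuits all of this: the single observation $|A|_2^2=\lambda_{\max}(AA^T)\le |AA^T|_F$ is exactly the sharpening that the paper extracts over several lines of trace gymnastics. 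The SVD alternative you sketch is also sound and makes the same point in coordinates. Either of your arguments would be a strict improvement in exposition; the paper's version has the minor virtue of never leaving the Frobenius world, but at the cost of being longer and harder to follow.
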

		\begin{proof} First, we recall the expressions of the Frobenius norm and the scalar product associated.
			For all $A$, $B \in \GM_3$, we have
			$$<A,B>_F=tr(A^TB)=tr(AB^T),\qquad |A|_F=\sqrt{<A,A>_F}=\sqrt{tr(A^TA)}.$$
			Using the Cauchy Schwartz inequality on the Frobenius inner product yields
			$$ |tr(AA^TB)|=|<AA^T,B>_F|\leq |B|_F|AA^T|_F,\quad\text{and}\quad |tr(A^TAB)|=|<A^TA,B>_F|\leq |B|_F|A^TA|_F,$$
			and we also have $|AB|_F\leq |A|_F|B|_F$.\\
			Using the above inequality, we obtain
			$$\begin{aligned}
				\left|ABA^T\right|^2_F=&|tr(\underbrace{AB}\,\underbrace{A^TA}\,\underbrace{B^TA^T})|=|tr(\underbrace{(AB)^TAB}\,\underbrace{A^TA})|\leq  |A^TA|_F\,|(AB)^TAB|_F\\
				=& |A^TA|_F\sqrt{tr((AB)^TAB(AB)^TAB)}=|A^TA|_F\;\sqrt{(tr(\underbrace{BB^T}\underbrace{A^TABB^TA^TA})}\\
				\leq&|A^TA|_F\sqrt{|\underbrace{A^TA}\underbrace{BB^T}\underbrace{A^TA}|_F|BB^T|_F}\leq |A^TA|_F^2|BB^T|_F\leq |B|^2_F|A^TA|_F^2.
			\end{aligned}$$	
			Hence, we got $|ABA^T|_F\leq  |B|_F\,|AA^T|_F$, since $|A^TA|_F=|AA^T|_F$.
		\end{proof} 
		We recall that for matrix function $A(\cdot,\cdot)\in [L^2(\o\X \Yc)]^{3\X3}$, we have
		\begin{equation}\label{FL2N}
			\|A\|_{L^2(\o\X\Yc)}=\sqrt{\int_{\o\X\Yc}|A(x',y)|^2_F dx'dy}.
		\end{equation} 
		
		\begin{lemma}\label{L102}
			For the same subsequence as in the Lemma \ref{CFuu1}, we have
			\begin{equation}\label{EQ107}
				\begin{aligned}
					{1\over 2\e^2} \Pi_\e\left(\nabla v_\e (\nabla v_\e)^T-\GI_3\right) &\rightharpoonup E(\Uc)+e_y(\wh{\fu})\;&& \text{weakly in}\quad L^2(\o\X Y_\kappa^B)^{3\X3},\\
					{1\over 2\e} \Pi_\e\left(\nabla v_\e (\GI_3+\e B_\e)(\GI_3+\e B_\e)^T(\nabla v_\e)^T-\GI_3\right) &\rightharpoonup   e_y(\Gu^M)+ Sym(B) \;&& \text{weakly in}\quad L^2(\o\X Y_\kappa^M)^{3\X3},
				\end{aligned}
			\end{equation}
			where the symmetric matrix $E(\Uc)$, $\wh{\fu}$ and $\Gu^M$ are all same as in Lemma \ref{CFGLST2}.
		\end{lemma}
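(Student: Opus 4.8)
The plan is to prove the two convergences of \eqref{EQ107} separately. The first one, over $\O^B_\e$, needs nothing new: there $A_\e=\GI_3$, so the matrix $\nabla v_\e(\nabla v_\e)^T-\GI_3$ is exactly the one treated in Lemma~\ref{CFGLST2}, and the present sequence satisfies \eqref{Ass02}, which is precisely the hypothesis \eqref{AFLP1} under which Lemmas~\ref{CFuu1}, \ref{CFeu2} and \ref{CFGLST2} were obtained; applying them verbatim produces the limits $\Uc$, $\wh{\fu}$, $\Gu^M$ of the statement together with \eqref{EQ107}$_1$. For the second convergence I would work on $\O^M_\e$, write $G_\e:=\GI_3+\e B_\e$ (the surrogate used for $A_\e^{-1}$) and $u_\e:=v_\e-\text{\bf id}$, and expand around the identity in the two \emph{small} quantities $\nabla u_\e$ and $\e B_\e$. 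Setting $\nabla v_\e G_\e=\GI_3+M_\e$ with $M_\e:=\nabla u_\e+\e B_\e+\e\,\nabla u_\e B_\e$, one has $\nabla v_\e G_\e(\nabla v_\e G_\e)^T-\GI_3=2\,Sym(M_\e)+M_\e M_\e^T$, hence
$$\frac1{2\e}\big(\nabla v_\e G_\e(\nabla v_\e G_\e)^T-\GI_3\big)=\frac1\e\,e(u_\e)+Sym(B_\e)+\frac1{2\e}\,\nabla u_\e(\nabla u_\e)^T+\rho_\e,$$
where $\rho_\e$ collects $Sym(\nabla u_\e B_\e)$ and all the entries of $\tfrac1{2\e}M_\e M_\e^T$ other than $\tfrac1{2\e}\nabla u_\e(\nabla u_\e)^T$, the latter each carrying an explicit positive power of $\e$.

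First I would record the a priori bound that the sequence $\big\{\tfrac1{2\e}\Pi_\e(\nabla v_\e G_\e(\nabla v_\e G_\e)^T-\GI_3)\big\}_\e$ is bounded in $L^2(\o\X Y_\kappa^M)^{3\X3}$: this is immediate from \eqref{EQ86}$_2$, which gives $\|\nabla v_\e G_\e(\nabla v_\e G_\e)^T-\GI_3\|_{L^2(\O^M_\e)}\le C\e^{3/2}$, together with the unfolding estimate \eqref{EQ722}. Consequently, along a further subsequence it converges weakly in $L^2(\o\X Y_\kappa^M)^{3\X3}$ — and a fortiori weakly in $L^1(\o\X Y_\kappa^M)^{3\X3}$ — to the same limit, so it is enough to identify that weak-$L^1$ limit; since it will be independent of the subsequence, the whole sequence then converges to it.

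Next I would identify the limit by applying $\Pi_\e$ to the displayed identity, using that $\Pi_\e$ commutes with pointwise products. The three leading terms have limits already available: $\tfrac1\e\Pi_\e(e(u_\e))\rightharpoonup e_y(\Gu^M)$ weakly in $L^2(\o\X Y_\kappa^M)$ by \eqref{CFeu1}$_2$; $\Pi_\e(Sym(B_\e))=Sym(\Pi_\e(B_\e))\rightharpoonup Sym(B)$ weakly in $L^2(\o\X Y_\kappa^M)$ by Assumption~\ref{Ass01}; and $\tfrac1{2\e}\Pi_\e(\nabla u_\e(\nabla u_\e)^T)\rightharpoonup0$ weakly in $L^2(\o\X Y_\kappa^M)$ by \eqref{CFgu2}$_2$. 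For the remainder I would show $\Pi_\e(\rho_\e)\to0$ in $L^1(\o\X Y_\kappa^M)$, using the elementary bounds $\|\Pi_\e(\nabla u_\e)\|_{L^2(\o\X Y_\kappa^M)}\le C\e$ (from \eqref{Eq106} and \eqref{EQ722}), $\|\Pi_\e(B_\e)\|_{L^2(\o\X Y_\kappa^M)}\le C$ (boundedness inherited from the weak convergence in Assumption~\ref{Ass01}), $\e\|\Pi_\e(B_\e)\|_{L^\infty}\le C_P$, and $\|\Pi_\e(\nabla u_\e)\Pi_\e(\nabla u_\e)^T\|_{L^2(\o\X Y_\kappa^M)}\le C\e$ (boundedness of $\tfrac1\e\Pi_\e(\nabla u_\e(\nabla u_\e)^T)$ from \eqref{CFgu2}$_2$), together with submultiplicativity of the Frobenius norm and Lemma~\ref{fniq1} to re-order the triple products of the form $\nabla u_\e(\,\cdot\,)(\nabla u_\e)^T$ so that the small factor $|\nabla u_\e(\nabla u_\e)^T|_F$ is exposed; the purely bilinear cross-terms can alternatively be treated with Lemma~\ref{swc}. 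Summing the three leading terms, the weak-$L^1$ limit equals $e_y(\Gu^M)+Sym(B)$, which by the previous step is also the weak-$L^2$ limit — that is, \eqref{EQ107}$_2$.

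The step I expect to be the real difficulty is this identification, and specifically the choice of expansion. The tempting splitting $\nabla v_\e\big(2\e\,Sym(B_\e)+\e^2B_\e B_\e^T\big)(\nabla v_\e)^T$ does \emph{not} work: neither $\nabla v_\e\,Sym(B_\e)\,(\nabla v_\e)^T$ nor $\tfrac\e2\nabla v_\e B_\e B_\e^T(\nabla v_\e)^T$ is, after unfolding, bounded in $L^2$ — only in $L^1$ — because $B_\e$ is controlled in $L^2(\o\X Y_\kappa^M)$ merely through its weak limit while its $L^\infty$-norm is of size $\e^{-1}$; only their \emph{sum} inherits the $O(\e^{3/2})$ bound of \eqref{EQ86}$_2$. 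Expanding instead around $\GI_3$ in $\nabla u_\e$ and $\e B_\e$ is what isolates exactly the three convergent building blocks already at hand — two from Case~1, one from the assumption on the pre-strain — and confines every genuinely nonlinear interaction to an $L^1$-null remainder; the passage from $L^1$- to $L^2$-weak convergence of the whole sequence is then forced by the a priori $L^2$-bound.
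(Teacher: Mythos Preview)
Your argument is correct, but it takes a different decomposition from the paper's. You expand $\nabla v_\e G_\e=\GI_3+M_\e$ with $M_\e=\nabla u_\e+\e B_\e+\e\nabla u_\e B_\e$, isolate the three leading pieces $\tfrac1\e e(u_\e)$, $Sym(B_\e)$, $\tfrac1{2\e}\nabla u_\e(\nabla u_\e)^T$ (whose limits are already known from Case~1 and Assumption~\ref{Ass01}), push the remainder to zero in $L^1$, and then upgrade to weak~$L^2$ via the a~priori bound from \eqref{EQ86}$_2$. The paper instead uses precisely the splitting you call ``tempting but not working'': it writes
\[
\nabla v_\e G_\e G_\e^T(\nabla v_\e)^T-\GI_3=\big(\nabla v_\e(\nabla v_\e)^T-\GI_3\big)+\e\big(\nabla v_\e B_\e^T(\nabla v_\e)^T+\nabla v_\e B_\e(\nabla v_\e)^T\big)+\e^2\nabla v_\e B_\e B_\e^T(\nabla v_\e)^T
\]
and shows that, after unfolding and scaling by $1/(2\e)$, \emph{each} of the last two terms is individually bounded in $L^2(\o\X Y_\kappa^M)$ and converges weakly there (to $Sym(B)$ and $0$, respectively). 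Your claim that these pieces are bounded only in $L^1$ is not right: the trick is to write $\nabla v_\e=(\nabla v_\e-\GI_3)+\GI_3$, expand the triple products, and on the worst piece $(\nabla v_\e-\GI_3)B_\e^T(\nabla v_\e-\GI_3)^T$ apply Lemma~\ref{fniq1} to get $|(\nabla v_\e-\GI_3)B_\e^T(\nabla v_\e-\GI_3)^T|_F\le|B_\e|_F\,|(\nabla v_\e-\GI_3)(\nabla v_\e-\GI_3)^T|_F$; since $\|B_\e\|_{L^\infty}\le C/\e$ while $\|\Pi_\e((\nabla v_\e-\GI_3)(\nabla v_\e-\GI_3)^T)\|_{L^2(\o\X\Yc)}\le C\e$ (from \eqref{EWA1}, \eqref{Eq106}), this piece is $L^2$-bounded. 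The paper's route therefore yields weak-$L^2$ convergence of each summand directly, without your $L^1$-identification-then-$L^2$-upgrade; your route, on the other hand, is arguably cleaner in that all the genuinely nonlinear cross-terms are bundled into one $L^1$-null remainder and no termwise $L^2$ bookkeeping is needed.
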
			
		\begin{proof}
			{\bf Step 1:} The convergence \eqref{EQ107}$_1$ is a direct consequence of the Lemma \ref{CFGLST2}. Here, we give the proof for the convergence \eqref{EQ107}$_2$.
			We have
			\begin{multline}\label{Cc2}
				\nabla v_\e (\GI_3+\e B_\e)(\GI_3+\e B_\e)^T(\nabla v_\e)^T-\GI_3=\big(\nabla v_\e (\nabla v_\e)^T-\GI_3\big)+\e\big(\nabla v_\e B_\e^T(\nabla v_\e)^T+\nabla v_\e B_\e (\nabla v_\e)^T\big)\\
				+\e^2\nabla v_\e B_\e B_\e^T (\nabla v_\e)^T.
			\end{multline} 
			Using the convergences \eqref{CFGLST1}$_2$, we get
			$$ \begin{aligned}
				{1\over 2\e} \Pi_\e\left(\nabla v_\e (\nabla v_\e)^T-\GI_3\right) &\rightharpoonup e_y(\Gu^M)\quad \text{weakly in}\;\;L^2(\o\X Y_\kappa^M)^{3\X3}.
			\end{aligned}$$
			{\bf Step 2:} We prove the convergence
			\begin{equation}\label{Cc1}
				{1\over 2}\left( \Pi_\e\left(\nabla v_\e B^T_\e (\nabla v_\e)^T\right)+\Pi_\e\left(\nabla v_\e B_\e (\nabla v_\e)^T\right)\right) \rightharpoonup Sym(B)\quad\text{weakly in}\;\;L^2(\o\X Y_\kappa^M)^{3\X3}.
			\end{equation}
			Using the strong convergence \eqref{C920}, the weak convergence of $B_\e$ from Assumption \ref{Ass01} and  Lemma \ref{swc}, we have 
			$$ \Pi_\e(\nabla v_\e B_\e^T)=\Pi_\e(\nabla v_\e)\Pi_\e(B_\e^T)\rightharpoonup B^T\quad\text{weakly in}\;\;L^1(\o\X Y_\kappa^M)^{3\X3}.$$
			
			Now, we show that $\{\Pi_\e(\nabla v_\e B^T_\e)\}_\e$ is uniformly bounded in $L^2(\o\X Y_\kappa^M)^{3\X3}$, then we will have the above convergence is weak in $L^2(\o\X Y_\kappa^M)^{3\X3}$. From the estimates \eqref{KornP2}$_1$-\eqref{Ass02} and property \eqref{EQ722} with the Assumption \ref{Ass01}, we obtain
			$$
			\begin{aligned}
				&\|\Pi_\e\big((\nabla v_\e-\GI_3) B^T_\e\big)\|_{L^2(\o\X Y_\kappa^M)}\leq C\|B_\e\|_{L^\infty(\o)}\|\Pi_\e(\nabla v_\e-\GI_3)\|_{L^2(\o\X Y_\kappa^M)}\leq C,\\
			\end{aligned}
			$$
			which imply
			\begin{equation}\label{Es810}
				\|\Pi_\e(\nabla v_\e B^T_\e)\|_{L^2(\o\X Y_\kappa^M)}\leq  \|\Pi_\e\big((\nabla v_\e-\GI_3) B^T_\e\big)\|_{L^2(\o\X Y_\kappa^M)}+ \|\Pi_\e(B^T_\e)\|_{L^2(\o\X Y_\kappa^M)}\leq C.
			\end{equation}
			
			So, we got
			$$  \Pi_\e(\nabla v_\e  B_\e^T) \rightharpoonup B^T \quad\text{weakly in}\;\;L^2(\o\X Y_\kappa^M)^{3\X3}.$$
			Similarly, using the above weak convergence and Lemma \ref{swc}, we get
			$$ \Pi_\e(\nabla v_\e B_\e^T(\nabla v_\e)^T)=\Pi_\e(\nabla v_\e B_\e^T)\Pi_\e((\nabla v_\e)^T)\rightharpoonup B^T\quad\text{weakly in}\;\;L^1(\o\X Y_\kappa^M)^{3\X 3}.$$
			From the estimates \eqref{EWA1}, \eqref{Ass02} and the Assumption \ref{Ass01}, we get
			$$\begin{aligned}
				\|(\nabla v_\e-\GI_3)(\nabla v_\e-\GI_3)^T\|_{L^2(\O_\e)}\leq \|\nabla v_\e(\nabla v_\e)^T-\GI_3\|_{L^2(\O_\e)}+2\|\nabla v_\e-\GI_3\|_{L^2(\O_\e)} \leq C\e^{3/2},
			\end{aligned} $$
			which implies
			\begin{equation}\label{Cc4}
				\|\Pi_\e\left((\nabla v_\e-\GI_3) (\nabla v_\e-\GI_3)^T\right)\|_{L^2(\o\X \Yc)}\leq C\e.
			\end{equation}
			Using the Lemma \ref{fniq1}, \eqref{EQ84}$_1$,  the equality \eqref{FL2N} and the fact that $\|\Pi_\e\big(\nabla v_\e-\GI_3\big)\|_{L^2(\o\X\Yc)}\leq C\e$ (from the estimates \eqref{KornP2}$_2$, \eqref{KornB1} and \eqref{EQ722}) we have
			$$ \begin{aligned}
				&\|\Pi_\e\left((\nabla v_\e-\GI_3) B_\e^T (\nabla v_\e-\GI_3)^T\right)\|^2_{L^2(\o\X \Yc)}\\
				&\hskip 35mm=\int_{\o\X\Yc}\left|\Pi_\e\left((\nabla v_\e(x',y)-\GI_3) B_\e^T(x') (\nabla v_\e(x',y)-\GI_3)^T\right)\right|^2_Fdx'dy\\
				& \hskip 35mm\leq \|B_\e\|^2_{L^\infty(\o)}\int_{\o\X\Yc}\left|\Pi_\e\left((\nabla v_\e(x',y)-\GI_3)((\nabla v_\e(x',y)-\GI_3))^T\right)\right|^2_Fdx'dy\\
				& \hskip 35mm\leq {C\over \e^2}\int_{\o\X\Yc}\left|\Pi_\e\left((\nabla v_\e(x',y)-\GI_3)((\nabla v_\e(x',y)-\GI_3))^T\right)\right|^2_Fdx'dy \leq C
			\end{aligned}$$
			which implies 
			$$ \|\Pi_\e\left((\nabla v_\e-\GI_3) B_\e^T (\nabla v_\e-\GI_3)^T\right)\|_{L^2(\o\X Y_\kappa^M)}\leq C.$$ 
			Similarly, like the estimate \eqref{Es810}, we have
			\begin{equation}\label{Es813}
				\|\Pi_\e\left((\nabla v_\e B_\e)^T\right)\|_{L^2(\o\X Y_\kappa^M)}\leq C.
			\end{equation}	 
			From the above estimates we have
			$$ \begin{aligned}
				&\|\Pi_\e\left(\nabla v_\e B^T_\e (\nabla v_\e)^T\right)\|_{L^2(\o\X Y_\kappa^M)}\\
				&\hskip 20mm\leq \|\Pi_\e\left((\nabla v_\e-\GI_3) B_\e^T (\nabla v_\e-\GI_3)^T\right)\|_{L^2(\o\X Y_\kappa^M)}+\|\Pi_\e(B^T_\e)\|_{L^2(\o\X Y_\kappa^M)}\\
				&\hskip 35mm+\|\Pi_\e(\nabla v_\e B^T_\e)\|_{L^2(\o\X Y_\kappa^M)}+\|\Pi_\e\left((\nabla v_\e B_\e)^T\right)\|_{L^2(\o\X Y_\kappa^M)}\leq C.
			\end{aligned}$$
			From all the above estimates and convergences, we get
			$$\Pi_\e\left(\nabla v_\e B^T_\e (\nabla v_\e)^T\right) \rightharpoonup B^T\quad\text{weakly in}\;\;L^2(\o\X Y_\kappa^M)^{3\X3}.$$
			Hence, we obtain the convergence \eqref{Cc1}.\\
			
			{\bf Step 3:} We show that 
			\begin{equation}\label{Cc5}
				{\e\over 2} \Pi_\e\left(\nabla v_\e B_\e B_\e^T (\nabla v_\e)^T\right) \rightharpoonup 0\quad\text{weakly in}\;\;L^2(\o\X Y_\kappa^M)^{3\X3}.
			\end{equation}
			Using  Assumption \ref{Ass01} and \eqref{Es813}, we have
			$$ \|\Pi_\e\left(\nabla v_\e B_\e B_\e^T(\nabla v_\e)^T\right)\|_{L^1(\o\X Y_\kappa^M)}=\|\Pi_\e\left(\nabla v_\e B_\e (\nabla v_\e B_\e)^T\right)\|_{L^1(\o\X Y_\kappa^M)}\leq \|\Pi_\e(\nabla v_\e B_\e)\|^2_{L^2(\o\X Y_\kappa^M)}\leq C.$$
			Hence
			\begin{equation}\label{Cc3}
				{\e\over 2} \Pi_\e\left(\nabla v_\e B_\e B_\e^T (\nabla v_\e)^T\right) \to 0\quad\text{strongly in}\;\;L^1(\o\X Y_\kappa^M)^{3\X3}.
			\end{equation}
			Similarly, using Lemma \ref{fniq1} and proceeding as Step 2, we get
			\begin{multline*}
				\|\Pi_\e\left((\nabla v_\e-\GI_3) B_\e B_\e^T(\nabla v_\e-\GI_3)^T\right)\|^2_{L^2(\o\X \Yc)}\\
				\leq \|B_\e\|^4_{L^\infty(\o)} \int_{\o\X\Yc}\left|\Pi_\e\left((\nabla v_\e(x',y)-\GI_3)(\nabla v_\e(x',y)-\GI_3)^T\right)\right|^2_Fdx'dy\leq {C\over \e^2},
			\end{multline*} 
			which imply
			\begin{equation}\label{Es817}
				\|\Pi_\e\left((\nabla v_\e-\GI_3) B_\e B_\e^T(\nabla v_\e-\GI_3)^T\right)\|_{L^2(\o\X Y_\kappa^M)}\leq {C\over \e}.
			\end{equation}
			So, we obtain
			\begin{equation}\label{Es816}
				\begin{aligned}
					&\|\Pi_\e\left(\nabla v_\e B_\e B_\e^T(\nabla v_\e)^T\right)\|_{L^2(\o\X Y^M_\kappa)}\\
					&\hskip 10mm\leq \|\Pi_\e\left((\nabla v_\e-\GI_3) B_\e B_\e^T(\nabla v_\e-\GI_3)^T\right)\|_{L^2(\o\X Y_\kappa^M)}+\|\Pi_\e(B_\e B^T_\e)\|_{L^2(\o\X Y_\kappa^M)}\\
					&\hskip 30mm+\|\Pi_\e\left(B_\e(\nabla v_\e B_\e)^T\right)\|_{L^2(\o\X Y_\kappa^M)}+\|\Pi_\e\left(\nabla v_\e B_\e B_\e^T\right)\|_{L^2(\o\X Y_\kappa^M)}.
			\end{aligned} \end{equation}
			From the estimates \eqref{EQ84}, \eqref{Es810} and \eqref{Es813}, we get
			$$\begin{aligned}
				&\|\Pi_\e\left(B_\e(\nabla v_\e B_\e)^T\right)\|_{L^2(\o\X Y_\kappa^M)}\leq \|B_\e\|_{L^\infty(\o)}\|\Pi_\e\left((\nabla v_\e B_\e)^T\right)\|_{L^2(\o\X Y_\kappa^M)}\leq {C\over \e},\\
				&\|\Pi_\e\left(\nabla v_\e B_\e B_\e^T\right)\|_{L^2(\o\X Y_\kappa^M)}\leq \|B_\e^T\|_{L^\infty(\o)}\|\Pi_\e\left(\nabla v_\e B_\e \right)\|_{L^2(\o\X Y_\kappa^M)}\leq {C\over \e},\\
				&\|\Pi_\e(B_\e B^T_\e)\|_{L^2(\o\X Y_\kappa^M)}\leq \|B_\e^T\|_{L^\infty(\o)}\|\Pi_\e(B_\e)\|_{L^2(\o\X Y_\kappa^M)}\leq {C\over \e}.
			\end{aligned}$$
			The above estimates together with \eqref{Es816} and \eqref{Es817} imply
			$$\e\|\Pi_\e\left(\nabla v_\e B_\e B_\e^T(\nabla v_\e)^T\right)\|_{L^2(\o\X Y^M_\kappa)}\leq C.$$
			So, we have the weak convergence\eqref{Cc5}.\\
			
			Hence, from the convergences \eqref{Cc1}, \eqref{Cc3} and \eqref{Cc5}, we obtain the convergence \eqref{EQ107}$_2$.
		\end{proof}

		We are now in position to give the asymptotic behavior of the re-scaled sequence $\ds\left\{{m^*_\e \over \e^5}\right\}_\e$.
		
		\subsection{Asymptotic behavior of the sequence $\ds\left\{{m^*_\e \over \e^5}\right\}_\e$}
		The limit from the convergence $\eqref{EQ107}$ allows us to investigate the limit of the non-linear elasticity problem. So, we introduce the limit re-scaled elastic energy for the non-linear elasticity problem as
$$
			\GJ^*(\Uc, \wh{\fu},\Gu^M)=\GJ_B(\Uc,\wh{\fu})+\GJ^*_M(\Gu^M)-|\Yc|\int_{\o} f\cdot\Uc dx'- \sum_{\alpha=1}^2\int_{\o}f_\alpha\Big(\int_\Yc\Gu_\alpha^M dy\Big)dx',						
$$
		where we have set
$$
			\begin{aligned}
				\GJ_B(\Uc,\wh{\fu})&=\int_{\o}\int_{Y_\kappa^B}Q(y,E(\Uc)+e_y(\wh{\fu})) dydx',\\
				\GJ^*_M(\Gu^M)&=\int_{\o}\int_{Y_\kappa^M} Q(y,e_y(\Gu^M)+ Sym(B))dydx'.
			\end{aligned}
$$
		Let us define the limit displacement space as
		$$\begin{aligned}
			&\O_R=\o\setminus\overline{\gamma},\qquad 
			\D_0=\Big\{\Uc=\left(\Uc_1,\Uc_2,\Uc_3\right)\in H^1(\O_R)^2\X H^2(\O_R) \;\;|\;\; \Uc=\partial_\alpha\Uc_3=0\quad\text{a.e. on}\;\; \gamma\Big \},\\
			&\GH^1(Y^M_\kappa)\doteq\Big\{\phi\in H^1(Y^M_\kappa)\;|\; \phi=0\;\;\hbox{a.e. on }\; \partial Y_\kappa\X(-\kappa,\kappa)\Big\}.
		\end{aligned}$$ 
		Before showing the convergence of the problem with $\Gamma$-convergence, we first prove that the limit-functional $\GJ^*$ attains a minimum on  $\D=\D_0\X L^2(\O_R;H^1_{per,0}(Y_\kappa^B))^3\X L^2(\O_R;\GH^1(Y_\kappa^M))^3$.

		\begin{lemma}
			The functional $\GJ^*$ admits a minimum on $\D$.
		\end{lemma}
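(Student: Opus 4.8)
The plan is to follow the proof of Lemma~\ref{ExMin1} essentially verbatim, the only new feature being the constant shift by $Sym(B)$ inside $\GJ^*_M$. First I would observe that, since $B\in L^2(\o\X Y_\kappa^M)^{3\X3}$ and $a\in L^\infty(\Yc)$, the value $\GJ^*(0,0,0)=\GJ^*_M(0)=\int_{\o}\int_{Y_\kappa^M}Q(y,Sym(B))\,dy\,dx'$ is a finite nonnegative constant. It is therefore legitimate to set $m^*=\inf_{\D}\GJ^*\le\GJ^*(0,0,0)$ and to restrict attention to triplets $(\Uc,\wh{\fu},\Gu^M)\in\D$ with $\GJ^*(\Uc,\wh{\fu},\Gu^M)\le\GJ^*(0,0,0)$.

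\textbf{Step 1 ($m^*>-\infty$).} For the ``beam'' part $\GJ_B$ nothing changes: the coercivity \eqref{QPD} and the norm equivalence of Lemma~\ref{L107} control $\|\Uc_3\|_{H^2(\o)}$, $\|\wh{\fu}\|_{L^2(\o;H^1(Y_\kappa^B))}$ and $\|e_{\alpha\beta}(\Uc_m)+{1\over2}\partial_\alpha\Uc_3\partial_\beta\Uc_3\|_{L^2(\o)}$ by $\GJ_B(\Uc,\wh{\fu})$, and then the embedding $H^2(\O_R)\hookrightarrow W^{1,4}(\O_R)$ together with the $2$D Korn inequality bounds $\|\Uc_1\|_{H^1(\o)}+\|\Uc_2\|_{H^1(\o)}$ by $\GJ_B(\Uc,\wh{\fu})+\GJ_B(\Uc,\wh{\fu})^2$, exactly as in Lemma~\ref{ExMin1}. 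For the matrix part I would use the elementary inequality
$$|e_y(\Gu^M)|^2_F\le 2\,|e_y(\Gu^M)+Sym(B)|^2_F+2\,|Sym(B)|^2_F$$
and the coercivity of $a$ to get $\|e_y(\Gu^M)\|^2_{L^2(\o\X Y_\kappa^M)}\le {2\over c_0}\,\GJ^*_M(\Gu^M)+2\|B\|^2_{L^2(\o\X Y_\kappa^M)}$; since $\|e_y(\cdot)\|_{L^2(Y_\kappa^M)}$ is a norm on $\GH^1(Y_\kappa^M)^3$, this yields $\|\Gu^M\|^2_{L^2(\o;H^1(Y_\kappa^M))}\le C\,\GJ^*_M(\Gu^M)+C$. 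Plugging these bounds into the two force terms and using the smallness assumption on $\|f\|_{L^2(\o)}$ from \eqref{EQ74} to absorb the quadratic $\GJ_B$-contribution, one obtains $\GJ_B(\Uc,\wh{\fu})+\GJ^*_M(\Gu^M)\le C$ on the chosen sublevel set, hence a uniform bound for $(\Uc,\wh{\fu},\Gu^M)$ in $\D$, and in particular $m^*\in(-\infty,\GJ^*(0,0,0)]$.

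\textbf{Step 2 ($m^*$ is attained).} Let $\{(\Uc^n,\wh{\fu}_n,\Gu^M_n)\}_n\subset\D$ be a minimizing sequence with $\GJ^*(\Uc^n,\wh{\fu}_n,\Gu^M_n)\le\GJ^*(0,0,0)$. By Step~1 it is bounded in $\D$, so up to a subsequence $(\Uc^n,\wh{\fu}_n,\Gu^M_n)\rightharpoonup(\Uc',\wh{\fu}',(\Gu^M)')$ weakly in $\D$. The linear force terms converge by weak convergence. The functional $\GJ_B$ is weakly lower semicontinuous as in Lemma~\ref{ExMin1}; for $\GJ^*_M$, note that $e_y(\Gu^M_n)+Sym(B)\rightharpoonup e_y((\Gu^M)')+Sym(B)$ weakly in $L^2(\o\X Y_\kappa^M)^{3\X3}$ and that $S\mapsto\int_{Y_\kappa^M}Q(y,S)\,dy$ is convex and nonnegative, so the shift by the fixed $L^2$-field $Sym(B)$ is harmless and $\GJ^*_M$ is weakly lsc as well. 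Therefore $\GJ^*(\Uc',\wh{\fu}',(\Gu^M)')\le\liminf_n\GJ^*(\Uc^n,\wh{\fu}_n,\Gu^M_n)=m^*$, and since $m^*$ is the infimum over $\D$ this forces equality, so $m^*$ is a minimum.

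The main (and essentially only) obstacle is checking in Step~1 that the pre-strain term $Sym(B)$ does not spoil the coercivity underlying the whole scheme; this is dispatched by the displayed inequality together with $B\in L^2(\o\X Y_\kappa^M)^{3\X3}$, which turns the pre-strain into a harmless additive constant. All remaining steps are a word-for-word repetition of the argument in Lemma~\ref{ExMin1}.
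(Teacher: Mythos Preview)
Your proposal is correct and follows exactly the approach the paper takes: the paper's own proof simply notes that $\GJ^*(0,0,0)\le C$, restricts to the corresponding sublevel set, and then states that ``the proof is in line of the proof of the Lemma~\ref{ExMin1}.'' You have in fact supplied more detail than the paper does, in particular the elementary inequality handling the $Sym(B)$ shift in the coercivity of $\GJ^*_M$, which is precisely the only new ingredient needed.
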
		
		\begin{proof}
			Only the triplets $(\Uc,\wh{\fu},\Gu^M)\in \D$ such that $\GJ^*(\Uc,\wh{\fu},\Gu^M)\leq \GJ^*(0,0,0)\leq C$\footnote{It should be noted that $\GJ^*(0,0,0)>0$.} are considered.
			
			Let us set
			$$ m=\inf_{(\Uc,\wh{\fu},\Gu^M)\in\D} \GJ(\Uc,\wh{\fu},\Gu^M).$$  
			We have $m\in[-\infty,C]$.\\
			
			The proof is in line of the proof of the Lemma \ref{ExMin1}. 
		\end{proof}			
		
		The following theorem is the main result of this section. It characterizes the limit of the re-scaled infimum of the total energy  $\ds {m^*_\e\over \e^5}={1\over \e^5}\inf_{v_\e\in\GV_\e}\GJ^*_\e(v_\e)$  as the minimum of the limit energy $\GJ^*$ over the space $\D$. 
		\begin{theorem}
			Under the Assumptions \ref{Ass01}, we have
			\begin{equation*}
				m^*=\lim_{\e\to0}{m^*_\e \over \e^5} = \min_{(\Uc,\wh{\fu},\Gu^M)\in \D} \GJ^*(\Uc,\wh{\fu},\Gu^M).
			\end{equation*}
		\end{theorem}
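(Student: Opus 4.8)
The plan is to mimic, with the extra bookkeeping for the pre-strain, the three-step $\Gamma$-convergence argument of Theorem~\ref{MainConR}.

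\emph{Step 1 (liminf).} Let $\{v_\e\}_\e\subset\GV_\e$ satisfy $\GJ^*_\e(v_\e)\le\GJ^*_\e(\text{\bf id})$ and realize $\liminf_\e m^*_\e/\e^5$. By the a priori bound \eqref{EQ86} the sequence lies in the regime \eqref{Ass02}, so the whole compactness machinery of Sections~\ref{Sec06}--\ref{Sec07} applies unchanged: up to a subsequence one obtains $(\Uc,\wh\fu,\Gu^M)\in\D$ and the convergences of Lemmas~\ref{CFuu1} and \ref{CFeu2}, the only modification being that, on the matrix, $\tfrac1{2\e}\Pi_\e\big(\nabla v_\e(\GI_3+\e B_\e)(\GI_3+\e B_\e)^T(\nabla v_\e)^T-\GI_3\big)\rightharpoonup e_y(\Gu^M)+Sym(B)$ weakly in $L^2(\o\X Y^M_\kappa)^{3\X3}$ by Lemma~\ref{L102} (and the Green--St.\,Venant tensor $\tfrac1{2\e}\Pi_\e((\nabla v_\e A_\e^{-1})^T\nabla v_\e A_\e^{-1}-\GI_3)$ has the same weak limit because $\Pi_\e(\nabla v_\e)\to\GI_3$ strongly, cf. \eqref{C920}, so that its rotation part converges to $\GI_3$). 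Splitting $\tfrac1{\e^5}\int_{\O_\e}\wh W_\e(x,\nabla v_\e A_\e^{-1})\,dx=\tfrac1{\e^5}\int_{\O^B_\e}Q(\tfrac x\e,\cdot)+\tfrac1{\e^3}\int_{\O^M_\e}Q(\tfrac x\e,\cdot)$, unfolding, and using the (sequential) weak lower semicontinuity of $S\mapsto\int Q(y,S)$ gives $\liminf\ge\GJ_B(\Uc,\wh\fu)+\GJ^*_M(\Gu^M)$, while the load term passes to the limit exactly as in \eqref{CFf1}. Hence $\liminf_\e m^*_\e/\e^5\ge\GJ^*(\Uc,\wh\fu,\Gu^M)\ge\min_\D\GJ^*$.

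\emph{Step 2 (limsup).} By the preceding lemma $\GJ^*$ has a minimizer $(\Uc,\wh\fu,\Gu^M)$ on $\D$; approximate it by smooth $(\Uc_n,\wh\fu_n,\Gu^M_n)$ as in \eqref{Eq1113}--\eqref{Eq1114}. For fixed $n$, use the test deformations $V_{n,\e}$ of \eqref{Eq1115}--\eqref{Eq1116}. Since $\|\nabla V_{n,\e}-\GI_3\|_{L^\infty(\O_\e)}\le C(n)\e$ and $\det A_\e^{-1}=1+O(\e)>0$, for $\e$ small both $\det\nabla V_{n,\e}>0$ and $\det(\nabla V_{n,\e}A_\e^{-1})>0$, so $m^*_\e\le\GJ^*_\e(V_{n,\e})$. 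The explicit $\e$-dependence gives, as in Lemma~\ref{L102}, the strong convergence of $\tfrac1{2\e}\Pi_\e(\nabla V_{n,\e}A_\e^{-1}(\cdot)^T-\GI_3)$ on $\o\X Y^B_\kappa$ to $E(\Uc_n)+e_y(\wh\fu_n)$; the load and stiff-part terms then converge exactly as in Step~2 of Theorem~\ref{MainConR}. Passing to the limit $\e\to0$ and then $n\to\infty$, using \eqref{Eq1113}--\eqref{Eq1114}, yields $\limsup_\e m^*_\e/\e^5\le\GJ^*(\Uc,\wh\fu,\Gu^M)=\min_\D\GJ^*$, \emph{provided} the matrix contribution $\tfrac1{\e^5}\int_{\O^M_\e}\wh W_\e(x,\nabla V_{n,\e}A_\e^{-1})$ converges to $\GJ^*_M(\Gu^M_n)$.

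\emph{Main obstacle.} That last proviso is the heart of the matter and the genuinely new point with respect to the classical strong-convergence setting: since $\Pi_\e(B_\e)$ converges to $B$ only \emph{weakly}, the quantity $\tfrac1{2\e}\Pi_\e(\nabla V_{n,\e}A_\e^{-1}(\cdot)^T-\GI_3)$ built from a fixed smooth $\Gu^M_n$ converges on $\o\X Y^M_\kappa$ only weakly to $e_y(\Gu^M_n)+Sym(B)$, and then $\int Q(y,\cdot)$ of it is merely lower semicontinuous --- exactly the wrong direction for an upper bound. The repair I would attempt is to not insert the smooth $\Gu^M_n$ directly into \eqref{Eq1116}, but in each cell $\xi_{pq}$ to add to $\e^2\Gu^M_n(x',x_3/\e)$ a corrector $\e^2\chi^{pq}_\e\big((x-\e\xi_{pq})/\e\big)$, with $\chi^{pq}_\e\in\GH^1(Y^M_\kappa)^3$ solving the cell problem of \eqref{Eq1046}-type for the constant datum $Sym(\Mc_{pq}(B_\e))$ (the matrix-cell average of $B_\e$), together with a thin boundary-layer cut-off near $\partial Y_\kappa$ preserving the matching with the stiff part; the bound $\e\|B_\e\|_{L^\infty(\o)}\le C_P$ of Assumption~\ref{Ass01} is then used to keep the cut-off contribution of lower order, and the weak convergence $\Pi_\e(B_\e)\rightharpoonup B$ to identify the limit of the cell averages --- this is also where the smallness constraint $C_MC_P\le\tfrac12$ enters. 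Combining Steps~1 and 2 one finally gets $\min_\D\GJ^*\le\liminf_\e m^*_\e/\e^5\le\limsup_\e m^*_\e/\e^5\le\min_\D\GJ^*$, i.e. $m^*=\lim_\e m^*_\e/\e^5=\min_\D\GJ^*$.
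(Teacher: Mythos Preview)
Your three-step outline coincides with the paper's: its entire proof is the sentence ``The proof is same like done for the Theorem~\ref{MainConR}'', so Steps~1 and 3, and all of Step~2 except the matrix contribution, go through verbatim.

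You are right to flag the matrix term in the limsup as the genuine issue, and in doing so you have gone further than the paper. Under Assumption~\ref{Ass01} one has only $\Pi_\e(B_\e)\rightharpoonup B$ weakly, so for the test deformations \eqref{Eq1115}--\eqref{Eq1116} the unfolded pre-strained strain converges to $e_y(\Gu^M_n)+Sym(B)$ only weakly in $L^2(\o\times Y^M_\kappa)^{3\times 3}$, and the quadratic $\int Q(y,\cdot)$ is then merely lower semicontinuous --- the wrong direction for an upper bound.

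Your proposed repair, however, does not close the gap. Cell-by-cell correctors driven by the averages $\Mc_{pq}(B_\e)$ replace $B_\e$ by its piecewise-constant cell projection, which still converges only weakly to $B$: averaging over $\e$-cells kills oscillations at scales $\ll\e$ but not at intermediate macroscopic scales. Take $B_\e(x')=\cos(x_1/\sqrt\e)\,\GI_3$: it satisfies Assumption~\ref{Ass01} with $B=0$ (indeed $\Pi_\e(B_\e)(x',y')\approx\cos(x_1'/\sqrt\e)\,\GI_3$ is essentially $y$-independent and weakly null in $x'$), it is essentially constant on each $\e$-cell so your correctors are already optimal, and yet the best achievable matrix energy in cell $\xi_{pq}$ is a fixed positive multiple of $\cos^2(p\sqrt\e)$, since a nonzero constant tensor $c\,\GI_3$ is never of the form $-e_y(\psi)$ with $\psi\in\GH^1(Y^M_\kappa)^3$. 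Summed over cells this tends to a strictly positive constant, whereas $\GJ^*_M$ with $B=0$ vanishes at $\Gu^M=0$. The defect lives in the macroscopic variable $x'$ and no cell-scale corrector can remove it. In short, the limsup step --- the paper's and yours --- appears to require the \emph{strong} convergence $\Pi_\e(B_\e)\to B$ in $L^2(\o\times Y^M_\kappa)^{3\times 3}$, precisely the hypothesis the paper's introduction claims to dispense with.
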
	
		\begin{proof}
			The proof is same like done for the Theorem \ref{MainConR}.
		\end{proof}
		
		\subsection{The cell problems}
		To obtain the cell problems, we consider the variational formulation for $\wh{\fu}$ and $\Gu^M$ associated to the functional $\GJ_B$ and $\GJ^*_M$, respectively. We recall the limit elasticity energy as 	
		$$
		\GJ^*(\Uc, \wh{\fu},\Gu^M)=\GJ_B(\Uc,\wh{\fu})+\GJ^*_M(\Gu^M)-|\Yc|\int_{\O} f\cdot\Uc dx'- \sum_{\alpha=1}^2\int_{\o}f_\alpha\Big(\int_\Yc\Gu_\alpha^M dy\Big)dx',	
		$$
		we have set 
$$
			\begin{aligned}
				\GJ_B(\Uc,\wh{\fu})&=\int_{\o}\int_{Y_\kappa^B}a(y)(E(\Uc)+e_y(\wh{\fu})) : (E(\Uc)+e_y(\wh{\fu})) dydx',\\
				\GJ^*_M(\Gu^M)&=\int_{\o}\int_{Y_\kappa^M} a(y)\left(e_y(\Gu^M)+ Sym(B)\right) : \left(e_y(\Gu^M)+ Sym(B)\right)dydx',
			\end{aligned}
$$
		Similarly, proceeding as Subsection \ref{cellPro}, we assume $(\Uc, \wh{\fu},\Gu^M)$ be a minimizer, then we have
		$$\GJ^*(\Uc, \wh{\fu},\Gu^M)\leq \GJ^*(\Uc, \wh{\fu}+t\wh{\fv},\Gu^M+t\Gv^M),\quad \forall t\in \R,\quad \forall (\wh{\fv}, \Gv^M)\in L^2(\O_R;H^1_{per,0}(Y_\kappa^B))^3\X L^2(\O_R;\GH^1(Y_\kappa^M))^3.$$
		Using the fact that $\GJ_B$ and $\GJ^*_M$ are quadratic forms in $e_y(\wh{\fu})$ and $e_y(\Gu^M)$ respectively over a Hilbert-space and we obtain:
$$
			\begin{aligned}
				&\text{Find $\wh{\fu}\in L^2(\o;H^1_{per,0}(Y_\kappa^B))^3$ and $\Gu^M\in L^2(\o;\GH^1(Y_\kappa^M))^3$ such that for all $\wh{w} \in H^1_{per,0}(Y_\kappa^B)^3$ and}\\
				&\hbox{$\Gw \in \GH^1(Y_\kappa^M)^3$, we have}\\	
				&\hspace{10mm}\left.
				\begin{aligned}
					&\int_{Y_\kappa^B} a(y)\left(E(\Uc)+e_y(\wh{\fu})\right) : e_y(\wh{w}) dy=0,\\
					&\int_{Y_\kappa^M} a(y)\left(e_y(\Gu^M)+Sym(B)\right) : e_y(\Gw) dy=\sum_{\alpha=1}^2 f_\alpha\left(\int_{Y_\kappa^M}\Gw_\alpha dy\right),
				\end{aligned}\right\}\quad\hbox{a.e. in $\o$}.
			\end{aligned}
$$ So, we have linear problems. Hence we have a unique solution for the above problems. Therefore,
		we get from the above equations \eqref{EQ761}, and the cell problems in $Y_\kappa^B$ is the same as \eqref{Eq125} which imply
$$
			\begin{aligned}
				&\wh{\fu}(x',y)=\Zc_{\alpha\beta}(x')\chi^m_{\alpha\beta}(y)+\partial_{\alpha\beta}\Uc_3(x')\chi^b_{\alpha\beta}(y),\quad &&\hbox{for a.e. } (x',y)\in \o\X Y^B_\kappa.
			\end{aligned}
$$
		Similarly, we express $\Gu^M$ in terms of correctors $\chi^p=(\chi^p_1,\chi^p_2)\in [\GH^1(Y_\kappa^M)^3]^2$. Before that we give the problem for which the $\chi^p$ are  the solution
$$
			\int_{Y_\kappa^M} a(y)\left( e_y(\chi^p_\alpha)+M^{ij}+Sym(B)M^{ij}\right) : e_y(\Gw) dy = \int_{Y_\kappa^M}\Gw_\alpha dy \quad \forall \Gw\in \GH^1(Y_\kappa^M)^3.
$$
		We use the variational formulation \eqref{Eq1042}$_2$ and above equation   which implies
$$
			\begin{aligned}
				\Gu^M(x',y)= \sum_{\alpha=1}^2 f_\alpha(x')\chi^p_\alpha(y)\qquad \hbox{for a.e. } (x',y)\in \o\X Y^M_\kappa. 
			\end{aligned}
$$
		We set the homogenized coefficients in  $Y_\kappa^M$ as
$$
			\begin{aligned}
				&a^{M,hom}_{i'j'k'l'} = \frac{1}{|Y_\kappa^M|} \int_{Y_\kappa^M} a_{ijkl}(y) \left[M^{i'j'}_{ij} + 
				e_{y,ij}({\chi}_{\alpha}^p)\right]M^{k'l'}_{kl} dy,\\
				&d^{M,hom}_{i'j'k'l'} = \frac{1}{|Y_\kappa^M|} \int_{Y_\kappa^M} a_{ijkl}(y) \left[M^{i'j'}_{ij}\right]M^{k'l'}_{kl} dy,					
			\end{aligned}								
$$
		where the matrices $M_{ij}$ are given by \eqref{EQMatrix}.
		Hence, the homogenized energy is defined by
		\begin{equation}\label{HomPre}
			\GJ_{vK}^{hom^*}(\Uc)=|Y_\kappa^B|\GJ^{B, hom}(\Uc)-|\Yc|\int_{\o}f\cdot \Uc dx'+|Y_\kappa^M|\GJ^{M,hom},
		\end{equation}
		where
$$
			\begin{aligned}
				\begin{aligned}
					&\GJ^{B, hom}(\Uc)
					&&= {1\over 2}\int_{\o}\Big(a^{B,hom}_{\alpha\beta\alpha^\prime\beta^\prime}\Zc_{\alpha\beta}\Zc_{\alpha^\prime\beta^\prime}+ b^{B,hom}_{\alpha\beta\alpha^\prime\beta^\prime}\Zc_{\alpha\beta}\partial_{\alpha^\prime\beta^\prime}\Uc_3+c^{B,hom}_{\alpha\beta\alpha^\prime\beta^\prime}\partial_{\alpha\beta}\Uc_3\partial_{\alpha^\prime\beta^\prime}\Uc_3\Big)dx',\\
					&\GJ^{M,hom}&&={1\over 2}\int_{\o}\Big(a^{M,hom}_{i'j'k'l'}\GF Sym(B) +d^{M,hom}_{i'j'k'l'}Sym(B)Sym(B)\Big)dx',
				\end{aligned}
			\end{aligned}
$$
		with
		$$ \Zc_{\alpha\beta}=e_{\alpha\beta}(\Uc)+{1\over 2}\partial_{\alpha}\Uc_3\partial_{\beta}\Uc_3,\quad\text{and}\quad\GF=\sum_{\alpha=1}^2 f_\alpha.$$
		The minimizer of this homogenized energy functional \eqref{HomPre} satisfies the variational problem:
		\begin{equation*}
			\begin{aligned}
				&\text{Find } \Uc\in \D_0\text{ such that for all } \Wc\in\D_0:\\
				&\hskip 25mm \int_{\o}\Big(a^{B,hom}_{\alpha\beta\alpha^\prime\beta^\prime}\Zc_{\alpha\beta}(\Uc)\Zc_{\alpha^\prime\beta^\prime}(\Wc)+ {b^{B,hom}_{\alpha\beta\alpha^\prime\beta^\prime}\over 2}\left(\Zc_{\alpha\beta}(\Uc)\partial_{\alpha^\prime\beta^\prime}\Wc_3+\Zc_{\alpha\beta}(\Wc)\partial_{\alpha^\prime\beta^\prime}\Uc_3\right)\\
				&\hskip 75mm +c^{B,hom}_{\alpha\beta\alpha^\prime\beta^\prime}\partial_{\alpha\beta}\Uc_3\partial_{\alpha^\prime\beta^\prime}\Uc_3\Big)dx' =|\Yc|\int_{\o}f\cdot\Wc dx'.
			\end{aligned}
		\end{equation*}
		We introduce the homogenized coefficients in the limit energy which  gives the convergence result for the homogenized energy. From the Lemma \ref{L716}, the quadratic form associated to $\GJ_{vK}^{hom}$ is coercive.
		
		We are ready to give the main result of this subsection.
		\begin{theorem}
			Under the assumption on the forces \eqref{AFB1} and \eqref{EQ74}, the problem
$$
				\min_{\Uc\in\D_0} \GJ_{vK}^{hom^*}(\Uc)
$$
			admits at least a solution. Moreover, one has
			$$ m^*=	\lim_{\e\to0}{m^*_\e \over \e^5} = \min_{(\Uc,\wh{\fu},\Gu^M)\in \D} \GJ^*(\Uc,\wh{\fu},\Gu^M)= 	\min_{\Uc\in\D_0} \GJ_{vK}^{hom^*}(\Uc).$$	
		\end{theorem}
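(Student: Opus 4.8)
The plan is to deduce the statement from the $\Gamma$-convergence result just established (namely $m^*=\lim_{\e\to0}m^*_\e/\e^5=\min_{(\Uc,\wh{\fu},\Gu^M)\in\D}\GJ^*(\Uc,\wh{\fu},\Gu^M)$) by eliminating the auxiliary fields $\wh{\fu}$ and $\Gu^M$ in favour of the macroscopic unknown $\Uc$, exactly as in Subsection \ref{cellPro}. First I would record that, for fixed $\Uc\in\D_0$, the maps $\wh{\fu}\mapsto\GJ_B(\Uc,\wh{\fu})$ and $\Gu^M\mapsto\GJ^*_M(\Gu^M)-\sum_{\alpha=1}^2\int_{\o}f_\alpha\big(\int_{Y_\kappa^M}\Gu^M_\alpha\,dy\big)dx'$ are continuous, strictly convex, coercive quadratic functionals on the Hilbert spaces $L^2(\O_R;H^1_{per,0}(Y_\kappa^B))^3$ and $L^2(\O_R;\GH^1(Y_\kappa^M))^3$: coercivity and strict convexity follow from \eqref{QPD} together with Lemma \ref{L107} for $\GJ_B$, and from \eqref{QPD} together with the Korn inequality in $\GH^1(Y_\kappa^M)$ for $\GJ^*_M$. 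Hence each admits a unique minimizer, characterised by the Euler--Lagrange cell problems of this subsection, and these minimizers depend linearly (resp. affinely, because of the $Sym(B)$ datum) on $\Uc$ through the correctors $\chi^m_{\alpha\beta},\chi^b_{\alpha\beta},\chi^p_\alpha$. Substituting the corrector representations into $\GJ_B$ and $\GJ^*_M$ and using the cell equations to cancel the corrector cross-terms produces precisely the homogenized densities with coefficients $a^{B,hom},b^{B,hom},c^{B,hom}$ and $a^{M,hom},d^{M,hom}$, so that $\GJ^*(\Uc,\wh{\fu}(\Uc),\Gu^M(\Uc))=\GJ_{vK}^{hom^*}(\Uc)$ for the optimal auxiliary fields $\wh{\fu}(\Uc),\Gu^M(\Uc)$.

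From this identity the equality $\min_\D\GJ^*=\min_{\D_0}\GJ_{vK}^{hom^*}$ follows in both directions: for any $\Uc\in\D_0$ the triple $(\Uc,\wh{\fu}(\Uc),\Gu^M(\Uc))$ lies in $\D$, so $\min_\D\GJ^*\le\GJ_{vK}^{hom^*}(\Uc)$, hence $\min_\D\GJ^*\le\min_{\D_0}\GJ_{vK}^{hom^*}$; conversely, if $(\Uc,\wh{\fu},\Gu^M)\in\D$ realises $\min_\D\GJ^*$, then by uniqueness of the cell minimizers $\wh{\fu}=\wh{\fu}(\Uc)$ and $\Gu^M=\Gu^M(\Uc)$, whence $\min_\D\GJ^*=\GJ_{vK}^{hom^*}(\Uc)\ge\min_{\D_0}\GJ_{vK}^{hom^*}$. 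Combined with the theorem established just above, this gives the chain $m^*=\lim_{\e\to0}m^*_\e/\e^5=\min_\D\GJ^*=\min_{\D_0}\GJ_{vK}^{hom^*}$, provided the last minimum is attained.

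For the existence of a minimizer of $\GJ_{vK}^{hom^*}$ over $\D_0$ I would note that, since $B$ is independent of $y$ and both $\GF=\sum_{\alpha=1}^2 f_\alpha$ and $Sym(B)$ are fixed data, the term $|Y_\kappa^M|\GJ^{M,hom}$ carries no dependence on $\Uc$; therefore $\GJ_{vK}^{hom^*}=\GJ_{vK}^{hom}+|Y_\kappa^M|\GJ^{M,hom}$ differs from the Case~1 homogenized energy by an additive constant, and existence reduces to the argument of Case~1. Concretely, the coercivity of the associated quadratic form (Lemma \ref{L716}) bounds $\|D^2\Uc_3\|_{L^2(\o)}$ and $\|\Zc(\Uc)\|_{L^2(\o)}$ along any minimizing sequence, hence bounds $\Uc_3$ in $H^2(\o)$ and, via the $2$D Korn inequality and the embedding $H^2(\O_R)\hookrightarrow W^{1,4}(\O_R)$, bounds $\Uc_m$ in $H^1(\o)$; weak lower semicontinuity then follows since $\Uc\mapsto\Zc(\Uc)$ is weakly continuous (product of a weakly $H^1$-convergent gradient and a strongly $L^4$-convergent gradient, by compactness of $H^2\hookrightarrow W^{1,4}$) and the force term is weakly continuous. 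Equivalently one simply invokes the existence result for $\min_{\D_0}\GJ_{vK}^{hom}$ proved in Case~1.

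The only genuinely delicate point I expect is the algebraic reduction of $\GJ^*_M$: unlike Case~1, the matrix pre-strain contributes both a term linear in $e_y(\Gu^M)$ and a pure $Sym(B)$ term, so one must verify that the cell problem for $\chi^p$ (with the $Sym(B)M^{ij}$ source) correctly absorbs the linear cross-term and that the residual reassembles exactly into $|Y_\kappa^M|\GJ^{M,hom}$ with the coefficients $a^{M,hom}$, $d^{M,hom}$ as defined — in particular that no $\Uc$-dependence survives in $\GJ^{M,hom}$, which is what allows the existence step to piggy-back on Case~1. The remaining steps are a line-by-line transcription of the proofs of Theorem \ref{MainConR} and Lemma \ref{ExMin1}.
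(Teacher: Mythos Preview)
Your proposal is correct and follows essentially the same approach as the paper: the paper does not give a self-contained proof of this theorem but simply relies on the cell-problem reduction of this subsection together with the remark that existence of a minimizer follows from the coercivity of the associated quadratic form (Lemma~\ref{L716}) and lower semicontinuity, exactly as in Case~1. Your observation that $|Y_\kappa^M|\GJ^{M,hom}$ is independent of $\Uc$, so that $\GJ_{vK}^{hom^*}$ differs from $\GJ_{vK}^{hom}$ only by an additive constant, makes this reduction to Case~1 fully explicit and is more detailed than what the paper actually writes down.
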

		\begin{remark} We remark the following  
			\begin{itemize}
				\item The existence of a minimizer of the limit homogenized energy \eqref{HomPre} is given by the fact that the associated quadratic form is coercive and lower semi-continuous.
				\item Note that the homogenized energy \eqref{HomPre} has an addition term due to the matrix pre-strain, which is absent on the homogenized energy \eqref{EQ949}, showing that for the soft matrix to have any effect in the homogenized model pre-strain is necessary.
				\item We have
				$$\GJ^*(0,0,0)= \int_{\o}\int_{Y_\kappa^M} a(y)\;Sym(B) :\;Sym(B)dydx'= {1\over 2}\int_{\o}d^{M,hom}_{i'j'k'l'}Sym(B)Sym(B)dx'.$$
			\end{itemize}
		\end{remark}	
		We end this subsection by showing that our homogenized vK plate, obtained through the incorporation of small matrix pre-strain (Case 2), exhibits orthotropic behavior\footnote{We know that this is true for Von-K\'arm\'an and linear regime, since the homogenized coefficients depend only on the cell problems.}.
		
		We know that for isotropic homogenized materials, we have from \cite{elasticityO} that
		$$ a_{ijkl}(y)=\lambda\delta_{ik}\delta_{jl}+\mu\delta_{ij}\delta_{kl}+\mu\delta_{il}\delta_{kj},\quad \text{for all}\;\;y\in\Yc.$$
		where $\lambda>0$, $\mu>0$ are the material Lam\'e constants and $\delta_{ij}$ is the Kronecker symbol.
%

		\begin{lemma}
			Let us assume that the plate is made from isotropic and homogeneous material, then we have
$$
				b^{B,hom}_{\alpha\beta\alpha^\prime\beta^\prime} =0,\qquad \forall\;(\alpha\beta\alpha^\prime\beta^\prime)\in \{1,2\}^4,
$$
			and also have
$$
				\begin{aligned}
					a^{B, hom}_{1111}=a^{B, hom}_{2222}\quad\text{and}\quad a_{\alpha\alpha12}^{B, hom}=0,\quad \alpha\in\{1,2\},\\
					c_{1111}^{B, hom}=c_{2222}^{B, hom}\quad\text{and}\quad c^{B, hom}_{\alpha\alpha12}=0,\quad \alpha\in\{1,2\},\\
					a^{M,hom}_{1111}=a^{M,hom}_{2222}\quad \text{and} \quad a^{M,hom}_{\alpha\alpha12}=0,\quad \alpha\in\{1,2\}.
				\end{aligned}
$$
		\end{lemma}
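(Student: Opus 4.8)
The plan is to derive all four assertions from the symmetries of the reference cell, combined with the invariance of an isotropic homogeneous Hooke tensor under the orthogonal group. First I would pass to the recentred cell variable $z=y-\frac12(\Ge_1+\Ge_2)$ and record that both $Y_\kappa^B$ and $Y_\kappa^M$ are invariant under the finite group $G$ generated by the three mirror maps $R_1=\mathrm{diag}(-1,1,1)$, $R_2=\mathrm{diag}(1,-1,1)$, $R_3=\mathrm{diag}(1,1,-1)$ and by the in-plane swap $P:(z_1,z_2,z_3)\mapsto(z_2,z_1,z_3)$; indeed $Y_\kappa^M$ is the centred box and $Y_\kappa^B$ its frame-shaped complement in $\Yc$, while $P$ simply exchanges the two families of beams. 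Since the Hooke tensor $a_{ijkl}=\lambda\delta_{ik}\delta_{jl}+\mu(\delta_{ij}\delta_{kl}+\delta_{il}\delta_{kj})$ is constant, it satisfies $a(RSR^T)=R\,a(S)\,R^T$ for every $R\in G$ and every symmetric $S$, so in particular $a(M^{11})=\mathrm{diag}(\lambda+2\mu,\lambda,\lambda)$, $a(M^{12})=2\mu\,M^{12}$, and $a(M^{22})=P\,a(M^{11})\,P$.

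Next I would transport these symmetries to the cell correctors. For $R\in G$ and a solution $\chi\in H^1_{per,0}(Y_\kappa^B)^3$ of one of the linear cell problems \eqref{Eq125}, the field $\chi^R(z):=R^T\chi(Rz)$ again lies in $H^1_{per,0}$ (the domain, the test space, and $a$ all being $R$-invariant), and by the identity $e_y(\chi^R)(z)=R^T e_y(\chi)(Rz)\,R$ it solves a cell problem of the same type whose source is obtained from the original one by the action $M\mapsto R^TMR$, up to an extra sign for the bending correctors whenever $R$ reverses $y_3$. By uniqueness of the cell solutions this pins down the correctors: under $R_3$, the in-plane components of $\chi^m_{\alpha\beta}$ are even in $y_3$ and the transverse one is odd, whereas for $\chi^b_{\alpha\beta}$ the parities are exchanged; under $R_1$, the correctors $\chi^m_{11},\chi^b_{11}$ are unchanged while $\chi^m_{12},\chi^b_{12}$ change sign, so the first component of $\chi^m_{11}$ (and of $\chi^b_{11}$) is odd in $y_1$ and the other two are even; and $P$ exchanges $\chi^m_{11}\leftrightarrow\chi^m_{22}$ and $\chi^b_{11}\leftrightarrow\chi^b_{22}$. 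The same construction carries over verbatim to the matrix cell problem on $Y_\kappa^M$ entering the definition of $a^{M,hom}$.

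Then I would read off the three families of identities. For the coupling tensor, write $b^{B,hom}_{\alpha\beta\alpha'\beta'}=\frac1{|Y_\kappa^B|}\int_{Y_\kappa^B}\big(y_3M^{\alpha\beta}+e_y(\chi^b_{\alpha\beta})\big):a(M^{\alpha'\beta'})\,dy$; the constant matrix $a(M^{\alpha'\beta'})$ has nonzero entries only in the positions $\{(1,1),(2,2),(3,3)\}$ (if $\alpha'\beta'\in\{11,22\}$) or $\{(1,2),(2,1)\}$ (if $\alpha'\beta'=12$), and in each of these five positions the matrix $y_3M^{\alpha\beta}+e_y(\chi^b_{\alpha\beta})$ is, by the $R_3$-parities above, an odd function of $y_3$; since $Y_\kappa^B$ is symmetric in $y_3$ the integral vanishes, so $b^{B,hom}\equiv0$. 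For the off-axis membrane and bending coefficients, the $(1,2)$-entry of $M^{\alpha\alpha}+e_y(\chi^m_{\alpha\alpha})$ (respectively of $y_3M^{\alpha\alpha}+e_y(\chi^b_{\alpha\alpha})$) is odd under $y_\alpha\mapsto-y_\alpha$ by the $R_\alpha$-parities, so contracting against $a(M^{12})=2\mu M^{12}$ (respectively $a(y_3M^{12})=2\mu y_3M^{12}$) and integrating over $Y_\kappa^B$ gives $a^{B,hom}_{\alpha\alpha12}=c^{B,hom}_{\alpha\alpha12}=0$, and the same argument on $Y_\kappa^M$ gives $a^{M,hom}_{\alpha\alpha12}=0$. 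Finally, the change of variables $y\mapsto Py$ in the integrals defining $a^{B,hom}_{1111}$, $c^{B,hom}_{1111}$, $a^{M,hom}_{1111}$, together with the $P$-invariance of $Y_\kappa^B,Y_\kappa^M$, the identity $a(M^{22})=P\,a(M^{11})\,P$, and $\chi^\bullet_{22}(z)=P^T\chi^\bullet_{11}(Pz)$, turns them respectively into the integrals defining $a^{B,hom}_{2222}$, $c^{B,hom}_{2222}$, $a^{M,hom}_{2222}$, whence the equalities.

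The main obstacle, and really the only work beyond these structural observations, is the bookkeeping in the middle paragraph: one must tabulate, generator by generator, the sign acquired by each scalar component of the vector-valued corrector and hence by each entry of $e_y(\chi)$, and then verify that the entries actually contracted with the sparse matrices $a(M^{\alpha'\beta'})$ are precisely those with the parity that annihilates the integral. A secondary subtlety is that the mirror symmetries of the cell are affine (reflections through the cell centre) rather than linear, which is exactly why the recentring to the variable $z$ is performed at the outset; once that is in place, every identity reduces to integrating an odd function over a symmetric domain or to a measure-preserving change of variables, and no quantitative estimate is needed.
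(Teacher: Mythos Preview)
Your symmetry argument is correct and is precisely the standard route: the paper itself gives no proof beyond the sentence ``The proof is in line with Lemma 6.9 of \cite{GOW2}'', and that reference carries out exactly the program you outline---exploiting the reflection symmetries $R_1,R_2,R_3$ and the in-plane swap $P$ of the (recentred) cell, transporting them to the correctors by uniqueness of the linear cell problems, and then reading off the vanishing and equality relations as integrals of odd functions over symmetric domains or via the measure-preserving change of variables $y\mapsto Py$. Your explicit recentring and your careful tabulation of which entries of $e_y(\chi^\bullet_{\alpha\beta})$ are contracted against the sparse matrices $a(M^{\alpha'\beta'})$ are in fact more detailed than what the paper provides.
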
	
		The proof is in line with Lemma 6.9 of \cite{GOW2}.
\section{Extension to general periodic composite plate}\label{Sec09} 
		In this section, the extension of our results to a more general periodic composite plate is undertaken. The key concept to emphasize is that our results can be extended to any periodic perforated plate with holes filled by a soft matrix. This extension yields similar results as presented in Lemma \ref{ExDef1}, with the holes being bounded domains featuring Lipschitz boundaries, thereby allowing for the attainment of the estimates \eqref{BGShell3.3}.

In the following discussion, we explore cases where the general holes take the form of \eqref{GenHole} (refer to Figure \ref{Fig02}).
		\subsection{General periodic composite plate} 
		Our general domain is a periodic composite plate in which the part filled with soft matrix is connected. We set the following: 
		\begin{itemize}
			\item $Y^B_\kappa$ be an  open connected  subset of $\Yc$ with Lipschitz boundary (see for example Figure \ref{Fig02}a) such that interior $\big(\overline{Y^B_\kappa}\cup(\overline{Y^B_\kappa}+\Ge_\alpha)\big)$  is connected for $\alpha=1,2$;
			\item  $Y^M_\kappa=\Yc\setminus\overline{Y}^B_\kappa$ is such that  interior$\big(\overline{Y^M_\kappa}\cup(\overline{Y^M_\kappa}+\Ge_\alpha)\big)$ is connected for $\alpha=1,2$;
			\item  $Y^B_\kappa$, $Y^M_\kappa$ are  the reference cells of the stiff and matrix part respectively;
			\item $\Xi_\e=\{\xi\in\Z^2\X\{0\}\;|\;\e(\xi+\Yc)\cap\O_\e\neq \emptyset\}$;
			\item $\wt{\O}_\e^\xi=\e(\xi+Y^B_\kappa)$, and $\O_\e^\xi=\e(\xi+Y^M_\kappa)$ for all $\xi\in\Xi_\e$;
			\item $\O^B_\e= \bigcup_{\xi\in\Xi_\e}\wt{\O}_\e^\xi $, see for example Figure \ref{Fig02}a.
		\end{itemize}
		We assume that the domain $\O_\e^M$ is filled with soft matrix and is given by  $\O_\e^M=\O_\e\setminus\overline{\O_\e^B}$ . Then, we have
		\begin{equation}\label{GenHole}
			\O_\e^M=\hbox{interior} \bigcup_{\xi\in\Xi_\e}\overline{\O_\e^\xi}.
		\end{equation} 
		Here, we note the following
		\begin{itemize}
			\item 		Here, we have $\bigcup_{\alpha=1}^2\Xi_{\e,\alpha}\subset\Xi_\e$ and that the domain $\O_\e^B$ is connected and made of stiff material.
			\item The domain filled with soft matrix $\O_\e^M$ is also connected.
			\item   This structure is (like the one in Figure \ref{Fig02}a) is $3$-dimensional and only periodic in two directions, in the third direction it is thin that is, its thickness is of same order $r=\kappa\e$ as the period of the other two directions\footnote{This particular structure has been extensively investigated in the literature, specifically in the works of \cite{GOW}, \cite{GFOW}, and \cite{GOW2}. In these studies, they consider the relationship $r=\kappa\varepsilon$ with $\kappa\in(0,1/3]$ as stated in Remark A.1 of \cite{GOW}.}.
		\end{itemize}
		  \begin{figure}[htb]
		\centering
		\subfigure[]{
			\begin{tikzpicture}[remember picture,
				x={(1cm,0cm)},
				y={(0cm,1cm)},
				z={({0.5*cos(45)},{0.5*sin(45)})},
				fill opacity = 1, scale=0.58]
				

				\begin{scope}[thick]

					\draw[fill = gray] (0,0,5)--(0,0,6)--(0,1,6)--(0,1,5)--cycle;
					\draw[name path = frontlow]  (6,1,5)--(5,1,5) to[out=180,in=0] (1,0,5) -- (0,0,5) ;
					\draw[name path = fronthigh]  (6,2,5)--(5,2,5) to[out=180,in=0] (1,1,5) -- (0,1,5);
					\draw[name path = backlow]   (6,1,6)--(5,1,6) to[out=180,in=0] (1,0,6) -- (0,0,6) ;
					\draw[name path = backhigh] (6,2,6)--(5,2,6) to[out=180,in=0] (1,1,6) -- (0,1,6) ;
					
					\tikzfillbetween[of=frontlow and fronthigh]{ left color=gray!20!white, right color = gray, shading angle=60,};
					\tikzfillbetween[of=fronthigh and backhigh]{ left color=gray, right color = gray!20!white, shading angle=180,};
					
					\draw[name path = frontlow] (6,1,5)--(5,1,5) to[out=180,in=0] (1,0,5) -- (0,0,5) ;
					\draw[name path = fronthigh] (6,2,5)--(5,2,5) to[out=180,in=0] (1,1,5) -- (0,1,5 );
					\draw[name path = backhigh]  (6,2,6)--(5,2,6) to[out=180,in=0] (1,1,6) -- (0,1,6) ;
					
					%
					\draw[fill = gray] (6,0,5)--(6,0,6)--(6,1,6)--(6,1,5)--cycle;
				\end{scope}

				\begin{scope}[thick,
					]
					
					\draw[fill=gray] (1,2,6)--(1,1,6)--(0,1,6)--(0,2,6)--cycle;
					\draw[name path = righthigh] (1,1,0)--(1,1,1) to[out=55,in=-135] (1,2,5)--(1,2,6) ;
					\draw[name path = rightlow] (1,0,0)--(1,0,1) to[out=45,in=-125] (1,1,5)--(1,1,6) ;
					\draw[name path = lefthigh] (0,1,0)--(0,1,1) to[out=55,in=-135] (0,2,5)--(0,2,6) ;
					\draw[name path = leftlow] (0,0,0)--(0,0,1) to[out=45,in=-125] (0,1,5)--(0,1,6) ;
					\draw[fill=gray] (0,0,0)--(0,1,0)--(1,1,0)--(1,0,0)--cycle;
					
					\tikzfillbetween[of=rightlow and righthigh]{ left color=gray!20!white, right color = gray, shading angle=60,};
					\tikzfillbetween[of=lefthigh and righthigh]{ left color=gray, right color = gray!20!white, shading angle=180,};
					
					\draw[name path = righthigh] (1,1,0)--(1,1,1) to[out=55,in=-135] (1,2,5)--(1,2,6) ;
					\draw[name path = rightlow] (1,0,0)--(1,0,1) to[out=45,in=-125] (1,1,5)--(1,1,6) ;
					\draw[name path = lefthigh] (0,1,0)--(0,1,1) to[out=55,in=-135] (0,2,5)--(0,2,6) ;
					\draw[] (0,2,6)--(1,2,6)--(1,1,6);
				\end{scope}

				\begin{scope}[thick,
					]
					\draw[name path = frontlow]  (0,1,0)--(1,1,0) to[out=0,in=180] (5,0,0) -- (6,0,0) ;
					\draw[name path = fronthigh] (0,2,0)--(1,2,0) to[out=0,in=180] (5,1,0) -- (6,1,0) ;
					\draw[name path = backlow]    (0,1,1)--(1,1,1) to[out=0,in=180] (5,0,1) -- (6,0,1) ;
					\draw[name path = backhigh]   (0,2,1)--(1,2,1) to[out=0,in=180] (5,1,1) -- (6,1,1);
					
					\draw[fill=gray ] (0,1,0)--(0,2,0)--(0,2,1)--(0,1,1)--cycle;
					
					\tikzfillbetween[of=frontlow and fronthigh]{ left color=gray!20!white, right color = gray, shading angle=60,};
					\tikzfillbetween[of=fronthigh and backhigh]{ left color=gray, right color = gray!20!white, shading angle=180,};

					\draw[name path = frontlow]   (0,1,0)--(1,1,0) to[out=0,in=180] (5,0,0) -- (6,0,0) ;
					\draw[name path = fronthigh]  (0,2,0)--(1,2,0) to[out=0,in=180] (5,1,0) -- (6,1,0) ;
					\draw[name path = backhigh]  (0,2,1)--(1,2,1) to[out=0,in=180] (5,1,1) -- (6,1,1);
					
					%
					\draw[fill=gray] (6,0,0)--(6,1,0)--(6,1,1)--(6,0,1)--cycle;
				\end{scope}

				\begin{scope}[thick]
					
					\draw[] (6,0,6)--(6,1,6)-- (5,1,6)-- (5,0,6)--cycle;
					
					\draw[name path = rightlow] (6,1,0)--(6,1,1) to[out=45,in=-145] (6,0,5) -- (6,0,6);
					\draw[name path = righthigh] (6,2,0)--(6,2,1) to[out=35,in=-145] (6,1,5) -- (6,1,6);
					\draw[name path = leftlow] (5,1,0)--(5,1,1) to[out=45,in=-135] (5,0,5) -- (5,0,6);
					\draw[name path = lefthigh] (5,2,0)--(5,2,1) to[out=35,in=-145] (5,1,5) -- (5,1,6);

					\tikzfillbetween[of=rightlow and righthigh]{ left color=gray!20!white, right color = gray, middle color=black!40!gray,};
					\tikzfillbetween[of=righthigh and lefthigh]{ top color=gray, bottom color = gray!20!white, middle color=black, shading angle=0};
					
					\draw[name path = rightlow] (6,1,0)--(6,1,1) to[out=45,in=-145] (6,0,5) -- (6,0,6);
					\draw[name path = righthigh] (6,2,0)--(6,2,1) to[out=35,in=-145] (6,1,5) -- (6,1,6);
					\draw[name path = lefthigh] (5,2,0)--(5,2,1) to[out=35,in=-145] (5,1,5) -- (5,1,6);

					\draw[fill=gray] (5,1,0)--(6,1,0)--(6,2,0)--(5,2,0)--cycle;
				\end{scope}

				\begin{scope}[thick]
					\clip[draw=none](4,0,5)--(7,0,5)--(7,3,5)--(4,3,5)--cycle; 
					
					\draw[fill = gray] (0,0,5)--(0,0,6)--(0,1,6)--(0,1,5)--cycle;
					\draw[name path = frontlow]  (6,1,5)--(5,1,5) to[out=180,in=0] (1,0,5) -- (0,0,5) ;
					\draw[name path = fronthigh]  (6,2,5)--(5,2,5) to[out=180,in=0] (1,1,5) -- (0,1,5);
					\draw[name path = backlow]   (6,1,6)--(5,1,6) to[out=180,in=0] (1,0,6) -- (0,0,6) ;
					\draw[name path = backhigh] (6,2,6)--(5,2,6) to[out=180,in=0] (1,1,6) -- (0,1,6) ;
					
					\tikzfillbetween[of=frontlow and fronthigh]{ left color=gray!20!white, right color = gray, shading angle=60,};
					\tikzfillbetween[of=fronthigh and backhigh]{ left color=gray, right color = gray!20!white, shading angle=180,};
					
					\draw[name path = frontlow] (6,1,5)--(5,1,5) to[out=180,in=0] (1,0,5) -- (0,0,5) ;
					\draw[name path = fronthigh] (6,2,5)--(5,2,5) to[out=180,in=0] (1,1,5) -- (0,1,5 );
					\draw[name path = backhigh]  (6,2,6)--(5,2,6) to[out=180,in=0] (1,1,6) -- (0,1,6) ;

					\draw[fill = gray] (6,1,5)--(6,1,6)--(6,2,6)--(6,2,5)--cycle;
					
				\end{scope}
				
				\begin{scope}
					\draw[|-|,thick] (0.5,-0.6,0) -- (5.5,-0.6,0) node [midway, below, sloped] (TextNode1) {\Large $\e/2$};
					\draw[dashed,thick] (0.5,-0.6,0) -- (0.5,0.5,0);
					\draw[dashed,thick] (5.5,-0.6,0) -- (5.5,1.5,0);
					
					\draw[|-|,thick] (0,-0.16,0) -- (1,-0.16,0) node [pos=.25,yshift=-1pt,xshift=-2pt, below, sloped] (TextNode2) {\Large $2r$};
					\draw[|-|,thick] (-0.16,0,0) -- (-0.16,1,0) node [midway, above, sloped] (TextNode3) {\Large $2r$};
					
					\draw[|-|,thick] (6,-0.5,0.5) -- (6,-0.5,5.5) node [midway, below, sloped] (TextNode4) {\Large $\e/2$};
					\draw[dashed,thick] (6,-0.5,0.5)-- (6,0.5,0.5);
					\draw[dashed,thick] (6,-0.5,5.5)-- (6,1.5,5.5);
				\end{scope}
		\end{tikzpicture}}
		\hspace{1cm}
		\subfigure[]{
			\includegraphics[scale=0.4]{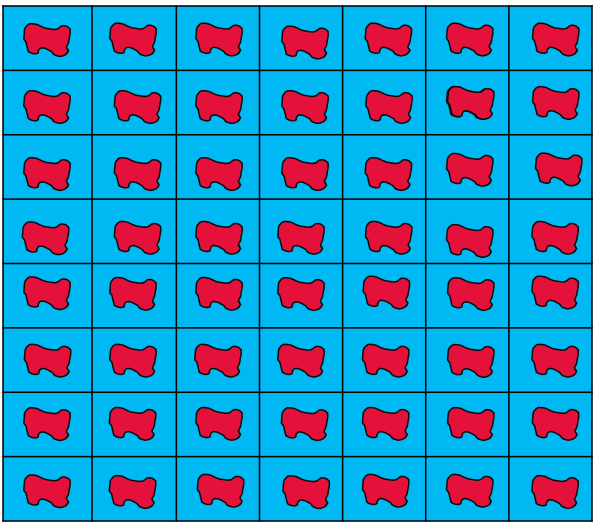}}
		\caption{(a) The domain is a quarter of the periodicity cell of the full structure. (b) $2$D periodic composite plate: Blue part is with stiff material and red part is with soft matrix.}\label{Fig02}
	\end{figure}
		Here, we have a similar extension result like in Lemma \ref{ExDef1}, which is a consequence of the extension result given in see [\cite{GOW2}, Lemma 2.1].
		\begin{proposition}
			For every deformation $v\in H^1(\O_\e^B)^3$, there exist a deformation $\Gv\in H^1(\O_\e)^3$ satisfying
			\begin{equation}\label{ExDef3}
				\begin{aligned}
					\Gv_{|\O_\e^B}=v,\quad\text{and}\quad
					\|dist(\nabla\Gv,SO(3))\|_{L^2(\O_\e)}\leq C\|dist(\nabla v,SO(3))\|_{L^2(\O_\e^B)}.
				\end{aligned}
			\end{equation}
			The constant does not depend on $\e$.
		\end{proposition}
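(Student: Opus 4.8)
The plan is to reduce the statement to a cell--by--cell construction on the fixed reference geometry and to adapt the argument of \cite[Lemma~2.1]{GOW2}, of which the present proposition is the natural generalization. Writing $\O_\e^B=\bigcup_{\xi\in\Xi_\e}\e(\xi+Y^B_\kappa)$ and $\O_\e^M=\bigcup_{\xi\in\Xi_\e}\e(\xi+Y^M_\kappa)$ (we disregard the inessential modifications needed for the finitely many cells meeting $\partial\o$), and recalling that every cell is obtained from the two \emph{fixed} Lipschitz sets $Y^B_\kappa\subset\Yc$ and $Y^M_\kappa=\Yc\setminus\overline{Y^B_\kappa}$ by a translation and a homothety of ratio $\e$, all the constants produced below will be scale invariant, that is, independent of $\e$ and of $\xi$.

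First I would produce the rigid motions. On each stiff cell $\e(\xi+Y^B_\kappa)$ --- open, connected, with Lipschitz boundary --- the quantitative geometric rigidity estimate (\cite{Friesecke06}; after rescaling, \cite[Theorem~II.1.1]{BGRod}, cf.\ \eqref{BGShell3.3}) provides $\GR_\xi\in SO(3)$ and $\Ga_\xi\in\R^3$ with
\[\|\nabla v-\GR_\xi\|_{L^2(\e(\xi+Y^B_\kappa))}+\tfrac{1}{\e}\|v-\Ga_\xi-\GR_\xi\,x\|_{L^2(\e(\xi+Y^B_\kappa))}\leq C\,\|dist(\nabla v,SO(3))\|_{L^2(\e(\xi+Y^B_\kappa))}.\]
Applying the same estimate on the connected two--cell patches $\e\big(\xi+\mathrm{int}(\overline{Y^B_\kappa}\cup(\overline{Y^B_\kappa}+\Ge_\alpha))\big)$, $\alpha=1,2$, I would in addition control the discrepancy between $(\Ga_\xi,\GR_\xi)$ and $(\Ga_{\xi+\Ge_\alpha},\GR_{\xi+\Ge_\alpha})$ of neighbouring cells in terms of $\|dist(\nabla v,SO(3))\|_{L^2}$ on the union of the two stiff cells; this is what will make the patching possible, and it is here that the hypothesis ``$\mathrm{int}(\overline{Y^B_\kappa}\cup(\overline{Y^B_\kappa}+\Ge_\alpha))$ connected'' is used.

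Next I would extend. Subtracting on each stiff cell the rigid motion $\Ga_\xi+\GR_\xi x$ leaves a remainder $w_\xi\doteq v-\Ga_\xi-\GR_\xi x$ that is small in $H^1(\e(\xi+Y^B_\kappa))$, namely $\|\nabla w_\xi\|_{L^2}+\tfrac{1}{\e}\|w_\xi\|_{L^2}\leq C\|dist(\nabla v,SO(3))\|_{L^2(\e(\xi+Y^B_\kappa))}$. Using a fixed bounded linear extension operator $\mathcal E\colon H^1(Y^B_\kappa)^3\to H^1(\Yc)^3$ (which exists because $\partial Y^B_\kappa$ is Lipschitz), a partition of unity subordinate to the cell covering, and the rigid--motion discrepancy bounds of the previous step to correct the mismatches across cell interfaces, one reassembles a single map $\Gv\in H^1(\O_\e)^3$ with $\Gv_{|\O_\e^B}=v$. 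On each matrix cell $\nabla\Gv$ differs from the rotation $\GR_\xi$ by a controlled combination of $\nabla(\mathcal E w_\xi)$ and of the rigid--motion jumps, so that $dist(\nabla\Gv,SO(3))\leq|\nabla\Gv-\GR_\xi|_F$ yields, cellwise in $L^2$, the bound $C\|dist(\nabla v,SO(3))\|_{L^2(\e(\xi+Y^B_\kappa))}$; since each stiff cell enters only a bounded number of such inequalities, summing over $\xi\in\Xi_\e$ and adding the trivial contribution from $\O_\e^B$ gives \eqref{ExDef3}.

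I expect the main obstacle to be the middle step, i.e.\ the explicit partition--of--unity gluing that turns the cellwise extensions together with the rigid--motion corrections into a map that is globally $H^1$ and whose norm does not blow up with $\e$: this is the genuinely geometric part, and it is precisely where the Lipschitz regularity of $\partial Y^B_\kappa$ and the two connectedness hypotheses on $Y^B_\kappa$ and $Y^M_\kappa$ are consumed. Since the proof of \cite[Lemma~2.1]{GOW2} already carries out exactly this construction and uses nothing about the woven--canvas geometry beyond these structural properties, in the write--up I would detail the rigidity part and then invoke \cite[Lemma~2.1]{GOW2} for the extension itself; the remaining points --- trace matching on the stiff/matrix interfaces and the $L^2$ summation --- are routine.
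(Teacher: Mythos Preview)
Your proposal is correct and coincides with the paper's approach: the paper does not give a self-contained proof but simply records that the proposition ``is a consequence of the extension result given in \cite[Lemma~2.1]{GOW2}'', which is exactly the reference you reduce to after sketching the cellwise rigidity/extension/gluing mechanism behind that lemma.
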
	
		Note that $\wt{\O}_\e^\xi$ and $\O_\e^\xi$ are the periodicity cell of the stiff part and the matrix part respectively. Here, we observe that $Y_\kappa^M$ (resp. $\O_\e^\xi$) is a bounded domain, but we do not assume it has a Lipschitz boundary (see Figure \ref{Fig02}a). For, this reason we proceed differently. The deformation $v$ is defined in $\O_\e$ and satisfies
		$$\|dist(\nabla v, SO(3))\|_{L^{2}(\O^B_\e)}+\e\|dist(\nabla v, SO(3))\|_{L^{2}(\O^M_\e)}\leq C\e^{5/2}\|f\|_{L^2(\o)}.$$ 
		The restriction of $v$ to $\O^M_\e$ is naturally extended by itself and satisfies
		$$\|dist(\nabla v, SO(3))\|_{L^{2}(\O_\e)}\leq C\e^{3/2}\|f\|_{L^2(\o)}.$$ Then, we can decompose $v$ since it is a deformation of the plate $\O_\e$. After passing to the limit, we can use the estimate $\|dist(\nabla v, SO(3))\|_{L^{2}(\O^B_\e)}\leq C\e^{5/2}\|f\|_{L^2(\o)}$ to show that the elementary limit of the extended deformation vanishes in $\o\X Y_\kappa^B$. Hence all our results are valid for the general composite plate with soft matrix.
	\begin{remark}
			Our results are also valid for composite plate $\O_\e$ when the part with soft matrix $\O_\e^M$ is not connected, we define such structure as
			\begin{itemize}
				\item  Let $\GC^*\subset Y$ be a bounded  domain with Lipschitz boundary such that the interior$\left(\overline{\GC}\cup(\overline{\GC}+\Ge_\alpha)\right)$  for $\alpha=1,2$ are connected  with Lipschitz boundary, where $\GC=Y\setminus\overline{\GC^*}$, for example see Figure \ref{Fig02}b;
				\item $Y_\kappa^B=\GC\X(-\kappa,\kappa)$, and $Y_\kappa^M=\Yc\setminus\overline{Y_\kappa^B}$.
			\end{itemize}
			Using $Y_\kappa^B$ and $Y_\kappa^M$ as the reference periodicity cell of the stiff part and matrix part respectively, we define $\O_\e^B$ and $\O_\e^M$. Further analysis is same as the case with connected $\O_\e^M$.  Hence all our results are valid for this type of composite plate with soft matrix. \footnote{The periodic perforated plate $\O_\e^B$ is studied in \cite{larysa1} with respect to a linear elasticity problem starting with displacements.}.
		\end{remark}
		\section{Conclusion}\label{sec10}
	     This paper focuses on the asymptotic analysis of a periodic composite plate with matrix pre-strain within the framework of non-linear elasticity. Two cases are considered: one without pre-strain and one with pre-strain. In both cases, the total elastic energy follows the von-K\'arm\'an (vK) regime. The re-scaling unfolding operator is utilized to derive the asymptotic behavior of the Green St. Venant's strain tensor. The existence of a minimizer for the limit energy is shown through $\Gamma$-convergence. The discussion of both cases aims to demonstrate that the soft matrix only has an impact on the homogenized plate when pre-strain is present. This behavior is attributed to the relative weakness of the matrix compared to the stiff part, as well as the stability of the structure (due to similar orders of thickness $r$ and periodicity $\varepsilon$). Furthermore, it is shown that the cell problems are linear elastic and the homogenized von-K\'arm\'an plate exhibits orthotropic behavior for isotropic homogenized material. The findings of this study have practical applications in the fields of 3D printing, textiles, and aerospace industries.
%
%
\section*{Acknowledgment}
		    The research was funded by DFG, German Research Foundation, project number OR 190/10-1 and AIF project OptiDrape, both in the collaboration with the textile institute ITA in Aachen, Germany.

\bibliographystyle{ios1}           
\bibliography{bibliography}        

\end{document}